\newcommand{\tComma}{\quad\scalebox{1.5}{,}\quad\quad}
\newcommand{\tDot}{\quad\scalebox{1.5}{.}}
\newcommand{\F}{\mathbb F}
\newcommand{\Z}{\mathbb{Z}}
\newcommand{\C}{\mathbb{C}}
\newcommand{\mac}{\mathcal C}
\newcommand{\inv}[0]{{-1}}
\newcommand{\oo}[0]{\otimes}
\newcommand{\id}[0]{\mathrm{id}}
\newcommand{\low}[2]{{#1}_{({#2})}}
\newcommand{\vect}[0]{\mathrm{Vect}}
\newcommand{\st}[0]{{\bf s}}
\newcommand{\ta}[0]{{\bf t}}
\newtheorem{theorem}{Theorem}[section]
\newtheorem*{theorem*}{Theorem}
\newtheorem{example}[theorem]{Example}
\newtheorem{lemma}[theorem]{Lemma}
\newtheorem{proposition}[theorem]{Proposition}
\newtheorem{corollary}[theorem]{Corollary}
\newtheorem{definition}[theorem]{Definition}
\newtheorem{remark}[theorem]{Remark}
\newcommand*\stdthebibliography{}
\let\stdthebibliography\thebibliography
\renewcommand{\thebibliography}[1]{%
\stdthebibliography{#1}\setlength{\itemsep}{-2pt}}
\def\mytitle{Mapping class group actions from  Hopf monoids and ribbon graphs}
\def\myauthors{C.~Meusburger, T.~Vo\ss}
\begin{document}

\begin{center}
  {\huge\mytitle}

  \vspace{2em}

  {\large
   Catherine Meusburger\footnote{{\tt catherine.meusburger@math.uni-erlangen.de}}\\
   Thomas Vo\ss\footnote{{\tt voss@math.fau.de}} 
 }

 Department Mathematik \\
  Friedrich-Alexander-Universit\"at Erlangen-N\"urnberg \\
  Cauerstra\ss e 11, 91058 Erlangen, Germany\\[+2ex]

{February 10, 2020}

  \begin{abstract}
We show that any pivotal Hopf monoid $H$ in a symmetric monoidal category $\mac$ gives rise to actions of mapping class groups of oriented surfaces of genus $g\geq 1$ with $n\geq 1$ boundary components. These mapping class group actions are given by group homomorphisms into the group of  automorphisms of certain Yetter-Drinfeld modules over $H$. They are 
associated with edge slides in embedded ribbon graphs that generalise chord slides in chord diagrams.  We give a concrete description of these mapping class group actions in terms of generating Dehn twists and defining relations. For the case where $\mac$ is finitely complete and cocomplete, we also obtain actions of mapping class groups of closed surfaces by imposing invariance and coinvariance under the Yetter-Drinfeld module structure.
  \end{abstract}
\end{center}

\section{Introduction}
\label{sec:intro}

Mapping class group representations  arising from Hopf algebras    have been investigated extensively  in the context of topological quantum field theories, Chern-Simons theories and conformal field theories and in  the quantisation of moduli spaces of flat connections.  

In  \cite{Ly95a,Ly95b, Ly96} Lyubashenko  constructed projective  representations of surface mapping class groups  from Hopf algebras in certain abelian ribbon categories. 
At the same time, projective  mapping class group representations  were obtained from  the quantisation of Chern-Simons gauge theories,  via Reshetikhin-Turaev TQFTs \cite{RT} and  via the combinatorial  quantisation formalism due to Alekseev, Grosse and Schomerus  \cite{AGS95,AGS96,AS} and Buffenoir and Roche \cite{BR95,BR96}. 
As shown in  \cite{AS},  the mapping class group representations from the latter  are equivalent to the ones obtained from  \cite{RT}. Recently, they were revisited and generalised by Faitg \cite{Fa18a,Fa18b}, who related them to Lyubashenko's  representations. Brochier, Ben-Zvi and Jordan placed them in the context of factorisation homology  \cite{BBJ18}. 

A construction based on similar representation theoretical data  was employed by Fuchs,  Schweigert and Stigner in \cite{FS, FSS12, FSS14} to construct mapping class group invariants  in the context of  conformal field theories. Recently,  Fuchs, Schaumann and Schweigert \cite{FSS} constructed a more general modular functor and associated mapping class group representations from bimodule categories over certain tensor categories.

The conditions on the Hopf algebras and the underlying tensor categories in these works are less restrictive than the ones  for  TQFTs. However,  the underlying monoidal category is still required to have  duals, to be linear over a field and  abelian. This excludes interesting examples of Hopf monoids such as group objects in cartesian monoidal categories (including groups, strict 2-groups and simplicial groups) or  combinatorial Hopf monoids such as the ones in \cite{AM}. It  also does not directly include geometric examples, such as mapping class group actions on the moduli space $\mathrm{Hom}(\pi_1(\Sigma), G)/G$ of flat $G$-connections on a surface $\Sigma$ or on Teichm\"uller space.

In this article, we construct mapping class group actions from more general  data, namely a pivotal Hopf monoid $H$ in a symmetric monoidal category $\mac$.
We neither require that  $\mac$ is linear over some field  nor  that it is abelian,  has a zero object or is equipped with  duals. 
In particular, the construction can be applied to cartesian monoidal categories such as $\mathrm{Set}$, $\mathrm{Top}$, $\mathrm{Cat}$ or the category $\mathrm{PSh}(\mathcal A)=\mathrm{Set}^{\mathcal A^{op}}$, where  Hopf monoids correspond to group objects.

We show that  pivotality, which generalises the notion of a pivotal Hopf algebra in $\mac=\vect_\F$, is sufficient to obtain actions of mapping class groups of  surfaces with boundary. Already in the case $\mac=\vect_\F$ this assumption is much weaker than  involutivity or semisimplicity and allows for  additional examples.  We also emphasise that pivotality is  a choice of structure. Even for involutive Hopf monoids, where the square of the antipode is the identity, one may choose a non-trivial pivotal structure and thus modify  the  mapping class group actions.  

If the symmetric monoidal category $\mac$ is finitely complete and cocomplete and the pivotal structure involutive, 
our construction also yields actions of mapping class groups of closed surfaces. These are  described concretely in terms of generating Dehn twists, 
 and the description is simple enough to verify its relations  by explicit computations.

More specifically, we consider the mapping class group $\mathrm{Map}(\Sigma)$ of an oriented surface $\Sigma$ of genus $g\geq 1$ with a finite number of boundary components. The ingredients in the construction  are
\begin{compactenum}[(i)]
 \item an embedded directed graph  $\Gamma$ in $\Sigma$ that satisfies certain additional conditions,
 \item  a pivotal Hopf monoid  $H$  in a symmetric monoidal category $\mac$. 
\end{compactenum}

To the  graph $\Gamma$ with edge set $E$, we associate the $E$-fold tensor product $H^{\oo E}$ in $\mac$.  Edge orientation reversal corresponds to an involution constructed from the pivotal structure and the antipode. Every edge of $\Gamma$ is associated with two $H$-module and two $H$-comodule structures on $H^{\oo E}$, and every pair of a vertex and an adjacent face in $\Gamma$ defines a $H$-Yetter-Drinfeld module structure on $H^{\oo E}$.

The mapping class group actions are obtained from edge slides that generalise the chord slides  in \cite{B,ABP}. They transform the graph $\Gamma$ by sliding an edge end in $\Gamma$ along an adjacent  edge. To each  edge slide we assign an automorphism of $H^{\oo E}$ constructed from the $H$-module and comodule structures for these edges. 
From the presentation of the mapping class group in \cite{B}  we then obtain a mapping class group action by automorphisms of Yetter-Drinfeld modules.

{\bf Theorem 1:} Let $\Sigma$ be an oriented surface of genus $g\geq 1$ with one boundary component. Then the edge slides induce a group homomorphism $\rho:\mathrm{Map}(\Sigma)\to \mathrm{Aut}_{YD}(H^{\oo 2g})$.

We then generalise this result to surfaces $\Sigma$ of genus $g\geq 1$ and with $n+1\geq 1$ boundary components. For this, we use the presentation  of the mapping class group $\mathrm{Map}(\Sigma)$ in terms of generating Dehn twists due to Gervais \cite{G01}. We work with a graph consisting of a set of generators of the fundamental group $\pi_1(\Sigma)$ and $n$ additional edges that extend to the boundary.
To each generating Dehn twist in \cite{G01} we assign an automorphism of the object $H^{\oo 2(n+g)}$. These are automorphisms of 
Yetter-Drinfeld modules with respect to a certain Yetter-Drinfeld module structure  on $H^{\oo 2(n+g)}$ and  satisfy the defining relations of the mapping class group $\mathrm{Map}(\Sigma)$.

{\bf Theorem 2:} Let $\Sigma$ be an oriented surface of genus $g\geq 1$ with $n+1\geq 1$ boundary components. Then the edge slides induce a group homomorphism 
$\rho: \mathrm{Map}(\Sigma)\to \mathrm{Aut}_{YD}(H^{\oo 2(n+g)})$. 

If  $\mac$ is finitely complete and cocomplete, one 
can  define (co)invariants of  $H$-(co)modules and $H$-modules as (co)equalisers  of their (co)action  morphisms and 
associate to each Yetter-Drinfeld module $M$ over $H$ an object $M_{inv}$ in $\mac$ that is both, invariant and coinvariant.  If the pivotal structure is involutive, this  yields  mapping class group actions for closed surfaces by automorphisms of $M_{inv}$.

{\bf Theorem 3:} Let $\mac$ be a symmetric monoidal category that is finitely complete and cocomplete and $H$ a pivotal Hopf monoid in $\mac$ with an involutive pivotal structure. Then the mapping class group action of Theorem 2  induces a group homomorphism $\rho: \mathrm{Map}(\Sigma')\to \mathrm{Aut}(H^{\oo 2(n+g)}_{inv})$, where  $\Sigma'$ is  obtained  by attaching a disc to a boundary component of $\Sigma$.

For $n=0$, this defines actions of mapping class groups of closed surfaces. 
In particular, we obtain the mapping class group action on the moduli space $\mathrm{Hom}(\pi_1(\Sigma), G)/G$ for a group $G$ and a surface $\Sigma$ of genus $g\geq 1$. We can also modify these actions with different (involutive) pivotal structures.

The article is structured as follows. Section \ref{sec:Hopf} contains the required background on Hopf monoids in symmetric monoidal categories and on their modules and comodules. Section \ref{sec:ribbon} summarises the background on ribbon graphs and Section \ref{sec:mapsec} the descriptions of mapping class groups from \cite{G01,B}.
In Section \ref{sec:hrib} we introduce the $H$-module, $H$-comodule and $H$-Yetter-Drinfeld module structures on $H^{\oo E}$ associated with an embedded graph $\Gamma$.  This is inspired by Kitaev's quantum double models \cite{Ki, BMCA}, but no background on these models is required.

Section \ref{sec:chordslideshopf} introduces graph transformations by edge slides and the associated automorphisms. It shows that these edge slides satisfy the defining relations in \cite{B} and thus define mapping class group actions for oriented surfaces of genus $g\geq 1$ with one boundary component (Theorem 1). Section \ref{sec:torus} gives an explicit description of these mapping class group actions for the simplest examples, the  torus and the one-holed torus. In particular, this defines an action of the braid group $B_3$ on $H^{ \oo 2}$ and an action of the modular group $\mathrm{SL}(2,\mathbb Z)$  on  $H^{ \oo 2}_{inv}$.

In Section \ref{sec:slidetwist}  we generalise edge slides  to slides along certain closed paths in the underlying graph and introduce Dehn twists. 
These are the technical prerequisites for Section \ref{sec:gervaismap}, where we consider a generating set of Dehn twists and prove that they satisfy the relations for the mapping class group of  a surface $\Sigma$ of genus $g\geq 1$ with $n+1\geq 1$ boundary components (Theorem 2). 
The remainder of Section \ref{sec:gervaismap}  requires a finitely complete and cocomplete symmetric monoidal category $\mac$ and an involutive pivotal structure and is dedicated to the proof of Theorem 3. We illustrate this with the example for the mapping class group of a surface of genus 2 with one or no boundary components.

\section{Background on Hopf monoids}
\label{sec:Hopf}

\subsection{Pivotal Hopf monoids}
Throughout the article we consider a symmetric monoidal category $\mathcal C$  with unit object $e$ and crossing or braiding morphisms $\tau_{X,Y}: X\oo Y\to Y\oo X$.
We denote by $1_{X}$ the identity morphism on an object $X$.
We use the usual diagrammatic notation for symmetric monoidal categories. Diagrams are read from  left to right for tensor products  and from top to bottom for the composition of morphisms. In formulas, we  suppress associators and left and right unit constraints of $\mac$. 
\begin{definition} Let $\mac$ be a symmetric monoidal category.
\begin{compactenum}
\item A 
{\bf monoid} in $\mac$ is a triple $(A,m,\eta)$ of an object $A$ in $\mac$ and  morphisms $m: A\oo A\to A$ and $\eta: e\to A$ such that, up to coherence data,
$$
m\circ (1_{A} \oo m)=m\circ (m\oo 1_{A})\qquad m\circ (1_A\oo\eta)=m\circ (\eta\oo 1_A)=1_A. 
$$
\item A 
{\bf comonoid} in $\mac$ is a triple $(C,\Delta,\epsilon)$ of an object $C$ in $\mac$ and  morphisms $\Delta: C\to C\oo C$ and $\epsilon: C\to e$ such that, up to coherence data,
$$
(\Delta\oo1_C)\circ \Delta = (1_C\oo \Delta)\circ\Delta \qquad (\epsilon\oo 1_C)\circ \Delta=(1_C\oo \epsilon)\circ\Delta=1_C.
$$
\item A  {\bf bimonoid} in $\mac$ is a pentuple $(H,m,\eta,\Delta,\epsilon)$ such that $(H,m,\eta)$ is a monoid, $(H,\Delta,\epsilon)$  a comonoid in $\mac$ and, up to coherence data
\begin{align*}
&\Delta\circ m=(m \oo m)\circ (1_H\oo \tau_{H,H}\oo 1_H)\circ (\Delta\oo \Delta)\qquad & &\Delta\circ\eta=\eta\oo \eta\\
&\epsilon\circ m=\epsilon\oo\epsilon & &\epsilon\circ\eta=1_e.
\end{align*}
\item A bimonoid $(H,m,\eta,\Delta,\epsilon)$ in $\mac$ is  a  {\bf Hopf monoid} if there is a morphism $S: H\to H$ with
$$
m\circ (S\oo 1_H)\circ\Delta=m\circ (1_H\oo S)\circ \Delta=\eta\circ\epsilon.
$$ 
\end{compactenum}
\end{definition}

If a bimonoid  in $\mac$ has an antipode, then this antipode is unique. The antipode is an anti-monoid and anti-comonoid morphism:
$$m\circ (S\oo S)\circ \tau_{H,H}=S\circ m,\quad  S\circ\eta=\eta,\quad
\tau_{H,H}\circ (S\oo S)\circ \Delta=\Delta\circ S, \quad \epsilon\circ S=\epsilon.$$

The multiplication $m:  H\oo H\to H$, unit $\eta: e\to H$, comultiplication $\Delta: H\to H\oo H$, counit  $\epsilon: H\to e$ and the antipode $S: H\to H$  are described by the following diagrams, respectively
\begin{align}\label{eq:hstructure}
\begin{tikzpicture}[scale=.25]
\begin{scope}
\draw[line width=1pt, color=black, draw opacity=1] plot [smooth, tension=0.6] coordinates 
      {(-2,2)(-1,0)(1,0)(2,2) };
 \draw[line width=1pt, color=black](0,-.2)--(0,-2);     
\end{scope}
\node at (2,-2) [anchor=north, color=black] {\scalebox{1.5}{,}};
\node at (6,-2) [anchor=north, color=black] {\scalebox{1.5}{,}};
\node at (13,-2) [anchor=north, color=black] {\scalebox{1.5}{,}};
\node at (16,-2) [anchor=north, color=black] {\scalebox{1.5}{,}};
\node at (21,-2) [anchor=north, color=black] {\scalebox{1.5}{.}};
\begin{scope}[shift={(5,0)}]
 \draw[line width=1pt, color=black](0,0)--(0,-2);  
 \draw[color=black, fill=white, line width=1pt] (0,0) circle (.2); 
\end{scope}
\begin{scope}[shift={(10,0)}]
\draw[line width=1pt, color=black, draw opacity=1] plot [smooth, tension=0.6] coordinates 
      {(-2,-2)(-1,0)(1,0)(2,-2) };
 \draw[line width=1pt, color=black](0,.2)--(0,2);     
\end{scope}
\begin{scope}[shift={(15,0)}]
 \draw[line width=1pt, color=black](0,2)--(0,0);  
 \draw[color=black, fill=white, line width=1pt] (0,0) circle (.2); 
\end{scope}
\begin{scope}[shift={(20,0)}]
 \draw[line width=1pt, color=black](0,2)--(0,-2);  
 \draw[color=black, fill=white, line width=1pt] (0,0) circle (.4); 
\end{scope}
\end{tikzpicture}
\end{align}
The condition that $H$ is a  monoid in $\mac$  reads
\begin{align}\label{eq:monoid}
&\begin{tikzpicture}[scale=.25]
\begin{scope} [shift={(-3.5,0)}]
\draw[line width=1pt, color=black, draw opacity=1] plot [smooth, tension=0.6] coordinates 
      {(-2,2)(-1,0)(1,0)(2.5,3) };
      \draw[line width=1pt, color=black, draw opacity=1] plot [smooth, tension=0.6] coordinates 
      {(-3,3)(-2.5,2.2)(-2,2)(-1.5,2.2)(-1,3) };
 \draw[line width=1pt, color=black](0,-.3)--(0,-2);     
\end{scope}
\node at (0,0){$=$};
\begin{scope} [shift={(3.5,0)}]
\draw[line width=1pt, color=black, draw opacity=1] plot [smooth, tension=0.6] coordinates 
      {(2,2)(1,0)(-1,0)(-2.5,3) };
      \draw[line width=1pt, color=black, draw opacity=1] plot [smooth, tension=0.6] coordinates 
      {(3,3)(2.5,2.2)(2,2)(1.5,2.2)(1,3) };
 \draw[line width=1pt, color=black](0,-.3)--(0,-2);     
\end{scope}
\end{tikzpicture}
%%%
\tComma
%%%
\begin{tikzpicture}[scale=.25]
\begin{scope} [shift={(-3.5,0)}]
  \draw[line width=1pt, color=black, draw opacity=1] plot [smooth, tension=0.6] coordinates 
      {(-2,2)(-1,0)(1,0)(2,2) };
 \draw[line width=1pt, color=black](0,-.2)--(0,-2);  
 \draw[color=black, fill=white, line width=1pt] (2,2) circle (.25);
\end{scope}
\node at (0,0){$=$};
\draw[line width=1pt, color=black] (1.5,2)--(1.5,-2);
\node at (3,0){$=$};
\begin{scope} [shift={(6.5,0)}]
    \draw[line width=1pt, color=black, draw opacity=1] plot [smooth, tension=0.6] coordinates 
      {(-2,2)(-1,0)(1,0)(2,2) };
 \draw[line width=1pt, color=black](0,-.2)--(0,-2);  
 \draw[color=black, fill=white, line width=1pt] (-2,2) circle (.25); 
\end{scope}
\end{tikzpicture}\tComma
\\\intertext{and the condition that $H$ is a comonoid in $\mac$  is}
\label{eq:comonoid}
&\begin{tikzpicture}[scale=.25]
\begin{scope} [shift={(-3.5,0)}]
\draw[line width=1pt, color=black, draw opacity=1] plot [smooth, tension=0.6] coordinates 
      {(-2,-2)(-1,0)(1,0)(2.5,-3) };
      \draw[line width=1pt, color=black, draw opacity=1] plot [smooth, tension=0.6] coordinates 
      {(-3,-3)(-2.5,-2.2)(-2,-2)(-1.5,-2.2)(-1,-3) };
 \draw[line width=1pt, color=black](0,.3)--(0,2);     
\end{scope}
\node at (0,0){$=$};
\begin{scope} [shift={(3.5,0)}]
\draw[line width=1pt, color=black, draw opacity=1] plot [smooth, tension=0.6] coordinates 
      {(2,-2)(1,0)(-1,0)(-2.5,-3) };
      \draw[line width=1pt, color=black, draw opacity=1] plot [smooth, tension=0.6] coordinates 
      {(3,-3)(2.5,-2.2)(2,-2)(1.5,-2.2)(1,-3) };
 \draw[line width=1pt, color=black](0,.3)--(0,2);     
\end{scope}
\end{tikzpicture}
%%%
\tComma
%%%
\begin{tikzpicture}[scale=.25]
\begin{scope} [shift={(-3.5,0)}]
  \draw[line width=1pt, color=black, draw opacity=1] plot [smooth, tension=0.6] coordinates 
      {(-2,-2)(-1,0)(1,0)(2,-2) };
 \draw[line width=1pt, color=black](0,.2)--(0,2);  
 \draw[color=black, fill=white, line width=1pt] (2,-2) circle (.25);
\end{scope}
\node at (0,0){$=$};
\draw[line width=1pt, color=black] (1.5,2)--(1.5,-2);
\node at (3,0){$=$};
\begin{scope} [shift={(6.5,0)}]
    \draw[line width=1pt, color=black, draw opacity=1] plot [smooth, tension=0.6] coordinates 
      {(-2,-2)(-1,0)(1,0)(2,-2) };
 \draw[line width=1pt, color=black](0,.2)--(0,2);  
 \draw[color=black, fill=white, line width=1pt] (-2,-2) circle (.25); 
\end{scope}
\end{tikzpicture}
\tDot
\end{align}
Using \eqref{eq:monoid} and \eqref{eq:comonoid}, we sometimes write $m^{n}: H^{\oo (n+1)}\to H$ and $\Delta^{n}: H\to H^{\oo(n+1)}$ for the $n$-fold  composites of $m$ and $\Delta$. We denote them by diagrams analogous to \eqref{eq:hstructure} but with $n+1$ lines meeting in a single point or on a horizontal line.

The compatibility conditions between the monoid and comonoid structure correspond to the diagrams
\begin{align}
	\nonumber
\begin{tikzpicture}[scale=.2]
\begin{scope}[shift={(-4,0)}]
  \draw[line width=1pt, color=black, draw opacity=1] plot [smooth, tension=0.6] coordinates 
      {(-2,2)(-1,0)(1,0)(2,2) };
       \draw[line width=1pt, color=black](0,-.2)--(0,-1.8); 
         \draw[line width=1pt, color=black, draw opacity=1] plot [smooth, tension=0.6] coordinates 
      {(-2,-4)(-1,-2)(1,-2)(2,-4) };
\end{scope}
\node at (-.5,-1){$=$};
\begin{scope}[shift={(4,0)}]
\draw[line width=1pt, color=black](-2,2)--(-2,1);
\draw[line width=1pt, color=black](2,2)--(2,1);
\draw[line width=1pt, color=black](-2,-4)--(-2,-3);
\draw[line width=1pt, color=black](2,-4)--(2,-3);
\draw[line width=1pt, color=black, draw opacity=1] plot [smooth cycle, tension=0.6] coordinates 
      {(-2,1)(-3, 0)(-3,-2)(-2,-3)(-.5,-2)(.5,0)(2,1)(3,0)(3,-2)(2,-3)(.5,-2)(-.5,0)};
 \draw[line width=1pt, color=black, draw opacity=1] plot [smooth, tension=0.6] coordinates 
      {};
\end{scope}
\end{tikzpicture}
\tComma
\begin{tikzpicture}[scale=.2]
\begin{scope}[shift={(12,0)}]
\draw[line width=1pt, color=black, draw opacity=1] plot [smooth, tension=0.6] coordinates 
      {(-2,2)(-1,0)(1,0)(2,2) };
 \draw[line width=1pt, color=black](0,-.2)--(0,-4);     
 \draw[color=black, fill=white, line width=.7pt] (0,-4) circle (.25);
 \node at (3,-1){$=$};
  \draw[line width=1pt, color=black](5,2)--(5,-4);
     \draw[color=black, fill=white, line width=.7pt] (5,-4) circle (.25);  
       \draw[line width=1pt, color=black](7,2)--(7,-4);
     \draw[color=black, fill=white, line width=.7pt] (7,-4) circle (.25);  
\end{scope}
\end{tikzpicture}
\tComma
\begin{tikzpicture}[scale=.2]
\begin{scope}[shift={(25,0)}]
\draw[line width=1pt, color=black, draw opacity=1] plot [smooth, tension=0.6] coordinates 
      {(-2,-4)(-1,-2)(1,-2)(2,-4) };
 \draw[line width=1pt, color=black](0,-1.8)--(0,2);     
 \draw[color=black, fill=white, line width=.7pt] (0,2) circle (.25);
 \node at (3,-1){$=$};
  \draw[line width=1pt, color=black](5,2)--(5,-4);
     \draw[color=black, fill=white, line width=.7pt] (5,2) circle (.25);  
       \draw[line width=1pt, color=black](7,2)--(7,-4);
     \draw[color=black, fill=white, line width=.7pt] (7,2) circle (.25);  
\end{scope}
\end{tikzpicture}
\tComma
\begin{tikzpicture}[scale=.2]
\begin{scope}[shift={(36,0)}]
  \draw[line width=1pt, color=black](0,2)--(0,-4);
   \draw[color=black, fill=white, line width=.7pt] (0,2) circle (.25);  
    \draw[color=black, fill=white, line width=.7pt] (0,-4) circle (.25);  
    \node at (1.5,-1){$=$};
	\draw[line width=1pt, color=black, dashed](3,2)--(3,-3.9);
    \end{scope}
\end{tikzpicture}\tComma
\end{align}
and the defining conditions on the antipode read
\begin{align}\label{eq:antdef}
\begin{tikzpicture}[scale=.2]
\begin{scope}[shift={(-3,0)}]
\draw[line width=1pt, color=black] (0,3)--(0,2);
\draw[line width=1pt, color=black] (0,-3)--(0,-2);
\draw[line width=1pt, color=black, draw opacity=1] plot [smooth cycle, tension=0.6] coordinates 
      {(0,2)(-1.2, 1)(-1.2,-1)(0,-2)(1.2,-1)(1.2,1)};
   \draw[color=black, fill=white, line width=1pt] (-1.3,0) circle (.4);     
\end{scope}
\node at (0,0){$=$};
\draw[line width=1pt, color=black] (1.5,3)--(1.5,1);
 \draw[color=black, fill=white, line width=.7pt] (1.5,1) circle (.25); 
\draw[line width=1pt, color=black] (1.5,-3)--(1.5,-1);
 \draw[color=black, fill=white, line width=.7pt] (1.5,-1) circle (.25); 
\node at (3,0){$=$};
\begin{scope}[shift={(6,0)}]
\draw[line width=1pt, color=black] (0,3)--(0,2);
\draw[line width=1pt, color=black] (0,-3)--(0,-2);
\draw[line width=1pt, color=black, draw opacity=1] plot [smooth cycle, tension=0.6] coordinates 
      {(0,2)(-1.2, 1)(-1.2,-1)(0,-2)(1.2,-1)(1.2,1)};
   \draw[color=black, fill=white, line width=1pt] (1.3,0) circle (.4);  
\end{scope}
\end{tikzpicture}
\tDot
\end{align}
That the antipode is an anti-monoid and anti-comonoid morphism is expressed in  the diagrams
\begin{align}\label{eq:anthom}
&\begin{tikzpicture}[scale=.2]
\begin{scope}[shift={(-3.5,0)}]
\draw[line width=1pt, color=black] (0,3)--(0,0);
\draw[line width=1pt, color=black, draw opacity=1] plot [smooth, tension=0.6] coordinates 
      {(-2,-2)(-1.5,-.5)(0,0)(1.5,-.5)(2,-2)};
   \draw[color=black, fill=white, line width=1pt] (0,1.5) circle (.4);     
\end{scope}
\node at (0,0){$=$};
\begin{scope}[shift={(3.6,0)}]
\draw[line width=1pt, color=black] (0,3)--(0,2);
\draw[line width=1pt, color=black, draw opacity=1] plot [smooth, tension=0.6] coordinates 
      {(2,-2)(-1,1)(0,2)(1,1)(-2,-2)};
   \draw[color=black, fill=white, line width=1pt] (1.5,-1.5) circle (.4); 
      \draw[color=black, fill=white, line width=1pt] (-1.5,-1.5) circle (.4);  
\end{scope}
\end{tikzpicture}
\tComma%%%%%
\begin{tikzpicture}[scale=.2]
\begin{scope}[shift={(-3.5,0)}]
\draw[line width=1pt, color=black] (0,-3)--(0,0);
\draw[line width=1pt, color=black, draw opacity=1] plot [smooth, tension=0.6] coordinates 
      {(-2,2)(-1.5,.5)(0,0)(1.5,.5)(2,2)};
   \draw[color=black, fill=white, line width=1pt] (0,-1.5) circle (.4);     
\end{scope}
\node at (0,0){$=$};
\begin{scope}[shift={(3.5,0)}]
\draw[line width=1pt, color=black] (0,-3)--(0,-2);
\draw[line width=1pt, color=black, draw opacity=1] plot [smooth, tension=0.6] coordinates 
      {(2,2)(-1,-1)(0,-2)(1,-1)(-2,2)};
   \draw[color=black, fill=white, line width=1pt] (1.5,1.5) circle (.4); 
      \draw[color=black, fill=white, line width=1pt] (-1.5,1.5) circle (.4);  
\end{scope}
\end{tikzpicture}
\tComma%%%%%
\begin{tikzpicture}[scale=.2]
\draw[line width=1pt, color=black] (-.5,2)--(-.5,-2);
   \draw[color=black, fill=white, line width=1pt] (-.5,0) circle (.4); 
      \draw[color=black, fill=white, line width=.7pt] (-.5,2) circle (.25);     
\node at (1.2,0){$=$};
\draw[line width=1pt, color=black] (2.5,2)--(2.5,-2);
      \draw[color=black, fill=white, line width=.7pt] (2.5,2) circle (.25);  
\end{tikzpicture}
\tComma%%%%%
\begin{tikzpicture}[scale=.2]
\draw[line width=1pt, color=black] (-.5,2)--(-.5,-2);
   \draw[color=black, fill=white, line width=1pt] (-.5,0) circle (.4); 
      \draw[color=black, fill=white, line width=.7pt] (-.5,-2) circle (.25);     
\node at (1.2,0){$=$};
\draw[line width=1pt, color=black] (2.5,2)--(2.5,-2);
      \draw[color=black, fill=white, line width=.7pt] (2.5,-2) circle (.25);  
\end{tikzpicture}
\tDot
\end{align}

Where this simplifies the presentation, we also use  Sweedler notation. For  $\mac=\vect_\F$ this is the usual Sweedler notation for a Hopf algebra over a field $\mathbb F$. For a general symmetric monoidal category $\mac$ it is to be interpreted as a shorthand notation for a diagram in $\mac$.

In the following, we consider Hopf monoids  with additional structure, which  generalise  {\em pivotal} Hopf algebras over a field $\mathbb F$.
The concept of a pivotal Hopf algebra
in  $\mac=\vect_\F$ was introduced in \cite[Def 3.1]{BW99} as a Hopf algebra $H$ over $\F$ together with a choice of a grouplike element $p\in H$, the {\em pivotal element}\footnote{The term {\em pivotal Hopf} algebra was subsequently adopted  in other publications. The grouplike element $p$ in this article corresponds to the {\em inverse} of the grouplike element in \cite[Def 3.1]{BW99}.}, such that $h=p S^2(h) p^\inv$ for all $h\in H$. 
It is clear from its definition that the pivotal element $p$ is unique up to multiplication with central grouplike elements in $H$.
%It is shown in \cite[Prop 3.6]{BW99} that for any pivotal Hopf algebra $(H,g)$  the category $H\text{-Mod}_{fin}$ of finite-dimensional representations of $H$ has the structure of a pivotal tensor category. 

This definition easily generalises to a Hopf monoid in a symmetric monoidal category $\mac$. In this case, a grouplike element $p\in H$ is replaced by a morphism $p: e\to H$ satisfying $\Delta\circ p=p\oo p$ and $\epsilon\circ p=1_e$. The second condition generalises the requirement $p\neq 0$ for a grouplike element $p\in H$.
Note also that these conditions imply together with \eqref{eq:antdef} 
$$m\circ (S\oo 1_H)\circ (p\oo p)=\eta=m\circ (1_H\oo S)\circ (p\oo p),$$ which replaces the identity $S(p)=p^\inv$ for grouplike elements. The condition $h=pS^2(h) p^\inv$ for all $h\in H$  can then  be formulated as a condition on the morphism $p: e\to H$. 
This yields the following definition, which reduces to the notion of pivotality in \cite{BW99} for $\mac=\vect_\F$.

\begin{definition}\label{def:pivotal}  Let $\mac$ be a symmetric monoidal category. A {\bf pivotal Hopf monoid} in $\mac$ is a pair $(H,p)$ of a Hopf monoid $H$ in $\mac$ and a morphism $p: e\to H$
satisfying the  identities
\begin{align*}
&\Delta\circ p=p\oo p\quad & &\epsilon\circ p=1_e & &m^{2}\circ (1_H\oo S^2\oo S )\circ (p\oo 1_H\oo p)=1_H.
\end{align*}
A Hopf monoid $H$ in $\mac$ is called {\bf involutive} if $(H,\eta)$ is a pivotal Hopf monoid.
\end{definition}

We denote the morphism $p: e\to H$ by a diagram similar to the one for the unit morphism, but with a small black circle labeled $p$ instead of a small  white circle. The conditions in Definition \ref{def:pivotal} then correspond to the  diagrams
\begin{align}\label{eq:pivdiags}
\begin{tikzpicture}[scale=.2]
\begin{scope}[shift={(0,0)}]
\draw[line width=1pt, color=black, draw opacity=1] plot [smooth, tension=0.6] coordinates 
      {(-2,-4)(-1,-2)(1,-2)(2,-4) };
 \draw[line width=1pt, color=black](0,-1.8)--(0,1);     
 \draw[color=black, fill=black, line width=.7pt] (0,1) circle (.25);
 \node at (0,1)[anchor=east]{$p$};
 \node at (3,-1){$=$};
  \draw[line width=1pt, color=black](5.5,1)--(5.5,-4);
     \draw[color=black, fill=black, line width=.7pt] (5.5,1) circle (.25); 
      \node at (5.5,1)[anchor=east]{$p$}; 
       \draw[line width=1pt, color=black](7.5,1)--(7.5,-4);
     \draw[color=black, fill=black, line width=.7pt] (7.5,1) circle (.25);  
      \node at (7.5,1)[anchor=west]{$p$};
\end{scope}\end{tikzpicture}
\tComma
\begin{tikzpicture}[scale=.2]
\begin{scope}[shift={(16,0)}]
 \draw[line width=1pt, color=black](0,-4)--(0,1);     
 \draw[color=black, fill=black, line width=.7pt] (0,1) circle (.25);
  \node at (0,1)[anchor=east]{$p$};
  \draw[color=black, fill=white, line width=.7pt] (0,-4) circle (.25);
 \node at (2,-1.5){$=$};
  \draw[line width=1pt, color=black,dashed](4,-3.9)--(4,1);  
\end{scope}\end{tikzpicture}
\tComma
\begin{tikzpicture}[scale=.2]
\begin{scope}[shift={(32,0)}]
\draw[line width=1pt, color=black, draw opacity=1] plot [smooth, tension=0.6] coordinates 
      {(-2,1)(-1,-2)(1,-2)(2,1) };
  \draw[line width=1pt, color=black](0,1.5)--(0,-4); 
     \draw[color=black, fill=white, line width=1pt] (0,0) circle (.4);  
        \draw[color=black, fill=white, line width=1pt] (0,-1) circle (.4);  
                \draw[color=black, fill=white, line width=1pt] (1.5,-1) circle (.4);  
                 \draw[color=black, fill=black, line width=.7pt] (-2,1) circle (.25);  
                   \node at (-2,1)[anchor=east]{$p$};  
                  \draw[color=black, fill=black, line width=.7pt] (2,1) circle (.25);
                    \node at (2,1)[anchor=west]{$p$};
                   \node at (4.5,-1.5){$=$};
                     \draw[line width=1pt, color=black](6.5,1.5)--(6.5,-4); 
\end{scope}
\end{tikzpicture}
\tDot
\end{align}
Note that the third condition implies that the antipode  $S: H\to H$ is an isomorphism with inverse $S^\inv=m^{2}\circ (1_H\oo S\oo S )\circ (p\oo 1_H\oo p):H\to H$. 
In diagrams we denote $S^\inv$ by a grey circle.  Identities \eqref{eq:antdef} and \eqref{eq:anthom} then imply  identities  analogous to \eqref{eq:anthom} and the following counterpart of \eqref{eq:antdef}
\begin{align}
	\nonumber
\begin{tikzpicture}[scale=.25]
\begin{scope}[shift={(-3,0)}]
\draw[line width=1pt, color=black] (0,3)--(0,2);
\draw[line width=1pt, color=black] (0,-3)--(0,-2);
\draw[line width=1pt, color=black, draw opacity=1] plot [smooth cycle, tension=0.6] coordinates 
      {(0,2)(-1.2, 1)(1.2,-1)(0,-2)(-1.2,-1)(1.2,1)};
   \draw[color=black, fill=gray, line width=1pt] (-1.2,1.3) circle (.4);     
\end{scope}
\node at (0,0){$=$};
\draw[line width=1pt, color=black] (1.5,3)--(1.5,1);
 \draw[color=black, fill=white, line width=1pt] (1.5,1) circle (.2); 
\draw[line width=1pt, color=black] (1.5,-3)--(1.5,-1);
 \draw[color=black, fill=white, line width=1pt] (1.5,-1) circle (.2); 
\node at (3,0){$=$};
\begin{scope}[shift={(6,0)}]
\draw[line width=1pt, color=black] (0,3)--(0,2);
\draw[line width=1pt, color=black] (0,-3)--(0,-2);
\draw[line width=1pt, color=black, draw opacity=1] plot [smooth cycle, tension=0.6] coordinates 
      {(0,2)(-1.2, 1)(1.2,-1)(0,-2)(-1.2,-1)(1.2,1)};
   \draw[color=black, fill=gray, line width=1pt] (1.2,1.3) circle (.4);    
\end{scope}
\end{tikzpicture}
\tDot
\end{align}

The condition that a Hopf monoid $H$ in $\mac$ has a pivotal structure is much less restrictive than involutivity. Many standard examples of Hopf algebras are pivotal Hopf monoids in $\mac=\vect_\F$.

\begin{example} \label{ex:hopfmon}$\quad$
\begin{compactenum}
\item If $H$  is an involutive Hopf algebra over a field $\F$, it is trivially pivotal with  $p=1\in H$. 
Pivotal structures on $H$ are  in bijection with central grouplike elements $p\in H$.

\item  If $H$ is a ribbon Hopf algebra over $\F$ with Drinfeld element $u= S(\low R 2)\low R 1$ and ribbon element $\nu$ satisfying $uS(u)=\nu^2$ and $\Delta(\nu)=(\nu\oo \nu)(R_{21}R)^\inv$, then $H$ is pivotal with $p=u^\inv\nu$.

\item 	If $H$ is a Hopf algebra over $\F$ with an invertible antipode, one can adjoin a grouplike element to form a pivotal Hopf algebra as follows. Let $G\cong \Z$ be the free group generated by the single element $p$. Then the tensor product $H \otimes \F[G]$ is a Hopf algebra with 
	\begin{align*}
		&( h \oo p^{m} ) \cdot ( k \oo p^{n} ) 
		= h S^{-2m}(k) \oo p^{m+n}
		&&\eta (1)
		= 1 \oo 1
		\\
		&\Delta ( h \oo p^{m} ) 
		= ( h_{(1)} \oo p^{m} ) \oo ( h_{(2)} \oo p^{m} )
		&&\epsilon ( h \oo p^{m} ) 
		= \epsilon(h)
		\\
		&S( h  \oo p^{m} ) =  S^{1+2m} (h) \oo p^{-m},
	\end{align*}
	and the element $1 \oo p\in H\oo \F[G]$ is pivotal.
	This shows that any Hopf algebra with invertible antipode is embedded in a pivotal Hopf algebra. 
	If $H$ is finite-dimensional with $\dim_\F(H)=d$, then adding the additional relation $p^{2d} = 1$ yields a finite-dimensional pivotal Hopf algebra containing $H$. 
	This finite-dimensional example is from~\cite{So}.
\item 
	
	For any complex semisimple finite-dimensional Lie algebra $\mathfrak{g}$, the Drinfeld-Jimbo algebra $U_{q}(\mathfrak{g})$ generated by elements $E_{i},F_{i}, K_{i}, K_{i}^{-1}$ as in~\cite{KlS} is a pivotal Hopf algebra. The pivotal element is the inverse of the grouplike element $K_{2\rho}$, where $\rho$ is the half-sum of the positive roots. A proof is given in~\cite[Ch.~6, Prop.~6]{KlS}.

\item It is clear from Example  4.~that $U_q(\mathfrak g)$ and $U_q(\mathfrak g)^{res}$ for $q$ an $n$th root of unity with  $q^4\neq 1$ are also pivotal Hopf algebras. 
The same holds for their Hopf-subalgebras  $U_q(\mathfrak b_+)$ and $U_q(\mathfrak b_-)$.

\item In a cartesian monoidal category $\mac$, where the tensor product is given by a product, Hopf monoids $H$ in  $\mac$ are precisely the group objects in $\mac$. This includes groups for $\mac=\mathrm{Set}$, topological groups for $\mac=\mathrm{Top}$,    abelian groups for $\mac=\mathrm{Grp}$ or $\mac=\mathrm{Ab}$ and strict 2-groups for $\mac=\mathrm{Cat}$ or $\mac=\mathrm{Grpd}$. 
If $\mac=G\mathrm{-Set}$ for a group $G$, group objects in $\mac$ are semidirect products $H\rtimes_\phi G$. If $\mac$ is concrete,  pivotal structures on $H$ correspond to central elements of $H$.

\item If $\mathcal A$ is a small category and $\mathcal B$ a symmetric monoidal category, then the category $\mathcal B^{\mathcal A}$ of functors from $\mathcal A$ to $\mathcal B$ and natural transformations between them is a symmetric monoidal category. In particular, this applies to  the cartesian monoidal category
$\mathcal B=\mathrm{Set}$ and the category $\mac=\mathrm{PSh}(\mathcal A)=\mathrm{Set}^{\mathcal A^{op}}$ of presheaves on $\mathcal A$. Hopf monoids in $\mathrm{PSh}(\mathcal A)$ correspond to functors $F:\mathcal A^{op}\to \mathrm{Grp}$, and these are involutive. In particular, Hopf monoids in  the category $\mac=\mathrm{sSet}=\mathrm{PSh}(\Delta)$ of simplicial sets are simplicial groups.

\end{compactenum}
\end{example}

The reason why we require a pivotal structure on a Hopf monoid $H$ is that it defines  an involutive automorphism of $H$.  This automorphism is  denoted by a double circle in diagrams and given by
\begin{align}\label{eq:tplusdef}
&T=m \circ (p\oo S)=m\circ (S^\inv\oo p): H\to H,\\
&\begin{tikzpicture}[scale=.2]
\draw[line width=1pt, color=black] (-1,3)--(-1,-3);
   \draw[color=black, fill=white, line width=.7pt] (-1,0) circle (.4); 
      \draw[color=black, fill=white, line width=.7pt] (-1,0) circle (.2);  
   \node at (1,0){$:=$};
   \begin{scope}[shift={(0.5,0)}]
   \draw[line width=1pt, color=black, draw opacity=1] plot [smooth, tension=0.6] coordinates 
      {(2,3)(2.5,.5)(4,0)(5.5,.5)(6,3)};
      \draw[line width=1pt, color=black] (4,-3)--(4,0);
         \draw[color=black, fill=black, line width=.7pt] (2,3) circle (.25); 
         \node at (2,3)[anchor=west]{$p$}; 
            \draw[color=black, fill=white, line width=1pt] (5.7,1.5) circle (.4);  
            \node at (7.5,0){$=$};
    \end{scope}
            \begin{scope}[shift={(8,0)}]
               \draw[line width=1pt, color=black, draw opacity=1] plot [smooth, tension=0.6] coordinates 
      {(2,3)(2.5,.5)(4,0)(5.5,.5)(6,3)};
            \draw[line width=1pt, color=black] (4,-3)--(4,0);
         \draw[color=black, fill=black, line width=.7pt] (6,3) circle (.25); 
         \node at (6,3)[anchor=west]{$p$}; 
            \draw[color=black, fill=gray, line width=1pt] (2.3,1.5) circle (.4);  
            \end{scope}
\end{tikzpicture}\nonumber\tDot
\end{align}

\begin{lemma}\label{lem:tprops} Let $(H,p)$ be a pivotal Hopf monoid in a symmetric monoidal category $\mac$. 
\begin{compactenum}
\item The morphism $T$ from \eqref{eq:tplusdef} is an involution: $T\circ T=1_H$.\\[-2ex]

\item It is an anti-comonoid morphism
\begin{align}\label{eq:tplusprop1}
&\begin{tikzpicture}[scale=.25]
\begin{scope}[shift={(-3,0)}]
\draw[line width=1pt, color=black] (0,3)--(0,0);
\draw[line width=1pt, color=black, draw opacity=1] plot [smooth, tension=0.6] coordinates 
      {(-2,-2)(-1.5,-.5)(0,0)(1.5,-.5)(2,-2)};
   \draw[color=black, fill=white, line width=1pt] (0,1.5) circle (.4);   
      \draw[color=black, fill=white, line width=1pt] (0,1.5) circle (.2);     
\end{scope}
\node at (0,0){$=$};
\begin{scope}[shift={(3,0)}]
\draw[line width=1pt, color=black] (0,3)--(0,2);
\draw[line width=1pt, color=black, draw opacity=1] plot [smooth, tension=0.6] coordinates 
      {(2,-2)(-1,1)(0,2)(1,1)(-2,-2)};
   \draw[color=black, fill=white, line width=1pt] (1.5,-1.5) circle (.4); 
      \draw[color=black, fill=white, line width=1pt] (1.5,-1.5) circle (.2); 
      \draw[color=black, fill=white, line width=1pt] (-1.5,-1.5) circle (.4);  
            \draw[color=black, fill=white, line width=1pt] (-1.5,-1.5) circle (.2);  
\end{scope}
\end{tikzpicture}
\tComma
\begin{tikzpicture}[scale=.25]
\draw[line width=1pt, color=black] (0,2)--(0,-3);
   \draw[color=black, fill=white, line width=1pt] (0,0) circle (.4);
      \draw[color=black, fill=white, line width=1pt] (0,0) circle (.2);
      \draw[color=black, fill=white, line width=1pt] (0,-3) circle (.2); 
      \node at (1.5,0){$=$};    
      \draw[line width=1pt, color=black] (2.5,2)--(2.5,-3);
      \draw[color=black, fill=white, line width=1pt] (2.5,-3) circle (.2); 
\end{tikzpicture}\tDot
\end{align}

\item It satisfies the diagrammatic identities
\begin{align}\label{eq:tplusprop2}
\begin{tikzpicture}[scale=.25]
\begin{scope}[shift={(-3,0)}]
\draw[line width=1pt, color=black] (0,-3)--(0,0);
\draw[line width=1pt, color=black, draw opacity=1] plot [smooth, tension=0.6] coordinates 
      {(-2,2)(-1.5,.5)(0,0)(1.5,.5)(2,2)};
   \draw[color=black, fill=white, line width=1pt] (0,-1.5) circle (.4);   
      \draw[color=black, fill=white, line width=1pt] (0,-1.5) circle (.2);     
\end{scope}
\node at (0,0){$=$};
\begin{scope}[shift={(3,0)}]
\draw[line width=1pt, color=black] (0,-3)--(0,-2);
\draw[line width=1pt, color=black, draw opacity=1] plot [smooth, tension=0.6] coordinates 
      {(2,2)(-1,-1)(0,-2)(1,-1)(-2,2)};
   \draw[color=black, fill=gray, line width=1pt] (1.5,1.5) circle (.4); 
      \draw[color=black, fill=white, line width=1pt] (-1.5,1.5) circle (.4);
            \draw[color=black, fill=white, line width=1pt] (-1.5,1.5) circle (.2);  
\end{scope}
\node at (6,0){$=$};
\begin{scope}[shift={(9,0)}]
\draw[line width=1pt, color=black] (0,-3)--(0,-2);
\draw[line width=1pt, color=black, draw opacity=1] plot [smooth, tension=0.6] coordinates 
      {(2,2)(-1,-1)(0,-2)(1,-1)(-2,2)};
   \draw[color=black, fill=white, line width=1pt] (1.5,1.5) circle (.4);
      \draw[color=black, fill=white, line width=1pt] (1.5,1.5) circle (.2); 
      \draw[color=black, fill=white, line width=1pt] (-1.5,1.5) circle (.4);  
\end{scope}
\end{tikzpicture}
%%%%%
\tComma
%%%%%
\begin{tikzpicture}[scale=.25]
\draw[line width=1pt, color=black] (0,2)--(0,-3);
   \draw[color=black, fill=white, line width=1pt] (0,0) circle (.4);
      \draw[color=black, fill=white, line width=1pt] (0,0) circle (.2);
      \draw[color=black, fill=white, line width=1pt] (0,2) circle (.2); 
      \node at (1.5,0){$=$};    
      \draw[line width=1pt, color=black] (2.5,2)--(2.5,-3);
      \draw[color=black, fill=black, line width=1pt] (2.5,2) circle (.2); 
      \node at (2.5,2)[anchor=west]{$p$};
\end{tikzpicture}
\tComma
%%%%%%%%%%
\begin{tikzpicture}[scale=.25]
\begin{scope}[shift={(-3,0)}]
\draw[line width=1pt, color=black] (0,3)--(0,2);
\draw[line width=1pt, color=black] (0,-3)--(0,-2);
\draw[line width=1pt, color=black, draw opacity=1] plot [smooth cycle, tension=0.6] coordinates 
      {(0,2)(-1.2, 1)(-1.2,-1)(0,-2)(1.2,-1)(1.2,1)};
   \draw[color=black, fill=white, line width=1pt] (-1.3,0) circle (.4);   
      \draw[color=black, fill=white, line width=1pt] (-1.3,0) circle (.2);     
\end{scope}
\node at (0,0){$=$};
\begin{scope}[shift={(0.5,0)}]
\draw[line width=1pt, color=black] (1.5,3)--(1.5,1);
 \draw[color=black, fill=white, line width=1pt] (1.5,1) circle (.2); 
\draw[line width=1pt, color=black] (1.5,-3)--(1.5,-1);
 \draw[color=black, fill=black, line width=1pt] (1.5,-1) circle (.2); 
 \node at (1.5,-1) [anchor=west]{$p$};
 \end{scope}
\node at (4,0){$=$};
\begin{scope}[shift={(7,0)}] 
\draw[line width=1pt, color=black] (0,3)--(0,2);
\draw[line width=1pt, color=black] (0,-3)--(0,-2);
\draw[line width=1pt, color=black, draw opacity=1] plot [smooth cycle, tension=0.6] coordinates 
      {(0,2)(-1.2, 1)(1.2,-1)(0,-2)(-1.2,-1)(1.2,1)};
   \draw[color=black, fill=white, line width=1pt] (-1.2,1.3) circle (.4);  
      \draw[color=black, fill=white, line width=1pt] (-1.2,1.3) circle (.2);   
\end{scope}
\end{tikzpicture}
\tDot
\end{align}
\end{compactenum}
\end{lemma}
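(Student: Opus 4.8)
The plan is to prove all three parts by direct computation in the graphical calculus, starting from the definition \eqref{eq:tplusdef}. The tools I would use throughout are the two equivalent presentations $T=m\circ(p\oo S)=m\circ(S^\inv\oo p)$, the anti-monoid and anti-comonoid identities \eqref{eq:anthom} for $S$ and their counterparts for $S^\inv$, the bimonoid compatibility of $m,\eta,\Delta,\epsilon$, the antipode axioms \eqref{eq:antdef} and their $S^\inv$-counterpart, the pivotal identities $\Delta\circ p=p\oo p$ and $\epsilon\circ p=1_e$, and the resulting relations $m\circ(S\oo1_H)\circ(p\oo p)=\eta=m\circ(1_H\oo S)\circ(p\oo p)$ together with their $S^\inv$-versions, which hold because $S$ is an isomorphism. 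For part 1 I would substitute $T=m\circ(S^\inv\oo p)$ in the outer factor and $T=m\circ(p\oo S)$ in the inner one; pushing $S^\inv$ past the inner $m$ via its anti-monoid property and cancelling $S^\inv\circ S=1_H$ reduces $T\circ T$ to $m\circ\bigl(1_H\oo\bigl(m\circ(S^\inv\oo1_H)\circ(p\oo p)\bigr)\bigr)=m\circ(1_H\oo\eta)=1_H$.

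For part 2 the counit identity is immediate, since $\epsilon\circ T=(\epsilon\oo\epsilon)\circ(p\oo S)=(\epsilon\circ p)\oo(\epsilon\circ S)=\epsilon$. For the comultiplication identity I would write $T=m\circ(p\oo S)$, apply the bimonoid compatibility $\Delta\circ m=(m\oo m)\circ(1_H\oo\tau_{H,H}\oo1_H)\circ(\Delta\oo\Delta)$, and then substitute $\Delta\circ p=p\oo p$ and $\Delta\circ S=\tau_{H,H}\circ(S\oo S)\circ\Delta$; rearranging the crossings by naturality of $\tau$ and the symmetry axioms brings the result into the form $\tau_{H,H}\circ(T\oo T)\circ\Delta$. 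Alternatively one can argue structurally: $p\colon e\to H^{\mathrm{cop}}$ and $S\colon H\to H^{\mathrm{cop}}$ are comonoid morphisms and $m\colon H^{\mathrm{cop}}\oo H^{\mathrm{cop}}\to H^{\mathrm{cop}}$ is a comonoid morphism because $H^{\mathrm{cop}}$ is again a bimonoid, so $T=m\circ(p\oo S)\colon H\to H^{\mathrm{cop}}$ is a comonoid morphism, which is exactly the anti-comonoid property \eqref{eq:tplusprop1}.

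For part 3 the three identities are all short. For the first, expanding the outer $T$ as $m\circ(p\oo S)$, using the anti-monoid property of $S$ to replace $S\circ m$ by $m\circ\tau_{H,H}\circ(S\oo S)$, and reassociating the two multiplications so that $p$ is grouped with one $S$-strand into a copy of $T$, gives $m\circ\tau_{H,H}\circ(S\oo T)$ -- the last diagram in the first line of \eqref{eq:tplusprop2}; expanding instead as $m\circ(S^\inv\oo p)$ and using the anti-monoid property of $S^\inv$ yields the middle diagram $m\circ\tau_{H,H}\circ(T\oo S^\inv)$ in the same way. For $T\circ\eta=p$ one uses $S\circ\eta=\eta$ and the unit axiom. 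For the last pair of identities I would substitute $T=m\circ(p\oo S)$ into $m\circ(T\oo1_H)\circ\Delta$ and reassociate so that the antipode axiom $m\circ(S\oo1_H)\circ\Delta=\eta\circ\epsilon$ appears, which leaves $m\circ\bigl(p\oo(\eta\circ\epsilon)\bigr)=p\circ\epsilon$; the braided version $m\circ\tau_{H,H}\circ(T\oo1_H)\circ\Delta$ follows the same way from $T=m\circ(S^\inv\oo p)$ using the $S^\inv$-counterpart $m\circ\tau_{H,H}\circ(S^\inv\oo1_H)\circ\Delta=\eta\circ\epsilon$ of \eqref{eq:antdef}.

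None of these steps is deep. The one place that requires genuine care is the crossing bookkeeping in the computation of $\Delta\circ T$ in part 2, where several symmetry morphisms have to be commuted past one another before the expression is visibly equal to $\tau_{H,H}\circ(T\oo T)\circ\Delta$; this is sidestepped entirely by the structural argument through $H^{\mathrm{cop}}$.
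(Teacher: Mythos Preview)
Your proposal is correct and follows essentially the same approach as the paper: direct diagrammatic/Sweedler computation using the bimonoid axioms, the anti-(co)monoid properties \eqref{eq:anthom} of $S$ (and their $S^\inv$ analogues), and the pivotal identities \eqref{eq:pivdiags}. The only noteworthy differences are that for part~1 the paper expands $T$ as $m\circ(p\oo S)$ twice and invokes the third pivotal identity in \eqref{eq:pivdiags} directly, whereas you mix the two presentations of $T$ and instead use the derived relation $m\circ(S^\inv\oo 1_H)\circ(p\oo p)=\eta$; and for part~2 you additionally offer the clean structural argument via $H^{\mathrm{cop}}$, which the paper does not mention.
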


\begin{proof} The identities follow by direct diagrammatic computations. That $T$ is an involution is obtained  from the defining diagram for  $T$ in \eqref{eq:tplusdef}, the second identity in \eqref{eq:anthom} and
 the last identity in \eqref{eq:pivdiags}. The identities in \eqref{eq:tplusprop1} follow from the defining diagram for  $T$ in \eqref{eq:tplusdef}, the first and last  identities  in \eqref{eq:anthom} and the  first diagram in \eqref{eq:pivdiags}. The first two identities in \eqref{eq:tplusprop2} follow from \eqref{eq:tplusdef}, the second and third identity in \eqref{eq:anthom} and the associativity of the multiplication. The last two identities in \eqref{eq:tplusprop2}  follows from  \eqref{eq:tplusdef}, the associativity of the multiplication and the defining diagrams \eqref{eq:antdef} for the antipode.
\end{proof}

\subsection{Modules and comodules}

In this section, we summarise  background and notation for (co)modules over (co)monoids in a symmetric monoidal category.

\begin{definition} Let $\mac$ be a symmetric monoidal category, $(A, m, \eta)$ a monoid in $\mac$ and $(C,\Delta,\epsilon)$ a comonoid in $\mac$. 
\begin{compactenum}
\item A {\bf (left) module} over $A$ is an object $M\in \mathrm{Ob}\,\mac$ together with a morphism $\rhd: A\oo M\to M$ such that
$\rhd\circ (m\oo 1_M)=\rhd\circ (1_A\oo \rhd)$ and $\rhd\circ (\eta\oo 1_M)=1_M$.\\[-1ex]

\item A {\bf morphism of $A$-modules} from $(M,\rhd)$ to $(M',\rhd')$ is a morphism $f: M\to M'$ such that $\rhd'\circ (1_A\oo f)=f\circ \rhd$.\\[-1ex]

\item A {\bf (left) comodule} over $C$ is an object $M\in \mathrm{Ob}\,\mac$ together with a morphism $\delta: M\to C\oo M$ such that
$(\Delta\oo 1_M)\circ \delta=(1_C\oo\delta)\circ\delta$ and $(\epsilon\oo 1_M)\circ\delta=1_M$.\\[-1ex]

\item A {\bf morphism of $C$-comodules} from $(M,\delta)$ to $(M',\delta')$ is a morphism $f: M\to M'$ with $\delta'\circ f=(1_C\oo f)\circ \delta$.
\end{compactenum}
\end{definition}

 In the diagrams we denote (co)modules  by fat  coloured vertical lines and the action and coaction morphisms by vertices on these lines.  The defining conditions for left modules and left comodules  then  take the form
\begin{align}\label{eq:module}
\begin{tikzpicture}[scale=.3]
\begin{scope}[shift={(-3,0)}]
\draw[line width=1.5pt, color=blue](0,2)--(0,-2.5);
\draw[line width=1pt, color=black, draw opacity=1] plot [smooth, tension=0.6] coordinates 
      {(-1,2)(-1.5,1)(-2.5,1)(-3,2) };
\draw[line width=1pt, color=black, draw opacity=1] plot [smooth, tension=0.6] coordinates 
      {(-2,.9)(-2,0)(-1.5,-.5)(0,-1) };
\end{scope}
\node at (-1.5,0){$=$};
\begin{scope}[shift={(2,0)}]
\draw[line width=1.5pt, color=blue](0,2)--(0,-2.5);
\draw[line width=1pt, color=black, draw opacity=1] plot [smooth, tension=0.6] coordinates 
      {(-1,2)(-.8,1)(0,.5) };
      \draw[line width=1pt, color=black, draw opacity=1] plot [smooth, tension=0.6] coordinates 
      {(-3,2)(-2.7,.5)(0,-1) };
\end{scope}
\end{tikzpicture}
\tComma
\begin{tikzpicture}[scale=.3]
\begin{scope}[shift={(-3,0)}]
\draw[line width=1.5pt, color=blue](0,2)--(0,-2.5);
\draw[line width=1pt, color=black, draw opacity=1] plot [smooth, tension=0.6] coordinates 
      {(-2,2)(-1.8,.5)(0,-.5) };
      \draw [color=black, fill=white, line width=1pt] (-2,1.8) circle (.2); 
\end{scope}
\node at (-1.5,0){$=$};
\begin{scope}[shift={(0,0)}]
\draw[line width=1.5pt, color=blue](0,2)--(0,-2.5);
\end{scope}
\end{tikzpicture}
\tComma\qquad
\begin{tikzpicture}[scale=.3]
\begin{scope}[shift={(-3,0)}]
\draw[line width=1.5pt, color=blue](0,-2)--(0,2.5);
\draw[line width=1pt, color=black, draw opacity=1] plot [smooth, tension=0.6] coordinates 
      {(-1,-2)(-1.5,-1)(-2.5,-1)(-3,-2) };
\draw[line width=1pt, color=black, draw opacity=1] plot [smooth, tension=0.6] coordinates 
      {(-2,-.9)(-2,-0)(-1.5,.5)(0,1) };
\end{scope}
\node at (-1.5,0){$=$};
\begin{scope}[shift={(2,0)}]
\draw[line width=1.5pt, color=blue](0,-2)--(0,2.5);
\draw[line width=1pt, color=black, draw opacity=1] plot [smooth, tension=0.6] coordinates 
      {(-1,-2)(-.8,-1)(0,-.5) };
      \draw[line width=1pt, color=black, draw opacity=1] plot [smooth, tension=0.6] coordinates 
      {(-3,-2)(-2.7,-.5)(0,1) };
\end{scope}
\end{tikzpicture}\tComma
\begin{tikzpicture}[scale=.3]
\begin{scope}[shift={(-3,0)}]
\draw[line width=1.5pt, color=blue](0,-2)--(0,2.5);
\draw[line width=1pt, color=black, draw opacity=1] plot [smooth, tension=0.6] coordinates 
      {(-2,-2)(-1.8,-.5)(0,.5) };
      \draw [color=black, fill=white, line width=1pt] (-2,-1.8) circle (.2); 
\end{scope}
\node at (-1.5,0){$=$};
\begin{scope}[shift={(0,0)}]
\draw[line width=1.5pt, color=blue](0,-2)--(0,2.5);
\end{scope}
\end{tikzpicture}
\tDot
\end{align}
The defining properties of  left module and  left comodule morphisms
are given by the diagrams
\begin{align}\label{eq:modulemorph}
\begin{tikzpicture}[scale=.3]
\begin{scope}
\draw[line width=1.5pt, color=blue](0,2)--(0,-1.5);
\draw[line width=1.5pt, color=red](0,-1.5)--(0,-3.5);
\node at (0,2)[anchor=west] {$M$};
\node at (0,-3.5)[anchor=west] {$M'$};
\draw[line width=1pt, color=black, draw opacity=1] plot [smooth, tension=0.6] coordinates 
      {(-2,2)(-1.5,.5)(0,0) };
 \draw[color=violet, fill=violet, line width=1pt] (0,-1.5) circle (.4); 
 \node at (.4,-1.5)[anchor=west]{$f$};
 \end{scope}
 \node at (2,0){$=$};
 \begin{scope}[shift={(5,0)}]
 \draw[line width=1.5pt, color=blue](0,2)--(0,0);
\draw[line width=1.5pt, color=red](0,0)--(0,-3.5);
\node at (0,2)[anchor=west] {$M$};
\node at (0,-3.5)[anchor=west] {$M'$};
\draw[line width=1pt, color=black, draw opacity=1] plot [smooth, tension=0.6] coordinates 
      {(-2,2)(-2,0)(-1.5,-1)(0,-1.5) };
 \draw[color=violet, fill=violet, line width=1pt] (0,0) circle (.4); 
 \node at (.4,0)[anchor=west]{$f$};
 \end{scope}
\end{tikzpicture}
\tComma
\begin{tikzpicture}[scale=.3]
\begin{scope}
\draw[line width=1.5pt, color=blue](0,2)--(0,-1.5);
\draw[line width=1.5pt, color=red](0,-1.5)--(0,-3.5);
\node at (0,2)[anchor=west] {$M$};
\node at (0,-3.5)[anchor=west] {$M'$};
\draw[line width=1pt, color=black, draw opacity=1] plot [smooth, tension=0.6] coordinates 
      {(-2,-3.5)(-1.5,-1)(0,0) };
 \draw[color=violet, fill=violet, line width=1pt] (0,-1.5) circle (.4); 
 \node at (.4,-1.5)[anchor=west]{$f$};
 \end{scope}
 \node at (2,0){$=$};
 \begin{scope}[shift={(5,0)}]
 \draw[line width=1.5pt, color=blue](0,2)--(0,0);
\draw[line width=1.5pt, color=red](0,0)--(0,-3.5);
\node at (0,2)[anchor=west] {$M$};
\node at (0,-3.5)[anchor=west] {$M'$};
\draw[line width=1pt, color=black, draw opacity=1] plot [smooth, tension=0.6] coordinates 
      {(-2,-3.5)(-1.5,-2.5)(0,-1.5) };
 \draw[color=violet, fill=violet, line width=1pt] (0,0) circle (.4); 
 \node at (.4,0)[anchor=west]{$f$};
 \end{scope}
\end{tikzpicture}
\tDot
\end{align}

Right (co)modules over a (co)monoid in $\mac$, morphisms of right (co)modules and their diagrams are defined analogously.   
 
 Note also that if $H$ is a pivotal Hopf monoid in $\mac$,  the involution $T: H\to H$ from \eqref{eq:tplusdef} relates $H$-left and right comodule structures.
 As $T$ is an anti-comonoid morphism by Lemma \ref{lem:tprops}, 
 for any $H$-left comodule $(M,\delta)$, we obtain a $H$-right comodule $(M, \delta')$ by setting
 \begin{align}\label{eq:rightdef}
 \delta'=\tau_{H,M}\circ (T\oo 1_M)\circ \delta: M\to M\oo H.
 \end{align}

 Bimodules over a monoid $A$  are defined as triples $(M,\rhd, \lhd)$ such that $(M,\rhd)$ is a left module, $(M,\lhd)$ a right module over $A$ and $\lhd\circ (\rhd\oo 1_A)=\rhd\circ  (1_A\oo \lhd)$. Bicomodules over a comonoid $C$ as triples $(M,\delta_L,\delta_R)$ such that $(M,\delta_L)$ is a left comodule over $C$, $(M,\delta_R)$ a right comodule over $C$ and $(\delta_L\oo 1_C)\circ\delta_R=(1_C\oo\delta_R)\circ\delta_L$. Morphisms of bimodules or of bicomodules are morphisms that are both left and right module morphisms or left and right comodule morphisms.
The compatibility conditions between left and right actions and left and right coactions are given by the diagrams
\begin{align}
	\nonumber%\label{eq:bimod}
\begin{tikzpicture}[scale=.25]
\begin{scope}
\draw[line width=1.5pt, color=blue](0,2.5)--(0,-2.5);
\draw[line width=1pt, color=black, draw opacity=1] plot [smooth, tension=0.6] coordinates 
      {(-2,2.5)(-1.5,1)(0,.5) };
      \draw[line width=1pt, color=black, draw opacity=1] plot [smooth, tension=0.6] coordinates 
      {(2,2.5)(1.5,-.5)(0,-1)};
      \end{scope}
      \node at (3,0){$=$};
      \begin{scope}[shift={(6,0)}]
\draw[line width=1.5pt, color=blue](0,2.5)--(0,-2.5);
\draw[line width=1pt, color=black, draw opacity=1] plot [smooth, tension=0.6] coordinates 
      {(-2,2.5)(-1.5,-.5)(0,-1) };
      \draw[line width=1pt, color=black, draw opacity=1] plot [smooth, tension=0.6] coordinates 
      {(2,2.5)(1.5,1)(0,.5)};
      \end{scope}
\end{tikzpicture}
\tComma
\begin{tikzpicture}[scale=.25]
\begin{scope}
\draw[line width=1.5pt, color=blue](0,2.5)--(0,-2.5);
\draw[line width=1pt, color=black, draw opacity=1] plot [smooth, tension=0.6] coordinates 
      {(-2,-2.5)(-1.5,-1)(0,-.5) };
      \draw[line width=1pt, color=black, draw opacity=1] plot [smooth, tension=0.6] coordinates 
      {(2,-2.5)(1.5,.5)(0,1)};
      \end{scope}
      \node at (3,0){$=$};
      \begin{scope}[shift={(6,0)}]
\draw[line width=1.5pt, color=blue](0,2.5)--(0,-2.5);
\draw[line width=1pt, color=black, draw opacity=1] plot [smooth, tension=0.6] coordinates 
      {(-2,-2.5)(-1.5,.5)(0,1) };
      \draw[line width=1pt, color=black, draw opacity=1] plot [smooth, tension=0.6] coordinates 
      {(2,-2.5)(1.5,-1)(0,-.5)};
      \end{scope}
\end{tikzpicture}
\tDot
\end{align}

In the following, we will need to consider  (co)invariants of (co)modules over  Hopf monoids in $\mac$.
As we do not restrict attention to  abelian categories or even categories with  zero objects, we impose that the category $\mac$ has all coequalisers and  equalisers and define them 
 as coequalisers  and equalisers. % of the morphisms  $\rhd$ and $\delta$, respectively. 

\begin{definition}  \label{def:invcoinv} Let $\mac$ be a symmetric monoidal category that has all equalisers and coequalisers and $H$ a Hopf monoid in $\mac$.
\begin{compactenum}
\item The {\bf invariants} of an $H$-left module $(M,\rhd)$ are the coequaliser $(M^{H},\pi)$ of $\rhd$ and $\epsilon\circ 1_{M}$: 
\begin{align}
	\nonumber
	%\label{eq:inva}
\xymatrix{  H\oo M  \ar@<-.5ex>[r]_{\quad\epsilon\oo 1_M} \ar@<.5ex>[r]^{\quad\rhd} & M \ar[r]^{\pi} & M^H.
}
\end{align} 

	\item The {\bf coinvariants} of an $H$-left comodule $(M,\delta)$ are the equaliser $(M^{coH},\iota)$ of $\delta$ and $\eta \oo 1_{M}$:
\begin{align}
	%\label{eq:coinva}
	\nonumber
\xymatrix{ M^{coH} \ar[r]^{\iota} & M  \ar@<-.5ex>[r]_{\eta\oo 1_M\quad} \ar@<.5ex>[r]^{\delta\quad} & H\oo M.
}
\end{align}

\end{compactenum}
\end{definition}

(Co)invariants of $H$-right (co)modules are defined analogously.  
Note that for $\mac=\vect_\F$ Definition~\ref{def:invcoinv} of coinvariants coincides with the usual definition as the subset $M^{coH}=\{m\in M\mid \delta(m)=1\oo m\}$. This is not the case for the definition of invariants in terms of the coequaliser, which yields a quotient of the $H$-module $M$, not a linear subspace.

It follows directly from Definition \ref{def:invcoinv}  that morphisms of $H$-(co)modules  induce morphisms between their (co)invariants. 
In the following we say that two morphisms $f: M\to M'$ of $H$-(co)modules {\em agree on the (co)invariants} if the induced morphisms between the (co)invariants agree.

\begin{lemma} \label{lem:inducedinvcoinv}Let $\mac$ be a symmetric monoidal category that has all equalisers and coequalisers and  $H$ a Hopf monoid in $\mac$.
\begin{compactenum}
\item For every $H$-module morphism $f: (M,\rhd)\to (M',\rhd')$ there is  a unique morphism\linebreak  $f^{H}: M^{H}\to M'^{H}$ with $\ f^{H}\circ \pi=\pi'\circ f$.
\item For every $H$-comodule morphism $f: (M,\delta)\to (M',\delta')$ there is a unique morphism\linebreak $f^{coH}: M^{coH}\to M'^{coH}$ with $\iota'\circ f^{coH}=f\circ \iota$.
\end{compactenum}
\end{lemma}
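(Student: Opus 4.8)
The plan is to exploit the universal properties of the coequaliser and equaliser from Definition \ref{def:invcoinv}, so that both statements reduce to the observation that composing a (co)module morphism with the relevant structure maps respects the defining parallel pair.

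\emph{Part (1).} First I would recall that $(M^{H},\pi)$ is the coequaliser of $\rhd$ and $\epsilon\oo 1_M$, and likewise $(M'^{H},\pi')$ that of $\rhd'$ and $\epsilon\oo 1_{M'}$. To construct $f^H$ it suffices, by the universal property of the coequaliser $\pi$, to show that the morphism $\pi'\circ f: M\to M'^H$ coequalises the pair $(\rhd,\epsilon\oo 1_M)$, i.e. that $\pi'\circ f\circ\rhd=\pi'\circ f\circ(\epsilon\oo 1_M)$. For the first composite I would use that $f$ is an $H$-module morphism, $f\circ\rhd=\rhd'\circ(1_H\oo f)$, and then that $\pi'$ coequalises $(\rhd',\epsilon\oo 1_{M'})$, giving $\pi'\circ\rhd'\circ(1_H\oo f)=\pi'\circ(\epsilon\oo 1_{M'})\circ(1_H\oo f)=\pi'\circ f\circ(\epsilon\oo 1_M)$, where the last step is just functoriality of $\oo$ (the counit $\epsilon$ and the map $f$ act on disjoint tensor factors). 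This is precisely the second composite, so the coequalising condition holds and the universal property yields a unique $f^H: M^H\to M'^H$ with $f^H\circ\pi=\pi'\circ f$.

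\emph{Part (2).} This is dual. Since $(M^{coH},\iota)$ is the equaliser of $\delta$ and $\eta\oo 1_M$, by the universal property of the equaliser $\iota'$ it suffices to show that $f\circ\iota: M^{coH}\to M'$ equalises $(\delta',\eta\oo 1_{M'})$, i.e. $\delta'\circ f\circ\iota=(\eta\oo 1_{M'})\circ f\circ\iota$. Using that $f$ is an $H$-comodule morphism, $\delta'\circ f=(1_H\oo f)\circ\delta$, and that $\iota$ equalises $(\delta,\eta\oo 1_M)$, we get $\delta'\circ f\circ\iota=(1_H\oo f)\circ\delta\circ\iota=(1_H\oo f)\circ(\eta\oo 1_M)\circ\iota=(\eta\oo 1_{M'})\circ f\circ\iota$, again by functoriality of $\oo$. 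Hence there is a unique $f^{coH}: M^{coH}\to M'^{coH}$ with $\iota'\circ f^{coH}=f\circ\iota$.

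There is essentially no obstacle here: the statement is a formal consequence of universal properties, and the only thing one must be careful about is the bookkeeping that $\epsilon\oo 1$ (resp.\ $\eta\oo 1$) commutes past $1_H\oo f$ up to the coherence isomorphisms suppressed throughout the paper — which is just the interchange law in the symmetric monoidal category $\mac$. If one wanted to be slightly slicker, both parts together say that taking (co)invariants is functorial, i.e.\ defines a functor from $H$-modules to $\mac$ (resp.\ from $H$-comodules to $\mac$); the argument above is exactly the verification that this assignment is well defined on morphisms, and functoriality (preservation of identities and composition) then follows immediately from the uniqueness clauses.
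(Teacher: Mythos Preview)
Your proof is correct and follows essentially the same approach as the paper's: verify that the relevant composite (co)equalises the defining parallel pair using the (co)module morphism condition, then invoke the universal property. The paper only spells out the comodule case and declares the module case analogous, while you write out both; otherwise the arguments are identical.
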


\begin{proof} If $f: (M,\delta)\to (M',\delta')$ is a morphism of $H$-comodules, then $\delta'\circ f=(1_H\oo f)\circ \delta$ and
\begin{align*}
\delta'\circ (f\circ \iota)=(1_H\oo f)\circ \delta\circ \iota=(1_H\oo f)\circ (\eta\oo 1_M)\circ \iota=(\eta\oo 1_M)\circ (f\circ \iota).
\end{align*}
By the universal property of the equaliser $M'^{coH}$, there is a unique morphism $f^{coH}: M^{coH}\to M'^{coH}$ with $\iota'\circ f^{coH}=f\circ \iota$.
The proof for invariants is analogous.
\end{proof}

In the following, we also consider morphisms  that are both, module and comodule morphisms, and  impose both, invariance and coinvariance.  As we do not  restrict attention to categories with a zero object, we implement this with the (non-abelian) notion of an image, see for instance  \cite[I.10]{Mi}. 

An  {\em image} of a morphism $f: C\to C'$ in $\mac$ is a pair $(P,I)$ of a monomorphism $I: \mathrm{im}(f)\to C'$ and a morphism $P: C\to \mathrm{im}(f)$ with $I\circ P=f$ such that for any pair $(Q,J)$ of a monomorphism $J: X\to C'$ and a morphism $Q: C\to X$ with $J\circ Q=f$ there is a unique morphism $v:\mathrm{im}(f)\to X$ with $I=J\circ v$. Note that if $\mac$ has all equalisers, then the morphism $P:C\to \mathrm{im}(f)$ is an epimorphism  \cite[Sec I.10]{Mi}. 
If $\mac$ is finitely complete and cocomplete, images  are obtained as equalisers of cokernel pairs
\cite[Def 5.1.1]{KaS}, and their existence
is guaranteed. In particular, this holds for the categories $\mathrm{Set}$, $\mathrm{Top}$, $\mathrm{Ab}$, $\mathrm{Grp}$, $\mathrm{Cat}$, the category  $G\mathrm{-Set}$ for a group $G$ and the category $\mathrm{PSh}(\mathcal A)$ for a small category $\mathcal A$ from Example \ref{ex:hopfmon}, 6.~and 7.
Note also  that if $\mac$ is abelian, this  coincides with the usual notion of an image in an abelian category.

\begin{definition} \label{def:biinv}Let $\mac$ be a symmetric monoidal category that is finitely complete and cocomplete and $H$ a Hopf monoid in $\mac$. 
 The {\bf biinvariants} of an object $M$ in $\mac$  that is both an $H$-left module and  $H$-left comodule are the image  of the morphism $\pi\circ\iota: M^{coH}\to M^H$  
\begin{align}\label{eq:imagedef}
\xymatrix{ M^{coH} \ar[rd]_P\ar[r]^\iota & M \ar[r]^\pi & M^H\\
& M_{inv}:=\mathrm{im}(\pi\circ \iota). \ar[ru]_{I}
}
\end{align}
\end{definition}

\begin{remark} Analogously, one could define biinvariants of  $M$ as the {\em coimage} of the morphism $\pi\circ \iota: M^{coH}\to M^H$, defined as the image of the morphism $\iota\circ \pi: M^H\to M^{coH}$ in the opposite category $\mac^{op}$ in \cite[Sec I.10]{Mi}. This amounts to replacing $H$ by the corresponding Hopf monoid 
 $H^*$ in $\mac^{op}$ that is the same object, but  with  multiplication and comultiplication and unit and counit exchanged.  The
 invariants and coinvariants of an $H$-module and $H$-comodule  $M$  correspond to the coinvariants and invariants of the associated $H^*$-comodule and $H^*$-module $M$ in $\mac^{op}$. Hence, the coimage of  $\pi\circ \iota: M^{coH}\to M^H$ in $\mac$ coincides with the biinvariants of $M$ in $\mac^{op}$.
\end{remark}

By Lemma \ref{lem:inducedinvcoinv} morphisms of $H$-(co)modules induce morphisms between their (co)invariants. A similar statement holds for   isomorphisms that respect both,  the  $H$-module and $H$-comodule structure, and for  the associated biinvariants.

\begin{lemma}\label{lem:maclemma} Let $H$ be a Hopf monoid in a   finitely complete and cocomplete symmetric monoidal category $\mac$. Then for any  isomorphism $\phi: M\to M'$  of $H$-modules and $H$-comodules there is a unique morphism  $\phi_{inv}: M_{inv}\to M'_{inv}$ with $\pi'\circ \phi\circ \iota=I'\circ \phi_{inv}\circ P$, and $\phi_{inv}$ is an isomorphism. 
\end{lemma}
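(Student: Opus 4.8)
The plan is to build $\phi_{inv}$ by chasing through the universal properties that define $M^{coH}$, $M^H$ and $M_{inv}$, applying Lemma~\ref{lem:inducedinvcoinv} twice and then the universal property of the image. First I would use that $\phi\colon M\to M'$ is in particular an $H$-comodule morphism, so by Lemma~\ref{lem:inducedinvcoinv}(2) there is a unique $\phi^{coH}\colon M^{coH}\to M'^{coH}$ with $\iota'\circ\phi^{coH}=\phi\circ\iota$. Likewise $\phi$ is an $H$-module morphism, so by Lemma~\ref{lem:inducedinvcoinv}(1) there is a unique $\phi^{H}\colon M^H\to M'^H$ with $\phi^{H}\circ\pi=\pi'\circ\phi$. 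Since $\phi$ is an isomorphism, the same construction applied to $\phi^{-1}$ yields $(\phi^{-1})^{coH}$ and $(\phi^{-1})^{H}$, and uniqueness in the universal properties forces these to be two-sided inverses of $\phi^{coH}$ and $\phi^{H}$ respectively; hence $\phi^{coH}$ and $\phi^{H}$ are isomorphisms. Along the way one checks the key compatibility $\phi^{H}\circ(\pi\circ\iota)=(\pi'\circ\iota')\circ\phi^{coH}$, which is immediate by composing the two defining relations: $\phi^{H}\circ\pi\circ\iota=\pi'\circ\phi\circ\iota=\pi'\circ\iota'\circ\phi^{coH}$.

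Next I would transport the image factorisation. By Definition~\ref{def:biinv}, $M_{inv}=\mathrm{im}(\pi\circ\iota)$ with epimorphism $P\colon M^{coH}\to M_{inv}$ and monomorphism $I\colon M_{inv}\to M^H$ satisfying $I\circ P=\pi\circ\iota$, and similarly $M'_{inv}=\mathrm{im}(\pi'\circ\iota')$ with $I'\circ P'=\pi'\circ\iota'$. Consider the morphism $\phi^{H}\circ I\circ$ — more to the point, consider the composite $I'\circ(P'\circ\phi^{coH})\colon M^{coH}\to M^H_{?}$; we have $I'\circ P'\circ\phi^{coH}=\pi'\circ\iota'\circ\phi^{coH}=\phi^{H}\circ\pi\circ\iota$. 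Because $\phi^{H}$ is an isomorphism and $I'$ is a monomorphism, $\phi^{H}\circ I'$— rather, one observes that $(\phi^{H})^{-1}\circ I'$ is a monomorphism and $(\phi^{H})^{-1}\circ I'\circ(P'\circ\phi^{coH})=\pi\circ\iota$; applying the universal property of the image $M_{inv}$ to the factorisation of $\pi\circ\iota$ through the monomorphism $(\phi^{H})^{-1}\circ I'$ and the morphism $P'\circ\phi^{coH}$, we get a unique $v\colon M_{inv}\to M'_{inv}$ with $I=(\phi^{H})^{-1}\circ I'\circ v$, equivalently $\phi^{H}\circ I=I'\circ v$. Set $\phi_{inv}:=v$. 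Post-composing $\phi^{H}\circ I=I'\circ\phi_{inv}$ with nothing and using $I\circ P=\pi\circ\iota$ plus the compatibility above gives $I'\circ\phi_{inv}\circ P=\phi^{H}\circ I\circ P=\phi^{H}\circ\pi\circ\iota=\pi'\circ\iota'\circ\phi^{coH}$; since $\iota'$ factors as $\iota'$ and $\phi^{coH}$ fits in $\iota'\circ\phi^{coH}=\phi\circ\iota$, this reads $I'\circ\phi_{inv}\circ P=\pi'\circ\phi\circ\iota$, which is exactly the required identity.

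For the uniqueness of $\phi_{inv}$: if $\psi\colon M_{inv}\to M'_{inv}$ also satisfies $\pi'\circ\phi\circ\iota=I'\circ\psi\circ P$, then $I'\circ\psi\circ P=I'\circ\phi_{inv}\circ P$; since $I'$ is a monomorphism, $\psi\circ P=\phi_{inv}\circ P$, and since $P$ is an epimorphism (guaranteed because $\mac$ has all equalisers, as recalled before Definition~\ref{def:biinv}), $\psi=\phi_{inv}$. Finally, to see $\phi_{inv}$ is an isomorphism, run the entire construction with $\phi^{-1}$ in place of $\phi$ to obtain $(\phi^{-1})_{inv}\colon M'_{inv}\to M_{inv}$; the composites $(\phi^{-1})_{inv}\circ\phi_{inv}$ and $\phi_{inv}\circ(\phi^{-1})_{inv}$ both satisfy the defining relation characterising $(\mathrm{id})_{inv}=1_{M_{inv}}$ and $1_{M'_{inv}}$ respectively (using functoriality of the $(-)^{coH}$, $(-)^H$ constructions, i.e.\ $(\mathrm{id})^{coH}=\mathrm{id}$ and $(\psi\circ\phi)^{coH}=\psi^{coH}\circ\phi^{coH}$, which follow from uniqueness in Lemma~\ref{lem:inducedinvcoinv}), so by the uniqueness just proved they equal the respective identities.

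The main obstacle I anticipate is purely organisational rather than conceptual: keeping straight which maps are monomorphisms and which are epimorphisms so that the cancellation arguments are valid, and correctly invoking the universal property of the image with the \emph{twisted} monomorphism $(\phi^{H})^{-1}\circ I'$ (one must verify it is genuinely a monomorphism, which holds since it is a composite of a monomorphism with an isomorphism). Everything else is a formal diagram chase with no reliance on the pivotal or Hopf structure beyond what Lemma~\ref{lem:inducedinvcoinv} already packages.
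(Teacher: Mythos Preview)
Your argument is correct and follows essentially the same route as the paper: you invoke Lemma~\ref{lem:inducedinvcoinv} to obtain the isomorphisms $\phi^{coH}$ and $\phi^{H}$, then apply the universal property of the image to the factorisation of $\pi\circ\iota$ through the monomorphism $(\phi^{H})^{-1}\circ I'$ and the morphism $P'\circ\phi^{coH}$, and conclude uniqueness and invertibility exactly as the paper does. The only difference is presentational---the paper is terser and names $J=(\phi^{H})^{-1}\circ I'$, $Q=P'\circ\phi^{coH}$ explicitly---but the mathematics is identical.
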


\begin{proof}
By Lemma \ref{lem:inducedinvcoinv} the isomorphism $\phi: M\to M'$ induces isomorphisms
$\phi^{coH}: M^{coH}\to M'^{coH}$ and  $\phi^{H}: M^H\to M'^H$ such that the following diagram commutes 
\begin{align}\label{eq:invcoinvcomm}
\xymatrix{ M^{coH}\ar[d]_{\phi^{coH}} \ar[r]^\iota & M \ar[d]^{\phi}\ar[r]^\pi & M^H \ar[d]^{\phi^H}\\
M'^{coH} \ar[r]_{\iota'} & M' \ar[r]_{\pi'} & M'^H.
}
\end{align}
Let $(M_{inv}, I, P)$ and $(M'_{inv}, I',P')$  be the biinvariants for $M$ and $M'$ from \eqref{eq:imagedef}.
Then  by \eqref{eq:imagedef} and \eqref{eq:invcoinvcomm}  the morphisms $ J=(\phi^H)^\inv\circ I': M'_{inv}\to M^H$ and $Q=P'\circ \phi^{coH}: M^{coH}\to M'_{inv}$ satisfy
$J \circ Q=\pi\circ \iota$, and there is a unique morphism $\phi_{inv}: M_{inv}\to M'_{inv}$ with
$I=J \circ \phi_{inv}=(\phi^H)^\inv\circ I'\circ \phi_{inv}$.
 This condition implies 
$I'\circ \phi_{inv}\circ P=\pi'\circ \phi\circ \iota$ by    \eqref{eq:imagedef} and \eqref{eq:invcoinvcomm}. 
As $P$ is an epimorphism and $I'$ a monomorphism, the last equation determines $\phi_{inv}$ uniquely. By applying this to the inverse of $\phi$, one finds that $\phi_{inv}$ is an isomorphism.
\end{proof}

\begin{example} $\quad$
\begin{compactenum}
\item A Hopf monoid $H$  in $\mathrm{Set}$ is a group,  $H$-modules are $H$-sets and an $H$-comodule is a set $M$ together with a map $f: M\to H$, whose graph is the morphism $\delta: M\to H\times M$.
\begin{compactitem}
\item The invariants of an $H$-set $M$ are the orbit space $M^H=M/H$  with the canonical surjection $\pi: M\to  M/H$. 
\item The coinvariants of an $H$-comodule $(M,f)$ are the subset $M^{coH}=\{m\in M\mid f(m)=1\}$ with the inclusion $\iota: M^{coH}\to M$.  
\item The biinvariants of an $H$-module and comodule $M$ 
 are the set $M_{inv}=\pi(M^{coH})$ together with the inclusion $I: M_{inv}\to M/H$ and the surjection $P: M^{coH}\to M_{inv}$, $m\mapsto \pi(m)$.
 \end{compactitem}
 \item If $\mac$ is abelian,  the invariants of an $H$-module $(M,\rhd)$ are the 
 cokernel of the morphism $\rhd-\epsilon\oo 1_M$, the coinvariants of an $H$-comodule $(M,\delta)$ are the kernel of the morphism $\delta-\eta\oo 1_M$, and the biinvariants are the  image of $\pi\circ \iota: M^{coH}\to M^H$ in  $\mac$.
 
 \item If $H$ is a finite-dimensional semisimple Hopf algebra in $\mac=\vect_\F$ with $\mathrm{char}(\F)=0$, then 
 \begin{compactitem}
 \item  the invariants of an $H$-module $(M,\rhd)$ are  $M^H=\{m\in M\mid h\rhd m=\epsilon(h) m\;\forall h\in H\}$ with  $\pi: M\to M^{H}$, $m\mapsto \ell \rhd m$, where $\ell$ is the normalised Haar integral of $H$,
\item  the coinvariants of an $H$-comodule $(M,\delta)$ are $M^{coH}=\{m\in M\mid \delta(m)=1\oo m\}$ with the inclusion $\iota: M^{coH}\to M$, 
\item the biinvariants of an $H$-module and $H$-comodule $M$ are  $M_{inv}=M^H\cap M^{coH}$ with the inclusion $I: M_{inv}\to M^{H}$ and the surjection $P: M^{coH}\to M^{inv}$, $m\mapsto \ell\rhd m$.
 \end{compactitem}
\end{compactenum}
\end{example}

\subsection{Hopf modules and Yetter-Drinfeld modules}

For a bimonoid $H$ in $\mac$ an object $M$ in $\mac$ that has both a $H$-module structure $\rhd: H\oo M\to M$ and a $H$-comodule structure $\delta: M\to H\oo M$ one can impose certain compatibility conditions between $\rhd$ and $\delta$. This leads to the notions of Hopf modules~\cite{LS} and Yetter-Drinfeld modules~\cite{Y}. % is a morphism of $H$-modules or, equivalently, that $\rhd$ is a morphism of $H$-comodules. This leads to the notion of a Hopf module. 

\begin{definition} Let $H$ be a bimonoid in a symmetric monoidal category $\mac$. A {\bf left-left Hopf module} over $H$ is a triple $(M,\rhd,\delta)$  such that $(M,\rhd)$ is a left module over $H$, $(M,\delta)$ is a left comodule over $H$ and  the following compatibility condition is satisfied%$\delta\circ \rhd=(m\oo\rhd)\circ (1_H\circ \tau_{H,H})\circ\Delta$  
%$\delta$ is a morphism of $H$-modules:
\begin{align}
	\label{eq:lefthopf}
\begin{tikzpicture}[scale=.3]
\begin{scope}[shift={(-8,0)}]
\draw[line width=1.5pt, color=blue](0,2)--(0,-2.5);
\draw[line width=1pt, color=black, draw opacity=1] plot [smooth, tension=0.6] coordinates 
      {(-2,2)(-1.5,.5)(0,0) };
      \draw[line width=1pt, color=black, draw opacity=1] plot [smooth, tension=0.6] coordinates 
      {(0,-.5)(-1.5,-1)(-2,-2.5) };
\end{scope}
\node at (-7,0){$=$};
\begin{scope}[shift={(-3,0)}]
\draw[line width=1.5pt, color=blue](0,2)--(0,-2.5);
\draw[line width=1pt, color=black, draw opacity=1] plot [smooth, tension=0.6] coordinates 
      {(0,-1) (-1.3,0)(-2,.5)(-2.7,0)(-3,-.5)(-2.7,-1) (-2,-1.5)(-1.3, -1) (0,1) };
      \draw[line width=1pt](-2,2)--(-2,.5);
            \draw[line width=1pt](-2,-1.5)--(-2,-2.5);
\end{scope}
\end{tikzpicture}
\end{align}
A morphism of left-left $H$-Hopf modules from $(M,\rhd,\delta)$ to $(M',\rhd',\delta')$ 
is  a morphism $f: M\to M$ that is a morphism of $H$-left modules and of $H$-left comodules.
\end{definition}

There are analogous definitions of left-right $H$-Hopf modules,  right-left $H$-Hopf modules and  right-right $H$-Hopf modules, where the left action  of $H$ is replaced by a right action   or the left coaction 
by a right coaction. In these cases the  diagrams that replace \eqref{eq:lefthopf}  read
\begin{align}
%\label{eq:hopfmodule}
	\nonumber
&\begin{tikzpicture}[scale=.3]
\begin{scope}[shift={(-3,0)}]
\draw[line width=1.5pt, color=blue](0,2)--(0,-2.5);
\draw[line width=1pt, color=black, draw opacity=1] plot [smooth, tension=0.6] coordinates 
      {(-2,2)(-1.5,.5)(0,0) };
      \draw[line width=1pt, color=black, draw opacity=1] plot [smooth, tension=0.6] coordinates 
      {(0,-.5)(1.5,-1)(2,-2.5) };
\end{scope}
\node at (0,0){$=$};
\begin{scope}[shift={(4,0)}]
\draw[line width=1.5pt, color=blue](0,2)--(0,-2.5);
\draw[line width=1pt, color=black, draw opacity=1] plot [smooth, tension=0.6] coordinates 
      {(1.5,-1.5)(1.5,-2.5)};
      \draw[line width=1pt, color=black, draw opacity=1] plot [smooth, tension=0.6] coordinates 
      {(-2,2)(-2,1)};
      \draw[line width=1pt, color=black, draw opacity=1] plot [smooth, tension=0.6] coordinates 
      {(0,1)(2,-.5)(1.5,-1.5)(-2,1)(-2.5,-.5)(0,-2) };
\end{scope}
\end{tikzpicture}
\tComma
\begin{tikzpicture}[scale=.3]
\begin{scope}[shift={(-3,0)}]
\draw[line width=1.5pt, color=blue](0,2)--(0,-2.5);
\draw[line width=1pt, color=black, draw opacity=1] plot [smooth, tension=0.6] coordinates 
      {(2,2)(1.5,.5)(0,0) };
      \draw[line width=1pt, color=black, draw opacity=1] plot [smooth, tension=0.6] coordinates 
      {(0,-.5)(-1.5,-1)(-2,-2.5) };
\end{scope}
\node at (0,0){$=$};
\begin{scope}[shift={(3,0)}]
\draw[line width=1.5pt, color=blue](0,2)--(0,-2.5);
\draw[line width=1pt, color=black, draw opacity=1] plot [smooth, tension=0.6] coordinates 
      {(-1.5,-1.5)(-1.5,-2.5)};
      \draw[line width=1pt, color=black, draw opacity=1] plot [smooth, tension=0.6] coordinates 
      {(2,2)(2,1)};
      \draw[line width=1pt, color=black, draw opacity=1] plot [smooth, tension=0.6] coordinates 
      {(0,1)(-2,-.5)(-1.5,-1.5)(2,1)(2.5,-.5)(0,-2) };
\end{scope}
\end{tikzpicture}
\tComma
\begin{tikzpicture}[scale=.3]
\begin{scope}[shift={(-3,0)}]
\draw[line width=1.5pt, color=blue](0,2)--(0,-2.5);
\draw[line width=1pt, color=black, draw opacity=1] plot [smooth, tension=0.6] coordinates 
      {(2,2)(1.5,.5)(0,0) };
      \draw[line width=1pt, color=black, draw opacity=1] plot [smooth, tension=0.6] coordinates 
      {(0,-.5)(1.5,-1)(2,-2.5) };
\end{scope}
\node at (0,0){$=$};
\begin{scope}[shift={(1,0)}]
\draw[line width=1.5pt, color=blue](0,2)--(0,-2.5);
\draw[line width=1pt, color=black, draw opacity=1] plot [smooth, tension=0.6] coordinates 
      {(0,-1) (1.3,0)(2,.5)(2.7,0)(3,-.5)(2.7,-1) (2,-1.5)(1.3, -1) (0,1) };
      \draw[line width=1pt](2,2)--(2,.5);
            \draw[line width=1pt](2,-1.5)--(2,-2.5);
\end{scope}
\end{tikzpicture}
\tDot
\end{align}
Analogously, a Hopf bimodule over a Hopf monoid $H$ in $\mac$ is defined as a pentuple $(M,\rhd,\lhd,\delta_L,\delta_R)$ such that
$(M,\rhd,\lhd)$ is a $H$-bimodule, $(M,\delta_L,\delta_R)$ is a $H$-bicomodule and $(M,\rhd,\delta_L)$, $(M,\rhd,\delta_R)$, $(M,\lhd,\delta_L)$, $(M,\lhd,\delta_R)$
are left-left, left-right, right-left and right-right $H$-Hopf modules, respectively.

\begin{example} \label{ex:hopfbimodule} Let $H$ be a Hopf monoid in $\mac$.
\begin{compactenum}
\item  $H$  is a Hopf bimodule over itself with
$\rhd=\lhd=m: H\oo H\to H $, $\delta_L=\delta_R=\Delta: H\to H\oo H$.

\item For any object $M$ in $\mac$, the object $H\oo M$ is a Hopf bimodule with 
\begin{align*}
&\rhd=m\oo 1_M: H\oo H\oo M\to H\oo M & &\lhd=(m\oo 1_M)\circ (1_H\oo \tau) : H\oo M\oo H\to H\oo M\\
&\delta_L=\Delta\oo 1_M: H\oo M\to H\oo H\oo M, & &\delta_R=(1_H\oo \tau)\circ(\Delta\oo 1_M):H\oo M\to H\oo  M\oo H.
\end{align*}
Hopf bimodules and the associated Hopf modules of this type are called trivial. 

\end{compactenum}
\end{example}

The main examples of Hopf (bi)modules  considered in this article are the trivial ones from Example \ref{ex:hopfbimodule}. By the fundamental theorem of Hopf modules \cite[Prop.~1]{LS}, any Hopf module $M$ over a Hopf monoid $H$ in $\mac=\vect_\F$ is isomorphic to the trivial Hopf module $H\oo M^{coH}$.
For a Hopf algebra in an  abelian rigid braided monoidal category an analogous result was shown in \cite[Thm~1.2]{Ly95b}.
 
 Note also that for finite-dimensional Hopf algebras in $\mac=\vect_\F$, the notion of a Hopf module coincides with the notion of a module over the {\em Heisenberg double} of $H$, which exists in different versions corresponding to left-left, left-right, right-left and right-right Hopf modules.

Just as Hopf modules can be viewed as categorical analogues of modules over Heisenberg doubles, there is also an analogue of modules over the Drinfeld double $D(H)$, namely Yetter-Drinfeld modules. Just as Hopf modules, they  come in four variants, for left and right module and comodule structures. We restrict attention to left module and left comodule structures.

\begin{definition} Let $H$ be a bimonoid in a symmetric monoidal category $\mac$.  A {\bf left-left Yetter-Drinfeld module} over $H$ is a triple $(M,\rhd,\delta)$ such that $(M,\rhd)$ is a left module over $H$, $(M,\delta)$ a left comodule over $H$ and the following compatibility condition is satisfied
\begin{align}\label{eq:ydcomp}
&\begin{tikzpicture}[scale=.3]
\begin{scope}[shift={(-3,0)}]
\draw[line width=1.5pt, color=blue](0,2.5)--(0,-2.5);
\draw[line width=1pt, color=black, draw opacity=1] plot [smooth, tension=0.6] coordinates 
      {(0,-.5) (-1.3,.5)(-2,1)(-2.7,.5)(-3,0)(-2.7,-.5) (-2,-1)(-1.3, -.5) (0,1.5) };
      \draw[line width=1pt](-2,2.5)--(-2,1);
            \draw[line width=1pt](-2,-1)--(-2,-2.5);
\end{scope}
\node at (-1,0){$=$};
\begin{scope}[shift={(3,0)}]
\draw[line width=1.5pt, color=blue](0,2.5)--(0,-2.5);
\draw[line width=1pt, color=black, draw opacity=1] plot [smooth, tension=0.6] coordinates 
      {(0,.5) (-2.5,1)(-2,2)(-1.5,1)(-1.5,-1)(-2,-2) (-2.5,-1) (0,-.5) };
      \draw[line width=1pt](-2,2)--(-2,2.5);
            \draw[line width=1pt](-2,-2)--(-2,-2.5);
\end{scope}
\end{tikzpicture}
\tDot
\end{align}
A {\bf morphism of $H$-left-left Yetter-Drinfeld modules} from $(M,\rhd,\delta)$ to $(M',\rhd',\delta')$ is a morphism $f: M\to M'$ that is a morphism of $H$-left modules and of $H$-left comodules.
\end{definition}

\section{Background on ribbon graphs}
\label{sec:ribbon}

In this section, we summarise the required background on {\em embedded graphs} or {\em ribbon graphs}. For more details,  see for instance \cite{LZ}. All graphs considered in this article are finite, but we allow  loops, multiple edges and univalent vertices.

\subsection{Paths}
\label{subsec:paths}
Paths in a graph are most easily described by orienting the edges of $\Gamma$ and considering the free groupoid generated by the resulting directed graph. Note that different choices of orientation yield isomorphic groupoids. 

 \begin{definition} \label{def:pathgroupoid}The {\bf path groupoid} $\mathcal G_\Gamma$ of a graph $\Gamma$ is the free groupoid generated by $\Gamma$. A {\bf path} in $\Gamma$ from a vertex $v$ to a vertex $w$ is a morphism $\gamma: v\to w$ in $\mathcal G_\Gamma$.
 \end{definition}

 The objects of $\mathcal G_\Gamma$ are the vertices of $\Gamma$. A morphism from $v$ to $w$ in $\mathcal G_\Gamma$ is a finite sequence $p=\alpha_1^{\epsilon_1}\circ ...\circ \alpha_n^{\epsilon_n}$, $\epsilon_i\in\{\pm 1\}$ of oriented edges $\alpha_i$ and their inverses such that the starting vertex of the first edge 
$\alpha_{n}^{\epsilon_{n}}$
 is $v$, the target vertex of the last edge 
$\alpha_{1}^{\epsilon_{1}}$
 is $w$, and the starting vertex of each edge in the sequence is the target vertex of the preceding edge. These sequences are taken with the relations  $\alpha\circ \alpha^\inv=1_{t(\alpha)}$ and $\alpha^\inv\circ \alpha=1_{s(e)}$, where $\alpha^\inv$ denotes the edge $\alpha$ with the reversed orientation, $s(\alpha)$ the starting and $t(\alpha)$ the target vertex of $\alpha$ and we set $s(\alpha^{\pm 1})=t(\alpha^{\mp 1})$. 
 
 An edge $\alpha\in E$ with  $s(\alpha)=t(\alpha)$ is called a {\em loop}, and a path  $\rho\in \mathcal G_\Gamma$  is called {\em closed} if it is an automorphism of a vertex.  
 We call a path $\rho\in\mathcal G_\Gamma$ a {\em subpath} of a path $\gamma \in \mathcal G_\Gamma$ if  the expression for $\gamma$ as a reduced word in $E$  is of the form $\gamma=\gamma_1\circ\rho\circ\gamma_2$ with (possibly empty) reduced words $\gamma_1,\gamma_2$. We call it  a {\em proper subpath} of $\gamma$ if $\gamma_1,\gamma_2$ are not both empty. We say that two paths $\rho,\gamma\in \mathcal G_\Gamma$ {\em overlap} if there is an edge in $\Gamma$ that is traversed by both $\rho$ and $\gamma$, and by both  in the same direction.

\subsection{Ribbon graphs}
The graphs we consider have additional structure. They are called
 {\em ribbon graphs}, \emph{fat graphs} or \emph{embedded graphs} and give a combinatorial description of oriented surfaces with or without boundary. 
 %In the following we  also consider ribbon graphs with a choice of a distinguished edge end at some of the vertices.

 \begin{definition} A {\bf ribbon graph} is a graph together with a cyclic ordering of the edge ends at each vertex. 
 A {\bf ciliated vertex} in a ribbon graph is a vertex together with  a linear ordering of the incident  edge ends  that is compatible with their cyclic ordering.
%A {\bf ciliated ribbon graph} is a ribbon graph in which each vertex is ciliated.
 \end{definition}
 
A ciliated vertex in a ribbon graph is obtained  by selecting one of its incident edge ends  as the starting end of the linear ordering. We indicate 
this in figures by assuming the counterclockwise cyclic ordering in the plane and inserting a line, the {\em cilium},  that separates the edges of minimal and maximal order, as shown in Figure \ref{fig:ciliated}. We say that an edge end $\beta$ at a ciliated vertex $v$ is {\em between} two edge ends $\alpha$ and $\gamma$ incident at $v$ if $\alpha<\beta<\gamma$ or $\gamma<\beta<\alpha$. We denote by $\st(\alpha)$ the starting end and by $\ta(\alpha)$ the target end of a directed edge $\alpha$.

The cyclic ordering of the edge ends at each vertex  allows  one to thicken the edges of a ribbon graph to  strips or ribbons and its vertices to polygons.
 It also equips the ribbon graph with the notion of a face. One says that a path in a ribbon graph $\Gamma$ turns {\em maximally left} at a vertex $v$ if it
 enters $v$ through an edge end $\alpha$ and leaves it through an edge end $\beta$ that comes directly before $\alpha$ with respect to 
  the cyclic ordering at $v$. 
% Paths that turn maximally left at each vertex, except possibly their start and target vertex,   play an essential role in the following. 

 \begin{definition}\label{def:face} Let $\Gamma$ be a ribbon graph. 
 \begin{compactenum}
 \item A {\bf face path} in $\Gamma$ is a path  that turns maximally left at each  vertex in the path and traverses each edge at most once in each direction. 
 \item A {\bf ciliated face} in $\Gamma$  is a closed face path whose cyclic permutations are also face paths.
 \item A {\bf face} of $\Gamma$ is an equivalence class of ciliated faces under  cyclic permutations. 
 \end{compactenum}
 \end{definition}

 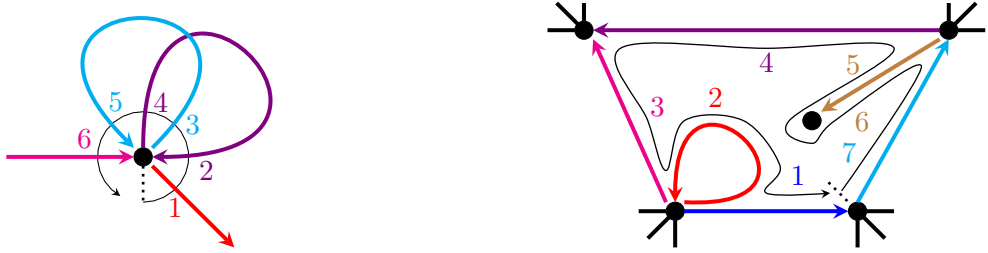
\begin{figure}
	\centering
		\begin{tikzpicture}[scale=.6]
			\draw [color=black, fill=black] (0,0) circle (.2); 
			\draw[color=black, style=dotted, line width=1pt] (0,-.2)--(0,-1);
			\draw [black,->,>=stealth,domain=-90:240] plot ({cos(\x)}, {sin(\x)});
			\draw[color=red, line width=1.5pt, ->,>=stealth] (.2,-.2)--(2,-2);
			\draw[color=violet, line width=1.5pt, <-,>=stealth] (.2,0)..controls (6,0) and (0,6).. (0,.2);
			\draw[color=cyan, line width=1.5pt, ->,>=stealth] (.2,.2)..controls (4,4) and (-4,4).. (-.2,.2);
			\draw[color=magenta, line width=1.5pt, <-,>=stealth] (-.2,0)--(-3,0);
			\node at (.7,-.7)[color=red, anchor=north]{${1}$};
			\node at (1,-.3)[color=violet, anchor=west]{${2}$};
			\node at (.7,.7)[color=cyan, anchor=west]{$3$};
			\node at (0,1.2)[color=violet, anchor=west]{${4}$};
			\node at (-.6,.8)[color=cyan, anchor=south]{$5$};
			\node at (-.9,.4)[color=magenta, anchor=east]{$6$};
		\end{tikzpicture}
		\qquad\qquad
				\begin{tikzpicture}[scale=.6]
		\begin{scope}[shift={(-6,0)}]
		\draw [color=black, fill=black] (-2,0) circle (.2); 
		\draw [color=black, fill=black] (2,0) circle (.2); 
		\draw [color=black, fill=black] (1,2) circle (.2); 
		\draw [color=black, fill=black] (4,4) circle (.2); 
		\draw [color=black, fill=black] (-4,4) circle (.2); 
		\draw[color=black, line width=1pt, style=dotted](1.9,.1)--(1.3,.7);
		\draw[color=blue, line width=1.5pt, ->,>=stealth] (-1.8,0)--(1.8,0);
		\node at (.7,1.2)[color=blue, anchor=north]{$1$};
		\draw[color=red, line width=1.5pt, ->,>=stealth] (-1.8,.2).. controls (2,0) and (-2,4).. (-2,.2);
		\node at (-1.5,2.5)[color=red, anchor=west]{$2$};
		\draw[color=magenta, line width=1.5pt, ->,>=stealth] (-2.2,.2)--(-3.8,3.8);
		\node at (-2,2.3)[color=magenta, anchor=east]{$3$};
		\draw[color=cyan, line width=1.5pt, ->,>=stealth] (2,.2)--(4,3.8);
		\node at (2.2,1.3)[color=cyan, anchor=east]{$7$};
		\draw[color=violet, line width=1.5pt, ->,>=stealth] (3.8,4)--(-3.8,4);
		\node at (0,3.7)[color=violet, anchor=north]{${4}$};
		\draw[color=brown, line width=1.5pt, ->,>=stealth] (3.8,3.8)--(1.2,2.2);
		\node at (2.1,2.4)[color=brown, anchor=north]{$6$};
		\node at (1.9,2.8)[color=brown, anchor=south]{$5$};
		\draw[color=black, line width=1.5pt] (-2.2,0)--(-2.8,0);
		\draw[color=black, line width=1.5pt ] (-2,-.2)--(-2,-.8);
		\draw[color=black, line width=1.5pt ] (-2.1,-.1)--(-2.6,-.6);
		\draw[color=black, line width=1.5pt] (2.2,0)--(2.8,0);
		\draw[color=black, line width=1.5pt ] (2,-.2)--(2,-.8);
		\draw[color=black, line width=1.5pt ] (2.1,-.1)--(2.6,-.6);
		\draw[color=black, line width=1.5pt] (-4,4.2)--(-4,4.6);
		\draw[color=black, line width=1.5pt ] (-4.2,4)--(-4.8,4);
		\draw[color=black, line width=1.5pt ] (-4.1,4.1)--(-4.6,4.6);
		\draw[color=black, line width=1.5pt] (4,4.2)--(4,4.6);
		\draw[color=black, line width=1.5pt ] (4.2,4)--(4.8,4);
		\draw[color=black, line width=1.5pt ] (4.1,4.1)--(4.6,4.6);
		\draw[line width=.5pt, color=black,->,>=stealth] plot [smooth, tension=0.6] coordinates 
      {(1.65,.45)(3.4,3.2)(1.5,2)(1,1.5)(.5,2)(2.8,3.6)(0,3.6)(-3.2,3.6)(-2.6,2) (-2.2,.9)(-1.8,2)(-.5,2)(.3,1)(0,.4)(1.4,.4)};
		\end{scope}
		\end{tikzpicture}
\caption{A ciliated vertex and a ciliated face in a directed ribbon graph}
\label{fig:ciliated}
\end{figure}

Examples of  faces and face paths are shown in Figures \ref{fig:ciliated} and \ref{figure:facePathChord}.
Each  face defines a cyclic  ordering of the edges and their inverses in  the face. A ciliated face  is a face together with the choice of a starting vertex and induces a linear ordering of these edges. These orderings are taken counterclockwise, as shown in Figure \ref{fig:ciliated}. %(Ciliated) faces can be viewed as the duals of (ciliated) vertices.
 
 A graph $\Gamma$ embedded into an oriented surface $\Sigma$ inherits a cyclic ordering of the edge ends at each vertex from the orientation of $\Sigma$ and hence a ribbon graph structure. Conversely, every ribbon graph $\Gamma$ defines a compact oriented surface $\Sigma_\Gamma$ that is obtained by attaching a disc at each face.  For a graph $\Gamma$ embedded into an oriented surface  $\Sigma$, the surfaces $\Sigma_\Gamma$ and $\Sigma$ are homeomorphic if and only if $\Sigma\setminus\Gamma$ is a disjoint union of discs.  
  An oriented surface with a boundary is obtained from a ribbon graph $\Gamma$ by attaching annuli instead of discs to some of its faces.

  \subsection{Chord diagrams}
  In the following, we sometimes restrict attention to ribbon graphs with a single vertex. Such a ribbon graph can be described   equivalently by a circular chord diagram. This chord diagram (with the chords pointing outwards from the circle)  is obtained  by thickening the vertex of the ribbon graph to a circle.  Similarly, a  ribbon graph with a single \emph{ciliated} vertex corresponds to a \emph{linear} chord diagram  that  is obtained from the circular chord diagram by cutting the circle at the position of the cilium,  as shown in Figure \ref{fig:chord}.

 We call the resulting line segment the {\em baseline} of the chord diagram.    The edges  of the ribbon graph then correspond to the {\em chords} of the  diagram. The  ribbon graph structure is given by the ordering of their ends on the circle or baseline, as shown in Figure \ref{fig:chord}. 
 Two chords are called {\em disjoint}, if they do not intersect, and two chord ends are called {\em disjoint} if they do not belong to the same chord. 
 In addition to the chords, we sometimes also admit additional edges that start or end at the baseline  and have a univalent vertex at their other end.

For a ribbon graph with a single vertex,  the path groupoid from Definition \ref{def:pathgroupoid} becomes a group. 
In the associated chord diagram, we represent its elements by paths that start and end below the baseline and are composed of segments along the chords and horizontal segments along the baseline, as shown in Figures \ref{figure:facePathChord} and \ref{figure:NonFaceChord}. 

\begin{figure}
\begin{center}
\begin{tikzpicture}[scale=.6, baseline=(current bounding box.center)]
\draw[line width=1pt, color=red, ->,>=stealth] (-.14,.14) .. controls (-3,3) and (-3,-3) .. (-.17,-.17);
\draw[line width=1pt, color=cyan, <-,>=stealth] (.22,0) .. controls (3,1.5) and (3,-1.5) .. (.14,-.14);
\draw[line width=1pt, color=black, ->,>=stealth] (0,-.14) .. controls (3.5,-4) and (3.5,4) .. (0,.22);
\draw[line width=1pt, color=blue, ->,>=stealth] (.14,.14) .. controls (3.5,2) and (-2.5,2.5) .. (-.22,-.1);
\draw[fill=black, color=black] (0,0) circle (.2);
\draw[line width=1pt, style=dotted] (0,-1)--(0,0);
\end{tikzpicture}
\qquad\qquad
\begin{tikzpicture}[scale=.4, baseline=(current bounding box.center)]
	\draw[line width=1pt, color=black](-3,0)--(10,0);
			\draw[color=red, line width=1.5pt, <-,>=stealth] (-2,0).. controls (-2,2) and (2,2)..(2,0);
			\draw[color=blue, line width=1.5pt, <-,>=stealth] (0,0).. controls (0,2) and (4,2)..(4,0);
			\draw[color=cyan, line width=1.5pt, <-,>=stealth] (6,0).. controls (6,2) and (8,2)..(8,0);   
			\draw[color=black, line width=1.5pt, <-,>=stealth] (3,0).. controls (3,5) and (9,5)..(9,0);    
			\node at (9,-0.1)[anchor=north] {1};
			\node at (8,-0.1)[anchor=north] {2};
			\node at (6,-0.1)[anchor=north] {3};
			\node at (4,-0.1)[anchor=north] {4};
			\node at (3,-0.1)[anchor=north] {5};
			\node at (2,-0.1)[anchor=north] {6};
			\node at (0,-0.1)[anchor=north] {7};
			\node at (-2,-0.1)[anchor=north] {8};
			\end{tikzpicture}
\end{center}

\vspace{-1cm}
\caption{Ribbon graph with a single ciliated vertex and the associated chord diagram.}
\label{fig:chord}
\end{figure}
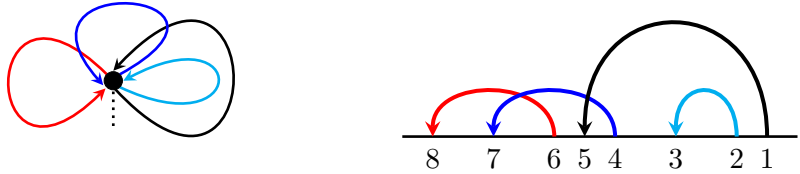

\begin{figure}%[H]
	\minipage[t]{0.45\textwidth}
	\begin{tikzpicture}[scale=.5]
		\draw[line width=1pt, color=black](-3,0)--(12,0);
		\draw[color=red, line width=1.5pt, <-,>=stealth] (-2,0).. controls (-2,2) and (2,2)..(2,0);
		\draw[color=blue, line width=1.5pt, <-,>=stealth] (0,0).. controls (0,2) and (4,2)..(4,0);
		\draw[line width=.5pt, color=black,->,>=stealth] (3.5,-.5)--(3.5,.5);
		\draw[line width=.5pt, color=black,->,>=stealth] (3.5,.5).. controls (3.5,1.2) and (.6,1.5) .. (.6,.5);
		\draw[line width=.5pt, color=black,->,>=stealth] (.6,.5)--(1.5,.5);
		\draw[line width=.5pt, color=black,->,>=stealth] (1.5,.5).. controls (1.5,1.2) and (-1.4,1.5) .. (-1.4,.5);
		\draw[line width=.5pt, color=black,->,>=stealth] (-1.4,.5)--(-.5,.5);
		\draw[line width=.5pt, color=black,->,>=stealth] (-.5,.5).. controls (-.5,2.5) and (4.5,2.5) .. (4.5,.5);
		\draw[line width=.5pt, color=black,->,>=stealth] (4.5,.5).. controls (4.5,2.5) and (7.5,2.5) .. (7.5,.5);
		\draw[line width=.5pt, color=black,->,>=stealth] (7.5,.5)--(7.5,-.5);
		\draw[color=violet, line width=1.5pt, <-,>=stealth] (5,0).. controls (5,2) and (7,2)..(7,0);
		\draw[color=cyan, line width=1.5pt, <-,>=stealth] (8,0).. controls (8,2) and (10,2)..(10,0);   
		\draw[color=black, line width=1.5pt, <-,>=stealth] (3,0).. controls (3,5) and (11,5)..(11,0);    
		\node at (0,1.8) [anchor=south, color=red] {$\alpha_{5}$};
		\node at (2,1.8) [anchor=south, color=blue] {$\alpha_{4}$};
		\node at (6,1.8) [anchor=south, color=violet] {$\alpha_{3}$};
		\node at (9,1.5) [anchor=south, color=cyan] {$\alpha_{2}$};
		\node at (7,3.8) [anchor=south, color=black] {$\alpha_{1}$};
		\node at (11,-0.1)[anchor=north] {1};
		\node at (10,-0.1)[anchor=north] {2};
		\node at (8,-0.1)[anchor=north] {3};
		\node at (7,-0.1)[anchor=north] {4};
		\node at (5,-0.1)[anchor=north] {5};
		\node at (4,-0.1)[anchor=north] {6};
		\node at (3,-0.1)[anchor=north] {7};
		\node at (2,-0.1)[anchor=north] {8};
		\node at (0,-0.1)[anchor=north] {9};
		\node at (-2,-0.1)[anchor=north] {10};
	\end{tikzpicture}
	%\vspace*{-0.8cm}
	\caption{The  path $\alpha_{3}^{-1}\alpha_{4}^{-1}\alpha_{5}\alpha_{4}$ is a face path, but not a ciliated face.}
	\label{figure:facePathChord}
	\endminipage 
$\qquad$
\minipage[t]{0.45\textwidth}
%\begin{figure}[H]
%	\centering
\begin{tikzpicture}[scale=.5]
\draw[line width=1pt, color=black](-3,0)--(12,0);
\draw[color=red, line width=1.5pt, <-,>=stealth] (-2,0).. controls (-2,2) and (2,2)..(2,0);
\draw[color=blue, line width=1.5pt, <-,>=stealth] (0,0).. controls (0,2) and (4,2)..(4,0);
\draw[line width=.5pt, color=black,->,>=stealth] (3.5,-.5)--(3.5,.5);
\draw[line width=.5pt, color=black,->,>=stealth] (3.5,.5).. controls (3.5,1.2) and (.6,1.5) .. (.6,.5);
\draw[line width=.5pt, color=black,->,>=stealth] (.6,.5)--(1.5,.5);
\draw[line width=.5pt, color=black,->,>=stealth] (1.5,.5).. controls (1.5,1.2) and (-1.4,1.5) .. (-1.4,.5);
\draw[line width=.5pt, color=black,->,>=stealth] (-1.4,.5)--(-.5,.5);
\draw[line width=.5pt, color=black,->,>=stealth] (-.5,.5).. controls (-.5,2.5) and (4.5,2.5) .. (4.5,.5);
\draw[line width=.5pt, color=black,->,>=stealth] (4.5,.5)--(7.5,.5);
\draw[line width=.5pt, color=black,->,>=stealth] (7.5,.5).. controls (7.5,2.5) and (10.5,2.5) .. (10.5,.5);
\draw[line width=.5pt, color=black,->,>=stealth] (10.5,.5)--(10.5,-.5);
      \draw[color=violet, line width=1.5pt, <-,>=stealth] (5,0).. controls (5,2) and (7,2)..(7,0);
            \draw[color=cyan, line width=1.5pt, <-,>=stealth] (8,0).. controls (8,2) and (10,2)..(10,0);   
\draw[color=black, line width=1.5pt, <-,>=stealth] (3,0).. controls (3,5) and (11,5)..(11,0);    
	\node at (0,1.8) [anchor=south, color=red] {$\alpha_{5}$};
	\node at (2,1.8) [anchor=south, color=blue] {$\alpha_{4}$};
	\node at (6,1.5) [anchor=south, color=violet] {$\alpha_{3}$};
	\node at (9,1.9) [anchor=south, color=cyan] {$\alpha_{2}$};
	\node at (7,3.8) [anchor=south, color=black] {$\alpha_{1}$};	
	\node at (11,-0.1)[anchor=north] {1};
	\node at (10,-0.1)[anchor=north] {2};
	\node at (8,-0.1)[anchor=north] {3};
	\node at (7,-0.1)[anchor=north] {4};
	\node at (5,-0.1)[anchor=north] {5};
	\node at (4,-0.1)[anchor=north] {6};
	\node at (3,-0.1)[anchor=north] {7};
	\node at (2,-0.1)[anchor=north] {8};
	\node at (0,-0.1)[anchor=north] {9};
	\node at (-2,-0.1)[anchor=north] {10};
\end{tikzpicture}
%\vspace*{-.3cm}
	\caption{The  path $\alpha_{2}^{-1} \alpha_{4}^{-1}\alpha_{5}\alpha_{4}$ is not a face path.
	}
	\label{figure:NonFaceChord}
	\endminipage
\end{figure}

\section{Mapping class groups}
\label{sec:mapsec}
In this section, we review the  background on mapping class groups for oriented surfaces with or without boundaries. Where no other references are given, we follow the presentation in \cite{FM}. As in \cite[Sec.~2.1]{FM}, we define the mapping class group of  an oriented surface $\Sigma$ with boundary $\partial \Sigma$ as 
\begin{align}
	\nonumber
%\label{eq:mapdef}
\mathrm{Map}(\Sigma)=\frac{\mathrm{Homeo}^+(\Sigma, \partial \Sigma)}{\mathrm{Homeo}_0 (\Sigma,\partial \Sigma)}.
\end{align}
where $\text{Homeo}^+(\Sigma,\partial \Sigma)$ is the group of orientation preserving homeomorphisms that restrict to the identity on the boundary and  $\text{Homeo}_0(\Sigma,\partial \Sigma)$ is the subgroup of  orientation preserving homeomorphisms that fix the boundary and are homotopic to the identity.  

This is  not to be confused with the mapping class group of a surface with marked points or punctures. For the relation between these mapping class groups see for instance \cite{Bi} or \cite[Sec.~2.1, 3.6]{FM} and the references therein. In particular,  Dehn twists along circles around the boundary components are not necessarily trivial.

\subsection{Presentation in terms of Dehn twists}
A simple finite presentation of the mapping class group of a surface of genus $g\geq 1$ with $n\geq 0$ boundary components was first given by Wajnryb \cite{W,BW94}. We  work with the presentation of  Gervais derived in \cite{G01} from Wajnryb's presentation and with a different method by Hirose \cite{H}.

Gervais' presentation describes the mapping class group $\mathrm{Map}(\Sigma)$ in terms of Dehn twists around a set of closed simple curves on $\Sigma$,  depicted in Figures \ref{fig:gervais} and \ref{fig:gervais2}. The relations depend on their essential intersection numbers. We write $|\alpha\cap\beta|=0$ if the homotopy classes of the closed simple curves $\alpha,\beta$ have representatives that do not intersect and $|\alpha\cap\beta|=1$ if they have representatives with a single intersection point.

\begin{theorem}[\cite{G01}, Th.~1] \label{th:gervais}Let $\Sigma$ be an oriented surface of genus $g\geq 1$ with $n\geq 0$ boundary components such that $g+n>1$. Then the mapping class group $\mathrm{Map}(\Sigma)$  is generated by the Dehn twists along the closed simple curves in the set $$G=\big \{\alpha_i\mid i=1,...,g\big\}\cup \big\{\delta_j\mid j=1,...,n+2g-2\big\}\cup \{\gamma_{k,l}\mid k,l=1,...,n+2g-2, k\neq l\big \}$$  in Figures \ref{fig:gervais} and \ref{fig:gervais2}, subject to the relations
\begin{compactenum}[(i)]
\item $D_{\gamma_{n+2j+1, n+2j}}=D_{\gamma_{n+2j, n+2j-1}}$ for $j=1,...,g-2$ and\footnote{The last relation   is not stated explicitly in \cite[Th.~1]{G01}
but implied by its proof and its verbal description. Note also that our notation differs from the one in \cite{G01}. Our curves $\alpha_i$ are called $\beta_i$, our curves $\delta_i$ are called $\alpha_i$, and our $\alpha_g$ is called $\beta$ in \cite{G01}. We reserve the letters $\alpha_i$ and $\beta_i$ for certain generators of the fundamental group $\pi_1(\Sigma)$.
} $D_{\gamma_{1, n+2g-2}}=D_{\gamma_{n+2g-2, n+2g-3}}$,

\item  $D_\alpha\circ D_\beta=D_\beta\circ D_\alpha$ for $\alpha,\beta\in G$ with $|\alpha\cap\beta|=0$, 

\item  $D_\alpha\circ D_\beta\circ D_\alpha=D_\beta\circ D_\alpha\circ D_\beta$ for $\alpha,\beta\in G$ with $|\alpha\cap \beta|=1$,
\item $(D_{\delta_k}\circ D_{\delta_i}\circ D_{\delta_j}\circ D_{\alpha_g})^3=D_{\gamma_{i,j}}\circ D_{\gamma_{j,k}}\circ D_{\gamma_{k,i}}$ for $i,j,k$ not all equal and  $j\leq i\leq k$ or $k\leq j\leq i$ or $i\leq k\leq j$ with $D_{\gamma_{i,i}}=\id$ for $1\leq i\leq n+2g-2$. 
 \end{compactenum}
\end{theorem}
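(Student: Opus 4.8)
Since this is Gervais' theorem, the plan is essentially to reproduce the argument of \cite{G01} (with the alternative route of Hirose \cite{H} available); for the purposes of this article one only needs the statement, so in practice one simply cites it, but here is how the proof goes. The starting point is Wajnryb's finite presentation of $\mathrm{Map}(\Sigma)$ \cite{W,BW94}, whose generators are Dehn twists along a chain of simple closed curves together with finitely many auxiliary twists, and whose relations consist of braid and commutation relations, a chain relation, a lantern relation, and one or two extra relations involving twists about the boundary curves. First I would fix explicit isotopy representatives for Gervais' curve system $G=\{\alpha_i\}\cup\{\delta_j\}\cup\{\gamma_{k,l}\}$ and for Wajnryb's curve system inside $\Sigma$, arranged so that every generating twist of one presentation can be written as an explicit word in the twists of the other. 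At this stage one must be careful with the relabelling recorded in the footnote, where the roles of the curves $\alpha_i$ and $\beta_i$ are interchanged relative to \cite{G01}.

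Next I would run the standard Tietze-transformation argument: adjoin Gervais' generators together with the words expressing them through Wajnryb's, verify that conversely each Wajnryb generator is a word in Gervais' generators, eliminate the old generators, and rewrite Wajnryb's relations in the new ones. Simplifying the rewritten relations should produce exactly the four families (i)--(iv): the identities among $\gamma$-twists in (i) record that the relevant curves are isotopic, the relations (ii) and (iii) are the universal facts $D_\alpha\circ D_\beta=D_\beta\circ D_\alpha$ when $|\alpha\cap\beta|=0$ and $D_\alpha\circ D_\beta\circ D_\alpha=D_\beta\circ D_\alpha\circ D_\beta$ when $|\alpha\cap\beta|=1$, and the \emph{star relation} (iv) is Gervais' symmetric repackaging of the lantern and chain relations as $(D_{\delta_k}\circ D_{\delta_i}\circ D_{\delta_j}\circ D_{\alpha_g})^3=D_{\gamma_{i,j}}\circ D_{\gamma_{j,k}}\circ D_{\gamma_{k,i}}$. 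It is convenient to pass through Gervais' earlier \emph{infinite} presentation (all Dehn twists as generators, with braid, commutation, chain, lantern and star relations) as an intermediate step before cutting down to the finite set $G$.

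The main obstacle is the verification that this change of generators really does turn Wajnryb's relations into precisely (i)--(iv) and nothing more: this is a finite but delicate computation inside $\mathrm{Map}(\Sigma)$, since one must check both that every Wajnryb relation follows from (i)--(iv) and that (i)--(iv) follow from Wajnryb's relations. This bookkeeping is also what explains the footnote: the relation $D_{\gamma_{1,n+2g-2}}=D_{\gamma_{n+2g-2,n+2g-3}}$ is forced by the curve identifications used in the argument, so although it is omitted from the displayed list in \cite[Th.~1]{G01} it must be kept to obtain a correct presentation. Here we simply record the statement and use it.
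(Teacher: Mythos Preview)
Your proposal is appropriate: this theorem is a cited background result from \cite{G01} (with the alternative derivation in \cite{H}), and the paper gives no proof of its own, simply invoking the statement. Your sketch of Gervais' argument via Wajnryb's presentation and Tietze transformations is a faithful summary of how the result is obtained in the literature, and your remark that ``in practice one simply cites it'' matches exactly what the paper does.
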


Relation (i) expresses the fact that  the  loops  around the two legs of the $j$th handle for $j=1,...,g-1$ are freely homotopic, such that   their Dehn twists coincide. 
 The curves  in relation (iv) are depicted in Figure \ref{fig:gervais2}. 
Note that the Dehn twists along $\gamma_{i,j}$, $\gamma_{j,k}$ and $\gamma_{k,i}$ and along $\delta_i$, $\delta_j$ and $\delta_k$  commute by (ii), so that their relative order in (iv) is irrelevant.

\begin{figure} %[H]
\centering
\def\svgwidth{.95\columnwidth}
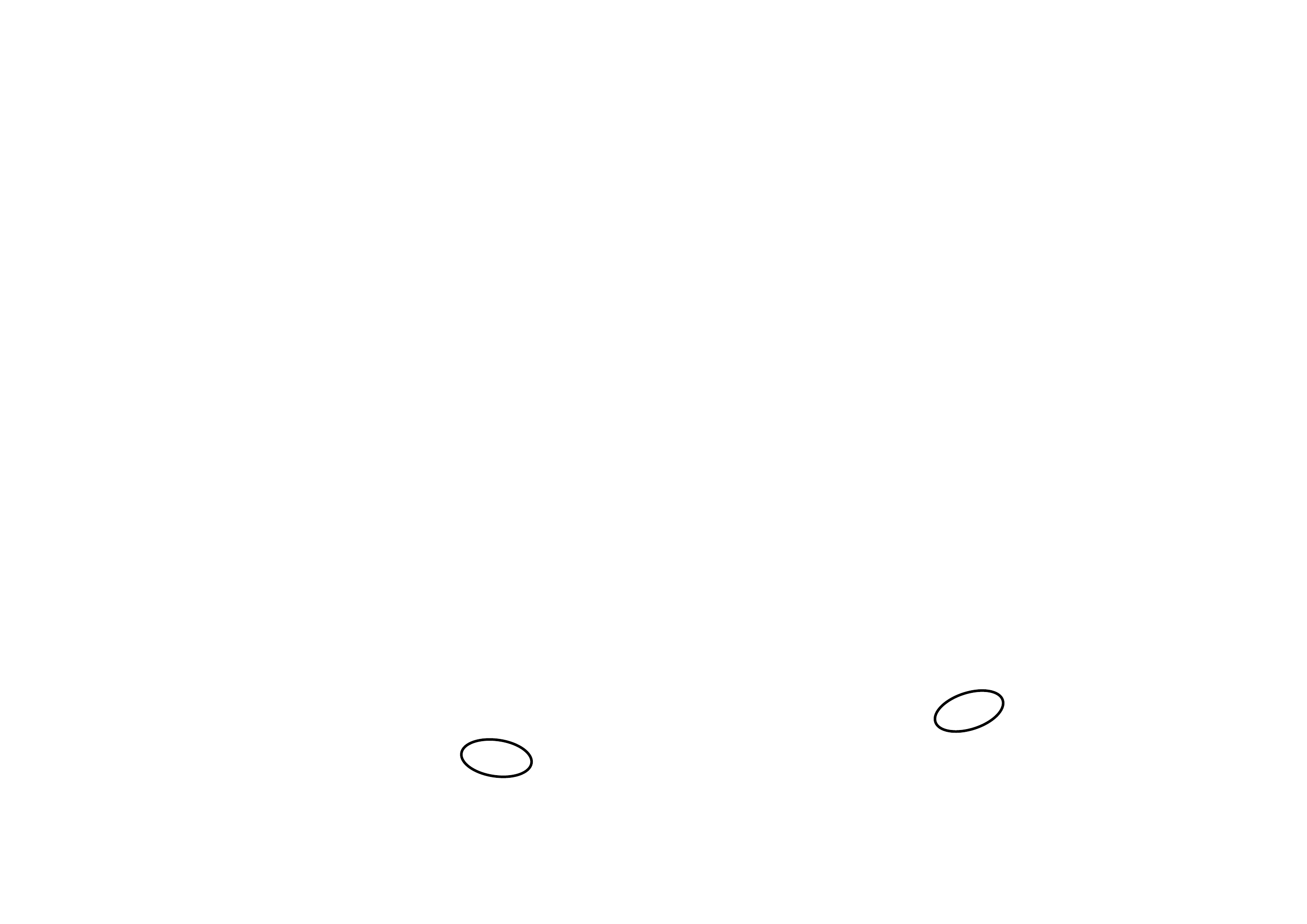
\caption{The curves $\alpha_i$ and $\delta_j$}
\label{fig:gervais}
\end{figure}

\begin{figure}
\centering
\def\svgwidth{.9\columnwidth}
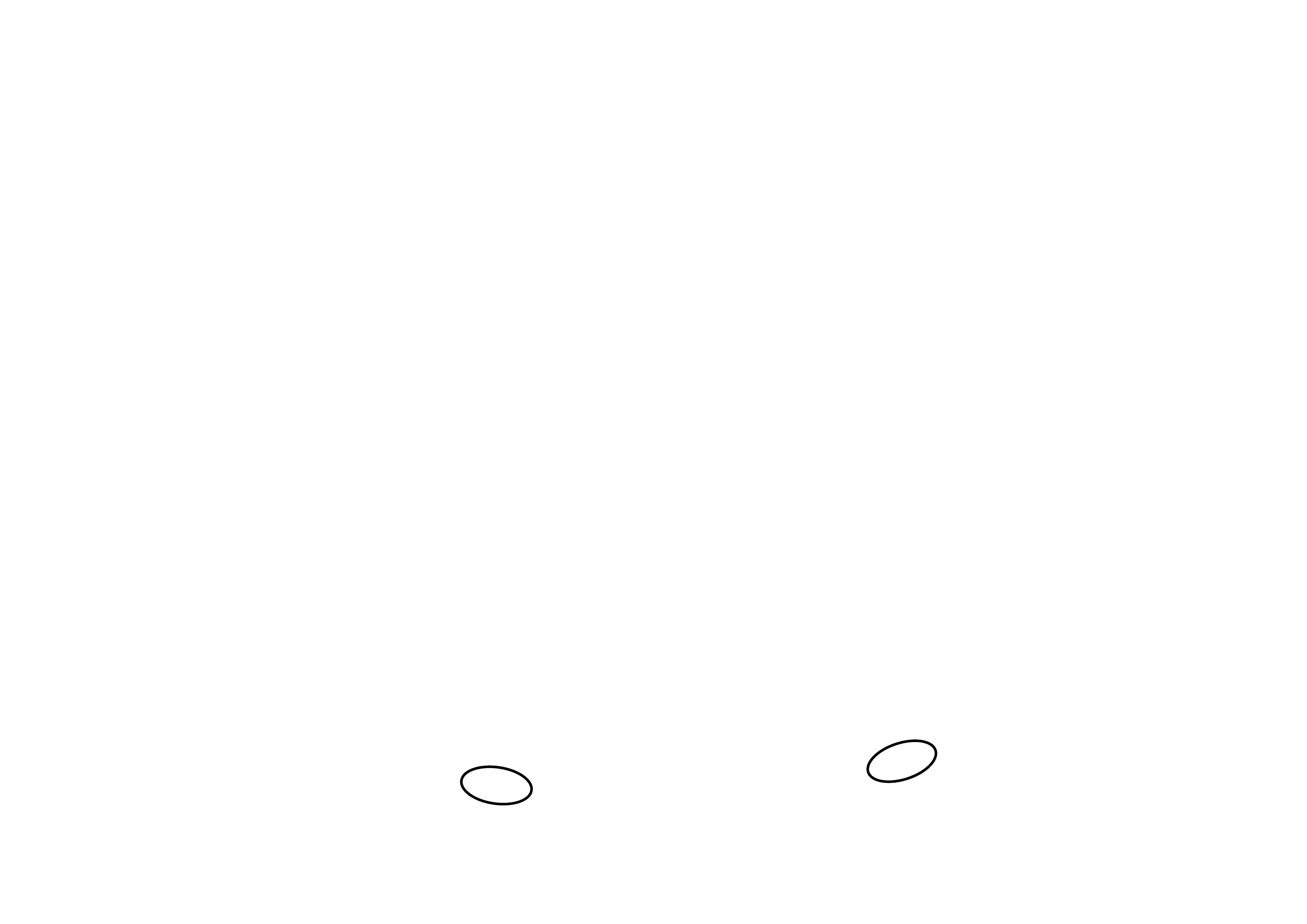
\caption{The curves $\delta_i$, $\delta_j$, $\delta_k$ and $\gamma_{i,j}$, $\gamma_{j,k}$, $\gamma_{k,i}$ for $j<i<k$. }
\label{fig:gervais2}
\end{figure}

The essential intersection numbers of the curves in Figures \ref{fig:gervais} and \ref{fig:gervais2} are given by
%\begingroup
%\allowdisplaybreaks
\begin{align}\label{eq:intnumbers}
		| \alpha_{i} \cap \alpha_{j} | &= |\delta_{i} \cap \delta_{j} | =0 \qquad\text{for all }i \neq j,% \\
		\\
		| \alpha_{i} \cap \delta_{j} | &= 
			\begin{cases}
				1 \quad \text{if $i= g$ or $j= n+2i$} \\
				0 \quad \text{ otherwise,}
			\end{cases}\nonumber
			\\
		| \alpha_{i} \cap \gamma_{j,k} | &= 
			\begin{cases}
				1 \quad \text{if $j= n+2i$ or $k=n+2i$}\\
				0 \quad \text{otherwise,}
			\end{cases}\nonumber
		\\
		| \delta_{i} \cap \gamma_{j, k} |& =
			\begin{cases}
				2 \quad \text{if $k< i < j$ up to cyclic permutations}
					\\
				0 \quad \text{otherwise,}
			\end{cases}\nonumber
		\\
		| \gamma_{i,j} \cap \gamma_{k,l} | &=
			\begin{cases}
				4 \quad \text{if $j<k< l< i$ up to cyclic permutations}
				\\
				0 \quad \text{if either $j \leq l \leq k \leq i, l \leq j \leq i \leq k$ or $j \leq i \leq l \leq k$ up to cyclic permutations}\\
				2 \quad \text{otherwise.}
			\end{cases}\nonumber
	\end{align}
%\endgroup

\subsection{Presentation in terms of chord slides}
\label{sec:chordslides}

For an oriented surface $\Sigma$ of genus $g\geq 1$ with one boundary component, we also
 consider a different presentation of the mapping class group, derived by Bene \cite{B} and also considered in \cite{ABP}. It describes the mapping class group $\mathrm{Map}(\Sigma)$ in terms of chord slides in an associated ribbon graph $\Gamma$ with a single ciliated vertex and face
 or the corresponding linear chord diagram. The surface $\Sigma$ is obtained by attaching an annulus to the face.

The chord slide of an end  of a chord $\beta$ along a chord $\alpha$ is defined if two ends of $\alpha$ and $\beta$ are adjacent with respect to the linear ordering of the chord ends at the  baseline. It is  given by the following diagram, where the holes in the  circle indicate  possible positions of  other chord ends or the cilium. 
\begin{align}%\label{eq:slideger}
	\nonumber
&\begin{tikzpicture}[scale=.4]
\begin{scope}[shift={(-5,0)}]
\draw[color=blue, line width=1.5pt, ] (2,.2)--(0,2);
\draw[color=red, line width=1.5pt, ] (-2,0)--(2,0);
\node at (0,0)[anchor=north, color=red]{$\alpha$};
\node at (1,1)[anchor=south west, color=blue]{$\beta$};
\draw [ line width=1pt, domain=-20:20] plot ({2*cos(\x)}, {2*sin(\x)});
\draw [ line width=1pt, domain=160:200] plot ({2*cos(\x)}, {2*sin(\x)});
\draw [ line width=1pt, domain=70:110] plot ({2*cos(\x)}, {2*sin(\x)});
\end{scope}
\draw[color=black,->,>=stealth] (-1,0)--(1,0);
\begin{scope}[shift={(5,0)}]
\draw[color=blue, line width=1.5pt, ] (-2,.2)--(0,2);
\draw[color=red, line width=1.5pt, ] (-2,0)--(2,0);
\node at (0,0)[anchor=north, color=red]{$\alpha$};
\node at (-1,1)[anchor=south east, color=blue]{$\beta$};
\draw [ line width=1pt, domain=-20:20] plot ({2*cos(\x)}, {2*sin(\x)});
\draw [ line width=1pt, domain=160:200] plot ({2*cos(\x)}, {2*sin(\x)});
\draw [ line width=1pt, domain=70:110] plot ({2*cos(\x)}, {2*sin(\x)});
\end{scope}
\end{tikzpicture}
\end{align}
Building on the work of Penner \cite{P87}, which describes the mapping class group $\mathrm{Map}(\Sigma)$ in terms  of {\em flip moves} or {\em Whitehead moves} on a triangulation, Bene \cite{B} gives a simple presentation in terms of chord slides.

\begin{theorem}[\cite{B}, Th.~7.1]\label{th:bene} Let $\Sigma$ be a surface of genus $g\geq 1$ with one boundary component and $\Gamma$ an embedded  ribbon graph  with a single  ciliated vertex and face such that $\Sigma\setminus\Gamma$ is an annulus. 
Then  the mapping class group $\mathrm{Map}(\Sigma)$  is presented  by finite sequences of chord slides that preserve $\Gamma$ up to the cilium
 subject to  the  relations
\begin{compactenum}
\item {\bf Involutivity:} the following composite of slides is the identity,
\begin{align}\label{eq:slidegerinv}
&\begin{tikzpicture}[scale=.4]
\begin{scope}[shift={(-15,0)}]
\draw[color=blue, line width=1.5pt, ] (2,.2)--(0,2);
\draw[color=red, line width=1.5pt, ] (-2,0)--(2,0);
\node at (0,0)[anchor=north, color=red]{$\alpha$};
\node at (1,1)[anchor=south west, color=blue]{$\beta$};
\draw [ line width=1pt, domain=-20:20] plot ({2*cos(\x)}, {2*sin(\x)});
\draw [ line width=1pt, domain=160:200] plot ({2*cos(\x)}, {2*sin(\x)});
\draw [ line width=1pt, domain=70:110] plot ({2*cos(\x)}, {2*sin(\x)});
\end{scope}
\draw[color=black,->,>=stealth] (-11,0)--(-9,0);
\begin{scope}[shift={(-5,0)}]
\draw[color=blue, line width=1.5pt, ] (-2,.2)--(0,2);
\draw[color=red, line width=1.5pt, ] (-2,0)--(2,0);
\node at (0,0)[anchor=north, color=red]{$\alpha$};
\node at (-1,1)[anchor=south east, color=blue]{$\beta$};
\draw [ line width=1pt, domain=-20:20] plot ({2*cos(\x)}, {2*sin(\x)});
\draw [ line width=1pt, domain=160:200] plot ({2*cos(\x)}, {2*sin(\x)});
\draw [ line width=1pt, domain=70:110] plot ({2*cos(\x)}, {2*sin(\x)});
\end{scope}
\draw[color=black,->,>=stealth] (-1,0)--(1,0);
\begin{scope}[shift={(5,0)}]
\draw[color=blue, line width=1.5pt, ] (2,.2)--(0,2);
\draw[color=red, line width=1.5pt, ] (-2,0)--(2,0);
\node at (0,0)[anchor=north, color=red]{$\alpha$};
\node at (1,1)[anchor=south west, color=blue]{$\beta$};
\draw [ line width=1pt, domain=-20:20] plot ({2*cos(\x)}, {2*sin(\x)});
\draw [ line width=1pt, domain=160:200] plot ({2*cos(\x)}, {2*sin(\x)});
\draw [ line width=1pt, domain=70:110] plot ({2*cos(\x)}, {2*sin(\x)});
\end{scope}
\end{tikzpicture}
\end{align}

\item {\bf Commutativity:} chord slides of disjoint ends along distinct chords commute,
\begin{align}\label{eq:slidegercomm}
&\begin{tikzpicture}[scale=.4]
\begin{scope}[shift={(-4,0)}]
\draw[color=red, line width=1.5pt, ] (10:2)--(160:2);
\draw[color=blue, line width=1.5pt, ] (90:2)--(15:2);
\draw[color=magenta, line width=1.5pt, ] (-20:2)--(200:2);
\draw[color=cyan, line width=1.5pt, ] (215:2)--(290:2);
\node at (40:1.5)[anchor=south west, color=blue]{$\beta$};
\node at (-.5,.5)[anchor=south, color=red]{$\alpha$};
\node at (-.5,-2.8)[anchor=south, color=cyan]{$\delta$};
\node at (-.5,-.7)[anchor=south, color=magenta]{$\gamma$};
\draw [ line width=1pt, domain=0:30] plot ({2*cos(\x)}, {2*sin(\x)});
\draw [ line width=1pt, domain=140:170] plot ({2*cos(\x)}, {2*sin(\x)});
\draw [ line width=1pt, domain=190:230] plot ({2*cos(\x)}, {2*sin(\x)});
\draw [ line width=1pt, domain=280:300] plot ({2*cos(\x)}, {2*sin(\x)});
\draw [ line width=1pt, domain=-10:-40] plot ({2*cos(\x)}, {2*sin(\x)});
\draw [ line width=1pt, domain=80:100] plot ({2*cos(\x)}, {2*sin(\x)});
\end{scope}
\draw[color=black,->,>=stealth] (-1,0)--(1,0);
\begin{scope}[shift={(4,0)}]
\draw[color=red, line width=1.5pt, ] (10:2)--(160:2);
\draw[color=blue, line width=1.5pt, ] (90:2)--(155:2);
\draw[color=magenta, line width=1.5pt, ] (-20:2)--(200:2);
\draw[color=cyan, line width=1.5pt, ] (215:2)--(290:2);
\node at (125:1.5)[anchor=south east, color=blue]{$\beta$};
\node at (-.5,.5)[anchor=south, color=red]{$\alpha$};
\node at (-.5,-2.8)[anchor=south, color=cyan]{$\delta$};
\node at (-.5,-.7)[anchor=south, color=magenta]{$\gamma$};
\draw [ line width=1pt, domain=0:30] plot ({2*cos(\x)}, {2*sin(\x)});
\draw [ line width=1pt, domain=140:170] plot ({2*cos(\x)}, {2*sin(\x)});
\draw [ line width=1pt, domain=190:230] plot ({2*cos(\x)}, {2*sin(\x)});
\draw [ line width=1pt, domain=280:300] plot ({2*cos(\x)}, {2*sin(\x)});
\draw [ line width=1pt, domain=-10:-40] plot ({2*cos(\x)}, {2*sin(\x)});
\draw [ line width=1pt, domain=80:100] plot ({2*cos(\x)}, {2*sin(\x)});
\end{scope}
\draw[color=black,->,>=stealth] (-4,-3)--(-4,-4);
\draw[color=black,->,>=stealth] (4,-3)--(4,-4);
\begin{scope}[shift={(-4,-7)}]
\draw[color=red, line width=1.5pt, ] (10:2)--(160:2);
\draw[color=blue, line width=1.5pt, ] (90:2)--(15:2);
\draw[color=magenta, line width=1.5pt, ] (-20:2)--(200:2);
\draw[color=cyan, line width=1.5pt, ] (-25:2)--(290:2);
\node at (40:1.5)[anchor=south west, color=blue]{$\beta$};
\node at (-.5,.5)[anchor=south, color=red]{$\alpha$};
\node at (2,-1.5)[anchor=north east, color=cyan]{$\delta$};
\node at (-.5,-.7)[anchor=south, color=magenta]{$\gamma$};
\draw [ line width=1pt, domain=0:30] plot ({2*cos(\x)}, {2*sin(\x)});
\draw [ line width=1pt, domain=140:170] plot ({2*cos(\x)}, {2*sin(\x)});
\draw [ line width=1pt, domain=190:230] plot ({2*cos(\x)}, {2*sin(\x)});
\draw [ line width=1pt, domain=280:300] plot ({2*cos(\x)}, {2*sin(\x)});
\draw [ line width=1pt, domain=-10:-40] plot ({2*cos(\x)}, {2*sin(\x)});
\draw [ line width=1pt, domain=80:100] plot ({2*cos(\x)}, {2*sin(\x)});
\end{scope}
\draw[color=black,->,>=stealth] (-1,-7)--(1,-7);
\begin{scope}[shift={(4,-7)}]
\draw[color=red, line width=1.5pt, ] (10:2)--(160:2);
\draw[color=blue, line width=1.5pt, ] (90:2)--(155:2);
\draw[color=magenta, line width=1.5pt, ] (-20:2)--(200:2);
\draw[color=cyan, line width=1.5pt, ] (-25:2)--(290:2);
\node at (125:1.5)[anchor=south east, color=blue]{$\beta$};
\node at (-.5,.5)[anchor=south, color=red]{$\alpha$};
\node at (2,-1.5)[anchor=north east, color=cyan]{$\delta$};
\node at (-.5,-.7)[anchor=south, color=magenta]{$\gamma$};
\draw [ line width=1pt, domain=0:30] plot ({2*cos(\x)}, {2*sin(\x)});
\draw [ line width=1pt, domain=140:170] plot ({2*cos(\x)}, {2*sin(\x)});
\draw [ line width=1pt, domain=190:230] plot ({2*cos(\x)}, {2*sin(\x)});
\draw [ line width=1pt, domain=280:300] plot ({2*cos(\x)}, {2*sin(\x)});
\draw [ line width=1pt, domain=-10:-40] plot ({2*cos(\x)}, {2*sin(\x)});
\draw [ line width=1pt, domain=80:100] plot ({2*cos(\x)}, {2*sin(\x)});
\end{scope}
\end{tikzpicture}
\end{align}

\item {\bf Triangle relation:}
\begin{align}
\label{eq:triangleger}
\begin{tikzpicture}[scale=.4]
\begin{scope}[shift={(-4,0)}]
\draw[color=red, line width=1.5pt, ] (-90:2)--(27:2);
\draw[color=blue, line width=1.5pt, ] (33:2)--(150:2);
\draw [ line width=1pt, domain=-70:-110] plot ({2*cos(\x)}, {2*sin(\x)});
\draw [ line width=1pt, domain=10:50] plot ({2*cos(\x)}, {2*sin(\x)});
\draw [ line width=1pt, domain=130:170] plot ({2*cos(\x)}, {2*sin(\x)});
\node at (-45:1.3)[anchor=west, color=red]{$\alpha$};
\node at (90:1)[anchor=south, color=blue]{$\beta$};
\end{scope}
\draw[color=black,->,>=stealth] (-1,0)--(1,0);
\begin{scope}[shift={(4,0)}]
\draw[color=red, line width=1.5pt, ] (-87:2)--(30:2);
\draw[color=blue, line width=1.5pt, ] (-93:2)--(150:2);
\draw [ line width=1pt, domain=-70:-110] plot ({2*cos(\x)}, {2*sin(\x)});
\draw [ line width=1pt, domain=10:50] plot ({2*cos(\x)}, {2*sin(\x)});
\draw [ line width=1pt, domain=130:170] plot ({2*cos(\x)}, {2*sin(\x)});
\node at (-45:1.3)[anchor=west, color=red]{$\alpha$};
\node at (225:1.3)[anchor=east, color=blue]{$\beta$};
\end{scope}
\draw[color=black,->,>=stealth] (4,-3)--(4,-4);
\begin{scope}[shift={(4,-7)}]
\draw[color=red, line width=1.5pt, ] (147:2)--(30:2);
\draw[color=blue, line width=1.5pt, ] (-90:2)--(153:2);
\draw [ line width=1pt, domain=-70:-110] plot ({2*cos(\x)}, {2*sin(\x)});
\draw [ line width=1pt, domain=10:50] plot ({2*cos(\x)}, {2*sin(\x)});
\draw [ line width=1pt, domain=130:170] plot ({2*cos(\x)}, {2*sin(\x)});
\node at (90:1)[anchor=south, color=red]{$\alpha$};
\node at (225:1.3)[anchor=east, color=blue]{$\beta$};
\end{scope}
\draw[color=black,->,>=stealth] (1,-7)--(-1,-7);
\begin{scope}[shift={(-4,-7)}]
\draw[color=blue, line width=1.5pt, ] (-90:2)--(27:2);
\draw[color=red, line width=1.5pt, ] (33:2)--(150:2);
\draw [ line width=1pt, domain=-70:-110] plot ({2*cos(\x)}, {2*sin(\x)});
\draw [ line width=1pt, domain=10:50] plot ({2*cos(\x)}, {2*sin(\x)});
\draw [ line width=1pt, domain=130:170] plot ({2*cos(\x)}, {2*sin(\x)});
\node at (90:1)[anchor=south, color=red]{$\alpha$};
\node at (-45:1.3)[anchor=west, color=blue]{$\beta$};
\end{scope}
\draw[color=black,<->,>=stealth] (-4,-4)--(-4,-3);
\node at (-4,-3.5) [anchor=east, color=black] {$\alpha\leftrightarrow\beta$};
\end{tikzpicture}
\end{align}

\item {\bf Left pentagon relation:}
\begin{align}\label{eq:leftpentger}
\begin{tikzpicture}[scale=.4]
\begin{scope}[shift={(-8,0)}]
\draw[color=red, line width=1.5pt, ] (-80:2)--(0:2);
\node at (1,-1) [anchor=north west, color=red]{$\alpha$};
\draw[color=violet, line width=1.5pt, ] (-90:2)--(90:2);
\node at (0,0)[anchor=west, color=violet]{$\gamma$};
\draw[color=blue, line width=1.5pt, ] (-100:2)--(180:2);
\node at (-1,-1)[anchor=north east, color=blue]{$\beta$};
\draw [ line width=1pt, domain=-25:25] plot ({2*cos(\x)}, {2*sin(\x)});
\draw [ line width=1pt, domain=65:115] plot ({2*cos(\x)}, {2*sin(\x)});
\draw [ line width=1pt, domain=155:205] plot ({2*cos(\x)}, {2*sin(\x)});
\draw [ line width=1pt, domain=245:295] plot ({2*cos(\x)}, {2*sin(\x)});
\end{scope}
\draw[color=black,->,>=stealth] (-4.5,0)--(-2.5,0);
\begin{scope}[shift={(1,0)}]
\draw[color=red, line width=1.5pt, ] (80:2)--(0:2);
\node at (1,1) [anchor=south west, color=red]{$\alpha$};
\draw[color=violet, line width=1.5pt, ] (-90:2)--(90:2);
\node at (0,0)[anchor=west, color=violet]{$\gamma$};
\draw[color=blue, line width=1.5pt, ] (-100:2)--(180:2);
\node at (-1,-1)[anchor=north east, color=blue]{$\beta$};
\draw [ line width=1pt, domain=-25:25] plot ({2*cos(\x)}, {2*sin(\x)});
\draw [ line width=1pt, domain=65:115] plot ({2*cos(\x)}, {2*sin(\x)});
\draw [ line width=1pt, domain=155:205] plot ({2*cos(\x)}, {2*sin(\x)});
\draw [ line width=1pt, domain=245:295] plot ({2*cos(\x)}, {2*sin(\x)});
\end{scope}
\draw[color=black,->,>=stealth] (4.5,0)--(6.5,0);
\begin{scope}[shift={(10,0)}]
\draw[color=red, line width=1.5pt, ] (80:2)--(0:2);
\node at (1,1) [anchor=south west, color=red]{$\alpha$};
\draw[color=violet, line width=1.5pt, ] (170:2)--(90:2);
\node at (-1,1)[anchor=south east, color=violet]{$\gamma$};
\draw[color=blue, line width=1.5pt, ] (-100:2)--(190:2);
\node at (-1,-1)[anchor=north east, color=blue]{$\beta$};
\draw [ line width=1pt, domain=-25:25] plot ({2*cos(\x)}, {2*sin(\x)});
\draw [ line width=1pt, domain=65:115] plot ({2*cos(\x)}, {2*sin(\x)});
\draw [ line width=1pt, domain=155:205] plot ({2*cos(\x)}, {2*sin(\x)});
\draw [ line width=1pt, domain=245:295] plot ({2*cos(\x)}, {2*sin(\x)});
\end{scope}
\draw[color=black,->,>=stealth] (-7,-3)--(-5,-5);
\draw[color=black,->,>=stealth] (5,-5)--(7,-3);
\begin{scope}[shift={(5,-8)}]
\draw[color=red, line width=1.5pt, ] (180:2)--(0:2);
\node at (0,0) [anchor=south, color=red]{$\alpha$};
\draw[color=violet, line width=1.5pt, ] (170:2)--(90:2);
\node at (-1,1)[anchor=south east, color=violet]{$\gamma$};
\draw[color=blue, line width=1.5pt, ] (-100:2)--(190:2);
\node at (-1,-1)[anchor=north east, color=blue]{$\beta$};
\draw [ line width=1pt, domain=-25:25] plot ({2*cos(\x)}, {2*sin(\x)});
\draw [ line width=1pt, domain=65:115] plot ({2*cos(\x)}, {2*sin(\x)});
\draw [ line width=1pt, domain=155:205] plot ({2*cos(\x)}, {2*sin(\x)});
\draw [ line width=1pt, domain=245:295] plot ({2*cos(\x)}, {2*sin(\x)});
\end{scope}
\draw[color=black,->,>=stealth] (-1,-8)--(1,-8);
\begin{scope}[shift={(-5,-8)}]
\draw[color=red, line width=1.5pt, ] (-80:2)--(0:2);
\node at (1,-1) [anchor=north west, color=red]{$\alpha$};
\draw[color=violet, line width=1.5pt, ] (170:2)--(90:2);
\node at (-1,1)[anchor=south east, color=violet]{$\gamma$};
\draw[color=blue, line width=1.5pt, ] (-100:2)--(190:2);
\node at (-1,-1)[anchor=north east, color=blue]{$\beta$};
\draw [ line width=1pt, domain=-25:25] plot ({2*cos(\x)}, {2*sin(\x)});
\draw [ line width=1pt, domain=65:115] plot ({2*cos(\x)}, {2*sin(\x)});
\draw [ line width=1pt, domain=155:205] plot ({2*cos(\x)}, {2*sin(\x)});
\draw [ line width=1pt, domain=245:295] plot ({2*cos(\x)}, {2*sin(\x)});
\end{scope}
\end{tikzpicture}
\end{align}

\item {\bf Right pentagon relation:}
\begin{align}\label{eq:rightpentger}
\begin{tikzpicture}[scale=.4]
\begin{scope}[shift={(-8,0)}]
\draw[color=red, line width=1.5pt, ] (-80:2)--(0:2);
\node at (1,-1) [anchor=north west, color=red]{$\alpha$};
\draw[color=violet, line width=1.5pt, ] (-90:2)--(90:2);
\node at (0,0)[anchor=west, color=violet]{$\gamma$};
\draw[color=blue, line width=1.5pt, ] (-100:2)--(180:2);
\node at (-1,-1)[anchor=north east, color=blue]{$\beta$};
\draw [ line width=1pt, domain=-25:25] plot ({2*cos(\x)}, {2*sin(\x)});
\draw [ line width=1pt, domain=65:115] plot ({2*cos(\x)}, {2*sin(\x)});
\draw [ line width=1pt, domain=155:205] plot ({2*cos(\x)}, {2*sin(\x)});
\draw [ line width=1pt, domain=245:295] plot ({2*cos(\x)}, {2*sin(\x)});
\end{scope}
\draw[color=black,<-,>=stealth] (-4.5,0)--(-2.5,0);
\begin{scope}[shift={(1,0)}]
\draw[color=red, line width=1.5pt, ] (-80:2)--(0:2);
\node at (1,-1) [anchor=north west, color=red]{$\alpha$};
\draw[color=violet, line width=1.5pt, ] (-90:2)--(90:2);
\node at (0,0)[anchor=west, color=violet]{$\gamma$};
\draw[color=blue, line width=1.5pt, ] (100:2)--(180:2);
\node at (-1,1)[anchor=south east, color=blue]{$\beta$};
\draw [ line width=1pt, domain=-25:25] plot ({2*cos(\x)}, {2*sin(\x)});
\draw [ line width=1pt, domain=65:115] plot ({2*cos(\x)}, {2*sin(\x)});
\draw [ line width=1pt, domain=155:205] plot ({2*cos(\x)}, {2*sin(\x)});
\draw [ line width=1pt, domain=245:295] plot ({2*cos(\x)}, {2*sin(\x)});
\end{scope}
\draw[color=black,<-,>=stealth] (4.5,0)--(6.5,0);
\begin{scope}[shift={(10,0)}]
\draw[color=red, line width=1.5pt, ] (-80:2)--(0:2);
\node at (1,-1) [anchor=north west, color=red]{$\alpha$};
\draw[color=violet, line width=1.5pt, ] (10:2)--(90:2);
\node at (1,1)[anchor=south west, color=violet]{$\gamma$};
\draw[color=blue, line width=1.5pt, ] (100:2)--(180:2);
\node at (-1,1)[anchor=south east, color=blue]{$\beta$};
\draw [ line width=1pt, domain=-25:25] plot ({2*cos(\x)}, {2*sin(\x)});
\draw [ line width=1pt, domain=65:115] plot ({2*cos(\x)}, {2*sin(\x)});
\draw [ line width=1pt, domain=155:205] plot ({2*cos(\x)}, {2*sin(\x)});
\draw [ line width=1pt, domain=245:295] plot ({2*cos(\x)}, {2*sin(\x)});
\end{scope}
\draw[color=black,<-,>=stealth] (-7,-3)--(-5,-5);
\draw[color=black,<-,>=stealth] (5,-5)--(7,-3);
\begin{scope}[shift={(5,-8)}]
\draw[color=red, line width=1.5pt, ] (-80:2)--(0:2);
\node at (1,-1) [anchor=north west, color=red]{$\alpha$};
\draw[color=violet, line width=1.5pt, ] (10:2)--(90:2);
\node at (1,1)[anchor=south west, color=violet]{$\gamma$};
\draw[color=blue, line width=1.5pt, ] (0:2)--(180:2);
\node at (0,0)[anchor=south, color=blue]{$\beta$};
\draw [ line width=1pt, domain=-25:25] plot ({2*cos(\x)}, {2*sin(\x)});
\draw [ line width=1pt, domain=65:115] plot ({2*cos(\x)}, {2*sin(\x)});
\draw [ line width=1pt, domain=155:205] plot ({2*cos(\x)}, {2*sin(\x)});
\draw [ line width=1pt, domain=245:295] plot ({2*cos(\x)}, {2*sin(\x)});
\end{scope}
\draw[color=black,<-,>=stealth] (-1,-8)--(1,-8);
\begin{scope}[shift={(-5,-8)}]
\draw[color=red, line width=1.5pt, ] (-80:2)--(0:2);
\node at (1,-1) [anchor=north west, color=red]{$\alpha$};
\draw[color=violet, line width=1.5pt, ] (10:2)--(90:2);
\node at (1,1)[anchor=south west, color=violet]{$\gamma$};
\draw[color=blue, line width=1.5pt, ] (-100:2)--(180:2);
\node at (-1,-1)[anchor=north east, color=blue]{$\beta$};
\draw [ line width=1pt, domain=-25:25] plot ({2*cos(\x)}, {2*sin(\x)});
\draw [ line width=1pt, domain=65:115] plot ({2*cos(\x)}, {2*sin(\x)});
\draw [ line width=1pt, domain=155:205] plot ({2*cos(\x)}, {2*sin(\x)});
\draw [ line width=1pt, domain=245:295] plot ({2*cos(\x)}, {2*sin(\x)});
\end{scope}
\end{tikzpicture}
\end{align}
\end{compactenum}
\end{theorem}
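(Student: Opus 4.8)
The plan is to realise $\mathrm{Map}(\Sigma)$ as the fundamental group of a two‑dimensional complex assembled from chord slides, taking as geometric input Penner's description \cite{P87} of the mapping class group in terms of flips of ideal triangulations (equivalently Whitehead moves on trivalent ribbon graphs), and then transporting the flip calculus to the chord‑slide calculus. First I would fix the geometric picture: since $\Sigma\setminus\Gamma$ is an annulus, $\Gamma$ is a deformation retract of $\Sigma$, so any embedded ribbon graph of this combinatorial type is a \emph{spine} and, together with a labelling of its $2g$ chords and the position of the cilium (which sits on the annular side), carries enough data to reconstruct $\Sigma$ and to detect isotopies. Let $\mathcal S$ be the groupoid whose objects are isotopy classes of such marked spines in $\Sigma$ and whose morphisms are generated by chord slides; $\mathrm{Map}(\Sigma)$ acts on $\mathcal S$ by postcomposition. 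This action is free on objects: a homeomorphism fixing $\partial\Sigma$ pointwise and carrying a marked spine to itself is isotopic to the identity, the only ambiguity being how it rotates the cilium around the annulus, which records a power of the boundary Dehn twist. The quotient $\mathcal S/\mathrm{Map}(\Sigma)$ then has as objects the abstract ciliated ribbon graphs of the prescribed type up to the cilium, and as morphisms the chord slides, and the standard correspondence between a free action on a simply connected $2$‑complex and a presentation of the group by edge‑loops and $2$‑cells of the quotient gives $\mathrm{Map}(\Sigma)\cong\pi_1(\mathcal S/\mathrm{Map}(\Sigma),\Gamma)$: the group generated by all finite sequences of chord slides returning $\Gamma$ to itself up to the cilium, modulo the relations carried by the $2$‑cells of a presentation complex for $\mathcal S$. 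It remains to establish (a) connectivity of $\mathcal S$, and (b) that a generating family of relations is given by the five listed ones.

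For (a), I would refine a marked spine to a trivalent ribbon graph (dually, to an ideal triangulation of the once‑punctured surface obtained by capping the annulus with a punctured disc), invoke Penner's theorem that any two ideal triangulations are joined by a finite sequence of flips, and check that each flip, together with the edge contractions and expansions needed to pass between trivalent and one‑vertex ribbon graphs, is a composite of chord slides. This makes $\mathcal S$ connected, so chord‑slide loops at $\Gamma$ surject onto $\mathrm{Map}(\Sigma)$. For (b), the same translation is used in reverse: relations among chord slides are generated by relations among flips, and Penner's analysis shows the latter are generated by $(\mathrm{flip})^2=\mathrm{id}$, commutativity of flips on disjoint edges, and the pentagon relation for flips around a square of a triangulation. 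Pulling these back along the (context‑dependent, many‑to‑one) dictionary between chord slides and flips, and then running through the finitely many combinatorial types of configuration of the chord ends involved in a slide — which ends are adjacent, which nested, which disjoint — one checks that every resulting relation is a consequence of involutivity, commutativity, the triangle relation and the two pentagons, and conversely that each of these five does hold: involutivity and commutativity directly from the definition of a slide, the triangle relation by reshuffling three pairwise‑adjacent chord ends, and the left and right pentagons from the two combinatorial types of a chord whose two ends straddle a pair of other chords.

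The hard part will be (b): showing that these five families generate \emph{all} relations among chord slides, equivalently that the chord‑slide presentation complex is simply connected. The topological inputs — contractibility of the arc complex of the punctured surface and Penner's flip presentation of the mapping class groupoid — are available, but the translation between chord slides and flips is neither canonical nor injective, so the substance of the argument is a careful and lengthy combinatorial bookkeeping that tracks, throughout each reduction, the cilium, the basepoint below the baseline used to represent paths, and the auxiliary trivalent structure. Particular care is needed for the ``up to the cilium'' clause, i.e. for the boundary Dehn twist: this is exactly the feature that distinguishes the mapping class group $\mathrm{Map}(\Sigma)$ considered here from the mapping class group of the corresponding once‑punctured surface, so the cilium discrepancy accumulated along a loop of slides must be shown to be absorbed consistently into the generating set and not to introduce extra relations.
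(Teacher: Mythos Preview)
The paper does not prove this statement: Theorem~\ref{th:bene} is quoted verbatim from Bene~\cite[Th.~7.1]{B} as background, so there is no proof in the paper to compare against. Your task was therefore to reconstruct Bene's argument, and your outline does capture its architecture correctly: Bene builds a groupoid of marked chord diagrams, uses Penner's cell decomposition of decorated Teichm\"uller space~\cite{P87} to get connectivity and a generating set of relations among Whitehead moves on trivalent fatgraphs, and then translates each Whitehead move into a composite of chord slides and checks that the Penner relations pull back to consequences of the five listed families.

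That said, what you have written is a proof \emph{strategy}, not a proof. You yourself flag part~(b) as ``careful and lengthy combinatorial bookkeeping'' without carrying any of it out, and this is precisely where the content lies. In Bene's paper the translation from Whitehead moves to chord slides goes through an intermediate step (collapsing a maximal tree in a trivalent fatgraph to get a chord diagram, and tracking how a flip interacts with the collapse), and the verification that the flip pentagon becomes the two chord-slide pentagons, and that no extraneous relations appear, occupies several pages of case analysis. Your sketch also leaves vague the treatment of the cilium: you say the boundary Dehn twist ``must be shown to be absorbed consistently,'' but Bene's setup handles this by working with fatgraphs carrying a distinguished boundary-parallel tail, and one needs to verify that chord slides never create or destroy this tail. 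None of your steps is wrong, but to turn this into a proof you would need to actually execute the dictionary between flips and slides and run the case analysis, or else cite~\cite{B} as the paper does.
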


Bene \cite{B} also defines analogous chord slides and  relations  for {\em oriented} chords. In this case, the chord slides preserve the orientation, and each relation holds for all possible orientation of the chords. The triangle relation  then requires an  additional orientation reversal. 
It is also shown  in \cite{B} that the relations in Theorem \ref{th:bene} imply two further relations.

\begin{lemma} [\cite{B}, Lemma 6.1]\label{lem:bene}
The relations in Theorem \ref{th:bene} imply 
\begin{compactenum}
\item {\bf Opposite end commutativity:} chord slides of different ends of a chord  commute.
\item {\bf Adjacent commutativity:} chord slides along  different  sides of a chord commute.

\begin{align}\label{eq:oppendger}
\begin{tikzpicture}[scale=.4]
\begin{scope}[shift={(-4,0)}]
\draw[color=red, line width=1.5pt, ] (80:2)--(10:2);
\node at (1,1) [anchor=south west, color=red]{$\alpha$};
\draw[color=violet, line width=1.5pt, ] (-90:2)--(90:2);
\node at (0,0)[anchor=west, color=violet]{$\gamma$};
\draw[color=blue, line width=1.5pt, ] (-100:2)--(190:2);
\node at (-1,-1)[anchor=north east, color=blue]{$\beta$};
\draw [ line width=1pt, domain=-25:25] plot ({2*cos(\x)}, {2*sin(\x)});
\draw [ line width=1pt, domain=65:115] plot ({2*cos(\x)}, {2*sin(\x)});
\draw [ line width=1pt, domain=155:205] plot ({2*cos(\x)}, {2*sin(\x)});
\draw [ line width=1pt, domain=245:295] plot ({2*cos(\x)}, {2*sin(\x)});
\end{scope}
\draw[color=black,->,>=stealth] (-1,0)--(1,0);
\begin{scope}[shift={(4,0)}]
\draw[color=red, line width=1.5pt, ] (80:2)--(10:2);
\node at (1,1) [anchor=south west, color=red]{$\alpha$};
\draw[color=violet, line width=1.5pt, ] (-90:2)--(-10:2);
\node at (1,-1)[anchor=north west, color=violet]{$\gamma$};
\draw[color=blue, line width=1.5pt, ] (-100:2)--(190:2);
\node at (-1,-1)[anchor=north east, color=blue]{$\beta$};
\draw [ line width=1pt, domain=-25:25] plot ({2*cos(\x)}, {2*sin(\x)});
\draw [ line width=1pt, domain=65:115] plot ({2*cos(\x)}, {2*sin(\x)});
\draw [ line width=1pt, domain=155:205] plot ({2*cos(\x)}, {2*sin(\x)});
\draw [ line width=1pt, domain=245:295] plot ({2*cos(\x)}, {2*sin(\x)});
\end{scope}
\draw[color=black,->,>=stealth] (4,-3)--(4,-4);
\draw[color=black,->,>=stealth] (-4,-3)--(-4,-4);
\begin{scope}[shift={(4,-7)}]
\draw[color=red, line width=1.5pt, ] (80:2)--(10:2);
\node at (1,1) [anchor=south west, color=red]{$\alpha$};
\draw[color=violet, line width=1.5pt, ] (180:2)--(0:2);
\node at (0,0)[anchor=south, color=violet]{$\gamma$};
\draw[color=blue, line width=1.5pt, ] (-100:2)--(190:2);
\node at (-1,-1)[anchor=north east, color=blue]{$\beta$};
\draw [ line width=1pt, domain=-25:25] plot ({2*cos(\x)}, {2*sin(\x)});
\draw [ line width=1pt, domain=65:115] plot ({2*cos(\x)}, {2*sin(\x)});
\draw [ line width=1pt, domain=155:205] plot ({2*cos(\x)}, {2*sin(\x)});
\draw [ line width=1pt, domain=245:295] plot ({2*cos(\x)}, {2*sin(\x)});
\end{scope}
\draw[color=black,->,>=stealth] (-1,-7)--(1,-7);
\begin{scope}[shift={(-4,-7)}]
\draw[color=red, line width=1.5pt, ] (80:2)--(10:2);
\node at (1,1) [anchor=south west, color=red]{$\alpha$};
\draw[color=violet, line width=1.5pt, ] (170:2)--(90:2);
\node at (-1,1)[anchor=south east, color=violet]{$\gamma$};
\draw[color=blue, line width=1.5pt, ] (-100:2)--(190:2);
\node at (-1,-1)[anchor=north east, color=blue]{$\beta$};
\draw [ line width=1pt, domain=-25:25] plot ({2*cos(\x)}, {2*sin(\x)});
\draw [ line width=1pt, domain=65:115] plot ({2*cos(\x)}, {2*sin(\x)});
\draw [ line width=1pt, domain=155:205] plot ({2*cos(\x)}, {2*sin(\x)});
\draw [ line width=1pt, domain=245:295] plot ({2*cos(\x)}, {2*sin(\x)});
\end{scope}
\end{tikzpicture}
\qquad
\qquad
\qquad
\begin{tikzpicture}[scale=.4]
\begin{scope}[shift={(-4,0)}]
\draw[color=red, line width=1.5pt, ] (-80:2)--(-10:2);
\node at (1,-1) [anchor=north west, color=red]{$\alpha$};
\draw[color=violet, line width=1.5pt, ] (-90:2)--(90:2);
\node at (0,0)[anchor=west, color=violet]{$\gamma$};
\draw[color=blue, line width=1.5pt, ] (-100:2)--(190:2);
\node at (-1,-1)[anchor=north east, color=blue]{$\beta$};
\draw [ line width=1pt, domain=-25:25] plot ({2*cos(\x)}, {2*sin(\x)});
\draw [ line width=1pt, domain=65:115] plot ({2*cos(\x)}, {2*sin(\x)});
\draw [ line width=1pt, domain=155:205] plot ({2*cos(\x)}, {2*sin(\x)});
\draw [ line width=1pt, domain=245:295] plot ({2*cos(\x)}, {2*sin(\x)});
\end{scope}
\draw[color=black,->,>=stealth] (-1,0)--(1,0);
\begin{scope}[shift={(4,0)}]
\draw[color=red, line width=1.5pt, ] (-80:2)--(-10:2);
\node at (1,-1) [anchor=north west, color=red]{$\alpha$};
\draw[color=violet, line width=1.5pt, ] (-90:2)--(90:2);
\node at (0,0)[anchor=west, color=violet]{$\gamma$};
\draw[color=blue, line width=1.5pt, ] (100:2)--(170:2);
\node at (-1,1)[anchor=south east, color=blue]{$\beta$};
\draw [ line width=1pt, domain=-25:25] plot ({2*cos(\x)}, {2*sin(\x)});
\draw [ line width=1pt, domain=65:115] plot ({2*cos(\x)}, {2*sin(\x)});
\draw [ line width=1pt, domain=155:205] plot ({2*cos(\x)}, {2*sin(\x)});
\draw [ line width=1pt, domain=245:295] plot ({2*cos(\x)}, {2*sin(\x)});
\end{scope}
\draw[color=black,->,>=stealth] (4,-3)--(4,-4);
\draw[color=black,->,>=stealth] (-4,-3)--(-4,-4);
\begin{scope}[shift={(4,-7)}]
\draw[color=red, line width=1.5pt, ] (80:2)--(-10:2);
\node at (1,1) [anchor=south west, color=red]{$\alpha$};
\draw[color=violet, line width=1.5pt, ] (-90:2)--(90:2);
\node at (0,0)[anchor=west, color=violet]{$\gamma$};
\draw[color=blue, line width=1.5pt, ] (100:2)--(170:2);
\node at (-1,1)[anchor=south east, color=blue]{$\beta$};
\draw [ line width=1pt, domain=-25:25] plot ({2*cos(\x)}, {2*sin(\x)});
\draw [ line width=1pt, domain=65:115] plot ({2*cos(\x)}, {2*sin(\x)});
\draw [ line width=1pt, domain=155:205] plot ({2*cos(\x)}, {2*sin(\x)});
\draw [ line width=1pt, domain=245:295] plot ({2*cos(\x)}, {2*sin(\x)});
\end{scope}
\draw[color=black,->,>=stealth] (-1,-7)--(1,-7);
\begin{scope}[shift={(-4,-7)}]
\draw[color=red, line width=1.5pt, ] (80:2)--(-10:2);
\node at (1,1) [anchor=south west, color=red]{$\alpha$};
\draw[color=violet, line width=1.5pt, ] (-90:2)--(90:2);
\node at (0,0)[anchor=west, color=violet]{$\gamma$};
\draw[color=blue, line width=1.5pt, ] (-100:2)--(190:2);
\node at (-1,-1)[anchor=north east, color=blue]{$\beta$};
\draw [ line width=1pt, domain=-25:25] plot ({2*cos(\x)}, {2*sin(\x)});
\draw [ line width=1pt, domain=65:115] plot ({2*cos(\x)}, {2*sin(\x)});
\draw [ line width=1pt, domain=155:205] plot ({2*cos(\x)}, {2*sin(\x)});
\draw [ line width=1pt, domain=245:295] plot ({2*cos(\x)}, {2*sin(\x)});
\end{scope}
\end{tikzpicture}
\end{align}
\end{compactenum}
\end{lemma}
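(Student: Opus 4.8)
The plan is to treat both identities purely formally, as consequences of the five defining relations of Theorem~\ref{th:bene}: for each I would exhibit an explicit finite sequence of chord slides interpolating between the two composites and rewrite it, step by step, using involutivity \eqref{eq:slidegerinv}, commutativity \eqref{eq:slidegercomm}, the triangle relation \eqref{eq:triangleger} and the two pentagon relations \eqref{eq:leftpentger}, \eqref{eq:rightpentger}. The point to keep in mind is that neither identity is an instance of \eqref{eq:slidegercomm}: in opposite end commutativity the two sliding ends belong to the \emph{same} chord, and in adjacent commutativity the two slides run along the \emph{same} chord. In both cases the configuration that genuinely governs the situation involves three chords, one of which is slid past (or along) the other two, which is exactly the local picture of the pentagon relations; this is why those relations must be used.

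For opposite end commutativity I would label by $\gamma$ the chord whose two ends are slid and by $\alpha,\beta$ the chords along which they are slid, as in the left diagram of \eqref{eq:oppendger}, and write $s_{1}$ for the slide of the $\alpha$-end of $\gamma$ along $\alpha$ and $s_{2}$ for the slide of the $\beta$-end of $\gamma$ along $\beta$; the claim is $s_{1}s_{2}=s_{2}s_{1}$. First I would use \eqref{eq:slidegercomm} to push all remaining chord ends and the cilium out of the arcs relevant to the left pentagon, which is always legal because those auxiliary slides involve pairwise distinct chords; then, after the appropriate relabelling of the three chords, \eqref{eq:leftpentger} rewrites $s_{1}$ roughly in the conjugated form $v^{-1}\,s_{1}'\,w\,v$, where $v$ slides $\gamma$ along $\beta$, $w$ slides $\alpha$ along $\beta$, and $s_{1}'$ slides $\alpha$ along the \emph{opposite} side of $\gamma$. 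None of $v,w,s_{1}'$ move the $\beta$-end of $\gamma$, and for each of them the chords involved are distinct from those of $s_{2}$ in the way required by \eqref{eq:slidegercomm}, so $s_{2}$ can be commuted past all of them. Finally I would reassemble $s_{1}$ by running \eqref{eq:rightpentger}, the pentagon with the reversed sense of the conjugation, cancelling the inverse pairs it produces via \eqref{eq:slidegerinv} and, if needed, applying the triangle relation \eqref{eq:triangleger} once to reconcile the side along which the last slide is taken; what survives is $s_{1}s_{2}$ on one side and $s_{2}s_{1}$ on the other.

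Adjacent commutativity I would then deduce from the case just treated. If $t_{1}$ slides a chord along the right side of $\gamma$ and $t_{2}$ slides another chord along the left side of $\gamma$, as in the right diagram of \eqref{eq:oppendger}, the triangle relation \eqref{eq:triangleger} --- which trades a slide of an end of $\gamma$ past a chord for a slide of that chord along $\gamma$ together with the exchange $\alpha\leftrightarrow\beta$ --- lets me rewrite $t_{1}$ as a slide of $\gamma$ itself followed by a slide along the left side of $\gamma$. After this substitution the two operations move genuinely different ends, so opposite end commutativity together with \eqref{eq:slidegercomm} yields $t_{1}t_{2}=t_{2}t_{1}$, and a second application of \eqref{eq:triangleger} (with \eqref{eq:slidegerinv}) returns the identity to its original form.

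The main obstacle is combinatorial rather than algebraic: each of the five relations of Theorem~\ref{th:bene} is stated only for one specific arrangement of adjacent chord ends, so before any of them can be invoked one must verify adjacency and, when it fails, first shuffle the offending ends (and the cilium) aside with \eqref{eq:slidegercomm}; keeping track of which ends are adjacent after every elementary slide, and checking that no intermediate diagram forces an illegal crossing of the cilium, is the delicate part. If one works instead with Bene's \emph{oriented} chord slides, there is the further bookkeeping of orientations, each use of the triangle relation carrying the orientation reversal recorded in \cite{B}; once all configurations are aligned, however, the rewriting itself is mechanical.
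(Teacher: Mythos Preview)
The paper does not give its own proof of this lemma: it is stated as a citation of \cite[Lemma~6.1]{B} and left without argument, so there is no proof in the paper to compare your attempt against. Your proposal therefore goes beyond what the present paper does, and any comparison would have to be with Bene's original argument rather than with anything here.

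That said, your outline is in the right spirit but too vague to count as a proof. You correctly identify that neither identity is an instance of the basic commutativity \eqref{eq:slidegercomm} and that the pentagon relations are the essential input; however, the phrase ``rewrites $s_1$ roughly in the conjugated form $v^{-1}s_1' w v$'' is not a precise application of \eqref{eq:leftpentger}, which equates a three-slide path with a two-slide path rather than producing a conjugation, and you do not explain concretely which of the five diagrams in the pentagon you start from or how the labelling matches. The subsequent claim that ``for each of them the chords involved are distinct from those of $s_2$ in the way required by \eqref{eq:slidegercomm}'' needs checking case by case, since after the pentagon rewrite some of the intermediate slides may share a chord with $s_2$. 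Similarly, in your reduction of adjacent commutativity to opposite end commutativity via the triangle relation, the triangle relation \eqref{eq:triangleger} exchanges the roles of the two chords rather than simply converting a right-side slide into a left-side slide, so the bookkeeping you describe (``rewrite $t_1$ as a slide of $\gamma$ itself followed by a slide along the left side of $\gamma$'') does not match what \eqref{eq:triangleger} actually says. To turn this into a proof you would need to write out the explicit sequences of diagrams, as Bene does.
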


\section{Hopf monoid labelled ribbon graphs}
\label{sec:hrib}

The mapping class group actions we derive in this article are inspired by Kitaev's {\em lattice models} or {\em quantum double models}, which were introduced by Kitaev in \cite{Ki} for group algebras of a finite group and generalised in \cite{BMCA} to finite-dimensional semisimple Hopf-*-algebras over $\C$.  We generalise some aspects of these models to  pivotal Hopf monoids in a symmetric monoidal category $\mac$. For the case where $\mac=\vect_\C$ and $H$ is a finite-dimensional semisimple Hopf $*$-algebra, our model is equivalent to the one in \cite{BMCA}, up to minor changes in conventions. 

Throughout this section, let $H$ be a pivotal Hopf monoid in a symmetric monoidal category $\mac$ with pivotal morphism $p: e\to H$ and  
$\Gamma$ a directed ribbon graph  with vertex set $V$ and edge set $E$.
We associate to $\Gamma$ the object $H^{\oo |E|}$ in $\mac$ with different copies of $H$  assigned to the oriented edges of $\Gamma$. To emphasise this assignment, we use the notation $H^{\oo E}$. This is a direct analogue of the construction of the extended Hilbert space in \cite{Ki, BMCA}.

Edge orientation is reversed with the involutive morphism $T: H\to H$ from \eqref{eq:tplusdef}. If $\Gamma'$ is obtained from $\Gamma$ by reversing the orientation of an edge $\alpha$ and
 $f: A\oo H^{\oo E}\oo B\to C\oo H^{\oo E}\oo D$ a morphism in $\mac$  associated with  $\Gamma$, we define the associated morphism for  $\Gamma'$  as
 $T_\alpha\circ f\circ T_\alpha$, where $T_\alpha$  applies $T$ to the copy of $H$ for $\alpha$ and is the identity morphism on all other factors of the tensor product.  
 Note that for an involutive Hopf monoid we can choose $T=S$ and $p=\eta$.

To each oriented edge $\alpha$ of $\Gamma$ we assign two  $H$-left module  structures $\rhd_{\alpha\pm}: H\oo H^{\oo E}\to H^{\oo E}$ and two  $H$-left comodule structures $\delta_{\alpha\pm}: H^{\oo E}\to H\oo H^{\oo E}$. The $H$-left module structures are associated with the edge ends of $\alpha$ and the $H$-left comodule structures with the edge sides of $\alpha$ as shown below. They  are related by the involution $T: H\to H$ from \eqref{eq:tplusdef}, which corresponds to reversing the orientation of $\alpha$.

 \begin{center}
 \begin{tikzpicture}[scale=.4]
 \draw[line width=1.5pt, ->,>=stealth, color=black] (-3.7,0)--(3.7,0);
  \node at (4.7,0)[anchor=west] {$\rhd_{\alpha+}$};
    \node at (-4.7,0)[anchor=east] {$\rhd_{\alpha-}$};
    \draw[line width=.5, <-, >=stealth, color=black] (4.5, -.5)--(4.5, .5);
        \draw[line width=.5, ->, >=stealth, color=black] (-4.5, -.5)--(-4.5, .5);
      \node at (0,1.3)[anchor=south] {$\delta_{\alpha+}$};
            \node at (0,-1.4)[anchor=north] {$\delta_{\alpha-}$};
                \draw[line width=.5, ->, >=stealth, color=black] (-.5, 1.3)--(.5, 1.3);
                                \draw[line width=.5, ->, >=stealth, color=black] (.5, -1.3)--(-.5, -1.3);
            \node at (0,.2)[anchor=south]{$\alpha$};
                \draw [color=black, fill=black] (-4,0) circle (.2); 
  \draw [color=black, fill=black] (4,0) circle (.2); 
 \end{tikzpicture}
 \end{center}

\begin{definition}\label{def:triangmodcom} Let $\Gamma$ be a directed ribbon graph  and $H$ a pivotal Hopf monoid in $\mac$. The $H$-left module structures $\rhd_{\alpha\pm}: H\oo H^{\oo E}\to H^{\oo E}$ and $H$-left comodule structures $\delta_{\alpha\pm}:  H^{\oo E}\to H\oo H^{\oo E}$ assigned to an oriented edge $\alpha$ are the following morphisms in $\mac$
\begin{align}
	\nonumber
&\begin{tikzpicture}[scale=.3]
\begin{scope}[shift={(-8,0)}]
\draw[line width=1.5pt, color=red](0,2.5)--(0,-2.5);
\draw[line width=1.5pt, color=blue](2,2.5)--(2,-2.5);
\draw[line width=1.5pt, color=violet](4,2.5)--(4,-2.5);
\node at (2,2.7)[anchor=south, color=blue]{$\alpha$};
\draw[line width=1pt, color=black, draw opacity=1] plot [smooth, tension=0.6] coordinates 
      {(-2,2.5)(0,.5)(2,0) };
\end{scope}
\end{tikzpicture}\qquad\qquad
&
&
\begin{tikzpicture}[scale=.3]
\begin{scope}[shift={(-6,0)}]
\draw[line width=1.5pt, color=red](0,2.5)--(0,-2.5);
\draw[line width=1.5pt, color=blue](2,2.5)--(2,-2.5);
\draw[line width=1.5pt, color=violet](4,2.5)--(4,-2.5);
\node at (2,2.7)[anchor=south, color=blue]{$\alpha$};
\draw[color=black,line width=1pt, fill=white] (2,1) circle (.4);
\draw[color=black,line width=1pt, fill=white] (2,1) circle (.2);
\draw[line width=1pt, color=black, draw opacity=1] plot [smooth, tension=0.6] coordinates 
      {(-2,2.5)(0,.5)(2,0) };
      \draw[color=black,line width=1pt, fill=white] (2,-1) circle (.4);
\draw[color=black,line width=1pt, fill=white] (2,-1) circle (.2);
\end{scope}
\node at (0,0){$=$};
\begin{scope}[shift={(3,0)}]
\draw[line width=1.5pt, color=red](0,2.5)--(0,-2.5);
\draw[line width=1.5pt, color=blue](2,2.5)--(2,-2.5);
\draw[line width=1.5pt, color=violet](4,2.5)--(4,-2.5);
\node at (2,2.7)[anchor=south, color=blue]{$\alpha$};
\draw[line width=1pt, color=black, draw opacity=1] plot [smooth, tension=0.6] coordinates 
      {(-2,2.5)(0,.5)(3.5,-.5)(2,-1.5) };
      \draw[color=black,line width=1pt, fill=white] (3.3,-.6) circle (.4);
\end{scope}
\end{tikzpicture}\\
&\rhd_{\alpha+}: H\oo H^{\oo E}\to H^{\oo E}, & &\quad\rhd_{\alpha-}: H\oo H^{\oo E}\to H^{\oo E},\nonumber
\\[+2ex]
	\nonumber
&\begin{tikzpicture}[scale=.3]
\begin{scope}[shift={(-8,0)}]
\draw[line width=1.5pt, color=red](0,2.5)--(0,-2.5);
\draw[line width=1.5pt, color=blue](2,2.5)--(2,-2.5);
\draw[line width=1.5pt, color=violet](4,2.5)--(4,-2.5);
\node at (2,2.7)[anchor=south, color=blue]{$\alpha$};
\draw[line width=1pt, color=black, draw opacity=1] plot [smooth, tension=0.6] coordinates 
      {(-2,-2.5)(0,-.5)(2,0) };
\end{scope}
\end{tikzpicture}
&
&\begin{tikzpicture}[scale=.3]
\begin{scope}[shift={(-6,0)}]
\draw[line width=1.5pt, color=red](0,2.5)--(0,-2.5);
\draw[line width=1.5pt, color=blue](2,2.5)--(2,-2.5);
\draw[line width=1.5pt, color=violet](4,2.5)--(4,-2.5);
\node at (2,2.7)[anchor=south, color=blue]{$\alpha$};
\draw[color=black,line width=1pt, fill=white] (2,1) circle (.4);
\draw[color=black,line width=1pt, fill=white] (2,1) circle (.2);
\draw[line width=1pt, color=black, draw opacity=1] plot [smooth, tension=0.6] coordinates 
      {(-2,-2.5)(0,-.5)(2,0) };
      \draw[color=black,line width=1pt, fill=white] (2,-1) circle (.4);
\draw[color=black,line width=1pt, fill=white] (2,-1) circle (.2);
\end{scope}
\node at (0,0){$=$};
\begin{scope}[shift={(3,0)}]
\draw[line width=1.5pt, color=red](0,2.5)--(0,-2.5);
\draw[line width=1.5pt, color=blue](2,2.5)--(2,-2.5);
\draw[line width=1.5pt, color=violet](4,2.5)--(4,-2.5);
\node at (2,2.7)[anchor=south, color=blue]{$\alpha$};
\draw[line width=1pt, color=black, draw opacity=1] plot [smooth, tension=0.6] coordinates 
      {(-2,-2.5)(0,-.5)(3.5,.5)(2,1.5) };
      \draw[color=black,line width=1pt, fill=white] (3.3,.6) circle (.4);
            \draw[color=black,line width=1pt, fill=white] (3.3,.6) circle (.2);
\end{scope}
\end{tikzpicture}\\
&\delta_{\alpha+}: H^{\oo E}\to H\oo H^{\oo E}, & &\quad\delta_{\alpha-}: H^{\oo E}\to H\oo H^{\oo E} ,
\nonumber
\end{align}
where the $H$-left and $H$-right (co)module structures on $H$ in the diagrams are given by left and right (co)multiplication, as in Example \ref{ex:hopfbimodule}.
\end{definition}

These $H$-left module and comodule  structures correspond to the  {\em triangle operators} in Kitaev's lattice models.  If $H$ is a finite-dimensional semisimple Hopf algebra over  $\C$ with $p=\eta$, then the
former correspond to the operators $L^h_{\alpha\pm}: H^{\oo E}\to H^{\oo E}$ for $h\in H$ in \cite{BMCA}.
 The morphisms $\delta_{\alpha\pm}$ induce two $H^*$-right module structures on $H^{\oo E}$.
 Up to antipodes that transform right $H^*$- into left $H^*$-modules, these define  the operators $T^\beta_{\alpha\pm}: H^{\oo E}\to H^{\oo E}$ in \cite{BMCA}.
The following lemma generalises the  relations between these operators from \cite{BMCA}. 

\begin{lemma} \label{lem:triangleops} Let $\alpha$ be an edge in a directed ribbon graph $\Gamma$ and $H$ a pivotal Hopf monoid in $\mac$. 
\begin{compactenum}
\item The triples $(H^{\oo E}, \rhd_{\alpha+}, \delta_{\alpha+})$ and $(H^{\oo E}, \rhd_{\alpha-},\delta_{\alpha-})$ are $H$-left-left Hopf modules in $\mac$,
\item The two $H$-left module structures $\rhd_{\alpha\pm}$ and $H$-left comodule structures $\delta_{\alpha\pm}$  commute:
\begin{align*}
&\rhd_{\alpha-}\circ (1_H\oo \rhd_{\alpha+})= \rhd_{\alpha+} \circ (1_H\oo \rhd_{\alpha-})\circ (\tau_{H,H}\oo 1_{H^{\oo E}})\\
&(1_H\oo \delta_{\alpha-})\circ \delta_{\alpha+}=(\tau_{H,H}\oo 1_{H^{\oo E}})\circ (1_H\oo\delta_{\alpha+})\circ \delta_{\alpha-}.
\end{align*}
\end{compactenum}
\end{lemma}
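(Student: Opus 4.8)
The plan is to prove both statements by direct diagrammatic computation, exploiting the fact that the module and comodule structures $\rhd_{\alpha\pm}$ and $\delta_{\alpha\pm}$ act non-trivially only on the single tensor factor $H$ assigned to the edge $\alpha$ (and the neighbouring factors through which the black strand of the vertex action passes). So it suffices to check every identity on that copy of $H$, where $\rhd_{\alpha+}$ is left multiplication, $\rhd_{\alpha-}$ is right multiplication precomposed with the antipode-type morphism (i.e.\ involving $T$), $\delta_{\alpha+}$ is left comultiplication, and $\delta_{\alpha-}$ is right comultiplication precomposed with $T$, as in Definition \ref{def:triangmodcom} and equation \eqref{eq:rightdef}.

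For part (i), I would verify the Hopf module compatibility \eqref{eq:lefthopf} for each of the two triples. For $(H^{\oo E},\rhd_{\alpha+},\delta_{\alpha+})$ this is exactly the statement that $H$, viewed as the trivial Hopf bimodule over itself with $\rhd=m$ and $\delta=\Delta$ (Example \ref{ex:hopfbimodule}, 1.), satisfies the left-left Hopf module axiom; this is just the bimonoid compatibility between $m$ and $\Delta$ in the definition of a bimonoid, so it follows immediately. For $(H^{\oo E},\rhd_{\alpha-},\delta_{\alpha-})$ I would use that reversing the orientation of $\alpha$ turns $\rhd_{\alpha+}$ into $\rhd_{\alpha-}$ and $\delta_{\alpha+}$ into $\delta_{\alpha-}$ via conjugation by $T_\alpha$, as explained in the paragraph preceding Definition \ref{def:triangmodcom}. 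Since $T$ is an anti-comonoid morphism (Lemma \ref{lem:tprops}) and, together with its compatibility with $m$ recorded in \eqref{eq:tplusprop2}, intertwines the relevant structure morphisms, conjugating the already-established Hopf module axiom for the $+$ case by $T_\alpha$ yields the axiom for the $-$ case. Alternatively, one checks \eqref{eq:lefthopf} directly using \eqref{eq:tplusdef}, \eqref{eq:tplusprop1}, \eqref{eq:tplusprop2} and the bimonoid axioms; either way it is a routine diagram chase.

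For part (ii), the two identities are again statements about the single factor $H$. The first identity says that left and right multiplication on $H$ commute in the obvious sense after the right action is twisted by $T$; this is associativity of $m$ together with the compatibility of $T$ with $m$ from \eqref{eq:tplusprop2} and a bookkeeping of the braiding $\tau_{H,H}$, which accounts for the order in which the two elements of $H$ are fed in. The second identity is the dual statement: coassociativity of $\Delta$ together with the anti-comonoid property of $T$ from \eqref{eq:tplusprop1}, again tracking one factor of $\tau_{H,H}$. Both reduce to applying coassociativity/associativity once and then moving a $T$ past a comultiplication/multiplication node.

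The main obstacle is not conceptual but organisational: keeping careful track of the braidings $\tau_{H,H}$ and of where exactly the orientation-reversing involution $T$ sits, since the $\rhd_{\alpha-}$ and $\delta_{\alpha-}$ structures involve $T$ on the $\alpha$-factor and the black strand of the vertex action may pass through other tensor factors. Once one fixes the convention (all diagrams read left-to-right, top-to-bottom, with $T$ acting only on the $\alpha$-strand) the verifications are short: each is a one- or two-step rewriting using a bimonoid axiom plus one identity from Lemma \ref{lem:tprops}, so I would present them as a sequence of labelled diagrams rather than prose.
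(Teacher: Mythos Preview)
Your proposal is correct and follows essentially the same route as the paper: part (i) for the $+$ case is Example \ref{ex:hopfbimodule}, the $-$ case follows by conjugating with the involution $T$, and part (ii) reduces to (co)associativity on the single copy of $H$ for $\alpha$. One small simplification: for part (ii) you do not actually need the identities from Lemma \ref{lem:tprops}; once $\rhd_{\alpha-}$ and $\delta_{\alpha-}$ are unpacked (right multiplication by $S(-)$, resp.\ right comultiplication followed by $T$ on the extracted factor), the $S$ or $T$ sits on a single strand that is not touched by the other (co)action, so the commutation is literally associativity of $m$ and coassociativity of $\Delta$ plus one braiding, exactly as the paper states.
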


\begin{proof} The first claim for  $(H^{\oo E}, \rhd_{\alpha+}, \delta_{\alpha+})$ follows directly from the definition of $\rhd_{\alpha\pm}$ and Example \ref{ex:hopfbimodule}, and the first claim for $(H^{\oo E}, \rhd_{\alpha-},\delta_{\alpha-})$ follows from the one for $(H^{\oo E}, \rhd_{\alpha+}, \delta_{\alpha+})$ and the fact that $T$ is an involution. The second claim is a direct consequence of (co)associativity for $H$.
\end{proof}

To every ciliated vertex $v$ and ciliated face $f$ in  $\Gamma$ we assign, respectively, an $H$-left module and an $H$-left comodule structure on $H^{\oo E}$. These are constructed from  the $H$-left
module structures for the edges incident at $v$ and the $H$-left comodule structures for the edges traversed by $f$. 

The $H$-left module structure for a ciliated vertex $v$ with $n$ incident edge ends  is obtained by applying the 
comultiplication  $\Delta^{n-1}: H\to H^{\oo n}$.  The different factors in the tensor product then act on the copies of $H$ associated to the edge ends at $v$, according to their  ordering. 
 The action on an edge end is $\rhd_{\alpha+}$ if the end of $\alpha$ is incoming and $\rhd_{\alpha-}$ if it is outgoing at $v$.

The $H$-left comodule structure for a ciliated face $f$ that traverses $n$ edges is obtained  from the comodule structures at each edge in $f$, where  one takes the left $H$-coaction $\delta_{\alpha+}$ if the edge $\alpha$ is traversed parallel to its orientation and $\delta_{\alpha-}$ if it is traversed against its orientation. One then applies the 
multiplication  $m^{n-1}: H^{\oo n}\to H$,  according to the order of the edges in $f$.

\begin{definition} \label{def:vertexface} Let $\Gamma$ be a directed ribbon graph and $H$ a pivotal Hopf monoid in $\mac$.
\begin{compactenum}
\item The $H$-left module structure $\rhd_v: H\oo H^{\oo E}\to H^{\oo E}$ for a ciliated vertex $v$ with incident edge ends $\alpha_1<\alpha_2<...<\alpha_n$ is 
$$
\rhd_v= \rhd_{\alpha_1}\circ (1_H\oo \rhd_{\alpha_2}) ...\circ (1_{H^{\oo (n-1)}}\oo \rhd_{\alpha_n})\circ (\Delta^{n-1}\oo 1_{H^{\oo E}}),
$$
where  $\rhd_{\alpha_i}=\rhd_{e(\alpha_i)+}$ and  $\rhd_{\alpha_i}=\rhd_{e(\alpha_i)-}$, respectively,  if the edge end $\alpha_i$ is incoming and outgoing  at $v$ and  $e(\alpha_i)$ denotes the edge for  the edge end $\alpha_i$. 

\item The $H$-left comodule structure $\delta_f:  H^{\oo E}\to H\oo H^{\oo E}$ for a ciliated face $f=\alpha_1^{\epsilon_1}\circ \ldots \circ \alpha_r^{\epsilon_r}$ 
 is 
$$
\delta_f= m^{r-1}\circ 
 (1_{H^{\oo(r-1)}}\oo\delta_{\alpha_r\epsilon_r})\circ \ldots\circ(1_H\oo \delta_{\alpha_2\epsilon_2})\circ \delta_{\alpha_1\epsilon_1}.
$$
\end{compactenum}
\end{definition}

In the following we will sometimes describe the module or comodule structure associated with a vertex or face with Sweedler notation on the edge labels. This  is to be understood as a shorthand notation for a diagram that describes a morphism in $\mac$, as illustrated in the following example.

\begin{example}\label{def:vertexaction}  The $H$-left module  and comodule structure for the ciliated vertex and face from Figure \ref{fig:ciliated} are given as follows

\begin{align*}
&\begin{tikzpicture}[scale=.5, baseline=(current bounding box.center)]
\begin{scope}[shift={(-6,0)}]
\draw [color=black, fill=black] (0,0) circle (.2); 
\draw[color=black, style=dotted, line width=1pt] (0,-.2)--(0,-1);
\draw [black,->,>=stealth,domain=-90:240] plot ({cos(\x)}, {sin(\x)});
\draw[color=red, line width=1.5pt, <-,>=stealth] (.2,-.2)--(2,-2);
\draw[color=violet, line width=1.5pt, <-,>=stealth] (.2,0)..controls (6,0) and (0,6).. (0,.2);
\draw[color=cyan, line width=1.5pt, ->,>=stealth] (.2,.2)..controls (4,4) and (-4,4).. (-.2,.2);
\draw[color=magenta, line width=1.5pt, <-,>=stealth] (-.2,0)--(-3,0);
\node at (2,-2)[color=red, anchor=north west]{$a$};
\node at (2.5,2.5)[color=violet, anchor=west]{$b$};
\node at (0,3.2)[color=cyan, anchor=south]{$c$};
\node at (-3,0)[color=magenta, anchor=east]{$d$};
\node at (0,-1) [color=black, anchor=north] {$h$};
\end{scope}
\draw[line width=1pt, color=black, ->,>=stealth](-3,0)--(-2,0);
\begin{scope}[shift={(3.5,0)}]
\draw [color=black, fill=black] (0,0) circle (.2); 
\draw[color=black, style=dotted, line width=1pt] (0,-.2)--(0,-1);
\draw[color=red, line width=1.5pt, <-,>=stealth] (.2,-.2)--(2,-2);
\draw[color=violet, line width=1.5pt, <-,>=stealth] (.2,0)..controls (6,0) and (0,6).. (0,.2);
\draw[color=cyan, line width=1.5pt, ->,>=stealth] (.2,.2)..controls (4,4) and (-4,4).. (-.2,.2);
\draw[color=magenta, line width=1.5pt, <-,>=stealth] (-.2,0)--(-3,0);
\node at (2,-2)[color=red, anchor=north west]{$\low h 1 a$};
\node at (2.5,2.5)[color=violet, anchor=west]{$\low h 2 b  S(\low h 4)$};
\node at (0,3.2)[color=cyan, anchor=south]{$\low h 5  c   S(\low h 3)$};
\node at (-3,0)[color=magenta, anchor=east]{$\low h 6  d$};
\end{scope}
\end{tikzpicture}
\qquad\qquad
\begin{tikzpicture}[scale=.3, baseline=(current bounding box.center)]
\draw[line width=1.5pt, color=red] (0,3)--(0,-5.5);
\draw[line width=1.5pt, color=violet] (2,3)--(2,-5.5);
\draw[line width=1.5pt, color=cyan] (4,3)--(4,-5.5);
\draw[line width=1.5pt, color=magenta] (6,3)--(6,-5.5);
\draw[color=black, line width=1pt] (-4.5,3)--(-4.5,2);
\draw[color=black, line width=1pt] (-7,2)--(-2,2);
\node at (0,3)[color=red, anchor=south]{$a$};
\node at (2,3)[color=violet, anchor=south]{$b$};
\node at (4,3)[color=cyan, anchor=south]{$c$};
\node at (6,3)[color=magenta, anchor=south]{$d$};
\draw[line width=1pt, color=black] plot [smooth, tension=0.6] coordinates 
      {(-2,2)(-1,1)(6,.5)};
 \draw[line width=1pt, color=black] plot [smooth, tension=0.6] coordinates 
      {(-3,2)(-1,0)(4,-.5)};
      \draw[line width=1pt, color=black] plot [smooth, tension=0.6] coordinates 
      {(-4,2)(-1.5,-.5)(3,-1.5)(2,-2)};
      \draw[line width=1pt, color=black] plot [smooth, tension=0.6] coordinates 
      {(-5,2)(-1,-2)(5,-3)(4,-3.5)};
      \draw[line width=1pt, color=black] plot [smooth, tension=0.6] coordinates 
      {(-6,2)(-2,-2.5)(2,-4)};
      \draw[line width=1pt, color=black] plot [smooth, tension=0.6] coordinates 
      {(-7,2)(-2,-3.5)(0,-4.5)};
      \draw[line width=1pt, color=black, fill=white] (3,-1.6)circle (.4);
            \draw[line width=1pt, color=black, fill=white] (5,-3.1)circle (.4);
\end{tikzpicture}
\end{align*}

\begin{align*}
\begin{tikzpicture}[scale=.5, baseline=(current bounding box.center)]
\begin{scope}[shift={(-6,0)}]
\draw [color=black, fill=black] (-2,0) circle (.2); 
\draw [color=black, fill=black] (2,0) circle (.2); 
\draw [color=black, fill=black] (1,2) circle (.2); 
\draw [color=black, fill=black] (4,4) circle (.2); 
\draw [color=black, fill=black] (-4,4) circle (.2); 
\draw[color=black, line width=1pt, style=dotted](1.9,.1)--(1.3,.7);
\draw[color=blue, line width=1.5pt, ->,>=stealth] (-1.8,0)--(1.8,0);
\node at (0,0)[color=blue, anchor=north]{$a$};
\draw[color=red, line width=1.5pt, ->,>=stealth] (-1.8,.2).. controls (2,0) and (-2,4).. (-2,.2);
\node at (-1.2,1)[color=red, anchor=west]{$f$};
\draw[color=magenta, line width=1.5pt, ->,>=stealth] (-2.2,.2)--(-3.8,3.8);
\node at (-3,2)[color=magenta, anchor=east]{$e$};
\draw[color=cyan, line width=1.5pt, ->,>=stealth] (2,.2)--(4,3.8);
\node at (3,2)[color=cyan, anchor=west]{$b$};
\draw[color=violet, line width=1.5pt, ->,>=stealth] (3.8,4)--(-3.8,4);
\node at (0,4)[color=violet, anchor=south]{$c$};
\draw[color=brown, line width=1.5pt, ->,>=stealth] (3.8,3.8)--(1.2,2.2);
\node at (1.8,2.8)[color=brown, anchor=south]{$d$};
\draw[color=black, line width=1.5pt] (-2.2,0)--(-2.8,0);
\draw[color=black, line width=1.5pt ] (-2,-.2)--(-2,-.8);
\draw[color=black, line width=1.5pt ] (-2.1,-.1)--(-2.6,-.6);
\draw[color=black, line width=1.5pt] (2.2,0)--(2.8,0);
\draw[color=black, line width=1.5pt ] (2,-.2)--(2,-.8);
\draw[color=black, line width=1.5pt ] (2.1,-.1)--(2.6,-.6);
\draw[color=black, line width=1.5pt] (-4,4.2)--(-4,4.6);
\draw[color=black, line width=1.5pt ] (-4.2,4)--(-4.8,4);
\draw[color=black, line width=1.5pt ] (-4.1,4.1)--(-4.6,4.6);
\draw[color=black, line width=1.5pt] (4,4.2)--(4,4.6);
\draw[color=black, line width=1.5pt ] (4.2,4)--(4.8,4);
\draw[color=black, line width=1.5pt ] (4.1,4.1)--(4.6,4.6);
\draw[line width=.5pt, color=black,->,>=stealth] plot [smooth, tension=0.6] coordinates 
      {(1.6,.4)(3.4,3.2)(1.5,2)(1,1.5)(.5,2)(2.8,3.6)(0,3.6)(-3.2,3.6)(-2.6,2) (-2.2,.9)(-1.8,2)(-.5,2)(.3,1)(0,.4)(1.4,.4)};
\end{scope}
\draw[color=black, line width=1pt,->,>=stealth](-2,2)--(-1,2);
\begin{scope}[shift={(4,0)}]           
      \draw [color=black, fill=black] (-2,0) circle (.2); 
\draw [color=black, fill=black] (2,0) circle (.2); 
\draw [color=black, fill=black] (1,2) circle (.2); 
\draw [color=black, fill=black] (4,4) circle (.2); 
\draw [color=black, fill=black] (-4,4) circle (.2); 
\draw[color=blue, line width=1.5pt, ->,>=stealth] (-1.8,0)--(1.8,0);
\node at (0,0)[color=blue, anchor=north]{$\low a 2$};
\draw[color=red, line width=1.5pt, ->,>=stealth] (-1.8,.2).. controls (2,0) and (-2,4).. (-2,.2);
\node at (-1.8,1)[color=red, anchor=west]{$\low f 1$};
\draw[color=magenta, line width=1.5pt, ->,>=stealth] (-2.2,.2)--(-3.8,3.8);
\node at (-3,2)[color=magenta, anchor=east]{$\low e 1$};
\draw[color=cyan, line width=1.5pt, ->,>=stealth] (2,.2)--(4,3.8);
\node at (3,1.9)[color=cyan, anchor=west]{$\low b 2$};
\draw[color=violet, line width=1.5pt, ->,>=stealth] (3.8,4)--(-3.8,4);
\node at (0,4)[color=violet, anchor=south]{$\low c 2$};
\draw[color=brown, line width=1.5pt, ->,>=stealth] (3.8,3.8)--(1.2,2.2);
\node at (1.8,2.65)[color=brown, anchor=south]{$\low d 2$};
\draw[color=black, line width=1.5pt] (-2.2,0)--(-2.8,0);
\draw[color=black, line width=1.5pt ] (-2,-.2)--(-2,-.8);
\draw[color=black, line width=1.5pt ] (-2.1,-.1)--(-2.6,-.6);
\draw[color=black, line width=1.5pt] (2.2,0)--(2.8,0);
\draw[color=black, line width=1.5pt ] (2,-.2)--(2,-.8);
\draw[color=black, line width=1.5pt ] (2.1,-.1)--(2.6,-.6);
\draw[color=black, line width=1.5pt] (-4,4.2)--(-4,4.6);
\draw[color=black, line width=1.5pt ] (-4.2,4)--(-4.8,4);
\draw[color=black, line width=1.5pt ] (-4.1,4.1)--(-4.6,4.6);
\draw[color=black, line width=1.5pt] (4,4.2)--(4,4.6);
\draw[color=black, line width=1.5pt ] (4.2,4)--(4.8,4);
\draw[color=black, line width=1.5pt ] (4.1,4.1)--(4.6,4.6);
\node at (-8,-2) [anchor=west, color=black]{$\low a 1 T(\low f 2) T(\low e 2)\low c {1} T(\low d 3)\low d {1} \low b {1} %p^\inv
$};
\end{scope}
\end{tikzpicture}
\qquad 
\begin{tikzpicture}[scale=.25, baseline=(current bounding box.center)]
\draw[line width=1.5pt, color=blue] (0,3)--(0,-8);
\draw[line width=1.5pt, color=red] (2,3)--(2,-8);
\draw[line width=1.5pt, color=magenta] (4,3)--(4,-8);
\draw[line width=1.5pt, color=violet] (6,3)--(6,-8);
\draw[line width=1.5pt, color=brown] (8,3)--(8,-8);
\draw[line width=1.5pt, color=cyan] (10,3)--(10,-8);
\node at (0,3)[color=blue, anchor=south]{$a$};
\node at (2,3)[color=red, anchor=south]{$f$};
\node at (4,3)[color=magenta, anchor=south]{$e$};
\node at (6,3)[color=violet, anchor=south]{$c$};
\node at (8,3)[color=brown, anchor=south]{$d$};
\node at (10,3)[color=cyan, anchor=south]{$b$};
 \draw[line width=1pt, color=black] plot [smooth, tension=0.6] coordinates 
      {(0,2)(-2,1)(-8,-6)};
 \draw[line width=1pt, color=black] plot [smooth, tension=0.6] coordinates 
      {(2,2)(3,1)(-1.5,0)(-7,-6)};
       \draw[line width=1pt, color=black] plot [smooth, tension=0.6] coordinates 
      {(4,1) (5,0)(-1,-1)(-6,-6)};
             \draw[line width=1pt, color=black] plot [smooth, tension=0.6] coordinates 
      {(6,-.7)(-1,-2)(-5,-6)};
                   \draw[line width=1pt, color=black] plot [smooth, tension=0.6] coordinates 
      {(8,-3)(-1,-4)(-3,-6)};
                         \draw[line width=1pt, color=black] plot [smooth, tension=0.6] coordinates 
      {(8,-1)(9,-2)(-1,-3)(-4,-6)};
                         \draw[line width=1pt, color=black] plot [smooth, tension=0.4] coordinates 
      {(10,-4)(-1,-5)(-2,-6)};
      \draw[line width=1pt, color=black](-8,-6)--(-2,-6);
      \draw[line width=1pt, color=black](-5,-6)--(-5,-8); 
       \draw[color=black, line width=1pt, fill=white] (3,1) circle (.4); 
            \draw[color=black, line width=1pt, fill=white] (3,1) circle (.2);  
        \draw[color=black, line width=1pt, fill=white] (5,0) circle (.4); 
         \draw[color=black, line width=1pt, fill=white] (5,0) circle (.2); 
              \draw[color=black, line width=1pt, fill=white] (9.3,-1.8) circle (.4); 
                 \draw[color=black, line width=1pt, fill=white] (9.3,-1.8) circle (.2);     
\end{tikzpicture}
\end{align*}
\end{example}

The $H$-left (co)module  structures for ciliated vertices and  faces correspond to the {\em vertex} and {\em face operators} in Kitaev's lattice models. For a finite-dimensional semisimple Hopf algebra $H$ in $\mac=\vect_\C$, the vertex operator $A_v^h$ for an element $h\in H$ from \cite{BMCA} is  $A^h_v=h\rhd_v-: H^{\oo E}\to H^{\oo E}$ and the face operator for  an element $\beta\in H^*$ by $B_f^\beta=( S(\beta) \oo 1_{H^{\oo E}})\circ \delta_f: H^{\oo E}\to H^{\oo E}$. The $H$-left (co)module structures   thus have properties similar to the vertex and face operators.

\begin{lemma}\label{lem:facevertex} Let $\Gamma$ be a directed ribbon graph and $H$ a pivotal Hopf monoid in $\mac$. The $H$-left module and comodule structures associated with ciliated vertices and faces of $\Gamma$ satisfy:
\begin{compactenum}
\item All $H$-left module structures for distinct  vertices $v,v'\in V$  commute:  
\begin{align*}
&\rhd_{v'}\circ (1_H\oo \rhd_v)= \rhd_v \circ (1_H\oo \rhd_{v'})\circ(\tau_{H,H}\oo 1_{H^{\oo E}}).
\end{align*}
\item If two ciliated faces $f,f'$ are not related by cyclic permutations, the associated $H$-comodule structures commute:
\begin{align*}
&(1_H\oo \delta_{f'})\circ \delta_f=(\tau_{H,H}\oo 1_{H^{\oo E}})\circ (1_H\oo \delta_f)\circ \delta_{f'}.
\end{align*}
\item If two cilia are at distinct vertices and  distinct faces, the $H$-module structure for one of them commutes with the $H$-comodule structure for the other:
\begin{align*}
\delta_f\circ \rhd_v=(1_H\oo \rhd_v)\circ (\tau_{H,H}\oo 1_{H^{\oo E}})\circ (1_H\oo \delta_f)
\end{align*}
\item If a ciliated vertex $v$ and a ciliated face $f$ share a cilium, the associated $H$-left module and $H$-left comodule structure form a $H$-left-left  Yetter-Drinfeld module:
\begin{align*}
&(m\circ \tau_{H,H}\oo 1_{H^{\oo E}})\circ(1_H\oo \delta_f)\circ(1_H\oo \rhd_v)\circ (\tau_{H,H}\circ \Delta\oo 1_{H^{\oo E}})\\
=&(m\oo \rhd_v) \circ (1_H\oo \tau_{H,H}\oo 1_{H^{\oo E}}) \circ (\Delta\oo \delta_f)
\end{align*}
\end{compactenum}
\end{lemma}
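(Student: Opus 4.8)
I proceed by diagrammatic computation, reducing all four assertions to the basic identities of Lemma \ref{lem:triangleops}.

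All four claims are \emph{local}. By Definition \ref{def:vertexface} each of $\rhd_v,\rhd_{v'},\delta_f,\delta_{f'}$ is assembled from the atomic operators $\rhd_{\alpha\pm},\delta_{\alpha\pm}$ of Definition \ref{def:triangmodcom}, together with an iterated comultiplication $\Delta^{n-1}$ fed into the module input and an iterated multiplication $m^{r-1}$ produced from the comodule output. Operators acting on disjoint tensor factors of $H^{\oo E}$ commute, and the auxiliary copies of $H$ carrying these $\Delta$'s and $m$'s are independent of one another; so in every case it suffices to control the atomic pieces attached to the edges the two structures have in common, keeping track of the (co)associativity rearrangements of $\Delta^{n-1}$ and $m^{r-1}$ and of the bimonoid identity $\Delta\circ m=(m\oo m)\circ(1_H\oo\tau_{H,H}\oo 1_H)\circ(\Delta\oo\Delta)$, which is what couples the comultiplications at $v$ to the multiplications along $f$. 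That $\rhd_v$ is an $H$-left module and $\delta_f$ an $H$-left comodule structure on $H^{\oo E}$ is proved the same way from Lemma \ref{lem:triangleops}, so for (iv) only the compatibility remains.

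Parts (i) and (ii) are then immediate. If $v\neq v'$, a common edge $\alpha$ has exactly one end at $v$ and one at $v'$ (a loop lies at a single vertex), so it contributes one of $\rhd_{\alpha+},\rhd_{\alpha-}$ to $\rhd_v$ and the other to $\rhd_{v'}$; these commute by the first identity of Lemma \ref{lem:triangleops}(2). Dually, distinct faces of $\Gamma$ use disjoint sets of edge sides, so a common edge of $f$ and $f'$ contributes $\delta_{\alpha+}$ to one and $\delta_{\alpha-}$ to the other, and these commute by the second identity of Lemma \ref{lem:triangleops}(2). Together with the triviality on the remaining, mutually independent factors, this gives the two commutation relations after matching the braidings on the two sides.

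Parts (iii) and (iv) are governed by a single mechanism: transport the vertex action through the face coaction one edge traversal at a time. Writing $\delta_f=m^{r-1}\circ(1_{H^{\oo(r-1)}}\oo\delta_{\alpha_r\epsilon_r})\circ\cdots\circ\delta_{\alpha_1\epsilon_1}$ and expanding $\rhd_v$ by $\Delta^{n-1}$ so that one copy of the vertex input lies at each edge end of $v$, one pushes these copies past $\delta_{\alpha_1\epsilon_1},\delta_{\alpha_2\epsilon_2},\dots$ in turn. A traversal of an edge not incident to $v$ is crossed freely; a traversal of an edge incident to $v$ is crossed using one instance of the left-left Hopf module relation \eqref{eq:lefthopf} of Lemma \ref{lem:triangleops}(1), or of its variant for the opposite side (obtained from \eqref{eq:lefthopf}, the bimonoid axiom and the antipode), which feeds a comultiplied factor of the vertex input into the holonomy accumulated so far. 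Recombining these factors by (co)associativity and the bimonoid identity, one finds that they telescope to the trivial factor precisely when the cilium of $v$ is not a corner of $f$ — giving, in case (iii), plain commutation of $\rhd_v$ with $\delta_f$ in the stated form — and to a conjugation of the accumulated holonomy by the vertex input (one factor on each side, the flanking factor carrying the antipode that reverses the orientation-reversed edges of $f$) precisely when $v$ and $f$ share a corner — giving, in case (iv), the Yetter--Drinfeld compatibility \eqref{eq:ydcomp} for $(H^{\oo E},\rhd_v,\delta_f)$. It is cleanest to organise the recombination as an induction on the length $r$ of $f$, removing the last traversal $\alpha_r^{\epsilon_r}$ at each step.

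The hard part is the combinatorics of (iii) and (iv) when $v$ and $f$ share several edges, or when $f$ passes through $v$ at more than one corner: one must check that the face-traversal signs $\epsilon_i$, the incoming/outgoing conventions at the ends of $v$, and the appearances of $S$ and $T$ inside $\rhd_{\alpha-}$ and $\delta_{\alpha-}$ all conspire so that the comultiplied factors introduced at the non-shared corners cancel exactly, leaving zero or one surviving factor according to whether the cilia differ or coincide, and that the order in which $m^{r-1}$ recombines the legs matches the order in which $\Delta^{n-1}$ distributes them. Establishing a few auxiliary single-edge identities relating $\rhd_{\alpha\pm}$ to $\delta_{\alpha\mp}$ (not contained in Lemma \ref{lem:triangleops}) is convenient for the bookkeeping. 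With these in hand, the remainder is a routine if lengthy diagram chase using (co)associativity, the bimonoid and antipode axioms \eqref{eq:antdef}, \eqref{eq:anthom}, and the properties of $T$ from Lemma \ref{lem:tprops}.
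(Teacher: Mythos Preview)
Your treatment of (i) and (ii) matches the paper's: both reduce to Lemma~\ref{lem:triangleops}(2), observing that distinct vertices share an edge only via opposite ends and distinct faces only via opposite sides.

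For (iii) and (iv) your route diverges from the paper's. You propose an edge-by-edge transport: push $\rhd_v$ through $\delta_{\alpha_1\epsilon_1},\delta_{\alpha_2\epsilon_2},\dots$ in turn, using one instance of the Hopf-module law per shared edge, and then argue that the comultiplied factors telescope except possibly at the cilium corner. The paper instead reduces each of (iii) and (iv) to a \emph{single canonical local configuration} (the ``constellations'' (a) and (b) drawn in the text), fixes a convenient choice of edge orientations, and verifies the identity once by an explicit diagram chase (Figure~\ref{fig:facevertex}); all other orientations are obtained by conjugating with the involution $T$. For (iv) the paper also first rewrites the Yetter--Drinfeld condition \eqref{eq:ydcomp} in the equivalent form with an antipode on one leg, which makes the local computation shorter.

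Your approach is correct in outline and is arguably more systematic, but the paper's buys a real economy: by isolating one corner and one orientation pattern it replaces your acknowledged ``hard combinatorics'' (matching the sign/ordering conventions of $\Delta^{n-1}$ and $m^{r-1}$ across multiple corners, and deriving the $\rhd_{\alpha\pm}$--$\delta_{\alpha\mp}$ identities you flag as missing from Lemma~\ref{lem:triangleops}) with a single picture. What your sketch leaves genuinely unfinished is exactly this bookkeeping: you assert the telescoping but do not exhibit it, and you defer the mixed $(\pm,\mp)$ single-edge identities. If you want to complete your argument as written, it is cleaner to regroup by \emph{corners} of $f$ at $v$ rather than by single edge traversals, since the cancellation is a two-edge phenomenon (arriving and departing edge ends are adjacent both in the linear order at $v$ and in the face multiplication), and then to fix orientations once and invoke $T$ for the rest---at which point you have essentially reproduced the paper's proof.
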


\begin{proof} The proof of these statements is analogous to the one in \cite{BMCA}, see also \cite{Me}. The first two  follow directly from the definition of the vertex and face operators and from Lemma \ref{lem:triangleops}, 2. The third is obvious if the vertex and face do not share an edge. If they share an edge, but not a cilium, if is sufficient to consider the constellation in case (a), and the graphical proof  in Figure \ref{fig:facevertex} (a). The other cases follow by applying the involution $T$ from \eqref{eq:tplusdef} to reverse edge orientation.
\begin{align*}
&\text{(a)}\quad
\begin{tikzpicture}[scale=.6, baseline=(current bounding box.center)]
\draw [color=black, fill=black] (2,0) circle (.2); 
\draw[line width=1.5pt, color=red,->, >=stealth](-2,0)--(1.8,0);
\draw[line width=1.5pt, color=blue,->, >=stealth](-1.5,1.8)--(1.8,.2);
\draw[color=black, style=dotted, line width=1pt] (2.2,0)--(3.2,0);
\draw [black,->,>=stealth,domain=5:345] plot ({2+cos(\x)}, {sin(\x)});
\draw[line width=1.5 pt, color=black] (2,0)--(2,-.8);
\draw[line width=1.5 pt, color=black] (2,0)--(2,.8);
\draw[line width=1.5 pt, color=black] (2,0)--(2.5,.5);
\draw[line width=1.5 pt, color=black] (2,0)--(2.5,-.5);
\draw[line width=.5pt, color=black,->,>=stealth] plot [smooth, tension=0.6] coordinates 
{(-2,.5)(.5,.5)(-1.5,1.3)};
\end{tikzpicture}
\qquad\qquad
\text{(b)}\quad
\begin{tikzpicture}[scale=.6, baseline=(current bounding box.center)]
\draw[line width=1.5pt, color=blue,->, >=stealth](-2,0)--(1.8,0);
\draw[line width=1.5pt, color=red,->, >=stealth](2,2.5)--(2,.2);
\draw[line width=1.5pt, color=violet,->, >=stealth](-2,2.5)--(-2,.2);
\draw[line width=1.5pt, color=violet,->, >=stealth](0,2.5)--(-1.8,2.5);
\draw[line width=1.5pt, color=violet](2,2.5)--(1.5,2.5);
\node at (1,2.5){$\ldots$};
\draw[line width=1.5pt, color=violet,->, >=stealth](.8,2.5)--(.2,2.5);
\draw [color=black, fill=black] (2,0) circle (.2); 
\draw [color=black, fill=black] (2,2.5) circle (.2); 
\draw [color=black, fill=black] (-2,0) circle (.2); 
\draw [color=black, fill=black] (-2,2.5) circle (.2); 
\draw [color=black, fill=black] (0,2.5) circle (.2); 
\draw[color=black, style=dotted, line width=1pt] (2,0)--(1,1);
\draw [black,->,>=stealth,domain=135: 475] plot ({2+cos(\x)}, {sin(\x)});
\draw[line width=1.5 pt, <-, >=stealth,color=magenta] (2.2,0)--(3.2,0);
\draw[line width=1.5 pt, <-, >=stealth,color=magenta] (2.2,.2)--(2.8,.8);
\draw[line width=1.5 pt, <-, >=stealth,color=magenta] (2.2,-.2)--(2.8,-.8);
\draw[line width=.5pt, color=black,->,>=stealth] plot [smooth, tension=0.6] coordinates 
{(1.5,.8)(1.5,2)(-1.5,2)(-1.5,.5)(1,.5)};
\end{tikzpicture}
\end{align*}
To prove the last claim, it is sufficient to consider the constellation in (b) and to note that  condition \eqref{eq:ydcomp} for a left-left Yetter-Drinfeld module is equivalent to the condition
\begin{align}\nonumber
\begin{tikzpicture}[scale=.25]
\begin{scope}[shift={(-2.5,0)}]
\draw[line width=1.5pt, color=blue] (1,2)--(1,-5);
 \draw[line width=1pt, color=black] plot [smooth , tension=0.6] coordinates 
            {(-2.5,2) (-1.5,.5)(1,-.5)};
             \draw[line width=1pt, color=black] plot [smooth , tension=0.6] coordinates 
            {(-2.5,-5) (-1.5,-3.5)(1,-2.5)};
\end{scope}
\node at (0,-1.5){$=$};
\begin{scope}[shift={(5.5,0)}]
\draw[line width=1.5pt, color=blue] (1,2)--(1,-5);
\draw[line width=1pt, color=black] (-2.5,2)--(-2.5,1.2);
\draw[line width=1pt, color=black] (-2.5,-4.2)--(-2.5,-5);
             \draw[line width=1pt, color=black] plot [smooth , tension=0.6] coordinates 
            {(-.9,0) (-2,1) (-3,1)  (-4,0) (-4, -3) (-3,-4)  (-2,-4)   (-1,-3.1) };
 \draw[line width=1pt, color=black] plot [smooth , tension=0.6] coordinates 
            {(1,1)(-2,-2)(-1.5,-3) (-.5,-3) (0,-2) (0,-1) (-.5,0) (-1.5,0) (-2,-1)(1,-3)};    
            \draw[color=black, line width=1pt, fill=white] (0,-1.3) circle (.4);   
            \end{scope}    
\end{tikzpicture}
\tDot
\end{align}
The graphical proof  is given in  Figure \ref{fig:facevertex} (b), and the identity for other edge orientations again follows by applying the involution $T$.
\end{proof}

\begin{figure}
\centering
\def\svgwidth{.9\columnwidth}
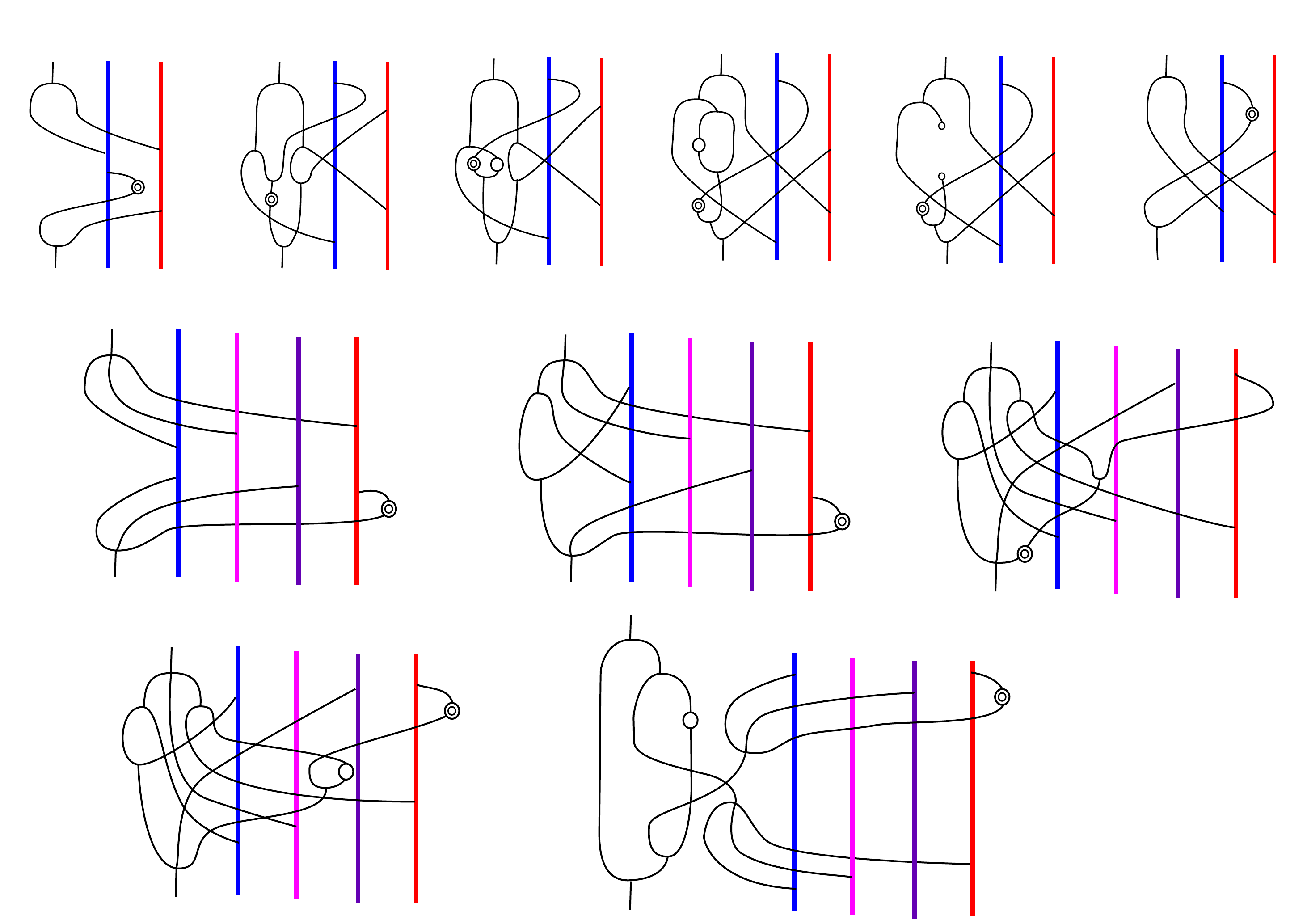
\caption{Diagrammatic proof of  Lemma \ref{lem:facevertex}}
\label{fig:facevertex}
\end{figure}

\section{Mapping class group actions  by edge slides}
\label{sec:chordslideshopf}

In this section, we consider a generalisation of the chord slides from Section \ref{sec:chordslides}.  Throughout this section, let  $\Gamma$ be a directed ribbon graph with edge set $E$. 

The slide is defined in analogy to the chord slides in Section \ref{sec:chordslides}, but not restricted to ribbon graphs with a single vertex. It slides  an end of  an oriented edge $\beta$  along another oriented edge $\alpha$ that
shares a vertex with this end of $\beta$ and is adjacent to it with respect to the cyclic ordering. 
 If some or all  of the vertices of the ribbon graph are ciliated, we sometimes impose that edge ends do not slide over cilia. This means that the relevant end of $\alpha$ is required to be adjacent to $\beta$ with respect to the {\em linear} ordering of the edges at this  vertex. 

As $\alpha$ is an oriented edge, we distinguish slides to the start and to the target of $\alpha$ and slides on the right and on the left of $\alpha$, viewed in the direction of its orientation. 
This yields the four different cases below. We denote by  $S_{\alpha^L}$ and $S_{\alpha^R}$ the slides  to the {\em target end} of $\alpha$, on the {\em left} and {\em right} of $\alpha$, respectively, and by $S_{\alpha^{-L}}$ and $S_{\alpha^{-R}}$ their inverses. In particular, we set
 $S_{(\alpha^\inv)^L}=S_{\alpha^{-R}}$, where $\alpha^\inv$ denotes the edge $\alpha$ with the reversed orientation. 
\begin{align}\label{eq:slideedge}
&\begin{tikzpicture}[scale=.4]
\begin{scope}[shift={(-5,0)}]
\draw [color=orange, fill=orange] (-2,0) circle (.2); 
\draw [color=violet, fill=violet] (2,0) circle (.2); 
\draw[color=blue, line width=1.5pt, ->,>=stealth] (-.2,1.8)--(-1.8,.2);
\draw[color=red, line width=1.5pt, ->,>=stealth] (-1.8,0)--(1.8,0);
\draw[color=black, line width=1pt] (-2.2,0)--(-2.8,0);
\draw[color=black, line width=1pt] (-2.2,.2)--(-2.6,.6);
\draw[color=black, line width=1pt] (-2.2,-.2)--(-2.6,-.6);
\draw[color=black, line width=1pt] (2.2,0)--(2.8,0);
\draw[color=black, line width=1pt] (2.2,.2)--(2.6,.6);
\draw[color=black, line width=1pt] (2.2,-.2)--(2.6,-.6);
\node at (0,0)[anchor=north, color=red]{$\alpha$};
\node at (-1,1)[anchor=south east, color=blue]{$\beta$};
\end{scope}
\draw[color=black,->,>=stealth] (-1,.5)--(1,.5);
\node at (0,0.5) [anchor=south, color=black] {$S_{\alpha^L}$};
\draw[color=black,<-,>=stealth] (-1,-.5)--(1,-.5);
\node at (0,-0.5) [anchor=north, color=black] {$S_{\alpha^{-L}}$};
\begin{scope}[shift={(5,0)}]
\draw [color=orange, fill=orange] (-2,0) circle (.2); 
\draw [color=violet, fill=violet] (2,0) circle (.2); 
\draw[color=blue, line width=1.5pt, ->,>=stealth] (.2,1.8)--(1.8,.2);
\draw[color=red, line width=1.5pt, ->,>=stealth] (-1.8,0)--(1.8,0);
\draw[color=black, line width=1pt] (-2.2,0)--(-2.8,0);
\draw[color=black, line width=1pt] (-2.2,.2)--(-2.6,.6);
\draw[color=black, line width=1pt] (-2.2,-.2)--(-2.6,-.6);
\draw[color=black, line width=1pt] (2.2,0)--(2.8,0);
\draw[color=black, line width=1pt] (2.2,.2)--(2.6,.6);
\draw[color=black, line width=1pt] (2.2,-.2)--(2.6,-.6);
\node at (0,0)[anchor=north, color=red]{$\alpha$};
\node at (1,1)[anchor=south west, color=blue]{$ \beta$};
\end{scope}
\end{tikzpicture}\qquad
&\begin{tikzpicture}[scale=.4]
\begin{scope}[shift={(-5,0)}]
\draw [color=orange, fill=orange] (-2,0) circle (.2); 
\draw [color=violet, fill=violet] (2,0) circle (.2); 
\draw[color=blue, line width=1.5pt, ->,>=stealth] (-.2,-1.8)--(-1.8,-.2);
\draw[color=red, line width=1.5pt, ->,>=stealth] (-1.8,0)--(1.8,0);
\draw[color=black, line width=1pt] (-2.2,0)--(-2.8,0);
\draw[color=black, line width=1pt] (-2.2,.2)--(-2.6,.6);
\draw[color=black, line width=1pt] (-2.2,-.2)--(-2.6,-.6);
\draw[color=black, line width=1pt] (2.2,0)--(2.8,0);
\draw[color=black, line width=1pt] (2.2,.2)--(2.6,.6);
\draw[color=black, line width=1pt] (2.2,-.2)--(2.6,-.6);
\node at (0,0)[anchor=north, color=red]{$\alpha$};
\node at (-1,-1)[anchor=north east, color=blue]{$\beta$};
\end{scope}
\draw[color=black,->,>=stealth] (-1,.5)--(1,.5);
\node at (0,0.5) [anchor=south, color=black] {$S_{\alpha^R}$};
\draw[color=black,<-,>=stealth] (-1,-.5)--(1,-.5);
\node at (0,-0.5) [anchor=north, color=black] {$S_{\alpha^{-R}}$};
\begin{scope}[shift={(5,0)}]
\draw [color=orange, fill=orange] (-2,0) circle (.2); 
\draw [color=violet, fill=violet] (2,0) circle (.2); 
\draw[color=blue, line width=1.5pt, ->,>=stealth] (-.2,-1.8)--(1.8,-.2);
\draw[color=red, line width=1.5pt, ->,>=stealth] (-1.8,0)--(1.8,0);
\draw[color=black, line width=1pt] (-2.2,0)--(-2.8,0);
\draw[color=black, line width=1pt] (-2.2,.2)--(-2.6,.6);
\draw[color=black, line width=1pt] (-2.2,-.2)--(-2.6,-.6);
\draw[color=black, line width=1pt] (2.2,0)--(2.8,0);
\draw[color=black, line width=1pt] (2.2,.2)--(2.6,.6);
\draw[color=black, line width=1pt] (2.2,-.2)--(2.6,-.6);
\node at (0,0)[anchor=north, color=red]{$\alpha$};
\node at (1,-1)[anchor=north west, color=blue]{$ \beta$};
\end{scope}
\end{tikzpicture}
\end{align}

The edge slides for the other orientation of $\beta$  are defined analogously. 
The short black lines in  \eqref{eq:slideedge}   indicate other edge ends or cilia at the vertices. Their  position remains unchanged under the edge slide, just as the position of the starting end of $\beta$.  Note also  that the two vertices in  \eqref{eq:slideedge}  may coincide and that the other end of $\beta$ may be incident at the same vertex. The only conditions are
\begin{compactenum}[(i)]
 	\item The sliding end of $\beta$  shares a vertex with the starting or target  end of  $\alpha$.
		\label{cond:ShareVertex}
	\item It comes directly after the starting end  (before the target end) of $\alpha$ with respect to the  ordering at their common vertex  for the slide $S_{\alpha^L}$ (for the slide $S_{\alpha^{-L}}$) and directly before the starting end (after the target end) of $\alpha$   for the slide $S_{\alpha^R}$ (for the slide $S_{\alpha^{-R}}$).	
	\item The edge $\beta$ is not identical to the edge $\alpha$. 
		\label{cond:NotIdenticalEdge}
 \end{compactenum}
 
 If $\Gamma'$ is obtained from a directed ribbon graph $\Gamma$ by an edge slide $S$, one has an isomorphism of groupoids  $S: \mathcal G_\Gamma\to \mathcal G_{\Gamma'}$. On the vertices, it is the canonical identification of the vertices of $\Gamma$ and $\Gamma'$.  On the morphisms, it is given by the images of the edges of $\Gamma$.
 If $S=S_{\alpha^{\pm L}}$ or $S=S_{\alpha^{\pm R}}$  slides the {\em target end} of an edge $\beta$ along $\alpha$, one has $S(\gamma)=\gamma$ for 
 edges
$\gamma\neq \beta$ and $S(\beta)=\alpha^{\mp 1} \circ\beta$, as shown in Figure \ref{figure:SlideGroupoidIsomorphism}.  If $S=S_{\alpha^{\pm L}}$ or $S=S_{\alpha^{\pm R}}$  slides the {\em starting end} of  an edge $\beta$ along $\alpha$, it is given by $S(\gamma)=\gamma$ for $\gamma\neq \beta$ and $S(\beta)=\beta\circ\alpha^{\pm 1}$.

\begin{figure} %[H]
	\centering
	\begin{tikzpicture}[scale=.6]
\begin{scope}[shift={(-5,0)}]
\draw [color=orange, fill=orange] (-2,0) circle (.2); 
\draw [color=violet, fill=violet] (2,0) circle (.2); 
\draw[color=blue, line width=1.5pt, ->,>=stealth] (-.2,1.8)--(-1.8,.2);
\draw[color=red, line width=1.5pt, ->,>=stealth] (-1.8,0)--(1.8,0);
\draw[color=black, line width=1pt] (-2.2,0)--(-2.8,0);
\draw[color=black, line width=1pt] (-2.2,.2)--(-2.6,.6);
\draw[color=black, line width=1pt] (-2.2,-.2)--(-2.6,-.6);
\draw[color=black, line width=1pt] (2.2,0)--(2.8,0);
\draw[color=black, line width=1pt] (2.2,.2)--(2.6,.6);
\draw[color=black, line width=1pt] (2.2,-.2)--(2.6,-.6);
\node at (0,0)[anchor=north, color=red]{$\alpha$};
\node at (-1,1)[anchor=south east, color=blue]{$\beta$};
\draw[line width=.5pt, color=black,->,>=stealth] plot [smooth , tension=0.6] coordinates {(-.2, 1.5) (-1.4,.3)};
\end{scope}
\draw[color=black,->,>=stealth] (-1,.5)--(1,.5);
\node at (0,0.5) [anchor=south, color=black] {$S_{\alpha^L}$};
\draw[color=black,<-,>=stealth] (-1,-.5)--(1,-.5);
\node at (0,-0.5) [anchor=north, color=black] {$S_{\alpha^{-L}}$};
\begin{scope}[shift={(5,0)}]
\draw [color=orange, fill=orange] (-2,0) circle (.2); 
\draw [color=violet, fill=violet] (2,0) circle (.2); 
\draw[color=blue, line width=1.5pt, ->,>=stealth] (.2,1.8)--(1.8,.2);
\draw[color=red, line width=1.5pt, ->,>=stealth] (-1.8,0)--(1.8,0);
\draw[color=black, line width=1pt] (-2.2,0)--(-2.8,0);
\draw[color=black, line width=1pt] (-2.2,.2)--(-2.6,.6);
\draw[color=black, line width=1pt] (-2.2,-.2)--(-2.6,-.6);
\draw[color=black, line width=1pt] (2.2,0)--(2.8,0);
\draw[color=black, line width=1pt] (2.2,.2)--(2.6,.6);
\draw[color=black, line width=1pt] (2.2,-.2)--(2.6,-.6);
\draw[line width=.5pt, color=black,->,>=stealth] plot [smooth , tension=0.4] coordinates {(.2, 1.5) (1.3,.2)(-1.5,.2)};
\node at (0,0)[anchor=north, color=red]{$\alpha$};
\node at (1,1)[anchor=south west, color=blue]{$ \beta$};
\node at (-.5,.2)[anchor=south, color=black]{$\alpha^\inv\beta$};
\end{scope}
\end{tikzpicture}	\caption{Isomorphism of path groupoids induced by sliding the target  end of $\beta$ along the left of  $\alpha$.}
	\label{figure:SlideGroupoidIsomorphism}
\end{figure}
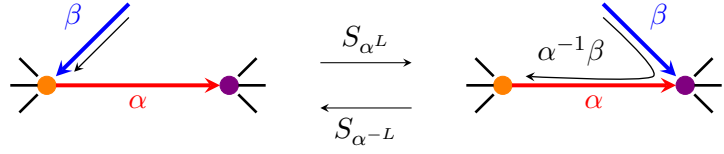

Given a Hopf monoid $H$  in a symmetric monoidal category $\mac$ and  a directed ribbon graph $\Gamma$, we associate to 
 each edge slide of $\Gamma$  an automorphism of the object $H^{\oo E}$ in $\mac$. It  is obtained by combining one of the $H$-left comodule structures  for  $\alpha$ and one of the $H$-left module structures for  $\beta$ from Definition \ref{def:triangmodcom}. 
The choice of the module and comodule structures depends  on the orientation of the edges and the direction of the slide.

\begin{definition}\label{def:edge slide} Let $\Gamma$ be a directed ribbon graph and $H$ a pivotal Hopf monoid in $\mac$.
\begin{compactenum}
\item If  $\alpha,\beta$ are oriented edges of $\Gamma$ as in \eqref{eq:slideedge}  the {\bf left edge slide}   of the target  end of  $\beta$  {\bf to the target end} of $\alpha$ is the isomorphism $S_{\alpha^L}=\rhd_{\beta+}\circ\delta_{\alpha+}: H^{\oo E}\to H^{\oo E}$
\begin{align}\label{eq:slidedef2a}
&\begin{tikzpicture}[scale=.6]
\begin{scope}[shift={(-5,0)}]
\draw [color=orange, fill=orange] (-2,0) circle (.2); 
\draw [color=violet, fill=violet] (2,0) circle (.2); 
\draw[color=blue, line width=1.5pt, ->,>=stealth] (-.2,1.8)--(-1.8,.2);
\draw[color=red, line width=1.5pt, ->,>=stealth] (-1.8,0)--(1.8,0);
\draw[color=black, line width=.5pt, ->,>=stealth] (-1.5,.5)--(-.5,.5);
\draw[color=black, line width=1pt] (-2.2,0)--(-2.8,0);
\draw[color=black, line width=1pt] (-2.2,.2)--(-2.6,.6);
\draw[color=black, line width=1pt] (-2.2,-.2)--(-2.6,-.6);
\draw[color=black, line width=1pt] (2.2,0)--(2.8,0);
\draw[color=black, line width=1pt] (2.2,.2)--(2.6,.6);
\draw[color=black, line width=1pt] (2.2,-.2)--(2.6,-.6);
\node at (0,0)[anchor=north, color=red]{$a$};
\node at (-1,1)[anchor=south east, color=blue]{$b$};
\end{scope}
\draw[color=black,->,>=stealth] (-1,0)--(1,0);
\node at (0,0.2) [anchor=south, color=black] {$S_{\alpha^{L}}$};
\begin{scope}[shift={(5,0)}]
\draw [color=orange, fill=orange] (-2,0) circle (.2); 
\draw [color=violet, fill=violet] (2,0) circle (.2); 
\draw[color=blue, line width=1.5pt, ->,>=stealth] (.2,1.8)--(1.8,.2);
\draw[color=red, line width=1.5pt, ->,>=stealth] (-1.8,0)--(1.8,0);
\draw[color=black, line width=1pt] (-2.2,0)--(-2.8,0);
\draw[color=black, line width=1pt] (-2.2,.2)--(-2.6,.6);
\draw[color=black, line width=1pt] (-2.2,-.2)--(-2.6,-.6);
\draw[color=black, line width=1pt] (2.2,0)--(2.8,0);
\draw[color=black, line width=1pt] (2.2,.2)--(2.6,.6);
\draw[color=black, line width=1pt] (2.2,-.2)--(2.6,-.6);
\node at (0,0)[anchor=north, color=red]{$\low a 2$};
\node at (1,1)[anchor=south west, color=blue]{$ \low a {1} b$};
\end{scope}
\end{tikzpicture}\qquad
&\begin{tikzpicture}[scale=0.3]
\draw[line width=1.5pt, color=blue](-2,3)--(-2,-3);
\draw[line width=1.5pt, color=red](2,3)--(2,-3);
\draw[line width=1pt, color=black] plot [smooth, tension=0.6] coordinates 
      {(2,2)(1,1.5)(-3,-1.5)(-2,-2)};
      \node at (-2,3)[anchor=south, color=blue]{$\beta$};
      \node at (2,3) [anchor=south, color=red]{$\alpha$};
\end{tikzpicture}
\end{align}
with inverse $S_{\alpha^{-L}}=\rhd_{\beta+}\circ (S^\inv\oo 1_{H^{\oo E}})\circ\delta_{\alpha+}: H^{\oo E}\to H^{\oo E}$
\begin{align}
\label{eq:slidedef2b}
&\begin{tikzpicture}[scale=.6]
\begin{scope}[shift={(-5,0)}]
\draw [color=orange, fill=orange] (-2,0) circle (.2); 
\draw [color=violet, fill=violet] (2,0) circle (.2); 
\draw[color=blue, line width=1.5pt, ->,>=stealth] (.2,1.8)--(1.8,.2);
\draw[color=red, line width=1.5pt, ->,>=stealth] (-1.8,0)--(1.8,0);
\draw[color=black, line width=.5pt, ->,>=stealth] (1.5,.5)--(.5,.5);
\draw[color=black, line width=1pt] (-2.2,0)--(-2.8,0);
\draw[color=black, line width=1pt] (-2.2,.2)--(-2.6,.6);
\draw[color=black, line width=1pt] (-2.2,-.2)--(-2.6,-.6);
\draw[color=black, line width=1pt] (2.2,0)--(2.8,0);
\draw[color=black, line width=1pt] (2.2,.2)--(2.6,.6);
\draw[color=black, line width=1pt] (2.2,-.2)--(2.6,-.6);
\node at (0,0)[anchor=north, color=red]{$a$};
\node at (1,1)[anchor=south west, color=blue]{$b$};
\end{scope}
\draw[color=black,->,>=stealth] (-1,0)--(1,0);
\node at (0,0.2) [anchor=south, color=black] {$S_{\alpha^{-L}}$};
\begin{scope}[shift={(5,0)}]
\draw [color=orange, fill=orange] (-2,0) circle (.2); 
\draw [color=violet, fill=violet] (2,0) circle (.2); 
\draw[color=blue, line width=1.5pt, ->,>=stealth] (-.2,1.8)--(-1.8,.2);
\draw[color=red, line width=1.5pt, ->,>=stealth] (-1.8,0)--(1.8,0);
\draw[color=black, line width=1pt] (-2.2,0)--(-2.8,0);
\draw[color=black, line width=1pt] (-2.2,.2)--(-2.6,.6);
\draw[color=black, line width=1pt] (-2.2,-.2)--(-2.6,-.6);
\draw[color=black, line width=1pt] (2.2,0)--(2.8,0);
\draw[color=black, line width=1pt] (2.2,.2)--(2.6,.6);
\draw[color=black, line width=1pt] (2.2,-.2)--(2.6,-.6);
\node at (0,0)[anchor=north, color=red]{$\low a 2$};
\node at (-.5,1.2)[anchor=south east, color=blue]{$S^\inv(\low a 1) b$};
\end{scope}
\end{tikzpicture}\qquad
&\begin{tikzpicture}[scale=0.3]
\draw[line width=1.5pt, color=blue](-2,3)--(-2,-3);
\draw[line width=1.5pt, color=red](2,3)--(2,-3);
\draw[line width=1pt, color=black] plot [smooth, tension=0.6] coordinates 
      {(2,2)(1,1.5)(-3,-1.5)(-2,-2)};
      \draw[line width=1pt, color=black, fill=gray] (-1,0) circle (.3);
      \node at (-2,3)[anchor=south, color=blue]{$\beta$};
      \node at (2,3) [anchor=south, color=red]{$\alpha$};
\end{tikzpicture}
\end{align}

\item  The edge slides for other edge orientations and their inverses are defined by 1.~by reversing edge orientation with  the involution $T: H\to H$ from \eqref{eq:tplusdef}.
\end{compactenum}
\end{definition}

That the morphism $S_{\alpha^{-L}}$ in  \eqref{eq:slidedef2b} is indeed the inverse of $S_{\alpha^L}$ in  \eqref{eq:slidedef2a} follows by a direct computation in Sweedler notation or, equivalently,   from the following diagrammatic computation
\begin{align*}
\begin{tikzpicture}[scale=.33]
\begin{scope}[shift={(-7,0)}]
\draw[line width=1.5pt, color=blue](-2,2.5)--(-2,-3.5);
\draw[line width=1.5pt, color=red](2,2.5)--(2,-3.5);
\draw[line width=1pt, color=black] plot [smooth, tension=0.6] coordinates 
      {(2,1.5)(1,1)(-3,-.5)(-2,-1.5)};
\draw[line width=1pt, color=black] plot [smooth, tension=0.6] coordinates 
      {(2,.5)(1,0)(-3,-1.5)(-2,-2.5)};
            \draw[line width=1pt, color=black, fill=gray] (-1,-.7) circle (.3);
\end{scope}
\node at (-3.5,-.5){$=$};
\begin{scope}[shift={(0,0)}]
\draw[line width=1.5pt, color=blue](-2,2.5)--(-2,-3.5);
\draw[line width=1.5pt, color=red](2,2.5)--(2,-3.5);
\draw[line width=1pt, color=black] plot [smooth, tension=0.6] coordinates 
{(2,2)(.5,1.5)(0,1)};
\draw[line width=1pt, color=black] plot [smooth cycle, tension=0.6] coordinates 
      { (0,1)(1,0)(-1,-.5)(0,-1.5) (1,-.5)(-1,0)};
\draw[line width=1pt, color=black] plot [smooth, tension=0.6] coordinates 
      {(0,-1.5)(0,-2)(-3,-2.5)(-2,-3)};
            \draw[line width=1pt, color=black, fill=gray] (-1,-.7) circle (.3);
\end{scope}
\node at (3.5,-.5){$=$};
\begin{scope}[shift={(7,0)}]
\draw[line width=1.5pt, color=blue](-2,2.5)--(-2,-3.5);
\draw[line width=1.5pt, color=red](2,2.5)--(2,-3.5);
\draw[line width=1pt, color=black] plot [smooth, tension=0.6] coordinates 
{(2,2)(.5,1.5)(0,1)};
\draw[line width=1pt, color=black, fill=white] (0,1) circle (.15);
\draw[line width=1pt, color=black, fill=white] (0,-1.5) circle (.15);
\draw[line width=1pt, color=black] plot [smooth, tension=0.6] coordinates 
      {(0,-1.5)(0,-2)(-3,-2.5)(-2,-3)};
\end{scope}
\node at (10.5,-.5){$=$};
\begin{scope}[shift={(14,0)}]
\draw[line width=1.5pt, color=blue](-2,2.5)--(-2,-3.5);
\draw[line width=1.5pt, color=red](2,2.5)--(2,-3.5);
\end{scope}
\node at (17.5,-.5){$=$};
\begin{scope}[shift={(21,0)}]
\draw[line width=1.5pt, color=blue](-2,2.5)--(-2,-3.5);
\draw[line width=1.5pt, color=red](2,2.5)--(2,-3.5);
\draw[line width=1pt, color=black] plot [smooth, tension=0.6] coordinates 
{(2,2)(.5,1.5)(0,1)};
\draw[line width=1pt, color=black, fill=white] (0,1) circle (.15);
\draw[line width=1pt, color=black, fill=white] (0,-1.5) circle (.15);
\draw[line width=1pt, color=black] plot [smooth, tension=0.6] coordinates 
      {(0,-1.5)(0,-2)(-3,-2.5)(-2,-3)};
\end{scope}
\node at (24.5,-.5){$=$};
\begin{scope}[shift={(28,0)}]
\draw[line width=1.5pt, color=blue](-2,2.5)--(-2,-3.5);
\draw[line width=1.5pt, color=red](2,2.5)--(2,-3.5);
\draw[line width=1pt, color=black] plot [smooth, tension=0.6] coordinates 
{(2,2)(.5,1.5)(0,1)};
\draw[line width=1pt, color=black] plot [smooth cycle, tension=0.6] coordinates 
      { (0,1)(1,0)(-1,-.5)(0,-1.5) (1,-.5)(-1,0)};
\draw[line width=1pt, color=black] plot [smooth, tension=0.6] coordinates 
      {(0,-1.5)(0,-2)(-3,-2.5)(-2,-3)};
            \draw[line width=1pt, color=black, fill=gray] (1,-.7) circle (.3);
\end{scope}
\node at (31.5,-.5){$=$};
\begin{scope}[shift={(36,0)}]
\draw[line width=1.5pt, color=blue](-2,2.5)--(-2,-3.5);
\draw[line width=1.5pt, color=red](2,2.5)--(2,-3.5);
\draw[line width=1pt, color=black] plot [smooth, tension=0.6] coordinates 
      {(2,1.5)(1,1)(-3,-.5)(-2,-1.5)};
\draw[line width=1pt, color=black] plot [smooth, tension=0.6] coordinates 
      {(2,.5)(1,0)(-3,-1.5)(-2,-2.5)};
            \draw[line width=1pt, color=black, fill=gray] (-1,.3) circle (.3);
\end{scope}
\end{tikzpicture}
\tDot
\end{align*}

Explicit expressions for the edge slides for other  edge orientations and their inverses are obtained from Definition \ref{def:triangmodcom} and  \eqref{eq:tplusdef} and given as follows.

\begin{align}
\label{eq:slidedef1b}
&\begin{tikzpicture}[scale=.6]
\begin{scope}[shift={(-5,0)}]
\draw [color=orange, fill=orange] (-2,0) circle (.2); 
\draw [color=violet, fill=violet] (2,0) circle (.2); 
\draw[color=blue, line width=1.5pt, ->,>=stealth] (-1.8,.2)--(-.2,1.8);
\draw[color=red, line width=1.5pt, ->,>=stealth] (-1.8,0)--(1.8,0);
\draw[color=black, line width=.5pt, ->,>=stealth] (-1.5,.5)--(-.5,.5);
\draw[color=black, line width=1pt] (-2.2,0)--(-2.8,0);
\draw[color=black, line width=1pt] (-2.2,.2)--(-2.6,.6);
\draw[color=black, line width=1pt] (-2.2,-.2)--(-2.6,-.6);
\draw[color=black, line width=1pt] (2.2,0)--(2.8,0);
\draw[color=black, line width=1pt] (2.2,.2)--(2.6,.6);
\draw[color=black, line width=1pt] (2.2,-.2)--(2.6,-.6);
\node at (0,0)[anchor=north, color=red]{$a$};
\node at (-1,1)[anchor=south east, color=blue]{$b$};
\end{scope}
\draw[color=black,->,>=stealth] (-1,0)--(1,0);
\node at (0,0.2) [anchor=south, color=black] {$S_{\alpha^{L}}$};
\begin{scope}[shift={(5,0)}]
\draw [color=orange, fill=orange] (-2,0) circle (.2); 
\draw [color=violet, fill=violet] (2,0) circle (.2); 
\draw[color=blue, line width=1.5pt, ->,>=stealth] (1.8,.2)--(.2,1.8);
\draw[color=red, line width=1.5pt, ->,>=stealth] (-1.8,0)--(1.8,0);
\draw[color=black, line width=1pt] (-2.2,0)--(-2.8,0);
\draw[color=black, line width=1pt] (-2.2,.2)--(-2.6,.6);
\draw[color=black, line width=1pt] (-2.2,-.2)--(-2.6,-.6);
\draw[color=black, line width=1pt] (2.2,0)--(2.8,0);
\draw[color=black, line width=1pt] (2.2,.2)--(2.6,.6);
\draw[color=black, line width=1pt] (2.2,-.2)--(2.6,-.6);
\node at (0,0)[anchor=north, color=red]{$\low a 2$};
\node at (1,1)[anchor=south west, color=blue]{$bS(\low a 1)$};
\end{scope}
\end{tikzpicture} 
&\begin{tikzpicture}[scale=0.35]
\draw[line width=1.5pt, color=blue](-2,3)--(-2,-3);
\draw[line width=1.5pt, color=red](2,3)--(2,-3);
\node at (-2,3)[anchor=south, color=blue]{$\beta$};
\node at (2,3)[anchor=south, color=red]{$\alpha$};
\draw[line width=1pt, color=black] plot [smooth, tension=0.6] coordinates 
      {(2,2)(1,1.5)(-1,-1.5)(-2,-2)};
      \draw[line width=1pt, color=black, fill=white] (0,0) circle (.3);
\end{tikzpicture}
\intertext{}
\label{eq:slidedef1a}
&\begin{tikzpicture}[scale=.6]
\begin{scope}[shift={(-5,0)}]
\draw [color=orange, fill=orange] (-2,0) circle (.2); 
\draw [color=violet, fill=violet] (2,0) circle (.2); 
\draw[color=blue, line width=1.5pt, ->,>=stealth] (1.8,.2)--(.2,1.8);
\draw[color=red, line width=1.5pt, ->,>=stealth] (-1.8,0)--(1.8,0);
\draw[color=black, line width=.5pt, ->,>=stealth] (1.5,.5)--(.5,.5);
\draw[color=black, line width=1pt] (-2.2,0)--(-2.8,0);
\draw[color=black, line width=1pt] (-2.2,.2)--(-2.6,.6);
\draw[color=black, line width=1pt] (-2.2,-.2)--(-2.6,-.6);
\draw[color=black, line width=1pt] (2.2,0)--(2.8,0);
\draw[color=black, line width=1pt] (2.2,.2)--(2.6,.6);
\draw[color=black, line width=1pt] (2.2,-.2)--(2.6,-.6);
\node at (0,0)[anchor=north, color=red]{$a$};
\node at (1,1)[anchor=south west, color=blue]{$b$};
\end{scope}
\draw[color=black,->,>=stealth] (-1,0)--(1,0);
\node at (0,0.2) [anchor=south, color=black] {$S_{\alpha^{-L}}$};
\begin{scope}[shift={(5,0)}]
\draw [color=orange, fill=orange] (-2,0) circle (.2); 
\draw [color=violet, fill=violet] (2,0) circle (.2); 
\draw[color=blue, line width=1.5pt, ->,>=stealth] (-1.8,.2)--(-.2,1.8);
\draw[color=red, line width=1.5pt, ->,>=stealth] (-1.8,0)--(1.8,0);
\draw[color=black, line width=1pt] (-2.2,0)--(-2.8,0);
\draw[color=black, line width=1pt] (-2.2,.2)--(-2.6,.6);
\draw[color=black, line width=1pt] (-2.2,-.2)--(-2.6,-.6);
\draw[color=black, line width=1pt] (2.2,0)--(2.8,0);
\draw[color=black, line width=1pt] (2.2,.2)--(2.6,.6);
\draw[color=black, line width=1pt] (2.2,-.2)--(2.6,-.6);
\node at (0,0)[anchor=north, color=red]{$\low a 2$};
\node at (-1,1)[anchor=south east, color=blue]{$b \low a {1}$};
\end{scope}
\end{tikzpicture}\qquad
&\begin{tikzpicture}[scale=0.35]
\draw[line width=1.5pt, color=blue](-2,3)--(-2,-3);
\draw[line width=1.5pt, color=red](2,3)--(2,-3);
\node at (-2,3)[anchor=south, color=blue]{$\beta$};
\node at (2,3)[anchor=south, color=red]{$\alpha$};
\draw[line width=1pt, color=black] plot [smooth, tension=0.6] coordinates 
      {(2,2)(1,1.5)(-1,-1.5)(-2,-2)};   
\end{tikzpicture}
\intertext{}
\label{eq:slidedef3b}
&\begin{tikzpicture}[scale=.6]
\begin{scope}[shift={(-5,0)}]
\draw [color=orange, fill=orange] (-2,0) circle (.2); 
\draw [color=violet, fill=violet] (2,0) circle (.2); 
\draw[color=blue, line width=1.5pt, <-,>=stealth] (-1.8,-.2)--(-.2,-1.8);
\draw[color=red, line width=1.5pt, ->,>=stealth] (-1.8,0)--(1.8,0);
\draw[color=black, line width=.5pt, ->,>=stealth] (-1.5,-.5)--(-.5,-.5);
\draw[color=black, line width=1pt] (-2.2,0)--(-2.8,0);
\draw[color=black, line width=1pt] (-2.2,.2)--(-2.6,.6);
\draw[color=black, line width=1pt] (-2.2,-.2)--(-2.6,-.6);
\draw[color=black, line width=1pt] (2.2,0)--(2.8,0);
\draw[color=black, line width=1pt] (2.2,.2)--(2.6,.6);
\draw[color=black, line width=1pt] (2.2,-.2)--(2.6,-.6);
\node at (0,0)[anchor=south, color=red]{$a$};
\node at (-1,-1)[anchor=north east, color=blue]{$b$};
\end{scope}
\draw[color=black,->,>=stealth] (-1,0)--(1,0);
\node at (0,0.2) [anchor=south, color=black] {$S_{\alpha^{R}}$};
\begin{scope}[shift={(5,0)}]
\draw [color=orange, fill=orange] (-2,0) circle (.2); 
\draw [color=violet, fill=violet] (2,0) circle (.2); 
\draw[color=blue, line width=1.5pt, <-,>=stealth] (1.8,-.2)--(.2,-1.8);
\draw[color=red, line width=1.5pt, ->,>=stealth] (-1.8,0)--(1.8,0);
\draw[color=black, line width=1pt] (-2.2,0)--(-2.8,0);
\draw[color=black, line width=1pt] (-2.2,.2)--(-2.6,.6);
\draw[color=black, line width=1pt] (-2.2,-.2)--(-2.6,-.6);
\draw[color=black, line width=1pt] (2.2,0)--(2.8,0);
\draw[color=black, line width=1pt] (2.2,.2)--(2.6,.6);
\draw[color=black, line width=1pt] (2.2,-.2)--(2.6,-.6);
\node at (0,0)[anchor=south, color=red]{$\low a 1$};
\node at (1,-1)[anchor=north west, color=blue]{$ S^\inv T(\low a {2})   b$};
\end{scope}
\end{tikzpicture}\qquad
&\begin{tikzpicture}[scale=0.35]
\draw[line width=1.5pt, color=blue](-2,3)--(-2,-3);
\draw[line width=1.5pt, color=red](2,3)--(2,-3);
\node at (-2,3)[anchor=south, color=blue]{$\beta$};
\node at (2,3)[anchor=south, color=red]{$\alpha$};
\draw[line width=1pt, color=black] plot [smooth, tension=0.6] coordinates 
      {(2,2)(2.7,1.5)(2.7,.5)(-2.7,-.5)(-2.7,-1.5)(-2,-2)};
      \draw[color=black, line width=1pt, fill=white] (2.9,1) circle (.3);
            \draw[color=black, line width=1pt, fill=white] (2.9,1) circle (.15);
      \draw[color=black, line width=1pt, fill=gray] (0,0) circle (.3);            
\end{tikzpicture}
\intertext{}
\label{eq:slidedef3a}
&\begin{tikzpicture}[scale=.6]
\begin{scope}[shift={(-5,0)}]
\draw [color=orange, fill=orange] (-2,0) circle (.2); 
\draw [color=violet, fill=violet] (2,0) circle (.2); 
\draw[color=blue, line width=1.5pt, <-,>=stealth] (1.8,-.2)--(.2,-1.8);
\draw[color=red, line width=1.5pt, ->,>=stealth] (-1.8,0)--(1.8,0);
\draw[color=black, line width=.5pt, ->,>=stealth] (1.5,-.5)--(.5,-.5);
\draw[color=black, line width=1pt] (-2.2,0)--(-2.8,0);
\draw[color=black, line width=1pt] (-2.2,.2)--(-2.6,.6);
\draw[color=black, line width=1pt] (-2.2,-.2)--(-2.6,-.6);
\draw[color=black, line width=1pt] (2.2,0)--(2.8,0);
\draw[color=black, line width=1pt] (2.2,.2)--(2.6,.6);
\draw[color=black, line width=1pt] (2.2,-.2)--(2.6,-.6);
\node at (0,0)[anchor=south, color=red]{$a$};
\node at (1,-1)[anchor=north west, color=blue]{$b$};
\end{scope}
\draw[color=black,->,>=stealth] (-1,0)--(1,0);
\node at (0,0.2) [anchor=south, color=black] {$S_{\alpha^{-R}}$};
\begin{scope}[shift={(5,0)}]
\draw [color=orange, fill=orange] (-2,0) circle (.2); 
\draw [color=violet, fill=violet] (2,0) circle (.2); 
\draw[color=blue, line width=1.5pt, <-,>=stealth] (-1.8,-.2)--(-.2,-1.8);
\draw[color=red, line width=1.5pt, ->,>=stealth] (-1.8,0)--(1.8,0);
\draw[color=black, line width=1pt] (-2.2,0)--(-2.8,0);
\draw[color=black, line width=1pt] (-2.2,.2)--(-2.6,.6);
\draw[color=black, line width=1pt] (-2.2,-.2)--(-2.6,-.6);
\draw[color=black, line width=1pt] (2.2,0)--(2.8,0);
\draw[color=black, line width=1pt] (2.2,.2)--(2.6,.6);
\draw[color=black, line width=1pt] (2.2,-.2)--(2.6,-.6);
\node at (0,0)[anchor=south, color=red]{$\low a 1$};
\node at (-1,-1)[anchor=north east, color=blue]{$T( \low a {2}) b$};
\end{scope}
\end{tikzpicture}\qquad
&\qquad\begin{tikzpicture}[scale=0.35]
\draw[line width=1.5pt, color=blue](-2,3)--(-2,-3);
\draw[line width=1.5pt, color=red](2,3)--(2,-3);
\node at (-2,3)[anchor=south, color=blue]{$\beta$};
\node at (2,3)[anchor=south, color=red]{$\alpha$};
\draw[line width=1pt, color=black] plot [smooth, tension=0.6] coordinates 
      {(2,2)(2.7,1.5)(2.7,.5)(-2.7,-.5)(-2.7,-1.5)(-2,-2)};
      \draw[color=black, line width=1pt, fill=white] (2.9,1) circle (.3);
            \draw[color=black, line width=1pt, fill=white] (2.9,1) circle (.15);
\end{tikzpicture}
\intertext{}
\label{eq:slidedef4a}
&\begin{tikzpicture}[scale=.6]
\begin{scope}[shift={(-5,0)}]
\draw [color=orange, fill=orange] (-2,0) circle (.2); 
\draw [color=violet, fill=violet] (2,0) circle (.2); 
\draw[color=blue, line width=1.5pt, ->,>=stealth] (-1.8,-.2)--(-.2,-1.8);
\draw[color=red, line width=1.5pt, ->,>=stealth] (-1.8,0)--(1.8,0);
\draw[color=black, line width=.5pt, ->,>=stealth] (-1.5,-.5)--(-.5,-.5);
\draw[color=black, line width=1pt] (-2.2,0)--(-2.8,0);
\draw[color=black, line width=1pt] (-2.2,.2)--(-2.6,.6);
\draw[color=black, line width=1pt] (-2.2,-.2)--(-2.6,-.6);
\draw[color=black, line width=1pt] (2.2,0)--(2.8,0);
\draw[color=black, line width=1pt] (2.2,.2)--(2.6,.6);
\draw[color=black, line width=1pt] (2.2,-.2)--(2.6,-.6);
\node at (0,0)[anchor=south, color=red]{$a$};
\node at (-1,-1)[anchor=north east, color=blue]{$b$};
\end{scope}
\draw[color=black,->,>=stealth] (-1,0)--(1,0);
\node at (0,0.2) [anchor=south, color=black] {$S_{\alpha^{R}}$};
\begin{scope}[shift={(5,0)}]
\draw [color=orange, fill=orange] (-2,0) circle (.2); 
\draw [color=violet, fill=violet] (2,0) circle (.2); 
\draw[color=blue, line width=1.5pt, ->,>=stealth] (1.8,-.2)--(.2,-1.8);
\draw[color=red, line width=1.5pt, ->,>=stealth] (-1.8,0)--(1.8,0);
\draw[color=black, line width=1pt] (-2.2,0)--(-2.8,0);
\draw[color=black, line width=1pt] (-2.2,.2)--(-2.6,.6);
\draw[color=black, line width=1pt] (-2.2,-.2)--(-2.6,-.6);
\draw[color=black, line width=1pt] (2.2,0)--(2.8,0);
\draw[color=black, line width=1pt] (2.2,.2)--(2.6,.6);
\draw[color=black, line width=1pt] (2.2,-.2)--(2.6,-.6);
\node at (0,0)[anchor=south, color=red]{$\low a 1$};
\node at (1,-1)[anchor=north west, color=blue]{$b  T( \low a {2})$};
\end{scope}
\end{tikzpicture}\qquad
&\begin{tikzpicture}[scale=0.35]
\draw[line width=1.5pt, color=blue](-2,3)--(-2,-3);
\draw[line width=1.5pt, color=red](2,3)--(2,-3);
\node at (-2,3)[anchor=south, color=blue]{$\beta$};
\node at (2,3)[anchor=south, color=red]{$\alpha$};
\draw[line width=1pt, color=black] plot [smooth, tension=0.6] coordinates 
      {(2,2)(2.7,1.5)(2.7,.5)(-1,-1.5)(-2,-2)};
      \draw[color=black, line width=1pt, fill=white] (2.9,1) circle (.3);
            \draw[color=black, line width=1pt, fill=white] (2.9,1) circle (.15);
\end{tikzpicture}
\intertext{}
\label{eq:slidedef4b}
&\begin{tikzpicture}[scale=.6]
\begin{scope}[shift={(-5,0)}]
\draw [color=orange, fill=orange] (-2,0) circle (.2); 
\draw [color=violet, fill=violet] (2,0) circle (.2); 
\draw[color=blue, line width=1.5pt, ->,>=stealth] (1.8,-.2)--(.2,-1.8);
\draw[color=red, line width=1.5pt, ->,>=stealth] (-1.8,0)--(1.8,0);
\draw[color=black, line width=.5pt, ->,>=stealth] (1.5,-.5)--(.5,-.5);
\draw[color=black, line width=1pt] (-2.2,0)--(-2.8,0);
\draw[color=black, line width=1pt] (-2.2,.2)--(-2.6,.6);
\draw[color=black, line width=1pt] (-2.2,-.2)--(-2.6,-.6);
\draw[color=black, line width=1pt] (2.2,0)--(2.8,0);
\draw[color=black, line width=1pt] (2.2,.2)--(2.6,.6);
\draw[color=black, line width=1pt] (2.2,-.2)--(2.6,-.6);
\node at (0,0)[anchor=south, color=red]{$a$};
\node at (1,-1)[anchor=north west, color=blue]{$b$};
\end{scope}
\draw[color=black,->,>=stealth] (-1,0)--(1,0);
\node at (0,0.2) [anchor=south, color=black] {$S_{\alpha^{-R}}$};
\begin{scope}[shift={(5,0)}]
\draw [color=orange, fill=orange] (-2,0) circle (.2); 
\draw [color=violet, fill=violet] (2,0) circle (.2); 
\draw[color=blue, line width=1.5pt, ->,>=stealth] (-1.8,-.2)--(-.2,-1.8);
\draw[color=red, line width=1.5pt, ->,>=stealth] (-1.8,0)--(1.8,0);
\draw[color=black, line width=1pt] (-2.2,0)--(-2.8,0);
\draw[color=black, line width=1pt] (-2.2,.2)--(-2.6,.6);
\draw[color=black, line width=1pt] (-2.2,-.2)--(-2.6,-.6);
\draw[color=black, line width=1pt] (2.2,0)--(2.8,0);
\draw[color=black, line width=1pt] (2.2,.2)--(2.6,.6);
\draw[color=black, line width=1pt] (2.2,-.2)--(2.6,-.6);
\node at (0,0)[anchor=south, color=red]{$\low a 1$};
\node at (-1,-1)[anchor=north east, color=blue]{$b ST( \low a {2})$};
\end{scope}
\end{tikzpicture}\qquad
&\begin{tikzpicture}[scale=0.35]
\draw[line width=1.5pt, color=blue](-2,3)--(-2,-3);
\draw[line width=1.5pt, color=red](2,3)--(2,-3);
\node at (-2,3)[anchor=south, color=blue]{$\beta$};
\node at (2,3)[anchor=south, color=red]{$\alpha$};
\draw[line width=1pt, color=black] plot [smooth, tension=0.6] coordinates 
      {(2,2)(2.7,1.5)(2.7,.5)(-1,-1.5)(-2,-2)};
      \draw[color=black, line width=1pt, fill=white] (2.9,1) circle (.3);
            \draw[color=black, line width=1pt, fill=white] (2.9,1) circle (.15);
             \draw[color=black, line width=1pt, fill=white] (.8,-.6) circle (.3);
\end{tikzpicture}
\end{align}

In the remainder of this section, we investigate the  properties of these  edge slides. We start by considering their interaction with the $H$-left module and  comodule structures for ciliated vertices and faces of the ribbon graph $\Gamma$.

\begin{proposition} \label{prop:vertfacecomp} Let $v$ be a ciliated vertex,  $f$ a ciliated face of $\Gamma$ and $\rhd_v: H\oo H^{\oo E}\to H^{\oo E}$ and $\delta_f: H^{\oo E}\to H\oo H^{\oo E}$ the associated $H$-left module and  comodule structures from Definition \ref{def:vertexface}. 
Then any  edge slide that does not slide edge ends over their cilia is an isomorphism of $H$-left modules and $H$-left comodules.  
\end{proposition}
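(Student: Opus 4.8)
The plan is to verify the two assertions — compatibility with the vertex module structure $\rhd_v$ and with the face comodule structure $\delta_f$ — separately, using the fact that both the edge slide automorphisms and the structures $\rhd_v$, $\delta_f$ are assembled out of the elementary pieces $\rhd_{\alpha\pm}$, $\delta_{\alpha\pm}$ of Definition \ref{def:triangmodcom}. Since an edge slide $S$ is built from one $\delta_{\alpha\pm}$ (a left comodule structure on the ``rail'' edge $\alpha$) and one $\rhd_{\beta\pm}$ (a left module structure on the ``sliding'' edge $\beta$), and since $\rhd_v$ and $\delta_f$ are obtained by composing such elementary pieces with $\Delta^{n-1}$ or $m^{r-1}$, the whole statement should reduce to a finite list of commutation lemmas between these elementary operators, most of which are already recorded in Lemma \ref{lem:triangleops} and Lemma \ref{lem:facevertex}. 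First I would reduce to the case where $S$ is one of the four basic slides in \eqref{eq:slideedge}, say $S_{\alpha^L}=\rhd_{\beta+}\circ\delta_{\alpha+}$, the other cases following by applying the orientation-reversing involution $T$ from \eqref{eq:tplusdef} exactly as in Definition \ref{def:edge slide}; note $T_\alpha$, $T_\beta$ intertwine all the relevant structures by Lemma \ref{lem:tprops} together with Lemma \ref{lem:triangleops} and Lemma \ref{lem:facevertex}.

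For compatibility with a vertex structure $\rhd_v$, I would split into cases according to how the cilium of $v$ and the edge ends of $\alpha$ and $\beta$ are positioned. If $v$ is incident to neither $\alpha$ nor $\beta$, then $\rhd_v$ acts on copies of $H$ disjoint from those touched by $\rhd_{\beta+}$ and $\delta_{\alpha+}$, and everything commutes trivially. If $v$ is incident to $\beta$ but not $\alpha$ (or to both), the key point is that $\rhd_{\beta+}$ is part of the Hopf module structure $(H^{\oo E},\rhd_{\beta+},\delta_{\alpha+}?)$; here one uses that the two module structures $\rhd_{\beta+}$ and $\rhd_{\gamma\pm}$ for different edge ends commute up to the braiding (this is the analogue of Lemma \ref{lem:facevertex}, 1., at the level of triangle operators, which in turn is Lemma \ref{lem:triangleops}, 2., applied edge-by-edge), so that sliding the comultiplication $\Delta^{n-1}$ past $\rhd_{\beta+}$ and using coassociativity gives the module morphism property. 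The genuinely substantive case is when $\alpha$ is incident to $v$: then $\delta_{\alpha+}$ interacts with the action $\rhd_{\alpha\pm}$ appearing inside $\rhd_v$. Since the slide does not move edge ends over the cilium, $\alpha$'s end at $v$ and $\beta$'s sliding end are adjacent in the \emph{linear} order, and one computes directly using the Hopf module compatibility \eqref{eq:lefthopf} for $(H^{\oo E},\rhd_{\alpha\pm},\delta_{\alpha\pm})$ from Lemma \ref{lem:triangleops}, 1., that pushing $\Delta^{n-1}$ through reproduces the slide on the image side — this is essentially the computation encoded in Figure \ref{fig:facevertex} (a), (b), adapted to the configuration at hand.

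For compatibility with a face structure $\delta_f$ the argument is dual: one splits according to whether $f$ traverses $\alpha$, $\beta$, both, or neither, and whether the cilium of $f$ lies in a position crossed by the slide. When $f$ traverses neither edge the statement is immediate. When $f$ traverses $\alpha$, one uses that $\delta_{\alpha+}$ (the coaction built into the slide) and the coaction $\delta_{\alpha\epsilon}$ built into $\delta_f$ commute up to the braiding by Lemma \ref{lem:triangleops}, 2., together with coassociativity of $\Delta$, so that pushing $m^{r-1}$ through reproduces the slide; when $f$ traverses $\beta$ one invokes instead the Hopf module compatibility \eqref{eq:lefthopf} relating $\rhd_{\beta+}$ to $\delta_{\beta\epsilon}$, exactly as in the third and fourth parts of Lemma \ref{lem:facevertex}. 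The hypothesis that edge ends are not slid over cilia is what guarantees that the edges $\alpha,\beta$ occur \emph{consecutively} in the linear word defining the ciliated vertex or face, so that the relevant comultiplications or multiplications can be brought adjacent and the local identities \eqref{eq:lefthopf}, Lemma \ref{lem:triangleops} apply without interference from intervening operators.

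The main obstacle will be the bookkeeping in the case where both $\alpha$ and $\beta$ are incident to the same ciliated vertex (respectively both traversed by the same ciliated face) and moreover the other end of $\beta$ is also incident there: then the single comultiplication $\Delta^{n-1}$ feeds several distinct copies of $H$ that are all affected by the slide, and one must carefully track, using coassociativity, cocommutativity-free manipulations, and the anti-(co)monoid property of the antipode and of $T$ (Lemma \ref{lem:tprops}), that the rearranged diagram is exactly $\rhd_v$ (resp.\ $\delta_f$) postcomposed with the slide on the transformed graph $\Gamma'$. I expect this to be a somewhat lengthy but entirely mechanical diagrammatic verification, organized as a handful of pictures in the style of Figure \ref{fig:facevertex}; once the basic slide $S_{\alpha^L}$ is done, the remaining three basic slides and all orientation variants follow formally by conjugating with $T_\alpha$, $T_\beta$ and invoking Lemma \ref{lem:tprops} together with \eqref{eq:rightdef}.
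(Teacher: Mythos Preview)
Your proposal is correct and follows essentially the same strategy as the paper: reduce to a single basic slide (the paper picks the variant \eqref{eq:slidedef1a}, you pick $S_{\alpha^L}$, which is immaterial), handle the remaining orientations by conjugating with $T$, dispose of the trivial cases where $\alpha,\beta$ are not incident at $v$ or not traversed by $f$, and verify the substantive cases by local diagrammatic identities using the Hopf module compatibility of Lemma~\ref{lem:triangleops},~1.\ and the commutativity of Lemma~\ref{lem:triangleops},~2. The paper organises the remaining verifications as six explicit identities \eqref{eq:vert1}--\eqref{eq:vert6}, one for each of the three vertices and three faces adjacent to the edge pair, and records the non-trivial ones in a separate figure (Figure~\ref{fig:compvertface}, not Figure~\ref{fig:facevertex}); your anticipated ``handful of pictures'' is exactly this.
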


\begin{proof}  That the edge slide  from \eqref{eq:slidedef2a} is an automorphism of the object $H^{\oo E}$ was already shown  above. The proof for the case where both vertices coincide is analogous, and so are the proofs that  the  slides \eqref{eq:slidedef1b} and  \eqref{eq:slidedef1a},  \eqref{eq:slidedef3b} and \eqref{eq:slidedef3a} and     \eqref{eq:slidedef4a} and \eqref{eq:slidedef4b} are inverse to each other. 

It remains to check compatibility with the $H$-module structures for the  vertices and with the $H$-comodule structures for the  faces. 
If the two edges involved in the slide are not incident at $v$ or not traversed by the face $f$, this follows directly from the definition of the slide and of the $H$-module and comodule structure.  We prove the remaining cases for the slide \eqref{eq:slidedef1a} under the assumption that the starting and target vertex of $\alpha$  are different and do not coincide with the target vertex of $\beta$. The proofs for the cases, where some of these vertices coincide are analogous. The proofs for other edge orientations follow by considering the inverses and reversing edge orientations with the involution $T: H\to H$ from \eqref{eq:tplusdef}.

The statement that \eqref{eq:slidedef1a} is an isomorphism of $H$-left modules and $H$-left comodules then corresponds to the equality of the following diagrams that arise from the $H$-left module and -comodule structures associated with the three vertices and three faces adjacent to the edges. 
\begin{align}\label{eq:vert1}
&\begin{tikzpicture}[scale=0.3]
\begin{scope}[shift={(-6,0)}]
\draw[line width=1.5pt, color=blue](-2,3)--(-2,-3);
\draw[line width=1.5pt, color=red](2,3)--(2,-3);
\node at (-2,3)[color=blue, anchor=south]{$\beta$};
\node at (2,3)[color=red, anchor=south]{$\alpha$};
\draw[line width=1pt, color=black] plot [smooth, tension=0.6] coordinates 
      {(2,0)(1,-.3)(-1,-1.7)(-2,-2)};
      \draw[line width=1pt, color=black] plot [smooth, tension=0.6] coordinates 
      {(-2,-1)(-1,0)(-5,1)(-4,2)(-2,1.5)(2,1)};
        \draw [color=black, fill=white, line width=1pt] (-1,-.25) circle (.3); 
      \draw[line width=1pt, color=black](-4,2)--(-4,3); 
      \end{scope}
      \node at (-1.5,0){$=$};
      \begin{scope}[shift={(4,0)}]
\draw[line width=1.5pt, color=blue](-2,3)--(-2,-3);
\draw[line width=1.5pt, color=red](2,3)--(2,-3);
\node at (-2,3)[color=blue, anchor=south]{$\beta$};
\node at (2,3)[color=red, anchor=south]{$\alpha$};
\draw[line width=1pt, color=black] plot [smooth, tension=0.6] coordinates 
      {(2,2)(1,1.7)(-1,.3)(-2,0)};
      \draw[line width=1pt, color=black] plot [smooth, tension=0.6] coordinates 
      {(-4,3)(-3,-1)(2,-2)};
      \end{scope}
\end{tikzpicture}
&
&\begin{tikzpicture}[scale=.6]
\begin{scope}[shift={(-3,0)}]
\draw [color=orange, fill=orange] (-2,0) circle (.2); 
\draw [color=cyan, fill=cyan] (0,2) circle (.2); 
\draw [color=violet, fill=violet] (2,0) circle (.2); 
\draw [color=black] (2,0) circle (.4); 
\draw[color=blue, line width=1.5pt, ->,>=stealth] (1.8,.2)--(.2,1.8);
\draw[color=red, line width=1.5pt, ->,>=stealth] (-1.8,0)--(1.8,0);
\node at (0,0) [anchor=north, color=red] {$h_{(2)} a$};
\node at (0.8,.8) [anchor=east, color=blue] {$bS(h_{(1)})$};
\end{scope}
\draw[color=black,->,>=stealth] (-1,1)--(1,1);
\begin{scope}[shift={(3,0)}]
\draw [color=orange, fill=orange] (-2,0) circle (.2); 
\draw [color=cyan, fill=cyan] (0,2) circle (.2); 
\draw [color=violet, fill=violet] (2,0) circle (.2); 
\draw [color=black] (2,0) circle (.4); 
\draw[color=blue, line width=1.5pt, ->,>=stealth] (-1.8,.2)--(-.2,1.8);
\draw[color=red, line width=1.5pt, ->,>=stealth] (-1.8,0)--(1.8,0);
\node at (0,0) [anchor=north, color=red] {$h a_{(2)}$};
\node at (-.8,.8) [anchor=west, color=blue] {$b a_{(1)}$};
\end{scope}
\end{tikzpicture}\\
\intertext{}
\label{eq:vert2}
&\begin{tikzpicture}[scale=0.3]
\begin{scope}[shift={(-6,0)}]
\draw[line width=1.5pt, color=blue](-2,3)--(-2,-3);
\draw[line width=1.5pt, color=red](2,3)--(2,-3);
\node at (-2,3)[color=blue, anchor=south]{$\beta$};
\node at (2,3)[color=red, anchor=south]{$\alpha$};
\draw[line width=1pt, color=black] plot [smooth, tension=0.6] coordinates 
      {(2,-.5)(1,-.8)(-1,-2.2)(-2,-2.5)};
      \draw[line width=1pt, color=black] plot [smooth, tension=0.6] coordinates 
      {(-3,3)(-3,2)(3,1.5)(3,1)(2,.5)};
        \draw [color=black, fill=white, line width=1pt] (3.3,1.25) circle (.3); 
      \end{scope}
      \node at (-2,0){$=$};
      \begin{scope}[shift={(4,0)}]
\draw[line width=1.5pt, color=blue](-2,3)--(-2,-3);
\draw[line width=1.5pt, color=red](2,3)--(2,-3);
\node at (-2,3)[color=blue, anchor=south]{$\beta$};
\node at (2,3)[color=red, anchor=south]{$\alpha$};
\draw[line width=1pt, color=black] plot [smooth, tension=0.6] coordinates 
      {(2,2.5)(1,2.2)(-1,.8)(-2,.5)};
      \draw[line width=1pt, color=black] plot [smooth, tension=0.6] coordinates 
      {(-2,-2.5)(-1,-2)(-1,-1.5)(-2.5, -1)(-3,.5)(-4,.5)(-5,0)(-4,-.7)(3,-1.5)(3,-2)(2,-2.5)};
              \draw [color=black, fill=white, line width=1pt] (3.3,-1.75) circle (.3); 
                            \draw [color=black, fill=white, line width=1pt] (-1,-1.75) circle (.3); 
                            \draw[color=black, line width=1pt] (-3.2,3)--(-3.2,.6);
      \end{scope}
\end{tikzpicture}\qquad
& 
&\begin{tikzpicture}[scale=.6]
\begin{scope}[shift={(-3,0)}]
\draw [color=orange, fill=orange] (-2,0) circle (.2); 
\draw [color=cyan, fill=cyan] (0,2) circle (.2); 
\draw [color=violet, fill=violet] (2,0) circle (.2); 
\draw [color=black] (-2,0) circle (.4); 
\draw[color=blue, line width=1.5pt, ->,>=stealth] (1.8,.2)--(.2,1.8);
\draw[color=red, line width=1.5pt, ->,>=stealth] (-1.8,0)--(1.8,0);
\node at (0,0) [anchor=north, color=red] {$aS(h)$};
\node at (0.8,.8) [anchor=east, color=blue] {$b$};
\end{scope}
\draw[color=black,->,>=stealth] (-1,1)--(1,1);
\begin{scope}[shift={(3,0)}]
\draw [color=orange, fill=orange] (-2,0) circle (.2); 
\draw [color=cyan, fill=cyan] (0,2) circle (.2); 
\node at (0,0) [anchor=north, color=red] {$ a_{(2)} S(h_{(1)})$};
\draw [color=violet, fill=violet] (2,0) circle (.2); 
\draw [color=black] (-2,0) circle (.4); 
\draw[color=blue, line width=1.5pt, ->,>=stealth] (-1.8,.2)--(-.2,1.8);
\draw[color=red, line width=1.5pt, ->,>=stealth] (-1.8,0)--(1.8,0);
\node at (-.8,.8) [anchor=west, color=blue] {$b a_{(1)}S(h_{(2)})$};
\end{scope}
\end{tikzpicture}
\\
\intertext{}
\label{eq:vert3}
&\begin{tikzpicture}[scale=0.3]
\begin{scope}[shift={(-6,0)}]
\draw[line width=1.5pt, color=blue](-2,3)--(-2,-3);
\draw[line width=1.5pt, color=red](2,3)--(2,-3);
\node at (-2,3)[color=blue, anchor=south]{$\beta$};
\node at (2,3)[color=red, anchor=south]{$\alpha$};
\draw[line width=1pt, color=black] plot [smooth, tension=0.6] coordinates 
      {(2,-.5)(1,-.8)(-1,-2.2)(-2,-2.5)};
      \draw[line width=1pt, color=black] plot [smooth, tension=0.6] coordinates 
      {(-3,3)(-3,.5)(-2,-.5)};
      \end{scope}
      \node at (-2,0){$=$};
      \begin{scope}[shift={(3,0)}]
\draw[line width=1.5pt, color=blue](-2,3)--(-2,-3);
\draw[line width=1.5pt, color=red](2,3)--(2,-3);
\node at (-2,3)[color=blue, anchor=south]{$\beta$};
\node at (2,3)[color=red, anchor=south]{$\alpha$};
\draw[line width=1pt, color=black] plot [smooth, tension=0.6] coordinates 
      {(2,2.5)(1,2.2)(-1,.8)(-2,.5)};
      \draw[line width=1pt, color=black] plot [smooth, tension=0.6] coordinates 
      {(-3,3)(-3,-1.5)(-2,-2)};
      \end{scope}
\end{tikzpicture}\qquad
&
&\begin{tikzpicture}[scale=.6]
\begin{scope}[shift={(-4,0)}]
\draw [color=orange, fill=orange] (-2,0) circle (.2); 
\draw [color=cyan, fill=cyan] (0,2) circle (.2); 
\draw [color=black, line width=.5] (0,2) circle (.4); 
\draw [color=violet, fill=violet] (2,0) circle (.2); 
\draw[color=blue, line width=1.5pt, ->,>=stealth] (1.8,.2)--(.2,1.8);
\draw[color=red, line width=1.5pt, ->,>=stealth] (-1.8,0)--(1.8,0);
\node at (0,0) [anchor=north, color=red] {$a$};
\node at (1,.8) [anchor=east, color=blue] {$h b$};
\end{scope}
\draw[color=black,->,>=stealth] (-1.5,1)--(.5,1);
\begin{scope}[shift={(3,0)}]
\draw [color=orange, fill=orange] (-2,0) circle (.2); 
\draw [color=cyan, fill=cyan] (0,2) circle (.2); 
\draw [color=black, line width=.5] (0,2) circle (.4); 
\draw [color=violet, fill=violet] (2,0) circle (.2); 
\draw[color=blue, line width=1.5pt, ->,>=stealth] (-1.8,.2)--(-.2,1.8);
\draw[color=red, line width=1.5pt, ->,>=stealth] (-1.8,0)--(1.8,0);
\node at (0,0) [anchor=north, color=red] {$a_{(2)}$};
\node at (-.8,.8) [anchor=west, color=blue] {$h  b a_{(1)}$};
\end{scope}
\end{tikzpicture}
\\
\intertext{}
\label{eq:vert4}
&\begin{tikzpicture}[scale=0.3]
\begin{scope}[shift={(-6,0)}]
\draw[line width=1.5pt, color=blue](-2,3)--(-2,-3);
\draw[line width=1.5pt, color=red](2,3)--(2,-3);
\node at (-2,3)[color=blue, anchor=south]{$\beta$};
\node at (2,3)[color=red, anchor=south]{$\alpha$};
\draw[line width=1pt, color=black] plot [smooth, tension=0.6] coordinates 
      {(2,-.5)(1,-.8)(-1,-2.2)(-2,-2.5)};
      \draw[line width=1pt, color=black] plot [smooth, tension=0.6] coordinates 
      {(-2,2.5)(-4, 1)(-3, -1)(2,2.5)};
      \draw[color=black, line width=1pt] (-3,-1)--(-3,-3);
      \end{scope}
      \node at (-1,0){$=$};
      \begin{scope}[shift={(3,0)}]
\draw[line width=1.5pt, color=blue](-2,3)--(-2,-3);
\draw[line width=1.5pt, color=red](2,3)--(2,-3);
\node at (-2,3)[color=blue, anchor=south]{$\beta$};
\node at (2,3)[color=red, anchor=south]{$\alpha$};
\draw[line width=1pt, color=black] plot [smooth, tension=0.6] coordinates 
      {(2,2.5)(1,2.2)(-1,1.3)(-2,1)};
      \draw[line width=1pt, color=black] plot [smooth, tension=0.4] coordinates 
      {(-2, -.5)(-3,-1)(-3,-3)};
      \end{scope}
\end{tikzpicture}\qquad
&
&\begin{tikzpicture}[scale=.6]
\begin{scope}[shift={(-3,0)}]
\draw [color=orange, fill=orange] (-2,0) circle (.2); 
\draw [color=cyan, fill=cyan] (0,2) circle (.2); 
\draw [color=violet, fill=violet] (2,0) circle (.2); 
\draw[color=blue, line width=1.5pt, ->,>=stealth] (1.8,.2)--(.2,1.8);
\draw[color=red, line width=1.5pt, ->,>=stealth] (-1.8,0)--(1.8,0);
\draw[line width=.5pt, color=black,->,>=stealth] plot [smooth, tension=0.6] coordinates 
      {(-1.6,.5)(.8, .5)(-.4,1.6)};
      \node at (0,0) [anchor=north, color=red] {$a_{(2)}$};
\node at (0.7,1.5) [anchor=west, color=blue] {$b_{(2)}$};
      \node at (-1.8,-1.5) [anchor=west, color=black] {$\cdots b_{(1)}a_{(1)}\cdots$};
\end{scope}
\draw[color=black,->,>=stealth] (-1,1)--(1,1);
\begin{scope}[shift={(4,0)}]
\draw [color=orange, fill=orange] (-2,0) circle (.2); 
\draw [color=cyan, fill=cyan] (0,2) circle (.2); 
\draw [color=violet, fill=violet] (2,0) circle (.2); 
\draw[color=blue, line width=1.5pt, ->,>=stealth] (-1.8,.2)--(-.2,1.8);
\draw[color=red, line width=1.5pt, ->,>=stealth] (-1.8,0)--(1.8,0);
\draw[line width=.5pt, color=black,->,>=stealth] plot [smooth, tension=0.6] coordinates 
      {(-2.3,.5)(-.8,2)};
\node at (0,0) [anchor=north, color=red] {$a_{(3)}$};
\node at (-.7,.8) [anchor=west, color=blue] {$b_{(2)} a_{(2)}$};
      \node at (-1.8,-1.5) [anchor=west, color=black] {$\cdots b_{(1)}a_{(1)}\cdots$};
\end{scope}
\end{tikzpicture}
\\
\intertext{}
\label{eq:vert5}
&\begin{tikzpicture}[scale=0.3]
\begin{scope}[shift={(-6,0)}]
\draw[line width=1.5pt, color=blue](-2,3)--(-2,-3);
\draw[line width=1.5pt, color=red](2,3)--(2,-3);
\node at (-2,3)[color=blue, anchor=south]{$\beta$};
\node at (2,3)[color=red, anchor=south]{$\alpha$};
\draw[line width=1pt, color=black] plot [smooth, tension=0.6] coordinates 
      {(2,-.5)(1,-.8)(-1,-2.2)(-2,-2.5)};
      \draw[line width=1pt, color=black] plot [smooth, tension=0.6] coordinates 
      {(-2,2)(-1,1.5)(-1,.5)(-3,-1)(-3,-3)};
       \draw [color=black, fill=white, line width=1pt] (-1,1) circle (.3); 
         \draw [color=black, fill=white, line width=1pt] (-1,1) circle (.15); 
      \end{scope}
      \node at (-1,0){$=$};
      \begin{scope}[shift={(3,0)}]
\draw[line width=1.5pt, color=blue](-2,3)--(-2,-3);
\draw[line width=1.5pt, color=red](2,3)--(2,-3);
\node at (-2,3)[color=blue, anchor=south]{$\beta$};
\node at (2,3)[color=red, anchor=south]{$\alpha$};
\draw[line width=1pt, color=black] plot [smooth, tension=0.6] coordinates 
      {(2,2.5)(1,2.2)(-1,1.3)(-2,1)};
      \draw[line width=1pt, color=black] plot [smooth, tension=0.6] coordinates 
      {(-2,.5)(-1, 0)(-3,-2)(-4,-1.5)(2,0)};
      \draw[line width=1pt, color=black] (-3,-2)--(-3,-3);
                    \draw [color=black, fill=white, line width=1pt] (-1,.2) circle (.3); 
                    \draw [color=black, fill=white, line width=1pt] (-1,.2) circle (.15);
      \end{scope}
\end{tikzpicture}\qquad
&
&\begin{tikzpicture}[scale=.6]
\begin{scope}[shift={(-4,0)}]
\draw [color=orange, fill=orange] (-2,0) circle (.2); 
\draw [color=cyan, fill=cyan] (0,2) circle (.2); 
\draw [color=violet, fill=violet] (2,0) circle (.2); 
\draw[color=blue, line width=1.5pt, ->,>=stealth] (1.8,.2)--(.2,1.8);
\draw[color=red, line width=1.5pt, ->,>=stealth] (-1.8,0)--(1.8,0);
\draw[line width=.5pt, color=black,<-,>=stealth] plot [smooth, tension=0.6] coordinates 
      {(2.2,.5)(.7,2)};
      \node at (0,0) [anchor=north, color=red] {$a$};
      \node at (1,1) [anchor=east, color=blue] {$b_{(1)}$};
      \node at (-1.8,-1.5) [anchor=west, color=black] {$\cdots T(b_{(2)})\cdots$};
\end{scope}
\draw[color=black,->,>=stealth] (-1.5,1)--(.5,1);
\begin{scope}[shift={(3.5,0)}]
\draw [color=orange, fill=orange] (-2,0) circle (.2); 
\draw [color=cyan, fill=cyan] (0,2) circle (.2); 
\draw [color=violet, fill=violet] (2,0) circle (.2); 
\draw[color=blue, line width=1.5pt, ->,>=stealth] (-1.8,.2)--(-.2,1.8);
\draw[color=red, line width=1.5pt, ->,>=stealth] (-1.8,0)--(1.8,0);
\draw[line width=.5pt, color=black,->,>=stealth] plot [smooth, tension=0.6] coordinates 
      {(.8,2)(-.8,.5)(1.4,.5)};
\node at (0,0) [anchor=north, color=red] {$a_{(2)}$};
\node at (-.8,1.5) [anchor=east, color=blue] {$b_{(1)} a_{(1)}$};
\node at (-1.8,-1.5) [anchor=west, color=black] {$\cdots T(b_{(2)})\cdots$};
\end{scope}
\end{tikzpicture}
\\
\intertext{}
\label{eq:vert6}
&\begin{tikzpicture}[scale=0.3]
\begin{scope}[shift={(-6,0)}]
\draw[line width=1.5pt, color=blue](-2,3)--(-2,-3);
\draw[line width=1.5pt, color=red](2,3)--(2,-3);
\node at (-2,3)[color=blue, anchor=south]{$\beta$};
\node at (2,3)[color=red, anchor=south]{$\alpha$};
\draw[line width=1pt, color=black] plot [smooth, tension=0.6] coordinates 
      {(2,-.5)(1,-.8)(-1,-2.2)(-2,-2.5)};
      \draw[line width=1pt, color=black] plot [smooth, tension=0.4] coordinates 
      {(2,2.5)(3,2)(3,1)(-2,.5)(-3,-3)};
       \draw [color=black, fill=white, line width=1pt] (3.1,1.5) circle (.3); 
            \draw [color=black, fill=white, line width=1pt] (3.1,1.5) circle (.15); 
      \end{scope}
      \node at (-1,0){$=$};
      \begin{scope}[shift={(4,0)}]
\draw[line width=1.5pt, color=blue](-2,3)--(-2,-3);
\draw[line width=1.5pt, color=red](2,3)--(2,-3);
\node at (-2,3)[color=blue, anchor=south]{$\beta$};
\node at (2,3)[color=red, anchor=south]{$\alpha$};
\draw[line width=1pt, color=black] plot [smooth, tension=0.6] coordinates 
      {(2,2.5)(1,2.2)(-1,.8)(-2,.5)};
      \draw[line width=1pt, color=black] plot [smooth, tension=0.4] coordinates 
      {(2,1)(3,.5)(3,0)(-2,-1)(-3,-1.5)(-3,-3)};
             \draw [color=black, fill=white, line width=1pt] (3.1,.2) circle (.3); 
               \draw [color=black, fill=white, line width=1pt] (3.1,.2) circle (.15); 
      \end{scope}
\end{tikzpicture}\qquad
&
&\begin{tikzpicture}[scale=.6]
\begin{scope}[shift={(-4,0)}]
\draw [color=orange, fill=orange] (-2,0) circle (.2); 
\draw [color=cyan, fill=cyan] (0,2) circle (.2); 
\draw [color=violet, fill=violet] (2,0) circle (.2); 
\draw[color=blue, line width=1.5pt, ->,>=stealth] (1.8,.2)--(.2,1.8);
\draw[color=red, line width=1.5pt, ->,>=stealth] (-1.8,0)--(1.8,0);
\draw[line width=.5,<-,>=stealth](-1.6,-.5)--(1.6, -.5);
\node at (0,0) [anchor=south, color=red] {$a_{(1)}$};
\node at (0.8,1.5) [anchor=west, color=blue] {$b$};
\node at (-1.8,-1.5) [anchor=west, color=black] {$\cdots T(a_{(2)})\cdots$};
\end{scope}
\draw[color=black,->,>=stealth] (-1.5,1)--(.5,1);
\begin{scope}[shift={(3.5,0)}]
\draw [color=orange, fill=orange] (-2,0) circle (.2); 
\draw [color=cyan, fill=cyan] (0,2) circle (.2); 
\draw [color=violet, fill=violet] (2,0) circle (.2); 
\draw[color=blue, line width=1.5pt, ->,>=stealth] (-1.8,.2)--(-.2,1.8);
\draw[color=red, line width=1.5pt, ->,>=stealth] (-1.8,0)--(1.8,0);
\draw[line width=.5,<-,>=stealth](-1.6,-.5)--(1.6, -.5);
\node at (0,0) [anchor=south, color=red] {$a_{(2)}$};
\node at (-.8,2) [anchor=north east, color=blue] {$b a_{(1)}$};
\node at (-1.8,-1.5) [anchor=west, color=black] {$\cdots T(a_{(3)})\cdots$};
\end{scope}
\end{tikzpicture}
\end{align}
Identities \eqref{eq:vert3} and \eqref{eq:vert6} follow directly from the fact that the morphisms $\rhd_{\beta+}$, $\rhd_{\beta-}$  and the morphisms $\delta_{\alpha+}$, $\delta_{\alpha-}$ commute by Lemma \ref{lem:triangleops}.
The proof of the other identities involves the compatibility conditions for $H$-left module and $H$-left comodule structures in a Hopf module, the properties of the antipode and the properties \eqref{eq:tplusprop1} and \eqref{eq:tplusprop2} of the involution \eqref{eq:tplusdef}.
A diagrammatic proof is given in Figure \ref{fig:compvertface}. Alternatively, this follows by a direct computation in Sweedler notation.
\end{proof}

We now show that the edge slides from Definition \ref{def:edge slide} satisfy generalisations of Bene's  relations for slides in \eqref{eq:slidegerinv} to \eqref{eq:rightpentger}.
These relations are obtained by replacing each connected component of the outer circle in  diagrams in \eqref{eq:slidegerinv} to \eqref{eq:rightpentger} with a vertex and orienting the edges between these vertices that replace the chords in  \eqref{eq:slidegerinv} to \eqref{eq:rightpentger}.

\begin{proposition} \label{prop: relationsbene} The edge slides from Definition \ref{def:edge slide} satisfy the  involutivity relation, the commutativity relation, triangle relation and the left and right pentagon relation from Theorem \ref{th:bene}. 
\end{proposition}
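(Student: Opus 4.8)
The plan is to verify each of Bene's relations from Theorem~\ref{th:bene} by reducing it to a diagrammatic identity for the Hopf monoid $H$ in $\mac$, exploiting the translation principle: each connected component of the outer circle in \eqref{eq:slidegerinv}--\eqref{eq:rightpentger} becomes a vertex, each chord becomes an oriented edge, and each chord slide becomes the corresponding automorphism $S_{\alpha^{\pm L}}$ or $S_{\alpha^{\pm R}}$ of $H^{\oo E}$ from Definition~\ref{def:edge slide}. Since a composite of edge slides only affects the copies of $H$ sitting on the edges actually involved, it suffices in each case to compute the composite on the relevant small sub-tensor-product (two, three, or four copies of $H$) and check equality there; the identity on all other tensor factors is automatic. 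Moreover, by the last clause of Definition~\ref{def:edge slide}, every edge slide for an arbitrary edge orientation is conjugate by the involution $T$ (applied to the relevant edges) to one of the four ``standard'' slides \eqref{eq:slidedef2a}--\eqref{eq:slidedef2b}, so it is enough to verify each relation for one choice of orientations and then conjugate; the properties of $T$ collected in Lemma~\ref{lem:tprops}, together with \eqref{eq:tplusprop1}--\eqref{eq:tplusprop2}, make these conjugations routine.

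Concretely, I would proceed relation by relation. \emph{Involutivity}: the composite in \eqref{eq:slidegerinv} is $S_{\alpha^L}\circ S_{\alpha^{-L}}$ (up to the orientation bookkeeping), which was already shown to be the identity in the diagrammatic computation following Definition~\ref{def:edge slide}; the remaining orientation cases follow by $T$-conjugation. \emph{Commutativity}: if two edge slides act on disjoint ends along distinct edges, then in the worst case the four edges $\alpha,\beta,\gamma,\delta$ sit at most on four tensor factors, and the two slide operators involve $\rhd$ and $\delta$ morphisms attached to different factors; they commute by (co)associativity of $H$ and the interchange law, exactly as in Lemma~\ref{lem:triangleops}(2) and Lemma~\ref{lem:facevertex}. \emph{Triangle relation}: this is the identity that expresses, in Sweedler notation, $b \mapsto \low b1 \oo S(\low b2)$-type manipulations; here the extra orientation reversal $\alpha\leftrightarrow\beta$ on one side of \eqref{eq:triangleger} corresponds precisely to an insertion of $T$, and the identity reduces to the antipode axiom \eqref{eq:antdef} combined with $T\circ T=1_H$. \emph{Left and right pentagon relations}: these are the substantial ones. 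Tracking a single chord $\gamma$ through the five-term cycle in \eqref{eq:leftpentger} (resp.\ \eqref{eq:rightpentger}), each side becomes a morphism $H^{\oo 3}\to H^{\oo 3}$ (the three relevant edges $\alpha,\beta,\gamma$), and the equality amounts to the coassociativity of $\Delta$ together with the compatibility \eqref{eq:hstructure}/\eqref{eq:monoid}/\eqref{eq:comonoid} of $m$ and $\Delta$ and the Hopf-module axiom \eqref{eq:lefthopf} for the triangle operators $(H^{\oo E},\rhd_{\alpha\pm},\delta_{\alpha\pm})$ from Lemma~\ref{lem:triangleops}(1).

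The main obstacle is bookkeeping rather than conceptual depth: the edge slides depend on four data (start vs.\ target end of $\alpha$, left vs.\ right side, orientation of $\alpha$, orientation of $\beta$), and when two edges coincide or share a vertex some of these degenerate, so a careful enumeration of cases is needed to be sure the ``generic'' computation really covers all configurations. I expect the cleanest presentation is to do each relation once for the standard orientations using Sweedler notation on the edge labels (as in Example~\ref{def:vertexaction}), reduce to a bare Hopf-monoid identity, and then remark that all other cases follow by $T$-conjugation, invoking Lemma~\ref{lem:tprops} and \eqref{eq:tplusprop1}--\eqref{eq:tplusprop2}. The left and right pentagon relations will require the largest such diagrammatic computation; I would present these in detail (or as a figure analogous to Figure~\ref{fig:facevertex}), since they are where the Hopf module axiom \eqref{eq:lefthopf} genuinely enters, whereas involutivity, commutativity, and the triangle relation are short consequences of the antipode axioms and (co)associativity already established above.
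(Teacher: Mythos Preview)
Your proposal is correct and matches the paper's approach closely: involutivity is immediate from the existence of inverses, commutativity follows because the two slides act on disjoint tensor factors, the triangle relation is checked by a direct Sweedler computation (the paper carries this out explicitly, obtaining $\alpha\mapsto T(b)$, $\beta\mapsto a$ after the three slides), and the two pentagon relations are verified diagrammatically from the Hopf-module compatibility \eqref{eq:lefthopf}, with all other edge orientations handled by $T$-conjugation. One small sharpening worth noting: the paper observes after the proof that the pentagon relations rely \emph{only} on the Hopf-module axiom and hence hold for arbitrary Hopf bimodules, whereas the triangle relation genuinely uses that the edges carry copies of $H$ itself (trivial Hopf bimodules); your sketch is consistent with this but does not isolate the distinction.
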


\begin{proof}
The involutivity relations are satisfied trivially, because all edge slides have inverses. The  commutativity relations follow directly from the fact that each edge slide affects only the copies of $H$ in $H^{\oo E}$ for edges involved in the slide.  

The triangle relation follows by a direct diagrammatic computation in Sweedler notation 
\begin{align}
\begin{tikzpicture}[scale=.5]
\begin{scope}[shift={(-8,0)}]
\draw [color=black, fill=black] (-2,0) circle (.2); 
\draw [color=black, fill=black] (0,2) circle (.2); 
\draw [color=black, fill=black] (2,0) circle (.2); 
\draw[color=red, line width=1.5pt, ->,>=stealth] (-1.8,0)--(1.8, 0);
\draw[color=blue, line width=1.5pt, ->,>=stealth] (-.2, 1.8)--(-1.8, .2);
\node at (0,0)[color=red, anchor=north]{$a$};
\node at (-1,1)[color=blue, anchor=south east]{$b$};
\end{scope}
\draw[line width=.5pt, ->,>=stealth](-5,1)--(-4,1);
\node at (-4.5,1)[anchor=south]{$S_{\alpha^L}$};
\begin{scope}[shift={(-1,0)}]
\draw [color=black, fill=black] (-2,0) circle (.2); 
\draw [color=black, fill=black] (0,2) circle (.2); 
\draw [color=black, fill=black] (2,0) circle (.2); 
\draw[color=red, line width=1.5pt, ->,>=stealth] (-1.8,0)--(1.8, 0);
\draw[color=blue, line width=1.5pt, ->,>=stealth] (.2, 1.8)--(1.8, .2);
\node at (0,0)[color=red, anchor=north]{$\low a 2$};
\node at (1.4,1)[color=blue, anchor=south]{$\low a 1 b$};
\end{scope}
\draw[line width=.5pt, ->,>=stealth](3,1)--(4,1);
\node at (3.5,1)[anchor=south]{$S_{\beta^{-R}}$};
\begin{scope}[shift={(8,0)}]
\draw [color=black, fill=black] (-2,0) circle (.2); 
\draw [color=black, fill=black] (0,2) circle (.2); 
\draw [color=black, fill=black] (2,0) circle (.2); 
\draw[color=red, line width=1.5pt, ->,>=stealth] (-1.8,.2)--(-.2, 1.8);
\draw[color=blue, line width=1.5pt, ->,>=stealth] (.2, 1.8)--(1.8, .2);
\node at (-1.9,1)[color=red, anchor=south]{$T(\low b 2)$};
\node at (1.4,1)[color=blue, anchor=south]{$ a  \low b 1$};
\end{scope}
\draw[line width=.5pt, ->,>=stealth](12,1)--(13,1);
\node at (12.5,1)[anchor=south]{$S_{\alpha^{-R}}$};
\begin{scope}[shift={(17,0)}]
\draw [color=black, fill=black] (-2,0) circle (.2); 
\draw [color=black, fill=black] (0,2) circle (.2); 
\draw [color=black, fill=black] (2,0) circle (.2); 
\draw[color=red, line width=1.5pt, ->,>=stealth] (-1.8,.2)--(-.2, 1.8);
\draw[color=blue, line width=1.5pt, ->,>=stealth] (-1.8,0)--(1.8, 0);
\node at (-1.6,1)[color=red, anchor=south]{$T(b)$};
\node at (0,0)[color=blue, anchor=north]{$a$};
\end{scope}
\end{tikzpicture}
\end{align}
The proofs for other edge orientations follow by taking inverses and reversing edge orientation with the involution $T$. The proof  for the cases where some vertices coincide is analogous.

The two pentagon relations follow directly from the compatibility condition between the $H$-left module and $H$-left comodule structures for a Hopf module and from the properties of the involution $T$. A diagrammatic proof for the edge orientations in Figure \ref{fig:sliderel5} is given in Figure \ref{fig:pent}. The proof for other edge orientations follow by taking inverses and reversing edge orientation with the involution $T$. The proofs  for the cases where some vertices coincide are analogous.
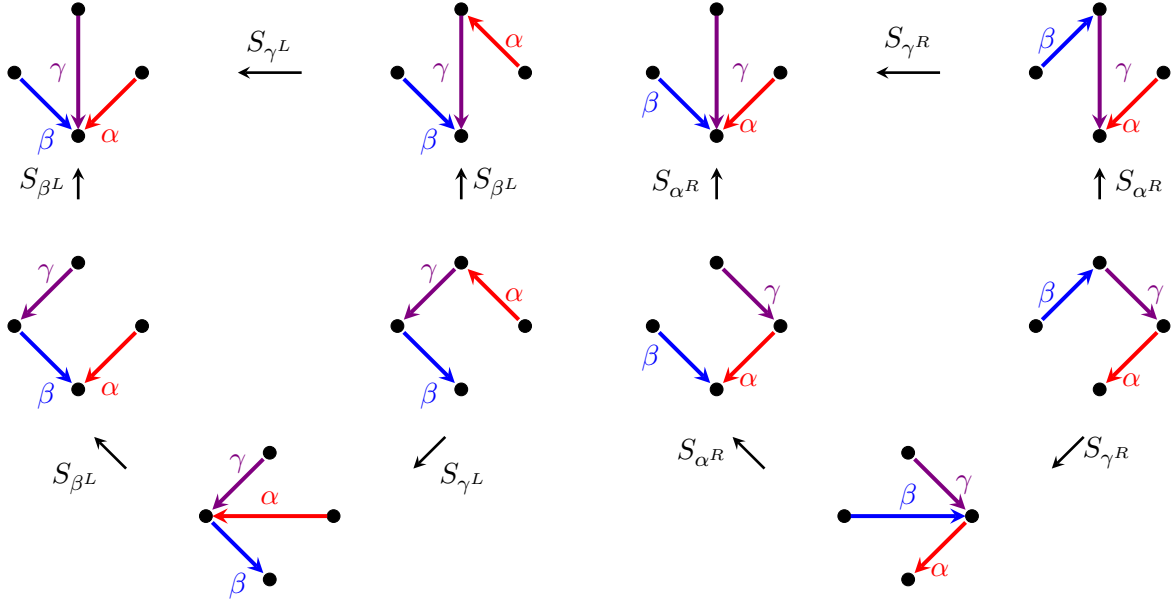
\begin{figure}[h]
	\centering
\begin{tikzpicture}[scale=.42]
\begin{scope}[shift={(-6,0)}]
\draw [color=black, fill=black] (-2,0) circle (.2); 
\draw [color=black, fill=black] (0,2) circle (.2); 
\draw [color=black, fill=black] (2,0) circle (.2); 
\draw [color=black, fill=black] (0,-2) circle (.2); 
\draw[color=blue, line width=1.5pt, <-,>=stealth] (-.2,-1.8)-- node[below=5pt] {$\beta$}(-1.8,-.2);
\node at (-1,-1) [anchor=north east] {};
\draw[color=violet, line width=1.5pt, <-,>=stealth] (0,-1.8)-- node[left]{$\gamma$} (0,1.8);
\node at (1,-1) [anchor=north west] {};
\draw[color=red, line width=1.5pt, <-,>=stealth] (.2,-1.8)--node[below=5pt]{$\alpha$}(1.8,-.2);
\node at (0,1) [anchor=west] {};
\end{scope}
\draw[color=black, line width=1pt, <-,>=stealth] (-1,0)--node[above]{$S_{\gamma^{L}}$}(1,0);
\begin{scope}[shift={(6,0)}]
\draw [color=black, fill=black] (-2,0) circle (.2); 
\draw [color=black, fill=black] (0,2) circle (.2); 
\draw [color=black, fill=black] (2,0) circle (.2); 
\draw [color=black, fill=black] (0,-2) circle (.2); 
\draw[color=blue, line width=1.5pt, <-,>=stealth] (-.2,-1.8)--node[below=5pt] {$\beta$}(-1.8,-.2);
\node at (-1,-1) [anchor=north east] {};
\draw[color=violet, line width=1.5pt, <-,>=stealth] (0,-1.8)--node[left]{$\gamma$}(0,1.8);
\node at (0,0) [anchor=west] {};
\draw[color=red, line width=1.5pt, <-,>=stealth] (.2,1.8)--node[right]{$\alpha$}(1.8,.2);
\node at (1,1)[anchor=south west] {};
\end{scope}
\draw[color=black, line width=1pt, <-,>=stealth] (6,-3)--node[right]{$S_{\beta^{L}}$}(6,-4);
\begin{scope}[shift={(6,-8)}]
\draw [color=black, fill=black] (-2,0) circle (.2); 
\draw [color=black, fill=black] (0,2) circle (.2); 
\draw [color=black, fill=black] (2,0) circle (.2); 
\draw [color=black, fill=black] (0,-2) circle (.2); 
\draw[color=blue, line width=1.5pt, <-,>=stealth] (-.2,-1.8)--node[below=5pt] {$\beta$}(-1.8,-.2);
\node at (-1,-1) [anchor=north east] {};
\draw[color=violet, line width=1.5pt, <-,>=stealth] (-1.8,.2)--node[left, above]{$\gamma$}(-.2,1.8);
\node at (-1,1)[anchor=south east] {};
\draw[color=red, line width=1.5pt, <-,>=stealth] (.2,1.8)--node[right]{$\alpha$}(1.8,.2);
\node at (1,1)[anchor=south west] {};
\end{scope}
\draw[color=black, line width=1pt, ->,>=stealth] (5.5,-11.5)--(4.5,-12.5);
\node at (5,-12)[anchor=north west] {$S_{\gamma^{L}}$};
\begin{scope}[shift={(0,-14)}]
\draw [color=black, fill=black] (-2,0) circle (.2); 
\draw [color=black, fill=black] (0,2) circle (.2); 
\draw [color=black, fill=black] (2,0) circle (.2); 
\draw [color=black, fill=black] (0,-2) circle (.2); 
\draw[color=blue, line width=1.5pt, <-,>=stealth] (-.2,-1.8)--node[below=5pt] {$\beta$}(-1.8,-.2);
\node at (-1,1) [anchor=south east] {};
\draw[color=violet, line width=1.5pt, <-,>=stealth] (-1.8,0.2)--node[left, above]{$\gamma$}(-0.2,1.8);
\node at (-1,-1) [anchor=north east] {};
\draw[color=red, line width=1.5pt, <-,>=stealth] (-1.8,0)--node[above]{$\alpha$}(1.8,0);
\node at (0,0) [anchor=south] {};
\end{scope}
\draw[color=black, line width=1pt, <-,>=stealth] (-5.5,-11.5)--(-4.5,-12.5);
\node at (-5,-12)[anchor=north east] {$S_{\beta^{L}}$};
\begin{scope}[shift={(-6,-8)}]
\draw [color=black, fill=black] (-2,0) circle (.2); 
\draw [color=black, fill=black] (0,2) circle (.2); 
\draw [color=black, fill=black] (2,0) circle (.2); 
\draw [color=black, fill=black] (0,-2) circle (.2); 
\draw[color=blue, line width=1.5pt, <-,>=stealth] (-.2,-1.8)--node[below=5pt] {$\beta$}(-1.8,-.2);
\node at (-1,1) [anchor=south east] {};
\draw[color=violet, line width=1.5pt, <-,>=stealth] (-1.8,.2)--node[left, above]{$\gamma$}(-.2,1.8);
\node at (1,-1) [anchor=north west] {};
\draw[color=red, line width=1.5pt, <-,>=stealth] (.2,-1.8)--node[below=5pt]{$\alpha$}(1.8,-.2);
\node at (-1,-1) [anchor=north east] {};
\end{scope}
\draw[color=black, line width=1pt, <-,>=stealth] (-6,-3)--node[left]{$S_{\beta^{L}}$}(-6,-4);
\begin{scope}[shift={(20,0)}]
\begin{scope}[shift={(-6,0)}]
\draw [color=black, fill=black] (-2,0) circle (.2); 
\draw [color=black, fill=black] (0,2) circle (.2); 
\draw [color=black, fill=black] (2,0) circle (.2); 
\draw [color=black, fill=black] (0,-2) circle (.2); 
\draw[color=blue, line width=1.5pt, <-,>=stealth] (-.2,-1.8)--node[left=5pt] {$\beta$}(-1.8,-.2);
\node at (-1,-1)[anchor=north east] {};
\draw[color=violet, line width=1.5pt, <-,>=stealth] (0,-1.8)--node[right=1.5pt]{$\gamma$}(0,1.8);
\node at (0,1)[anchor=west] {};
\draw[color=red, line width=1.5pt, <-,>=stealth] (.2,-1.8)--node[below=1.5pt]{$\alpha$}(1.8,-.2);
\node at (1,-1)[anchor=north west] {};
\end{scope}
\draw[color=black, line width=1pt, <-,>=stealth] (-1,0)--node[above=1.5pt]{$S_{\gamma^{R}}$}(1,0);
\begin{scope}[shift={(6,0)}]
\draw [color=black, fill=black] (-2,0) circle (.2); 
\draw [color=black, fill=black] (0,2) circle (.2); 
\draw [color=black, fill=black] (2,0) circle (.2); 
\draw [color=black, fill=black] (0,-2) circle (.2); 
\draw[color=blue, line width=1.5pt, ->,>=stealth] (-1.8,.2)--node[left] {$\beta$}(-.2,1.8);
\node at (-1,1) [anchor=south east] {};
\draw[color=violet, line width=1.5pt, <-,>=stealth] (0,-1.8)--node[right=1.5pt]{$\gamma$}(0,1.8);
\node at (0,1)[anchor=west] {};
\draw[color=red, line width=1.5pt, <-,>=stealth] (.2,-1.8)--node[below=1.5pt]{$\alpha$}(1.8,-.2);
\node at (1,-1)[anchor=north west] {};
\end{scope}
\draw[color=black, line width=1pt, <-,>=stealth] (6,-3)--node[right=1.5pt]{$S_{\alpha^{R}}$}(6,-4);
\begin{scope}[shift={(6,-8)}]
\draw [color=black, fill=black] (-2,0) circle (.2); 
\draw [color=black, fill=black] (0,2) circle (.2); 
\draw [color=black, fill=black] (2,0) circle (.2); 
\draw [color=black, fill=black] (0,-2) circle (.2); 
\draw[color=blue, line width=1.5pt, ->,>=stealth] (-1.8,.2)--node[left] {$\beta$}(-.2,1.8);
\node at (-1,1) [anchor=south east] {};
\draw[color=violet, line width=1.5pt, ->,>=stealth] (.2,1.8)--node[right=1.5pt]{$\gamma$}(1.8,.2);
\node at (1,1) [anchor=south west]{};
\draw[color=red, line width=1.5pt, <-,>=stealth] (.2,-1.8)--node[below=1.5pt]{$\alpha$}(1.8,-.2);
\node at (1,-1)[anchor=north west] {};
\end{scope}
\draw[color=black, line width=1pt, ->,>=stealth] (5.5,-11.5)--node[right=1.5pt]{$S_{\gamma^{R}}$}(4.5,-12.5);
\begin{scope}[shift={(0,-14)}]
\draw [color=black, fill=black] (-2,0) circle (.2); 
\draw [color=black, fill=black] (0,2) circle (.2); 
\draw [color=black, fill=black] (2,0) circle (.2); 
\draw [color=black, fill=black] (0,-2) circle (.2); 
\draw[color=blue, line width=1.5pt, ->,>=stealth] (-1.8,0)--node[above=-2pt] {$\beta$}(1.8,0);
\node at (0,0) [anchor=south]{};
\draw[color=violet, line width=1.5pt, ->,>=stealth] (.2,1.8)--node[right=1.5pt]{$\gamma$}(1.8,.2);
\node at (1,1) [anchor=south west] {};
\draw[color=red, line width=1.5pt, <-,>=stealth] (.2,-1.8)--node[below=1.5pt]{$\alpha$}(1.8,-.2);
\node at (1,-1)[anchor=north west] {};
\end{scope}
\draw[color=black, line width=1pt, <-,>=stealth] (-5.5,-11.5)--node[left=4pt]{$S_{\alpha^{R}}$}(-4.5,-12.5);
\begin{scope}[shift={(-6,-8)}]
\draw [color=black, fill=black] (-2,0) circle (.2); 
\draw [color=black, fill=black] (0,2) circle (.2); 
\draw [color=black, fill=black] (2,0) circle (.2); 
\draw [color=black, fill=black] (0,-2) circle (.2); 
\draw[color=blue, line width=1.5pt, ->,>=stealth] (-1.8,-.2)--node[left=5pt] {$\beta$}(-.2,-1.8);
\node at (-1,-1) [anchor=north east] {};
\draw[color=violet, line width=1.5pt, ->,>=stealth] (.2,1.8)--node[right=1.5pt]{$\gamma$}(1.8,.2);
\node at (1,1) [anchor=south west] {};
\draw[color=red, line width=1.5pt, <-,>=stealth] (0.2,-1.8)--node[below=1.5pt]{$\alpha$}(1.8,-.2);
\node at (1,-1)[anchor=north west] {};
\end{scope}
\draw[color=black, line width=1pt, <-,>=stealth] (-6,-3)--node[left=1.5pt]{$S_{\alpha^{R}}$}(-6,-4);
\end{scope}
\end{tikzpicture}
	\caption{The left and right pentagon relations for edge slides.}
	\label{fig:sliderel5}
\end{figure}

\begin{figure} %[H]
\centering
\def\svgwidth{.55\columnwidth}
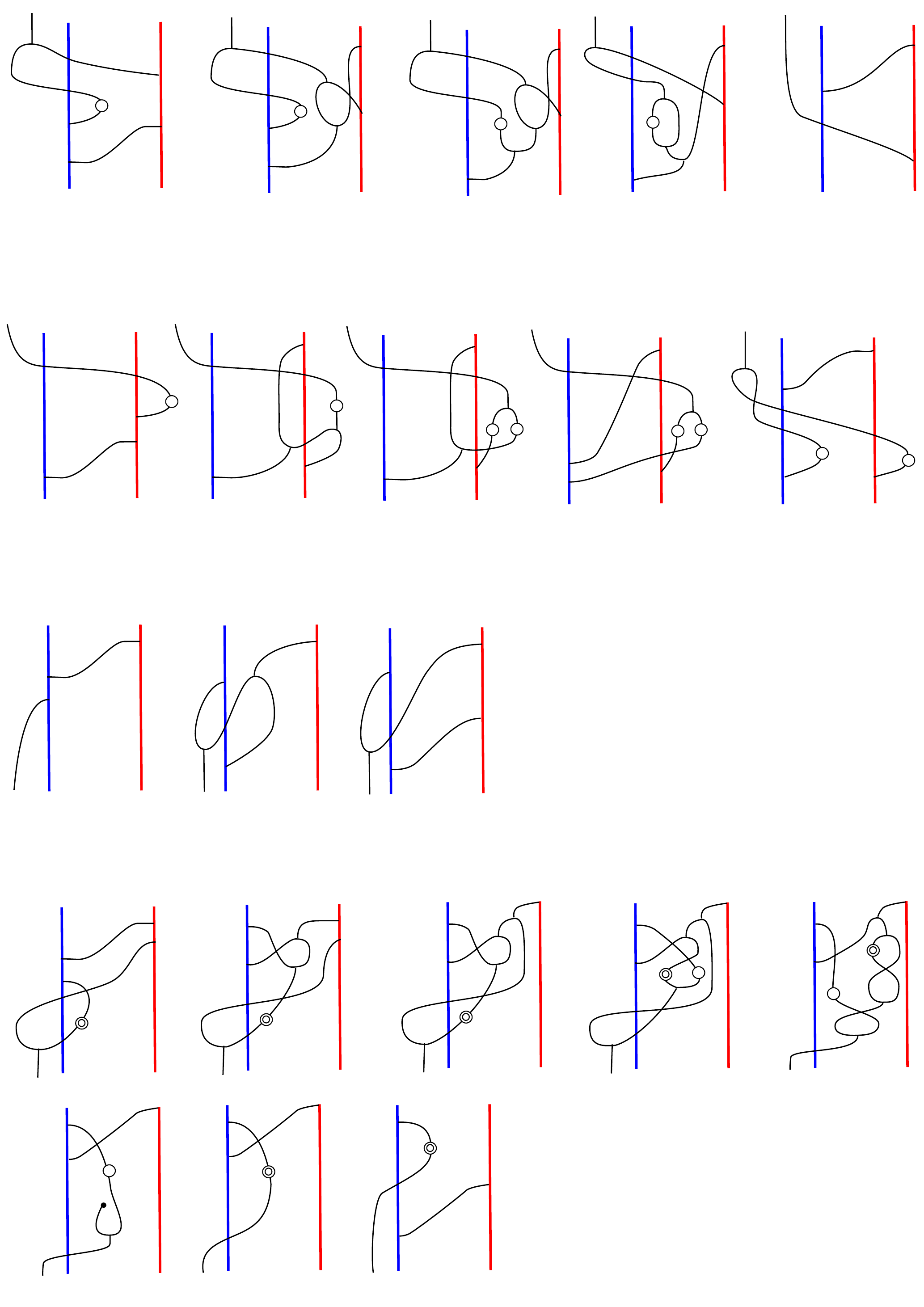
\caption{Diagrammatic proof of the identities \eqref{eq:vert1}, \eqref{eq:vert2}, \eqref{eq:vert4}, \eqref{eq:vert5}.}
\label{fig:compvertface}
\end{figure}

\begin{figure}%[H]
\begin{center}
\includegraphics[scale=.35]{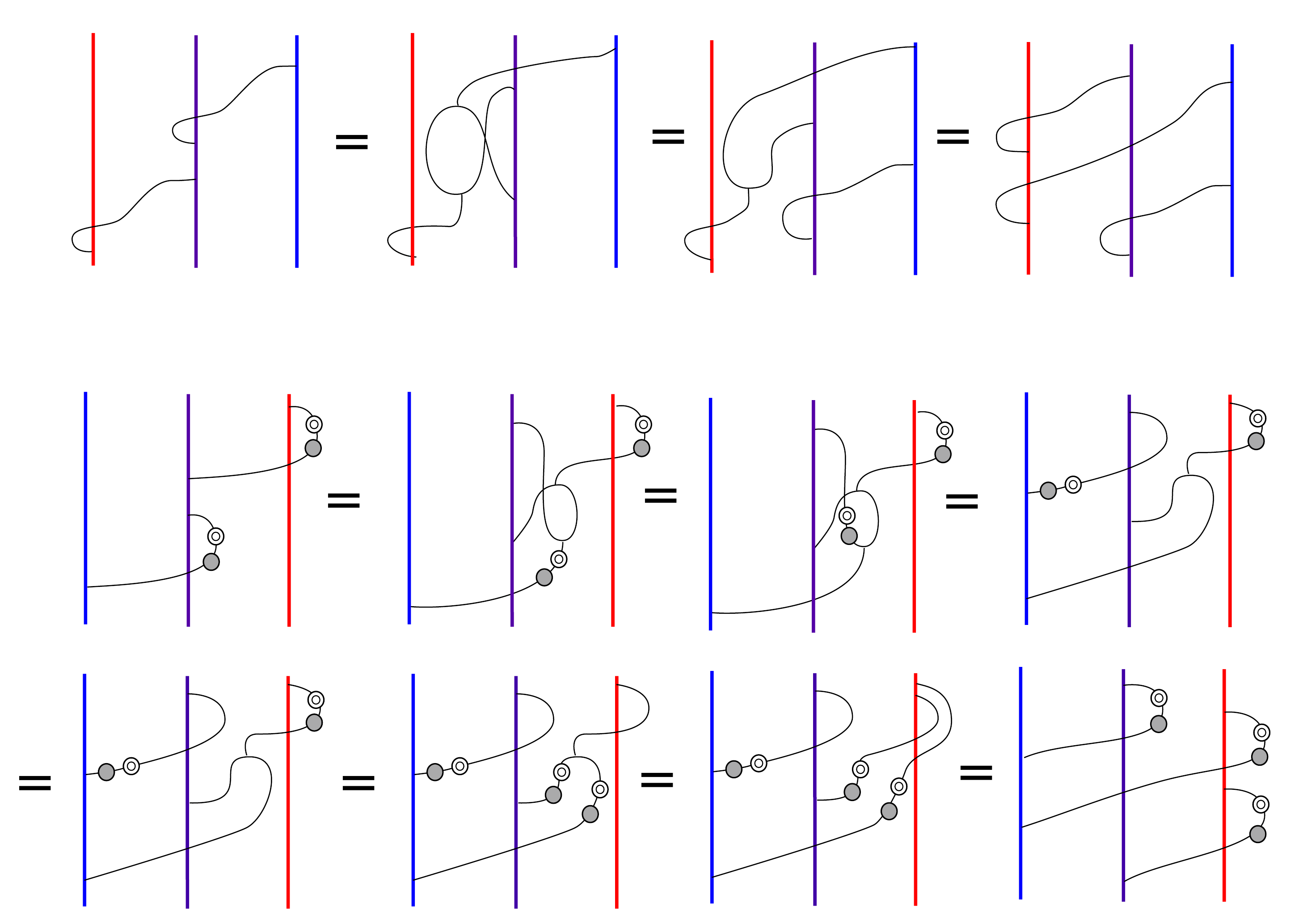}
\end{center}
\caption{Diagrammatic proof of the left  and  right pentagon relation in Figure~\ref{fig:sliderel5}.}
\label{fig:pent}
\end{figure}

\end{proof}

\begin{corollary}\label{cor:adjacentendslide} The edge slides from Definition \ref{def:edge slide} satisfy generalisations of the opposite end commutativity relation and the adjacent commutativity relation from Lemma \ref{lem:bene}.
\end{corollary}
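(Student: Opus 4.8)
The plan is to derive the opposite end commutativity and adjacent commutativity relations for edge slides directly from Proposition~\ref{prop: relationsbene}, mimicking the proof of Lemma~\ref{lem:bene} in \cite{B}. Since Proposition~\ref{prop: relationsbene} establishes that the edge slides from Definition~\ref{def:edge slide} satisfy the involutivity, commutativity, triangle and left and right pentagon relations of Theorem~\ref{th:bene}, and since Lemma~\ref{lem:bene} states that these five relations \emph{imply} the opposite end commutativity and adjacent commutativity relations, the corollary follows once we check that Bene's derivation makes sense in our graph-theoretic setting, i.e.\ for edge slides in a directed ribbon graph rather than chord slides in a single-vertex chord diagram.

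First I would state precisely what the two relations say for edge slides. The opposite end commutativity relation asserts that sliding one end of an edge $\alpha$ along an edge $\beta$ and sliding the other end of $\alpha$ along an edge $\gamma$ commute as automorphisms of $H^{\oo E}$, whenever these slides are defined and involve disjoint configurations; the adjacent commutativity relation asserts that slides along the two different sides (left and right) of the same edge $\alpha$ commute. Both are obtained from the diagrams in \eqref{eq:oppendger} by replacing each connected component of the outer circle with a vertex and orienting the connecting edges, exactly as in the statement of Proposition~\ref{prop: relationsbene} for the pentagon relations. The key observation is that Bene's proof of Lemma~6.1 in \cite{B} is purely formal: it combines the triangle relation, the two pentagon relations and the commutativity relation in a fixed finite sequence of moves to produce the desired identities. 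Since each of these input relations holds in our setting by Proposition~\ref{prop: relationsbene}, and since the combinatorics of the moves only involves the local picture around the edges $\alpha,\beta,\gamma$ (which is identical to the local picture of chords near a single vertex), the same sequence of rewrites proves the corollary here.

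The main steps are then: (1) reduce to the local configurations, noting as in the proofs of Propositions~\ref{prop:vertfacecomp} and \ref{prop: relationsbene} that an edge slide affects only the tensor factors of $H^{\oo E}$ associated with the edges involved, so it suffices to treat the finitely many local pictures where the relevant vertices may or may not coincide; (2) for each orientation of the edges, write down the edge slides explicitly using Definition~\ref{def:edge slide} and the formulas \eqref{eq:slidedef2a}--\eqref{eq:slidedef4b}; (3) apply Bene's sequence of triangle and pentagon moves (equivalently, verify the identities directly by a diagrammatic computation or a computation in Sweedler notation using the Hopf module compatibility and the properties \eqref{eq:tplusprop1}, \eqref{eq:tplusprop2} of the involution $T$), just as was done for the pentagon relations in Figure~\ref{fig:pent}; (4) handle the remaining edge orientations by taking inverses and reversing edge orientation with $T$, exactly as in the proofs of the preceding propositions.

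The step I expect to be the main obstacle is (1), the bookkeeping of degenerate configurations: unlike the chord-diagram setting of \cite{B}, where all chords meet the single vertex, here the edges $\alpha,\beta$ (and $\gamma$ for the opposite-end case) may have some or all of their endpoints coinciding, and one must check that each coincidence pattern still yields a well-defined slide and that the relation persists. However, as in Proposition~\ref{prop: relationsbene}, these degenerate cases are genuinely analogous to the generic one—the relevant module and comodule structures on the affected copies of $H$ are governed by the same (co)associativity and Hopf-module axioms—so no new phenomenon arises, and the verification reduces to the generic diagrammatic identity together with a routine case check. I would therefore present the proof as: the relations hold by Lemma~\ref{lem:bene} applied to the relations of Theorem~\ref{th:bene}, which are satisfied by Proposition~\ref{prop: relationsbene}; the degenerate configurations are treated as in the proof of Proposition~\ref{prop: relationsbene} by reversing edge orientations with $T$ and by direct computation.

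\begin{proof}
By Proposition~\ref{prop: relationsbene} the edge slides from Definition~\ref{def:edge slide} satisfy the involutivity, commutativity, triangle and left and right pentagon relations. By Lemma~\ref{lem:bene} these relations imply the opposite end commutativity and adjacent commutativity relations. The argument of \cite[Lemma~6.1]{B} is a formal consequence of the five relations and only involves the local picture around the edges participating in the slides; since an edge slide from Definition~\ref{def:edge slide} affects only the tensor factors of $H^{\oo E}$ associated with the edges involved in the slide, the same sequence of moves applies here. The cases in which some of the vertices incident to the edges coincide are treated as in the proof of Proposition~\ref{prop: relationsbene}: they are analogous to the generic case, and the identities for the remaining edge orientations follow by taking inverses and reversing edge orientation with the involution $T: H\to H$ from \eqref{eq:tplusdef}.
\end{proof}
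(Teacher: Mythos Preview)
Your proposal is correct and matches the paper's approach: the paper states this as a corollary with no proof, treating it as an immediate consequence of Proposition~\ref{prop: relationsbene} together with Lemma~\ref{lem:bene}. Your additional remarks about locality and degenerate vertex configurations simply make explicit what the paper leaves implicit.
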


Note that the  triangle and the pentagon relations in Proposition \ref{prop: relationsbene} have a different status. The diagrammatic proof of the pentagon relations in Figure \ref{fig:pent} generalises directly to tensor products of Hopf bimodules. The proof of the triangle relation relies on the fact that edges are decorated with copies of a Hopf monoid $H$ in $\mac$ and works only for trivial $H$-Hopf bimodules.

If we restrict attention to ribbon graphs $\Gamma$ with a single ciliated vertex, the edge slides from  \eqref{eq:slideedge} coincide with an oriented version of the chord slides in Section \ref{sec:chordslides}.  If additionally, the ribbon graph $\Gamma$ has only a single face, then the surface $\Sigma$ obtained by gluing an annulus to the face is an oriented surface with a single boundary component. 
By Theorem \ref{th:bene}, the mapping class group $\mathrm{Map}(\Sigma)$ is then presented by finite sequences of chord slides that preserve $\Gamma$ up to the cilium, subject to the relations in  Theorem \ref{th:bene}. As the edge slides from Definition \ref{def:edge slide} satisfy these relations by Proposition \ref{prop: relationsbene}, we obtain an action of the mapping class group $\mathrm{Map}(\Sigma)$. As no edge ends slide over cilia, Proposition \ref{prop:vertfacecomp} implies that it acts by automorphisms of Yetter-Drinfeld modules with respect to the Yetter-Drinfeld module structure associated with the cilium.

\begin{theorem} \label{the:edge1}Let $H$ be a pivotal Hopf monoid in a symmetric monoidal category $\mac$,  $\Gamma$ a directed ribbon graph with a single ciliated vertex  and a single face  and $\Sigma$ the oriented surface obtained by gluing an annulus to the face. 
 Then  the edge slides from Definition \ref{def:edge slide} define a group homomorphism  $\rho: \text{Map}(\Sigma)\to \text{Aut}_{YD}(H^{\oo E})$. 
\end{theorem}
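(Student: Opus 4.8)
\textbf{Proof strategy for Theorem \ref{the:edge1}.}

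The plan is to combine Bene's presentation of $\mathrm{Map}(\Sigma)$ from Theorem \ref{th:bene} with the compatibility results established for edge slides. By Theorem \ref{th:bene}, since $\Gamma$ has a single ciliated vertex and a single face and $\Sigma\setminus\Gamma$ is an annulus, the mapping class group $\mathrm{Map}(\Sigma)$ is presented by finite sequences of chord slides that preserve $\Gamma$ up to the cilium, subject to the involutivity, commutativity, triangle, left pentagon and right pentagon relations. For a ribbon graph with a single ciliated vertex, the edge slides of \eqref{eq:slideedge} are exactly an oriented version of these chord slides, so a finite sequence of chord slides preserving $\Gamma$ up to the cilium determines a finite sequence of edge slides in the sense of Definition \ref{def:edge slide}, and hence a composite of the associated automorphisms of $H^{\oo E}$ from Definition \ref{def:edge slide}. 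The first step is therefore to record this dictionary carefully: a generator of $\mathrm{Map}(\Sigma)$ in Bene's presentation is sent to the corresponding composite of the isomorphisms $S_{\alpha^{\pm L}}, S_{\alpha^{\pm R}}: H^{\oo E}\to H^{\oo E}$.

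Next I would verify that this assignment is well defined as a map $\mathrm{Map}(\Sigma)\to \mathrm{Aut}(H^{\oo E})$. Since $\mathrm{Map}(\Sigma)$ is presented by generators (chord slides) and relations (the five relations of Theorem \ref{th:bene}), it suffices to check that the automorphisms assigned to the edge slides satisfy these same five relations. This is precisely the content of Proposition \ref{prop: relationsbene}: the edge slides satisfy involutivity, commutativity, the triangle relation, and the left and right pentagon relations. Hence the map descends to a group homomorphism $\rho: \mathrm{Map}(\Sigma)\to \mathrm{Aut}(H^{\oo E})$; it is automatically a homomorphism because composition of edge slides corresponds to composition of the associated automorphisms, and the identity element (the empty sequence of slides, or any sequence whose net effect on $\Gamma$ is trivial up to the cilium) is sent to $1_{H^{\oo E}}$.

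It then remains to upgrade the target from $\mathrm{Aut}(H^{\oo E})$ to $\mathrm{Aut}_{YD}(H^{\oo E})$, i.e.\ to show that each automorphism $\rho(\phi)$ is a morphism of Yetter-Drinfeld modules for the Yetter-Drinfeld module structure on $H^{\oo E}$ associated with the single cilium. Since $\Gamma$ has one ciliated vertex $v$ and one ciliated face $f$ and these share the unique cilium, Lemma \ref{lem:facevertex}, part 4, shows that $(H^{\oo E}, \rhd_v, \delta_f)$ is a left-left $H$-Yetter-Drinfeld module. A chord slide preserving $\Gamma$ up to the cilium, by definition, does not slide any edge end over the cilium; therefore every edge slide occurring in a sequence representing an element of $\mathrm{Map}(\Sigma)$ is of the kind covered by Proposition \ref{prop:vertfacecomp}, which states that such edge slides are isomorphisms of $H$-left modules and of $H$-left comodules with respect to $\rhd_v$ and $\delta_f$. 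A composite of $H$-module and $H$-comodule isomorphisms is again such an isomorphism, hence a morphism of Yetter-Drinfeld modules. Thus $\rho(\phi)\in\mathrm{Aut}_{YD}(H^{\oo E})$ for all $\phi$, and $\rho$ is the desired group homomorphism into $\mathrm{Aut}_{YD}(H^{\oo E})$.

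The main obstacle is the first, essentially bookkeeping, step: making the correspondence between Bene's unoriented chord slides and the oriented edge slides of Definition \ref{def:edge slide} completely precise, including the identification of inverses such as $S_{(\alpha^\inv)^L}=S_{\alpha^{-R}}$ and the remark (following Theorem \ref{th:bene}) that in the oriented setting the triangle relation carries an extra orientation reversal, which on the Hopf side is implemented by the involution $T$. Once it is granted that Bene's relations in the oriented version are the ones actually appearing, and that these are exactly the relations verified in Proposition \ref{prop: relationsbene}, the argument is a direct application of the universal property of a group presentation together with Propositions \ref{prop:vertfacecomp} and \ref{prop: relationsbene} and Lemma \ref{lem:facevertex}.
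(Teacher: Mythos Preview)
Your proposal is correct and follows essentially the same approach as the paper: the paper's argument (given in the paragraph immediately preceding the theorem) also identifies edge slides on a single-vertex ribbon graph with Bene's oriented chord slides, invokes Theorem~\ref{th:bene} for the presentation of $\mathrm{Map}(\Sigma)$, uses Proposition~\ref{prop: relationsbene} to verify the defining relations, and then applies Proposition~\ref{prop:vertfacecomp} (with Lemma~\ref{lem:facevertex}) to conclude that the resulting automorphisms are Yetter--Drinfeld module automorphisms since no edge ends slide over the cilium. Your write-up is simply a more explicit unpacking of the same steps.
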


\section{The torus and the one-holed torus}
\label{sec:torus}

In this section, we consider the simplest examples of  mapping class group actions by edge slides, namely the mapping class groups of the torus $T$ and of the torus $T^*$ with a disc removed. 
The mapping class group of $T$  is the modular group $\mathrm{SL}(2,\mathbb Z)$ \cite[Sec.~2.2.4]{FM}. A presentation of the mapping class group of $T^*$ is obtained from Theorem \ref{th:gervais}. It involves only two generators $\alpha_1$, $\delta_1$ and relation (iii), which yields 
 the braid group $B_3$ on three strands 
\begin{align}\label{eq:sl2rep}
&\mathrm{SL}(2,\mathbb Z)=\langle A,B\mid (BAB)^4=1, ABA=BAB\rangle\qquad\qquad
B_3=\langle A,B\mid  ABA=BAB\rangle.
\end{align}
The element $(BAB)^4$ is central in $B_3$.  
In both cases, the generators $A$, $B$ correspond to Dehn twists around the $a$- and $b$-cycle. For more details on these presentations, see for instance \cite[App.~A]{KT}.

To describe the tori $T$ and $T^*$  we
consider a ribbon graph $\Gamma$  that consists of  a single  ciliated vertex and two oriented loops representing the generators of the fundamental groups $\pi_1(T)\cong \mathbb Z\times\mathbb Z$ and $\pi_1(T^*)\cong F_2$. 
By cutting the  vertex at the cilium, we obtain the associated chord diagram. The ribbon graph $\Gamma$ and this  chord diagram each have a single face, as shown below.

\vspace{-2cm}
\begin{align}\label{eq:chordtorus}
\begin{tikzpicture}[scale=.5, baseline=(current  bounding  box.center)]
\begin{scope}[shift={(-5,0)}]
\draw [color=black,line width=1pt, fill=white] (0,0) circle (.2); 
\draw[color=red, line width=1.5pt, <-,>=stealth] (-.2,.2).. controls (-4,4)  and (4,4) ..(.2,.2);
\draw[color=blue, line width=1.5pt, ->,>=stealth] (.2,0).. controls (6,0)  and (0,6) ..(0,.2);
\draw[color=black, line width=1pt, style=dotted] (0,-.2)--(0,-.8);
\node at (0,3) [anchor=south, color=red]{$b$};
\node at (2.5,2.3) [anchor=south, color=blue]{$a$};
\end{scope}
\begin{scope}[shift={(1.5,0)}]
\draw[line width=1pt, color=black](-3,0)--(5,0);
\draw[color=red, line width=1.5pt, <-,>=stealth] (-2,0).. controls (-2,2) and (2,2)..(2,0);
\node at (0,1.5) [anchor=south, color=red]{$b$};
\draw[color=blue, line width=1.5pt, <-,>=stealth] (0,0).. controls (0,2) and (4,2)..(4,0);
\node at (2,1.5) [anchor=south, color=blue]{$a$};
\end{scope}
\begin{scope}[shift={(11,0)}]
\draw[line width=1pt, color=black](-3,0)--(5,0);
\draw[color=red, line width=1.5pt, <-,>=stealth] (-2,0).. controls (-2,2) and (2,2)..(2,0);
\node at (0,2) [anchor=south, color=red]{$b$};
\draw[color=blue, line width=1.5pt, <-,>=stealth] (0,0).. controls (0,2) and (4,2)..(4,0);
\node at (2,2) [anchor=south, color=blue]{$a$};
\draw[line width=.5pt, color=black,->,>=stealth] plot [smooth, tension=0.6] coordinates 
      {(-2.5,0)(-1.6,1.6)(0,2)(1.6,1.6)(2.5,.3)(3.5,.3)(3, 1)(2,1.3)(1,1)(0.5,.3)(1.5,.3)(1,1)(0,1.3)  (-1,1)(-1.5,.3)(-.5,.3)(.4,1.6)(2,2)(3.6,1.6)(4.5,.1)  };
\end{scope}
\end{tikzpicture}
\end{align}

Given a pivotal Hopf monoid $H$ in a symmetric monoidal category $\mac$, we associate to $\Gamma$ the object $H^{\oo 2}$ in $\mac$, with the first copy of $H$ assigned to  $a$ and the second to $b$.
The $H$-left module structure $\rhd_v: H\oo H^{\oo 2}\to H^{\oo 2}$ for the ciliated vertex and the $H$-left comodule structure $\delta_f: H^{\oo 2} \to H\oo H^{\oo 2}$ for the ciliated face of the ribbon graph from Definition \ref{def:vertexaction} define a left-left Yetter-Drinfeld module structure on $H^{\oo 2}$ by Lemma \ref{lem:facevertex}. They are given by
\begin{align}	
\label{facestruct}
&\rhd_v:\qquad \begin{tikzpicture}[scale=.5, baseline=(current  bounding  box.center)]
\begin{scope}[shift={(-5,0)}]
\draw[line width=1pt, color=black](-3,0)--(5,0);
\draw[color=red, line width=1.5pt, <-,>=stealth] (-2,0).. controls (-2,2) and (2,2)..(2,0);
\node at (0,1.5) [anchor=south, color=red]{$b$};
\draw[color=blue, line width=1.5pt, <-,>=stealth] (0,0).. controls (0,2) and (4,2)..(4,0);
\node at (2,1.5) [anchor=south, color=blue]{$a$};
\node at (1,0)[anchor=north] {$h$};
\end{scope}
\draw[line width=.5pt, color=black, ->,>=stealth] (0,1)--(2,1);
\begin{scope}[shift={(5,0)}]
\draw[line width=1pt, color=black](-3,0)--(5,0);
\draw[color=red, line width=1.5pt, <-,>=stealth] (-2,0).. controls (-2,2) and (2,2)..(2,0);
\node at (0,1.5) [anchor=south east, color=red]{$\low h 4 b S(\low h 2)$};
\draw[color=blue, line width=1.5pt, <-,>=stealth] (0,0).. controls (0,2) and (4,2)..(4,0);
\node at (2,1.5) [anchor=south west, color=blue]{$\low h 3 a S(\low h 1) $};
\end{scope}
\end{tikzpicture}\\
\nonumber
&\delta_f:\qquad \begin{tikzpicture}[scale=.5, baseline=(current  bounding  box.center)]
\begin{scope}[shift={(-5,0)}]
\draw[line width=1pt, color=black](-3,0)--(5,0);
\draw[color=red, line width=1.5pt, <-,>=stealth] (-2,0).. controls (-2,2) and (2,2)..(2,0);
\node at (0,1.5) [anchor=south, color=red]{$b$};
\draw[color=blue, line width=1.5pt, <-,>=stealth] (0,0).. controls (0,2) and (4,2)..(4,0);
\node at (2,1.5) [anchor=south, color=blue]{$a$};
\end{scope}
\draw[line width=.5pt, color=black, ->,>=stealth] (0,1)--(2,1);
\begin{scope}[shift={(5,0)}]
\draw[line width=1pt, color=black](-3,0)--(5,0);
\draw[color=red, line width=1.5pt, <-,>=stealth] (-2,0).. controls (-2,2) and (2,2)..(2,0);
\node at (0,1.5) [anchor=south, color=red]{$\low b 2$};
\draw[color=blue, line width=1.5pt, <-,>=stealth] (0,0).. controls (0,2) and (4,2)..(4,0);
\node at (2,1.5) [anchor=south, color=blue]{$\low a 2 $};
      \node at (1,0) [anchor=north]{$ T(\low a 3)\low b 1\low a 1 T(\low b 3) $};
\end{scope}
\end{tikzpicture}\nonumber
\end{align}
By Theorem \ref{the:edge1}, the  edge slides acting on this ribbon graph induce an action of the braid group $B_3$ on $H^{ \oo 2}$.
We will now show that  the generators $A$ and $B$ from \eqref{eq:sl2rep} can be identified with the  slides along the left side of the two edges to their targets. 
From  expressions \eqref{eq:slidedef2a} and \eqref{eq:slidedef1b}  one finds %that they are given by 
\begin{align}\label{eq:abslide}
&\begin{tikzpicture}[scale=.5]
\begin{scope}[shift={(-6,0)}]
\draw[line width=1pt, color=black](-3,0)--(5,0);
\draw[color=red, line width=1.5pt, <-,>=stealth] (-2,0).. controls (-2,2) and (2,2)..(2,0);
\node at (0,1.5) [anchor=south, color=red]{$b$};
\draw[color=blue, line width=1.5pt, <-,>=stealth] (0,0).. controls (0,2) and (4,2)..(4,0);
\node at (2,1.5) [anchor=south, color=blue]{$a$};
\end{scope}
\draw[line width=1pt, color=black, ->,>=stealth](0,1)--(2,1);
\node at (1,1) [anchor=south]{$D_b=S_{b^L}$};
\begin{scope}[shift={(6,0)}]
\draw[line width=1pt, color=black](-3,0)--(5,0);
\draw[color=red, line width=1.5pt, <-,>=stealth] (-2,0).. controls (-2,2) and (2,2)..(2,0);
\node at (0,1.5) [anchor=south, color=red]{$\low b 2$};
\draw[color=blue, line width=1.5pt, <-,>=stealth] (0,0).. controls (0,2) and (4,2)..(4,0);
\node at (2,1.5) [anchor=south west , color=blue]{$\low b 1 a$};
\end{scope}
\end{tikzpicture}\\
&\begin{tikzpicture}[scale=.5]
\begin{scope}[shift={(-6,0)}]
\draw[line width=1pt, color=black](-3,0)--(5,0);
\draw[color=red, line width=1.5pt, <-,>=stealth] (-2,0).. controls (-2,2) and (2,2)..(2,0);
\node at (0,1.5) [anchor=south, color=red]{$b$};
\draw[color=blue, line width=1.5pt, <-,>=stealth] (0,0).. controls (0,2) and (4,2)..(4,0);
\node at (2,1.5) [anchor=south, color=blue]{$a$};
\end{scope}
\draw[line width=1pt, color=black, ->,>=stealth](0,1)--(2,1);
\node at (1,1) [anchor=south]{$D_a=S_{a^L}$};
\begin{scope}[shift={(6,0)}]
\draw[line width=1pt, color=black](-3,0)--(5,0);
\draw[color=red, line width=1.5pt, <-,>=stealth] (-2,0).. controls (-2,2) and (2,2)..(2,0);
\node at (0,1.5) [anchor=south east, color=red]{$ b S(\low a 1) $};
\draw[color=blue, line width=1.5pt, <-,>=stealth] (0,0).. controls (0,2) and (4,2)..(4,0);
\node at (2,1.5) [anchor=south west , color=blue]{$\low a 2$};
\end{scope}
\end{tikzpicture}\nonumber
\end{align}

As they do not slide any edge ends over the cilium, Proposition   \ref{prop:vertfacecomp} implies that the  morphisms $D_a,D_b\in \mathrm{Aut}(H^{\oo 2})$  are automorphisms of Yetter-Drinfeld modules with respect to the Yetter-Drinfeld module structure \eqref{facestruct}. We will now show   they satisfy the braid relation and  hence define an action of the group $B_3$ on $H^{\oo 2}$ by automorphisms of Yetter-Drinfeld modules. 
We also show that whenever $\mac$ is finitely complete and cocomplete, this induces 
 an action of the modular group $\mathrm{SL}(2,\mathbb Z)$ on the object $H^{\oo 2}_{inv}$ from Definition \ref{def:biinv}.

\begin{theorem} \label{th:modular}Let $H$ be a pivotal Hopf monoid in a symmetric monoidal category $\mac$.
\begin{compactenum}
\item The slides  in \eqref{eq:abslide} define a group homomorphism $\rho: B_3\to \mathrm{Aut}_{YD}(H^{\oo 2})$.
%\item If $\mac$ has coequalisers, they induce an action of 
%	$\mathrm{SL}(2,\mathbb Z)$  on the invariants of  $(H\oo H,\rhd_{v})$.
%\item If $\mac$	 has equalisers, they induce an action of 
%	$\mathrm{SL}(2,\mathbb Z)$  on the coinvariants of  $(H\oo H, \delta'_f$).
	\item If $\mac$ is finitely complete and cocomplete, the slides in \eqref{eq:abslide}   induce a group homomorphism \linebreak
	$\rho:\mathrm{SL}(2,\mathbb Z)\to \mathrm{Aut}(H^{\oo 2}_{inv})$.
%	 action of 
%	$\mathrm{SL}(2,\mathbb Z)$  on the biinvariants of the Yetter-Drinfeld module $(H\oo H, \rhd_v,\delta_f)$.
\end{compactenum}
\end{theorem}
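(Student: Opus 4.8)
The plan is to prove part 1 first and then deduce part 2 using Lemma \ref{lem:maclemma}. For part 1, by the presentation $B_3=\langle A,B\mid ABA=BAB\rangle$ in \eqref{eq:sl2rep}, it suffices to check that the two automorphisms $D_a,D_b\in\mathrm{Aut}(H^{\oo 2})$ from \eqref{eq:abslide} satisfy the braid relation $D_a\circ D_b\circ D_a=D_b\circ D_a\circ D_b$; that each $D_a,D_b$ lies in $\mathrm{Aut}_{YD}(H^{\oo 2})$ has already been established above, using Proposition \ref{prop:vertfacecomp}, since neither slide moves an edge end over the cilium. The braid relation is the statement that the two edges $a$ and $b$ of $\Gamma$ have a single intersection point and that $S_{a^L}$, $S_{b^L}$ are the associated slides along adjacent edge ends; the relation $D_a\circ D_b\circ D_a=D_b\circ D_a\circ D_b$ is then precisely an instance of the triangle relation of Theorem \ref{th:bene}, which the edge slides satisfy by Proposition \ref{prop: relationsbene}. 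Concretely I would run the explicit Sweedler-notation computation: starting from labels $(a,b)$ on the two loops, apply $D_a$, then $D_b$, then $D_a$, and separately $D_b,D_a,D_b$, and check that both sequences produce the same pair of elements of $H^{\oo 2}$, using coassociativity of $\Delta$, the antipode axioms, and Lemma \ref{lem:tprops}. This gives the group homomorphism $\rho\colon B_3\to\mathrm{Aut}_{YD}(H^{\oo 2})$.

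For part 2, recall from \eqref{eq:sl2rep} that $\mathrm{SL}(2,\mathbb Z)=\langle A,B\mid (BAB)^4=1,\ ABA=BAB\rangle$ is the quotient of $B_3$ by the normal subgroup generated by the central element $(BAB)^4$. Hence it suffices to show that the automorphism $\rho((BAB)^4)=(\rho(B)\circ\rho(A)\circ\rho(B))^4$ of $H^{\oo 2}$ restricts to the identity on the biinvariants $H^{\oo 2}_{inv}$ of Definition \ref{def:biinv}. Since $\mac$ is finitely complete and cocomplete and (as the theorem hypothesises via the involutive pivotal structure, matching Theorem 3) the biinvariants exist, Lemma \ref{lem:maclemma} gives for each $\phi\in\mathrm{Aut}_{YD}(H^{\oo 2})$ a unique induced automorphism $\phi_{inv}\in\mathrm{Aut}(H^{\oo 2}_{inv})$, and $\phi\mapsto\phi_{inv}$ is functorial (a group homomorphism $\mathrm{Aut}_{YD}(H^{\oo 2})\to\mathrm{Aut}(H^{\oo 2}_{inv})$), because the defining equation $\pi'\circ\phi\circ\iota=I'\circ\phi_{inv}\circ P$ is compatible with composition and identities. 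Composing $\rho$ from part 1 with this homomorphism yields $\tilde\rho\colon B_3\to\mathrm{Aut}(H^{\oo 2}_{inv})$, and it remains to verify $\tilde\rho((BAB)^4)=1_{H^{\oo 2}_{inv}}$, i.e. that $(D_b\circ D_a\circ D_b)^4$ acts as the identity after imposing invariance and coinvariance under the Yetter-Drinfeld structure \eqref{facestruct}. Geometrically $D_b\circ D_a\circ D_b$ is the image in the chord-slide presentation of the generator whose fourth power is the boundary Dehn twist, which becomes trivial once a disc is glued to the boundary face; algebraically, gluing a disc corresponds exactly to passing to $M_{inv}$. So the key computation is to evaluate $(D_b\circ D_a\circ D_b)^4$ on an element of $H^{\oo 2}$ in Sweedler notation and show that, modulo the relations defining $\pi\circ\iota$ (i.e. $h\rhd_v(-)=\epsilon(h)(-)$ on invariants and $\delta_f(-)=\eta\oo(-)$ on coinvariants), the result equals the original element.

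I expect the braid relation in part 1 to be only moderately laborious — it is a direct bookkeeping exercise with $\Delta$, $S$, $T$ and the compatibility of the Hopf module structures — whereas the main obstacle is the last verification in part 2: showing $(D_b\circ D_a\circ D_b)^4$ acts trivially on $H^{\oo 2}_{inv}$. The difficulty is that one must work with the coequaliser/equaliser/image presentation of $H^{\oo 2}_{inv}$ rather than with actual elements, so the identity must be reduced to an identity of morphisms in $\mac$ that becomes valid only after composing with $P$ and $\pi\circ\iota$; this forces careful use of the involutivity of the pivotal structure (so that $T$ and $S$ interact as in an involutive Hopf monoid) and of the Yetter-Drinfeld compatibility \eqref{eq:ydcomp}. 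A cleaner route, which I would try first, is to recognise $D_b\circ D_a\circ D_b$ as the composite of slides realising the mapping class $S\in\mathrm{SL}(2,\mathbb Z)$ with $S^4=1$ in the closed-surface mapping class group, and to invoke the general closed-surface statement (Theorem 3, specialised to genus $1$): once the disc is attached the relation $(BAB)^4=1$ holds on $H^{\oo 2}_{inv}$ by construction of the mapping class group action there, so part 2 follows immediately from part 1 together with Lemma \ref{lem:maclemma}. Failing that, the explicit four-fold computation, simplified using $p$, $S$ and $T$, provides a self-contained verification.
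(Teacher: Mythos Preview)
Your overall strategy matches the paper's: verify the braid relation directly for part 1, then show $(D_bD_aD_b)^4$ is trivial on biinvariants for part 2 via Lemma \ref{lem:maclemma}. Two points deserve correction.

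First, the braid relation $D_aD_bD_a=D_bD_aD_b$ is not an instance of the triangle relation \eqref{eq:triangleger}. The triangle relation equates a composite of three slides of mixed types ($S_{\alpha^L}$, $S_{\beta^{-R}}$, $S_{\alpha^{-R}}$ in the proof of Proposition \ref{prop: relationsbene}) with a label swap, whereas $D_a=S_{a^L}$ and $D_b=S_{b^L}$ are both left slides to the target. One can cite Theorem \ref{the:edge1} (which already yields the $B_3$-action abstractly from the full set of Bene relations), but the paper simply carries out the direct Sweedler computation you also propose, obtaining $D_bD_aD_b=D_aD_bD_a\colon(a,b)\mapsto(\low b 2,\ \low b 3 S(a) S(\low b 1))$.

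Second, and more substantively, for part 2 the paper's key step is the closed-form identity
\[
(D_bD_aD_b)^4=\rhd_v\circ(T\oo 1_{H^{\oo2}})\circ\delta_f,
\]
obtained by iterating the formula above four times. This makes triviality on $H^{\oo2}_{inv}$ immediate: on coinvariants $\delta_f\circ\iota=(\eta\oo 1)\circ\iota$, then $T\circ\eta=p$ by \eqref{eq:tplusprop2}, and on invariants $\pi\circ\rhd_v=\pi\circ(\epsilon\oo 1)$ together with $\epsilon\circ p=1_e$ gives $\pi\circ(D_bD_aD_b)^4\circ\iota=\pi\circ\iota$. No involutivity of the pivotal structure is assumed or needed; your aside that the theorem ``hypothesises'' it is incorrect, and invoking Theorem 3 (Theorem \ref{th:close}), which does require $m\circ(p\oo p)=\eta$ and is proved later, would both weaken the statement and reverse the paper's logical order. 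The closed-form identity above is the missing idea that turns the four-fold computation you anticipated into a two-line argument.
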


\begin{proof}
We identify $A=D_a$ and $B=D_b$ and verify the relations in \eqref{eq:sl2rep}. A direct computation using expressions \eqref{eq:abslide} yields
\begin{align}%\label{eq:s}
	\nonumber
&D_b \circ D_a \circ D_b=D_a\circ D_b\circ D_a:\\
&\begin{tikzpicture}[scale=.5]
\begin{scope}[shift={(-6,0)}]
\draw[line width=1pt, color=black](-3,0)--(5,0);
\draw[color=red, line width=1.5pt, <-,>=stealth] (-2,0).. controls (-2,2) and (2,2)..(2,0);
\node at (0,1.5) [anchor=south, color=red]{$b$};
\draw[color=blue, line width=1.5pt, <-,>=stealth] (0,0).. controls (0,2) and (4,2)..(4,0);
\node at (2,1.5) [anchor=south, color=blue]{$a$};
\end{scope}
\draw[line width=1pt, color=black, ->,>=stealth](0,1)--(2,1);
\begin{scope}[shift={(6,0)}]
\draw[line width=1pt, color=black](-3,0)--(5,0);
\draw[color=red, line width=1.5pt, <-,>=stealth] (-2,0).. controls (-2,2) and (2,2)..(2,0);
\node at (-1,1.5) [anchor=south, color=red]{$\low b 3 S(a) S(\low b 1)$};
\draw[color=blue, line width=1.5pt, <-,>=stealth] (0,0).. controls (0,2) and (4,2)..(4,0);
\node at (2,1.5) [anchor=south west , color=blue]{$\low b 2$};
\end{scope}
\end{tikzpicture}\nonumber
\end{align}
By applying this morphism four times we obtain
\begin{align}
&(D_b \circ D_a \circ D_b)^4=\rhd_v\circ (T\oo 1_{H^{\oo 2}})\circ\delta_f%=\phi_c
:\\
&\begin{tikzpicture}[scale=.5]
\begin{scope}[shift={(-6,0)}]
\draw[line width=1pt, color=black](-3,0)--(5,0);
\draw[color=red, line width=1.5pt, <-,>=stealth] (-2,0).. controls (-2,2) and (2,2)..(2,0);
\node at (0,1.5) [anchor=south, color=red]{$b$};
\draw[color=blue, line width=1.5pt, <-,>=stealth] (0,0).. controls (0,2) and (4,2)..(4,0);
\node at (2,1.5) [anchor=south, color=blue]{$a$};
\end{scope}
\draw[line width=1pt, color=black, ->,>=stealth](0,1)--(2,1);
\begin{scope}[shift={(6,0)}]
\draw[line width=1pt, color=black](-3,0)--(5,0);
\draw[color=red, line width=1.5pt, <-,>=stealth] (-2,0).. controls (-2,2) and (2,2)..(2,0);
\node at (-2,1.5) [anchor=south, color=red]{$\low x 4 S^2( \low b 2) S(\low x 2)$};
\draw[color=blue, line width=1.5pt, <-,>=stealth] (0,0).. controls (0,2) and (4,2)..(4,0);
\node at (2,1.5) [anchor=south west , color=blue]{$\low x 3 S^2( \low a 2) S(\low x 1)$};
\node at (-3,-.5)[anchor=north west]{$x=\low b 3 S(\low a 1)S(\low b 1)S^2(\low a 3)$};
\end{scope}
\end{tikzpicture}\nonumber
\end{align}
By Proposition \ref{prop:vertfacecomp}, the automorphisms $D_a$ and $D_b$ are automorphisms of Yetter-Drinfeld modules with respect to \eqref{facestruct}, and by
Lemma \ref{lem:maclemma} they induce  automorphisms $D_a,D_b$ of $H^{\oo 2}_{inv}$. 
As the automorphism $(D_b \circ D_a \circ D_b)^4=\rhd_v\circ (T\oo 1_{H^{\oo 2}})\circ\delta_f$ 
induces
the identity morphisms on $H^{\oo 2}_{inv}$, we obtain an $\mathrm{SL}(2,\mathbb Z)$-action on $H^{\oo 2}_{inv}$.
\end{proof}

Note that the expressions for the Dehn twists in \eqref{eq:abslide} depend only on the Hopf monoid $H$ and not on the choice of the pivotal structure for $H$.  
It will become apparent in Section \ref{sec:gervaismap} that this is specific to the torus and not true for Dehn twists in higher genus mapping class groups.
Note also that the proof of Theorem \ref{th:modular} yields   not only an $\mathrm{SL}(2,\mathbb Z)$-action on the biinvariants of \eqref{eq:abslide}, but also an $\mathrm{SL}(2,\mathbb Z)$-action
on the invariants of the $H$-left module $(H^{\oo 2},\rhd_v)$ and on the coinvariants of the right $H$-comodule $(H^{\oo 2},\delta'_f)$ with $H$-right comodule structure $\delta'_f$ from \eqref{eq:rightdef}, whenever they are defined.
The restriction to the invariants, coinvariants or biinvariants is sufficient, but not  necessary  to obtain actions of the modular group $\mathrm{SL}(2,\mathbb Z)$.  This depends on  the Hopf monoid $H$. 

\begin{example} Let $\mac=\vect_\F$ and  $H\cong\mathbb F[G]$ for a group $G$ and field $\F$. Then $\F[G]\oo \F[G]\cong\F[G\times G]$, and the slides from \eqref{eq:abslide} are given by
\begin{align}
D_b: (a,b)\mapsto (ba, b)\qquad\qquad D_a:(a,b)\mapsto(a, ba^\inv),
\end{align}
 which yields
$$(D_bD_aD_b)^4(a,b)=([b,a^\inv] a[a^\inv,b], [b,a^\inv] b [a^\inv,b]).$$
This shows that  the modular group $\mathrm{SL}(2,\mathbb Z)$ acts  on the set
\begin{align*}
M=\mathrm{span}_\F\{(a,b)\in G\times G\mid [a,b]\in Z(G)\}
\end{align*}
If $G$ is nilpotent of nilpotence degree 2, then $[G,G]$ is central in $G$, and Theorem \ref{th:modular} defines an $\mathrm{SL}(2,\mathbb Z)$-action on $\mathbb F[G\times G]$.
\end{example}

\section{Slides and twists}
\label{sec:slidetwist}

In the previous section we gave  concrete expressions for  the action of the mapping class group of the torus and  one-holed torus  in terms of generating Dehn twists. In the remainder of this article, we  derive analogous descriptions for the mapping class groups of  surfaces  of genus $g\geq 1$ with $n\geq 0$ boundary components. 
 On one hand, this will extend Theorem \ref{the:edge1} and show that  edges slides associated with pivotal Hopf algebras define actions of the mapping class groups of  surfaces  with  more than one boundary component. 
It will also allow us to determine under which additional conditions they induce actions of mapping class groups of closed surfaces. Finally,   it is desirable, especially for geometric applications, 
to obtain concrete expressions for the action of the mapping class group in terms of generating Dehn twists and their relations.

For this,  we need to generalise the Dehn twists along the $a$- and $b$-cycles of the torus from Section \ref{sec:torus} to twists along more general closed paths in a graph. 
This requires a number of technical results on edge slides, which we derive in this section. The reader only interested in the results may skip these at first and proceed to Section \ref{sec:gervaismap}. 

To generalise the Dehn twists from Section \ref{sec:torus}, we proceed in three steps.
 In Section \ref{subsec:slide} we generalise the edge slides from Definition \ref{def:edge slide} to slides along face paths  (cf.~Definition \ref{def:face}).  In Section \ref{subsec:addedge}  we show how a slide along a face path can be described in terms of a slide along a single edge by adding and removing edges in the graph. In Section \ref{sec:twists} we then define Dehn twists along closed face paths  and investigate their interaction with edge slides. 
 In Section \ref{sec:gervaismap} we further generalise these Dehn twists to a number of closed paths that are not face paths. This yields a set of generating Dehn twists  that  satisfy the  relations   in Theorem \ref{th:gervais}.

Throughout this section, we assume that $H$ is a pivotal Hopf monoid in a symmetric monoidal category $\mac$ and $\Gamma$ a directed ribbon graph with edge set $E$.

\subsection{Slides along face paths}
\label{subsec:slide}

We start by generalising  the edge slides from Section \ref{sec:chordslideshopf}  to slides along face paths.
For this, note that the conditions on an edge slide after equation \eqref{eq:slideedge} allow one to slide an edge end $\beta$ along an entire face path $\gamma$ path via successive edge slides,  whenever it can slide along the first edge in  $\gamma$ and is not traversed by $\gamma$. This is illustrated in Figure \ref{fig:faceslide}.

\begin{definition} \label{def:slideface}Let
	$\gamma=\gamma_1^{\epsilon_1} \circ \ldots \circ\gamma_n^{\epsilon_n}:v\to w$ be a face path and denote by $\beta$ the edge  end directly after the starting end $\st(\gamma_{n}^{\epsilon_{n}})$ of $\gamma$ with respect to the cyclic ordering at $v$.
	If $\beta$ is not traversed by $\gamma$, 
the {\bf  slide} of $\beta$ along $\gamma$   is 
\begin{align}
	S_{\gamma}:= S_{(\gamma^{\epsilon_1}_1)^L} \cdots S_{(\gamma^{\epsilon_n}_{n})^L}\quad\text{with}\quad S_{(\gamma_i^\inv)^L}:=S_{\gamma_i^{-R}}.
	\nonumber
\end{align}
\end{definition}

Note that for a face path $\gamma=\gamma_1^{\epsilon_1}$ consisting of a single edge $\gamma_1\in E$, the slide along $\gamma$ reduces to the edge slides from Definition \ref{def:edge slide}, namely $S_{\gamma}=S_{\gamma_1^{L}}$  if $\epsilon_1=1$ and $S_{\gamma}=S_{\gamma_1^{-R}}$  if $\epsilon_1=-1$. 

Just as for slides along edges, there  is a  description of slides along face paths in terms of an $H$-comodule structure associated to face paths. From Definition \ref{def:slideface} and the expressions for edge slides in  \eqref{eq:slidedef2a} to \eqref{eq:slidedef4b}   one obtains the following alternative definition.

\begin{remark} \label{rem:otherslide}Associate to a face path $\gamma$ in $\Gamma$  an $H$-comodule structure $\delta_\gamma: H^{\oo E}\to H\oo H^{\oo E}$  as  in Definition  \ref{def:vertexface} and let $\beta$ be as in Definition \ref{def:slideface}. 
Then
 the slide along $\gamma$ is given by
$$
S_\gamma=\rhd_{\beta\pm}\circ\delta_\gamma
%\qquad S_{\gamma^\inv}=\rhd_{\alpha\pm}\circ (S^\inv\oo 1_{H^{\oo E}})\circ\delta_\gamma
$$
where one takes $+$ if the sliding end of the edge $\beta$ is incoming and $-$ if it is outgoing.
\end{remark}

As slides along face paths are defined as composites of edge slides, we  obtain a direct generalisation of Proposition \ref{prop:vertfacecomp} 
for slides along face paths.

\begin{corollary}\label{cor:slidepath} Let  $\gamma$ be a face path satisfying the conditions in Definition \ref{def:slideface}. Then
	$S_{\gamma}$ is an isomorphism of  $H$-(co)modules 
	for any ciliated vertex $v$  (face $f$) whose cilium is not traversed by $\gamma$.
\end{corollary}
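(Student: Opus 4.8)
\textbf{Proof proposal for Corollary \ref{cor:slidepath}.}

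The plan is to reduce the statement to the already-established Proposition \ref{prop:vertfacecomp} by unwinding the definition of the slide along a face path. Recall from Definition \ref{def:slideface} that if $\gamma = \gamma_1^{\epsilon_1}\circ\cdots\circ\gamma_n^{\epsilon_n}$ is a face path satisfying the conditions there, then
$$
S_\gamma = S_{(\gamma_1^{\epsilon_1})^L}\circ\cdots\circ S_{(\gamma_n^{\epsilon_n})^L},
$$
a composite of $n$ individual edge slides, each of which is one of the maps from Definition \ref{def:edge slide}. Since a composite of isomorphisms of $H$-(co)modules is again an isomorphism of $H$-(co)modules, it suffices to show that each factor $S_{(\gamma_i^{\epsilon_i})^L}$ is an isomorphism of $H$-(co)modules for the vertex $v$ (resp.\ face $f$) in question; then $S_\gamma$ inherits this property by composition. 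By Proposition \ref{prop:vertfacecomp}, each edge slide that does not slide any edge end over a cilium is such an isomorphism, so the whole task is to verify that \emph{none} of the intermediate edge slides in the composite for $S_\gamma$ slides an end over the cilium at $v$ or the cilium at $f$.

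The key observation is that the condition in Definition \ref{def:slideface} --- that the edge end $\beta$ being slid is not traversed by $\gamma$ and is the one directly after $\st(\gamma_n^{\epsilon_n})$ in the cyclic ordering at the starting vertex --- is precisely designed so that at each step the relevant end of $\beta$ lies adjacent to the next edge of $\gamma$ \emph{without a cilium in between}. Concretely, as $\beta$ is dragged along $\gamma$ edge by edge, at the $i$-th step its sliding end sits next to $\gamma_i$ at a common vertex, and because $\gamma$ is a face path (Definition \ref{def:face}), consecutive edges of $\gamma$ turn maximally left, which means $\gamma_i$ and $\gamma_{i+1}$ are cyclically adjacent at their shared vertex with no other edge end --- and in particular no cilium --- between them on the side where $\beta$ travels. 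Hence the hypothesis that the cilium of $v$ (resp.\ the cilium of $f$) is not traversed by $\gamma$ guarantees that the sliding end of $\beta$ never has to cross that cilium: the only cilia that could obstruct lie along the path $\gamma$ itself, and by assumption those cilia are not on $\gamma$. Therefore each of the $n$ edge slides composing $S_\gamma$ satisfies the hypothesis of Proposition \ref{prop:vertfacecomp} with respect to $v$ (resp.\ $f$), and the conclusion follows.

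The main obstacle I anticipate is making the geometric claim ``the sliding end never crosses the cilium'' fully precise at the level of the combinatorial data of ciliated ribbon graphs, i.e.\ tracking carefully, through each intermediate graph produced by the partial composite $S_{(\gamma_{i+1}^{\epsilon_{i+1}})^L}\circ\cdots\circ S_{(\gamma_n^{\epsilon_n})^L}$, that the linear/cyclic ordering of edge ends at the vertices visited by $\gamma$ places $\beta$ in the required adjacent position to $\gamma_i$ on the correct side, and that the cilium-at-$v$ and cilium-at-$f$ data are left untouched because they sit at positions not on $\gamma$. This is a bookkeeping argument about how the groupoid isomorphism $S\colon \mathcal G_\Gamma\to\mathcal G_{\Gamma'}$ (discussed after Figure \ref{figure:SlideGroupoidIsomorphism}) and the induced cyclic orderings transform under successive slides; it is routine but must be done with care, and is essentially the same bookkeeping already implicit in Definition \ref{def:slideface} and Figure \ref{fig:faceslide}. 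Once that is granted, the rest of the proof is the one-line composition argument above.
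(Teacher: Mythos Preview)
Your proposal is correct and follows essentially the same approach as the paper: decompose $S_\gamma$ into its constituent edge slides via Definition~\ref{def:slideface} and apply Proposition~\ref{prop:vertfacecomp} to each factor. The paper treats this corollary as immediate and gives no proof beyond the one-sentence remark preceding it (``As slides along face paths are defined as composites of edge slides, we obtain a direct generalisation of Proposition~\ref{prop:vertfacecomp}\ldots''); your additional verification that no intermediate slide crosses the cilium---using the maximal-left-turn property of face paths to see that the sliding end of $\beta$ stays cyclically adjacent to the next edge of $\gamma$ with nothing in between except possibly a cilium traversed by $\gamma$---is a correct elaboration of a point the paper leaves implicit.
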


By considering  slides along  face paths, one obtains the left and right pentagon relation and the opposite end and adjacent commutativity relations as special cases of a  simpler and more general relation.  
For this, recall from the beginning of Section \ref{sec:chordslideshopf} that  edge slides induce isomorphisms between the path groupoids of directed  ribbon graphs. Because slides along face paths are composites of edge slides, this statement generalises to face paths. 

We now consider how a face path  $\rho$  in $\Gamma$   is transformed under the slide of an edge $\alpha$ along  a face path $\gamma$.
Then there are four possibilities for the relative positions of the  paths (cf.~Section \ref{subsec:paths}): 
\begin{compactenum}[(i)]
\item $\gamma$ and $\rho$ do not overlap,
\item $\gamma$ and $\rho$ overlap,  but neither is a subpath of the other, 
 \item  $\rho$ is a subpath of $\gamma$,
\item  $\gamma$ is a proper subpath of $\rho$.
 \end{compactenum}
 The transformation of $\rho$ under the slide $S_\gamma$  is depicted in Figure \ref{fig:FacePathTransformationCases}.
 In case (i), one has $S_\gamma(\rho)=\rho$,  unless $\rho$ traverses the edge $\alpha$. If $\rho$ traverses $\alpha$, then $S_\gamma(\rho)\neq \rho$, but $S_\gamma(\rho)$ is still a face path, as shown in Figure \ref{fig:FacePathTransformationCases} (i). In case (ii), $S_\gamma(\rho)$ is never a face path, as shown in Figure \ref{fig:FacePathTransformationCases} (ii), as face paths by definition  turn maximally left at every vertex.
 The same holds for case (iv) if $\gamma$ and $\rho$ have the same starting vertex. In case (iii)  $S_\gamma(\rho)$ is again a face path, as shown in Figure \ref{fig:FacePathTransformationCases} (iii). The same holds in case (iv) if $\gamma$ and $\rho$ do not start at the same vertex, as shown in Figure \ref{fig:FacePathTransformationCases} (iv).

\begin{figure}
\begin{tikzpicture}[scale=.5]
\begin{scope}[shift={(0,0)}]
\draw[line width=1.5pt, ->,>=stealth, color=blue] (0,2)--(0,.2);
\draw[line width=1.5pt, <-,>=stealth, color=blue] (.2,0)--(3.8,0);
\draw[line width=1.5pt, ->,>=stealth, color=blue] (4,0)--(4,1.8);
\draw[line width=1.5pt, ->,>=stealth, color=red] (2,2)--(.2,2);
\draw[line width=1pt, color=blue,->,>=stealth] plot [smooth, tension=0.6] coordinates 
      {(.5,1.5)(.5,.5)(3.5,.5)(3.5,1.5)};    
\draw[color=black, fill=black] (0,2) circle (.2);
\draw[color=black, fill=black] (0,0) circle (.2);
\draw[color=black, fill=black] (4,0) circle (.2);
\draw[color=black, fill=black] (4,2) circle (.2);
\draw[color=black, fill=black] (2,2) circle (.2);
\node at (2,.5)[color=blue, anchor=south]{$\gamma$};
\node at (0,1)[color=blue, anchor=east]{$\gamma_3$};
\node at (4,1)[color=blue, anchor=west]{$\gamma_1$};
\node at (2,0)[color=blue, anchor=north]{$\gamma_2$};
\end{scope}
%%%
\draw[line width=1pt, color=black, ->,>=stealth] (5.5,1)--(7.5,1);
\node at (6.5,1)[color=black, anchor=south] {$S_{\gamma_3^L}$};
%%%
\begin{scope}[shift={(9,0)}]
\draw[line width=1.5pt, ->,>=stealth, color=blue] (0,2)--(0,.2);
\draw[line width=1.5pt, <-,>=stealth, color=blue] (.2,0)--(3.8,0);
\draw[line width=1.5pt, ->,>=stealth, color=blue] (4,0)--(4,1.8);
\draw[line width=1.5pt, ->,>=stealth, color=red] (2,2)--(.2,.2);
\draw[color=black, fill=black] (0,2) circle (.2);
\draw[color=black, fill=black] (0,0) circle (.2);
\draw[color=black, fill=black] (4,0) circle (.2);
\draw[color=black, fill=black] (4,2) circle (.2);
\draw[color=black, fill=black] (2,2) circle (.2);
\node at (0,1)[color=blue, anchor=east]{$\gamma_3$};
\node at (4,1)[color=blue, anchor=west]{$\gamma_1$};
\node at (2,0)[color=blue, anchor=north]{$\gamma_2$};
\end{scope}
%%%
\draw[line width=1pt, color=black, ->,>=stealth] (14.5,1)--(16.5,1);
\node at (15.5,1)[color=black, anchor=south] {$S_{\gamma_2^{-R}}$};
%%%
\begin{scope}[shift={(18,0)}]
\draw[line width=1.5pt, ->,>=stealth, color=blue] (0,2)--(0,.2);
\draw[line width=1.5pt, <-,>=stealth, color=blue] (.2,0)--(3.8,0);
\draw[line width=1.5pt, ->,>=stealth, color=blue] (4,0)--(4,1.8);
\draw[line width=1.5pt, ->,>=stealth, color=red] (2,2)--(3.8,.2);
\draw[color=black, fill=black] (0,2) circle (.2);
\draw[color=black, fill=black] (0,0) circle (.2);
\draw[color=black, fill=black] (4,0) circle (.2);
\draw[color=black, fill=black] (4,2) circle (.2);
\draw[color=black, fill=black] (2,2) circle (.2);
\node at (0,1)[color=blue, anchor=east]{$\gamma_3$};
\node at (4,1)[color=blue, anchor=west]{$\gamma_1$};
\node at (2,0)[color=blue, anchor=north]{$\gamma_2$};
\end{scope}
%%%%%
\draw[line width=1pt, color=black, ->,>=stealth] (23.5,1)--(25.5,1);
\node at (24.5,1)[color=black, anchor=south] {$S_{\gamma_1^L}$};
%%%
\begin{scope}[shift={(27,0)}]
\draw[line width=1.5pt, ->,>=stealth, color=blue] (0,2)--(0,.2);
\draw[line width=1.5pt, <-,>=stealth, color=blue] (.2,0)--(3.8,0);
\draw[line width=1.5pt, ->,>=stealth, color=blue] (4,0)--(4,1.8);
\draw[line width=1.5pt, ->,>=stealth, color=red] (2,2)--(3.8,2);
\draw[color=black, fill=black] (0,2) circle (.2);
\draw[color=black, fill=black] (0,0) circle (.2);
\draw[color=black, fill=black] (4,0) circle (.2);
\draw[color=black, fill=black] (4,2) circle (.2);
\draw[color=black, fill=black] (2,2) circle (.2);
\node at (0,1)[color=blue, anchor=east]{$\gamma_3$};
\node at (4,1)[color=blue, anchor=west]{$\gamma_1$};
\node at (2,0)[color=blue, anchor=north]{$\gamma_2$};
\end{scope}
\end{tikzpicture}
\caption{Sliding an edge end along the face path $\gamma=\gamma_1\gamma_2^\inv\gamma_3$.}
\label{fig:faceslide}
\end{figure}
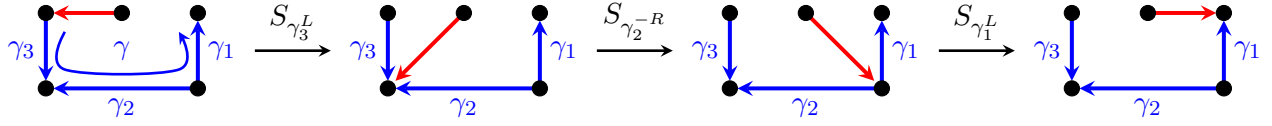

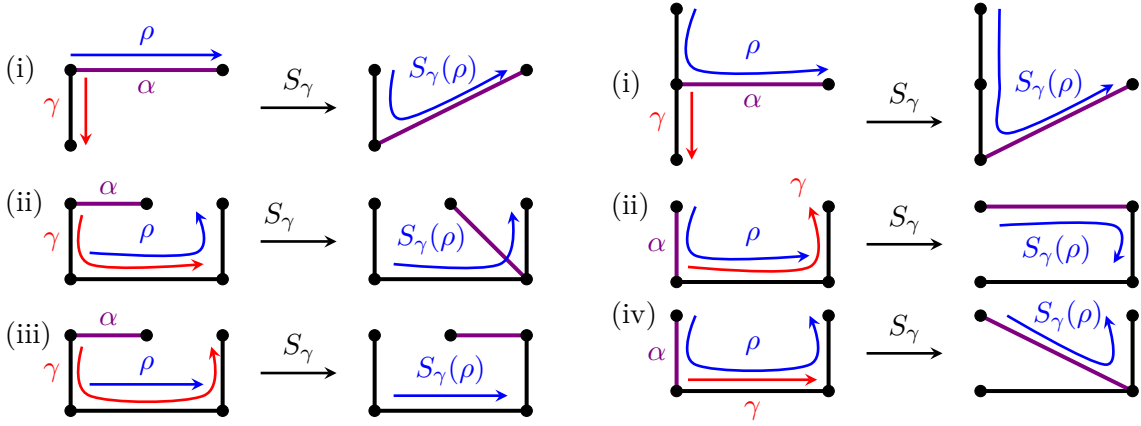
\begin{figure}
\begin{center}
\begin{tikzpicture}[scale=.5, baseline=(current bounding box.center)]
\begin{scope}[shift={(-4,0)}]
\node at (-4,2)[anchor=west]{(i)};
\draw[line width=1pt, color=violet, line width=1.5pt] (-2,2)--(2,2);
\draw[line width=1pt, color=black,  line width=1.5pt] (-2,0)--(-2,2);
\draw[fill=black, color=black,] (-2,0) circle (.15);
\draw[fill=black, color=black] (2,2) circle (.15);
\draw[fill=black, color=black] (-2,2) circle (.15);
\draw[line width=1pt, color=blue,->,>=stealth] plot [smooth, tension=0.6] coordinates 
      {(-2, 2.4)(2,2.4)};      
\draw[line width=1pt, color=red, ->,>=stealth] plot [smooth, tension=0.6] coordinates 
      {(-1.6,1.8)(-1.6,0)};
      \node at (-2,1)[anchor=east, color=red]{$\gamma$};
      \node at (0,2.4)[anchor=south, color=blue]{$\rho$}; 
       \node at (0,2)[anchor=north, color=violet]{$\alpha$};
       \end{scope}
       \draw[line width=1pt, ->,>=stealth](-1,1)--(1,1);
       \node at (0,1)[anchor=south]{$S_\gamma$};
       \begin{scope}[shift={(4,0)}]
\draw[line width=1pt, color=black,  line width=1.5pt] (-2,0)--(-2,2);
\draw[line width=1pt, color=violet,  line width=1.5pt] (-2,0)--(2,2);
\draw[fill=black, color=black,] (-2,0) circle (.15);
\draw[fill=black, color=black] (2,2) circle (.15);
\draw[fill=black, color=black] (-2,2) circle (.15);
\draw[line width=1pt, color=blue,->,>=stealth] plot [smooth, tension=0.6] coordinates 
      {(-1.5,2)(-1.3,.7)(1.5,2)};      
      \node at (1,2)[anchor=east, color=blue]{$S_\gamma(\rho)$}; 
       \end{scope}
\end{tikzpicture}
\qquad
\begin{tikzpicture}[scale=.5,baseline=(current bounding box.center)]
\begin{scope}[shift={(-4,0)}]
\node at (-4,2)[anchor=west]{(i)};
\draw[line width=1pt, color=violet, line width=1.5pt] (-2,2)--(2,2);
\draw[line width=1pt, color=black,  line width=1.5pt] (-2,0)--(-2,2);
\draw[line width=1pt, color=black,  line width=1.5pt] (-2,2)--(-2,4);
\draw[fill=black, color=black,] (-2,0) circle (.15);
\draw[fill=black, color=black] (2,2) circle (.15);
\draw[fill=black, color=black] (-2,4) circle (.15);
\draw[fill=black, color=black] (-2,2) circle (.15);
\draw[line width=1pt, color=red, ->,>=stealth] plot [smooth, tension=0.6] coordinates 
      {(-1.6,1.8)(-1.6,0)};
\draw[line width=1pt, color=blue,->,>=stealth] plot [smooth, tension=0.6] coordinates 
      {(-1.5, 4)(-1.5, 2.4)(2,2.4)};      
      \node at (-2,1)[anchor=east, color=red]{$\gamma$};
      \node at (0,2.4)[anchor=south, color=blue]{$\rho$}; 
       \node at (0,2)[anchor=north, color=violet]{$\alpha$};
       \end{scope}
       \draw[line width=1pt, ->,>=stealth](-1,1)--(1,1);
       \node at (0,1)[anchor=south]{$S_\gamma$};
       \begin{scope}[shift={(4,0)}]
\draw[line width=1pt, color=black,  line width=1.5pt] (-2,0)--(-2,2);
\draw[line width=1pt, color=violet,  line width=1.5pt] (-2,0)--(2,2);
\draw[line width=1pt, color=black,  line width=1.5pt] (-2,2)--(-2,4);
\draw[fill=black, color=black,] (-2,0) circle (.15);
\draw[fill=black, color=black] (2,2) circle (.15);
\draw[fill=black, color=black] (-2,4) circle (.15);
\draw[fill=black, color=black] (-2,2) circle (.15);
\draw[line width=1pt, color=blue,->,>=stealth] plot [smooth, tension=0.6] coordinates 
      {(-1.5,4)(-1.5,2)(-1.3,.7)(1.5,2)};      
      \node at (1,2)[anchor=east, color=blue]{$S_\gamma(\rho)$}; 
       \end{scope}
\end{tikzpicture}

\begin{tikzpicture}[scale=.5, baseline=(current bounding box.center)]
\begin{scope}[shift={(-4,0)}]
\node at (-4,2)[anchor=west]{(ii)};
\draw[line width=1pt, color=black,  line width=1.5pt] (-2,0)--(2,0);
\draw[line width=1pt, color=black, line width=1.5pt] (2,0)--(2,2);
\draw[line width=1pt, color=black,  line width=1.5pt] (-2,0)--(-2,2);
\draw[line width=1pt, color=violet,  line width=1.5pt] (0,2)--(-2,2);
\draw[fill=black, color=black,] (-2,0) circle (.15);
\draw[fill=black, color=black] (2,0) circle (.15);
\draw[fill=black, color=black] (2,2) circle (.15);
\draw[fill=black, color=black] (-2,2) circle (.15);
\draw[fill=black, color=black] (0,2) circle (.15);
\draw[line width=1pt, color=red, ->,>=stealth] plot [smooth, tension=0.6] coordinates 
      {(-1.7,1.7)(-1.5,.4)(1.5,.4)};
\draw[line width=1pt, color=blue,->,>=stealth] plot [smooth, tension=0.6] coordinates 
      {(-1.5,.7)(1.3,.7)(1.3,1.8)};      
      \node at (-2,1)[anchor=east, color=red]{$\gamma$};
      \node at (0,.7)[anchor=south, color=blue]{$\rho$}; 
       \node at (-1,2)[anchor=south, color=violet]{$\alpha$};
       \end{scope}
       \draw[line width=1pt, ->,>=stealth](-1,1)--(1,1);
       \node at (-.5,1)[anchor=south]{$S_\gamma$};
       \begin{scope}[shift={(4,0)}]
\draw[line width=1pt, color=black,  line width=1.5pt] (-2,0)--(2,0);
\draw[line width=1pt, color=black,  line width=1.5pt] (2,0)--(2,2);
\draw[line width=1pt, color=black,  line width=1.5pt] (-2,0)--(-2,2);
\draw[line width=1pt, color=violet,  line width=1.5pt] (0,2)--(2,0);
\draw[fill=black, color=black,] (-2,0) circle (.15);
\draw[fill=black, color=black] (2,0) circle (.15);
\draw[fill=black, color=black] (2,2) circle (.15);
\draw[fill=black, color=black] (-2,2) circle (.15);
\draw[fill=black, color=black] (0,2) circle (.15);
\draw[line width=1pt, color=blue,->,>=stealth] plot [smooth, tension=0.6] coordinates 
      {(-1.5,.4)(1.3,.4)(1.6,1.8)};      
            \node at (-.5,.4)[anchor=south, color=blue]{$S_\gamma(\rho)$}; 
      \end{scope}
\end{tikzpicture}
\qquad
\begin{tikzpicture}[scale=.5, baseline=(current bounding box.center)]
\begin{scope}[shift={(-4,0)}]
\node at (-4,2)[anchor=west]{(ii)};
\draw[line width=1pt, color=black,  line width=1.5pt] (-2,0)--(2,0);
\draw[line width=1pt, color=black, line width=1.5pt] (2,0)--(2,2);
\draw[line width=1pt, color=violet,  line width=1.5pt] (-2,0)--(-2,2);
\draw[fill=black, color=black,] (-2,0) circle (.15);
\draw[fill=black, color=black] (2,0) circle (.15);
\draw[fill=black, color=black] (2,2) circle (.15);
\draw[fill=black, color=black] (-2,2) circle (.15);
\draw[line width=1pt, color=red, ->,>=stealth] plot [smooth, tension=0.6] coordinates 
      {(-1.7,.4)(1.5,.4)(1.5,2)};
\draw[line width=1pt, color=blue,->,>=stealth] plot [smooth, tension=0.6] coordinates 
      {(-1.5,2)(-1.5,.7)(1.5,.7)};      
      \node at (1.7,2)[anchor=south east, color=red]{$\gamma$};
      \node at (0,.7)[anchor=south, color=blue]{$\rho$}; 
       \node at (-2,1)[anchor=east, color=violet]{$\alpha$};
       \end{scope}
       \draw[line width=1pt, ->,>=stealth](-1,1)--(1,1);
       \node at (0,1)[anchor=south]{$S_\gamma$};
       \begin{scope}[shift={(4,0)}]
\draw[line width=1pt, color=black,  line width=1.5pt] (-2,0)--(2,0);
\draw[line width=1pt, color=black, line width=1.5pt] (2,0)--(2,2);
\draw[line width=1pt, color=violet,  line width=1.5pt] (-2,2)--(2,2);
\draw[fill=black, color=black,] (-2,0) circle (.15);
\draw[fill=black, color=black] (2,0) circle (.15);
\draw[fill=black, color=black] (2,2) circle (.15);
\draw[fill=black, color=black] (-2,2) circle (.15);
\draw[line width=1pt, color=blue,->,>=stealth] plot [smooth, tension=0.6] coordinates 
      {(-1.5,1.5)(1.5,1.5)(1.5,.5)};      
      \node at (0,1.5)[anchor=north, color=blue]{$S_\gamma(\rho)$}; 
       \end{scope}
\end{tikzpicture}

\begin{tikzpicture}[scale=.5, baseline=(current bounding box.center)]
\begin{scope}[shift={(-4,0)}]
\node at (-4,2)[anchor=west]{(iii)};
\draw[line width=1pt, color=black,  line width=1.5pt] (-2,0)--(2,0);
\draw[line width=1pt, color=black, line width=1.5pt] (2,0)--(2,2);
\draw[line width=1pt, color=black,  line width=1.5pt] (-2,0)--(-2,2);
\draw[line width=1pt, color=violet,  line width=1.5pt] (0,2)--(-2,2);
\draw[fill=black, color=black,] (-2,0) circle (.15);
\draw[fill=black, color=black] (2,0) circle (.15);
\draw[fill=black, color=black] (2,2) circle (.15);
\draw[fill=black, color=black] (-2,2) circle (.15);
\draw[fill=black, color=black] (0,2) circle (.15);
\draw[line width=1pt, color=red, ->,>=stealth] plot [smooth, tension=0.6] coordinates 
      {(-1.7,1.7)(-1.5,.4)(1.5,.4)(1.7,1.7)};
\draw[line width=1pt, color=blue,->,>=stealth] plot [smooth, tension=0.6] coordinates 
      {(-1.5,.7)(1.5,.7)};      
      \node at (-2,1)[anchor=east, color=red]{$\gamma$};
      \node at (0,.7)[anchor=south, color=blue]{$\rho$}; 
       \node at (-1,2)[anchor=south, color=violet]{$\alpha$};
       \end{scope}
       \draw[line width=1pt, ->,>=stealth](-1,1)--(1,1);
       \node at (0,1)[anchor=south]{$S_\gamma$};
       \begin{scope}[shift={(4,0)}]
\draw[line width=1pt, color=black,  line width=1.5pt] (-2,0)--(2,0);
\draw[line width=1pt, color=black,  line width=1.5pt] (2,0)--(2,2);
\draw[line width=1pt, color=black,  line width=1.5pt] (-2,0)--(-2,2);
\draw[line width=1pt, color=violet,  line width=1.5pt] (0,2)--(2,2);
\draw[fill=black, color=black,] (-2,0) circle (.15);
\draw[fill=black, color=black] (2,0) circle (.15);
\draw[fill=black, color=black] (2,2) circle (.15);
\draw[fill=black, color=black] (-2,2) circle (.15);
\draw[fill=black, color=black] (0,2) circle (.15);
\draw[line width=1pt, color=blue,->,>=stealth] plot [smooth, tension=0.6] coordinates 
      {(-1.5,.4)(1.5,.4)};      
            \node at (0,.4)[anchor=south, color=blue]{$S_\gamma(\rho)$}; 
      \end{scope}
\end{tikzpicture}
\qquad
\begin{tikzpicture}[scale=.5, baseline=(current bounding box.center)]
\begin{scope}[shift={(-4,0)}]
\node at (-4,2)[anchor=west]{(iv)};
\draw[line width=1pt, color=black,  line width=1.5pt] (-2,0)--(2,0);
\draw[line width=1pt, color=black, line width=1.5pt] (2,0)--(2,2);
\draw[line width=1pt, color=violet,  line width=1.5pt] (-2,0)--(-2,2);
\draw[fill=black, color=black,] (-2,0) circle (.15);
\draw[fill=black, color=black] (2,0) circle (.15);
\draw[fill=black, color=black] (2,2) circle (.15);
\draw[fill=black, color=black] (-2,2) circle (.15);
\draw[line width=1pt, color=red, ->,>=stealth] plot [smooth, tension=0.6] coordinates 
      {(-1.7,.3)(1.7,.3)};
\draw[line width=1pt, color=blue,->,>=stealth] plot [smooth, tension=0.6] coordinates 
      {(-1.5,2)(-1.5,.7)(1.5,.7) (1.5,2)};      
      \node at (0,0)[anchor=north, color=red]{$\gamma$};
      \node at (0,.7)[anchor=south, color=blue]{$\rho$}; 
       \node at (-2,1)[anchor=east, color=violet]{$\alpha$};
       \end{scope}
       \draw[line width=1pt, ->,>=stealth](-1,1)--(1,1);
       \node at (0,1)[anchor=south]{$S_\gamma$};
       \begin{scope}[shift={(4,0)}]
\draw[line width=1pt, color=black,  line width=1.5pt] (-2,0)--(2,0);
\draw[line width=1pt, color=black, line width=1.5pt] (2,0)--(2,2);
\draw[line width=1pt, color=violet,  line width=1.5pt] (-2,2)--(2,0);
\draw[fill=black, color=black,] (-2,0) circle (.15);
\draw[fill=black, color=black] (2,0) circle (.15);
\draw[fill=black, color=black] (2,2) circle (.15);
\draw[fill=black, color=black] (-2,2) circle (.15);
\draw[line width=1pt, color=blue,->,>=stealth] plot [smooth, tension=0.6] coordinates 
      {(-1.3,2)(1.3,.7)(1.3,2)};      
      \node at (1.5,2)[anchor=east, color=blue]{$S_\gamma(\rho)$}; 
       \end{scope}
\end{tikzpicture}
\end{center}
\caption{Transformation of a face path $\rho$ under a slide along a face path $\gamma$.}
\label{fig:FacePathTransformationCases}
\end{figure}

\begin{lemma}[Commutativity of slides]
	\label{lemma:SlideCommutativityRelation}
	Let $\gamma$ and $\rho$  be  face paths such that $S_{\gamma}(\rho)$ and $S_{\rho}(\gamma)$ also are face paths and the slides along  them are defined. Then: 
	\begin{align}
		S_{S_{\rho}(\gamma)} S_{\rho} = S_{S_{\gamma}(\rho)} S_{\gamma}
		\label{eq:SlideCommutativityRelation}
	\end{align}
\end{lemma}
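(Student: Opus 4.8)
The proof will proceed by reducing the commutativity relation \eqref{eq:SlideCommutativityRelation} to the relations already established for edge slides in Proposition \ref{prop: relationsbene} and Corollary \ref{cor:adjacentendslide}, using induction on the total length $|\gamma|+|\rho|$ of the two face paths. First I would observe that a slide $S_\gamma$ along a face path is by Definition \ref{def:slideface} a composite of the elementary edge slides $S_{(\gamma_i^{\epsilon_i})^L}$, and that Remark \ref{rem:otherslide} gives the more symmetric description $S_\gamma=\rhd_{\beta\pm}\circ\delta_\gamma$ in terms of the comodule structure $\delta_\gamma$ for the face path. Since both $S_{S_\gamma(\rho)}S_\gamma$ and $S_{S_\rho(\gamma)}S_\rho$ are, after expansion, long composites of elementary edge slides, the strategy is to show that either side equals the same normal form; the bookkeeping is controlled by the groupoid isomorphism $S_\gamma:\mathcal G_\Gamma\to\mathcal G_{\Gamma'}$ discussed before Figure \ref{figure:SlideGroupoidIsomorphism}, which tells us exactly how the path $\rho$ (and each of its edges) is transformed.

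The key case distinction follows the four configurations (i)--(iv) of relative positions of $\gamma$ and $\rho$ depicted in Figure \ref{fig:FacePathTransformationCases}. In case (i), where $\gamma$ and $\rho$ do not overlap, the two slides act on disjoint (or only end-sharing) portions of the graph, and the relation reduces to the commutativity relation \eqref{eq:slidegercomm} and the opposite-end and adjacent commutativity relations of Corollary \ref{cor:adjacentendslide}, applied edge-by-edge; here one uses that $S_\gamma(\rho)=\rho$ unless $\rho$ traverses the sliding edge $\alpha$, and in the latter situation the transformed path is read off from the groupoid isomorphism. In case (iii), where $\rho$ is a subpath of $\gamma$, and case (iv), where $\gamma$ is a proper subpath of $\rho$ not starting at the same vertex, the relevant local moves are precisely the left and right pentagon relations \eqref{eq:leftpentger}--\eqref{eq:rightpentger} together with the triangle relation \eqref{eq:triangleger}: one peels off the first edge of the shorter path, applies a pentagon or triangle identity to exchange the order of that elementary slide with the appropriate piece of the longer path, and then invokes the inductive hypothesis on the shortened pair. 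Case (ii), where the paths overlap but neither contains the other, is handled by splitting along the overlap and combining a pentagon relation at the branch point with commutativity for the disjoint remainders.

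The main obstacle I anticipate is not any single algebraic identity but the combinatorial control of how $\rho$ and its constituent edges are relabelled under $S_\gamma$, and symmetrically $\gamma$ under $S_\rho$, so that the two sides of \eqref{eq:SlideCommutativityRelation} can be recognised as equal composites: one must check that the face-path hypotheses on $S_\gamma(\rho)$ and $S_\rho(\gamma)$ persist under each peeling step (this is exactly the content of the Figure \ref{fig:FacePathTransformationCases} analysis, ensuring that intermediate paths still turn maximally left), and that the induction is set up so that the total length strictly decreases at every step. Once the configurations are separated and the base case $|\gamma|=|\rho|=1$ is identified with the pentagon, triangle and commutativity relations of Theorem \ref{th:bene} and Lemma \ref{lem:bene}, the inductive step is a routine — if lengthy — matching of elementary slides, which I would present diagrammatically rather than in Sweedler notation, in the style of Figures \ref{fig:sliderel5} and \ref{fig:pent}.
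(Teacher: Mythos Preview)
Your overall strategy—induction on the total length and reduction to the elementary edge-slide relations of Proposition \ref{prop: relationsbene} and Corollary \ref{cor:adjacentendslide}—is the same as the paper's, and it works. The paper's execution, however, is considerably more streamlined than your plan. Rather than organising the argument around the four configurations (i)--(iv) of Figure \ref{fig:FacePathTransformationCases}, the paper first reduces entirely to the base case where both $\gamma$ and $\rho$ consist of a single edge, and there distinguishes only three cases: (a) $\gamma=\rho$ (trivial), (b) one slide moves the edge traversed by the other (handled by the right pentagon relation alone), and (c) the two slides act on distinct edge ends not lying on the other path (handled by commutativity, adjacent commutativity, or opposite-end commutativity). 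The general case then follows by straightforward induction over the two lengths.

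Two places where your plan over-reaches: first, configuration (ii) of Figure \ref{fig:FacePathTransformationCases}—paths that overlap with neither a subpath of the other—cannot occur under the lemma's hypotheses, since in that situation $S_\gamma(\rho)$ is never a face path; so there is nothing to prove there, and no need to ``split along the overlap''. Second, the triangle relation \eqref{eq:triangleger} plays no role: the single-edge base case needs only the commutativity-type relations and the pentagon. Recognising these simplifications would bring your sketch in line with the paper's short argument.
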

\begin{proof}
 We first prove the claim for paths of the form $\gamma=\gamma_{1}^{\nu_1}, \rho=\rho_{1}^{\epsilon_1}$ with  $\gamma_{1},\rho_{1} \in E$ and $\nu_1,\epsilon_1\in \{\pm 1\}$. 
	Then there are three cases:
(a) The paths  are equal: $\gamma = \rho$, (b)
 Either $S_{\gamma}$ or $S_{\rho}$ slides the edge traversed by the other path, 
(c)~$S_{\gamma}$ and $S_{\rho}$ slide different edges $\alpha \neq \beta$ or different ends of the same edge $\alpha =\beta$, and $\alpha$ and $\beta$ are distinct from $\rho_{1}$ and $\gamma_{1}$.

	In case (a) and case (c) we have $S_{\gamma}(\rho) =\rho$ and $S_{\rho}(\gamma)=\gamma$. In case (a) the claim is trivial, and in case (c) it follows directly from the adjacent commutativity, opposite end commutativity in Corollary \ref{cor:adjacentendslide} or from the commutativity relation in Proposition \ref{prop: relationsbene}. In case (b) we can suppose without loss of generality that
$S_\gamma=S_{\gamma_1^L}$ slides the target end of  $\rho_{1}$.   As we suppose that $S_\gamma(\rho)$ is a face path, this can happen only
in case (i) in Figure \ref{fig:FacePathTransformationCases}, and we have  $\rho=\rho_1^\inv$,  $S_\gamma(\rho)=\rho_1^\inv\gamma_1$, $S_\rho=S_{\rho_1^{-R}}$ and  $S_{\rho}(\gamma) = \gamma$. (Note that the slide along $S_\gamma(\rho)$ is not defined if the starting vertex of $\gamma$ is bivalent, since in this case $S_\gamma(\rho)$ starts at a univalent vertex).  We now consider the right pentagon relation from Figure~\ref{fig:sliderel5},  with $\alpha=\gamma_1^\inv$ and $\gamma=\rho_1$,  read against the orientation of its arrows from the top left picture to the middle picture on the right.  This yields 
	\begin{align*}
		S_{S_{\gamma}(\rho)} S_{\gamma} = S_{\rho_{1}^{-R}} S_{\gamma_{1}^{L}} S_{\gamma_{1}^{L}} =  S_{\gamma_{1}^{L}} S_{\rho_{1}^{-R}} = S_{S_{\rho}(\gamma)} S_{\rho}.
	\end{align*}
	This proves the claim for face paths $\gamma$ and $\rho$ consisting of one edge. For general  paths $\gamma = \gamma_{1}^{\nu_{1}} \cdots \gamma_{n}^{\nu_{n}}$ and $\rho= \rho_{1}^{\epsilon_{1}}\cdots \rho_{m}^{\epsilon_{m}}$ that satisfy the assumptions, it then follows by induction over $m$ and $n$.
\end{proof}

 Lemma \ref{lemma:SlideCommutativityRelation} allows one to generalise the pentagon equations  from an equation involving three edges to an identity involving three composable  face paths, which replace the three edges in Figure~\ref{fig:sliderel5}. The only condition  is that the  last edge of the first face path is not traversed by the second. 

\begin{corollary}[Generalised pentagon equation]
	\label{corollary:GeneralizedPentagon}
	Let $\gamma=\gamma_1\circ\gamma_2\circ\gamma_3$ be a face path with face paths $\gamma_1,\gamma_2$  and a non-trivial face path $\gamma_{3}$  such that the last edge of $\gamma_{3}$ is not traversed by $\gamma_{2}$. Then $S_{\gamma_2}(\gamma)=\gamma_1\circ \gamma_3$ and
	\begin{align}
		S_{\gamma_{2}} S_{\gamma} = S_{S_{\gamma_{2}}(\gamma)} S_{\gamma_{2}}.
		\label{eq:GeneralizedPentagon}
	\end{align}
\end{corollary}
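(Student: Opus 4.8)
\textbf{Proof strategy for Corollary \ref{corollary:GeneralizedPentagon}.}
The plan is to deduce the generalised pentagon equation from the commutativity of slides (Lemma \ref{lemma:SlideCommutativityRelation}) together with the elementary functoriality of slides under composition of face paths, namely the identity $S_{\gamma_1\circ\gamma_2}=S_{\gamma_1}\circ S_{\gamma_2}$ built into Definition \ref{def:slideface}. First I would record the combinatorial claim that $S_{\gamma_2}(\gamma)=\gamma_1\circ\gamma_3$: writing $\gamma=\gamma_1\circ\gamma_2\circ\gamma_3$, the slide $S_{\gamma_2}$ slides the relevant edge end along the subpath $\gamma_2$, which is exactly the situation of case (iii) in Figure \ref{fig:FacePathTransformationCases} (with $\rho=\gamma_2$ a subpath of the sliding path $\gamma$), so the middle segment $\gamma_2$ is cancelled and the result $\gamma_1\circ\gamma_3$ is again a face path; the hypothesis that the last edge of $\gamma_3$ is not traversed by $\gamma_2$ is precisely what is needed for the slide $S_{S_{\gamma_2}(\gamma)}$ to be defined, since it guarantees the edge end that slides along $\gamma_1\circ\gamma_3$ is not traversed by that path.

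Next I would set up the two paths to which Lemma \ref{lemma:SlideCommutativityRelation} is applied. Take $\rho:=\gamma_2$ and apply the lemma with the face paths $\gamma$ and $\rho$: by the computation above $S_{\gamma}(\rho)=S_\gamma(\gamma_2)$ is the trivial path (the middle edge segment of $\gamma$, slid along all of $\gamma$, reduces away) — actually the cleaner choice is to observe that $\gamma_2$ \emph{is} a subpath of $\gamma$, so it is more convenient to instead decompose $S_\gamma = S_{\gamma_1}\circ S_{\gamma_2}\circ S_{\gamma_3}$ and manipulate directly. Thus I would write
\begin{align*}
S_{\gamma_2}\circ S_\gamma = S_{\gamma_2}\circ S_{\gamma_1}\circ S_{\gamma_2}\circ S_{\gamma_3},
\end{align*}
and then apply the commutativity relation \eqref{eq:SlideCommutativityRelation} to the inner pair $S_{\gamma_2}\circ S_{\gamma_1}$, or more precisely iterate Lemma \ref{lemma:SlideCommutativityRelation} to move $S_{\gamma_2}$ past the block $S_{\gamma_1}$, picking up the transformed path $S_{\gamma_2}(\gamma_1)$; since $\gamma_1$ and $\gamma_2$ share only the single vertex $t(\gamma_1)=s(\gamma_2)$ and $\gamma_2$ does not traverse edges of $\gamma_1$, this is one of the benign cases and yields $S_{\gamma_2}(\gamma_1)=\gamma_1$ (the sliding only appends/cancels $\gamma_2$-edges, which $\gamma_1$ does not contain). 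Collecting terms then gives $S_{\gamma_2}\circ S_\gamma = S_{\gamma_1}\circ S_{\gamma_3}\circ S_{\gamma_2}$ on the other side, and recognising $S_{\gamma_1}\circ S_{\gamma_3}=S_{\gamma_1\circ\gamma_3}=S_{S_{\gamma_2}(\gamma)}$ finishes the identity.

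The step I expect to be the genuine obstacle is the bookkeeping in the induction underlying Lemma \ref{lemma:SlideCommutativityRelation} when the face paths are long: one must be careful that at each intermediate stage the relevant edge ends still satisfy conditions \eqref{cond:ShareVertex}–\eqref{cond:NotIdenticalEdge} of the edge slide, i.e.\ that no slide becomes undefined midway and that the ``face path'' property is preserved so that Corollary \ref{cor:slidepath} and the pentagon relations of Proposition \ref{prop: relationsbene} apply. In particular the hypothesis that $\gamma_3$ is non-trivial and its terminal edge avoids $\gamma_2$ must be used exactly at the point where one invokes the right pentagon relation from Figure \ref{fig:sliderel5}, just as in the one-edge base case of Lemma \ref{lemma:SlideCommutativityRelation}; tracking which of the four configurations of Figure \ref{fig:FacePathTransformationCases} occurs at each vertex is the only delicate part, and I would handle it by reducing, via induction on the lengths of $\gamma_2$ and $\gamma_3$, to the single-edge instances already proved. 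Everything else is a formal manipulation of composites of the morphisms $S_{\alpha^{\pm L}}$, $S_{\alpha^{\pm R}}$ using the relations of Theorem \ref{th:bene} and their generalisations.
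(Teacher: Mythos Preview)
Your first instinct—to apply Lemma \ref{lemma:SlideCommutativityRelation} directly with $\rho=\gamma_2$ and the full face path $\gamma$—is exactly the paper's approach, and it works in one line. The error that made you abandon it is the claim that ``$S_\gamma(\gamma_2)$ is the trivial path''. This misreads what $S_\gamma(\cdot)$ does: $S_\gamma$ is a groupoid isomorphism that only moves the single edge end $\beta$ being slid, and $\beta$ is by hypothesis not traversed by $\gamma$, hence not by $\gamma_2$. So $S_\gamma(\gamma_2)=\gamma_2$ \emph{unchanged}, not trivial. With this in hand, Lemma \ref{lemma:SlideCommutativityRelation} reads
\[
S_{S_{\gamma_2}(\gamma)}\,S_{\gamma_2}=S_{S_\gamma(\gamma_2)}\,S_\gamma=S_{\gamma_2}\,S_\gamma,
\]
which is the corollary. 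The hypothesis that the last edge of $\gamma_3$ is not traversed by $\gamma_2$ is precisely what guarantees that the slide $S_{\gamma_2}$ (now sliding that last edge) is defined after $S_\gamma$.

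Your fallback decomposition argument has a genuine counting gap. Writing $S_{\gamma_2}\circ S_\gamma = S_{\gamma_2}\circ S_{\gamma_1}\circ S_{\gamma_2}\circ S_{\gamma_3}$ and then commuting the outer $S_{\gamma_2}$ past $S_{\gamma_1}$ leaves you with $S_{\gamma_1}\circ S_{\gamma_2}\circ S_{\gamma_2}\circ S_{\gamma_3}$, which contains \emph{two} instances of $S_{\gamma_2}$, whereas your target $S_{\gamma_1}\circ S_{\gamma_3}\circ S_{\gamma_2}$ has only one. These two $S_{\gamma_2}$'s slide different edge ends (first $\beta$, then the last edge of $\gamma_3$), so they do not cancel; your ``collecting terms'' step silently drops one of them. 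One can repair this by a further commutation of $S_{\gamma_2}$ past $S_{\gamma_3}$, but that is precisely case (b) in the proof of Lemma \ref{lemma:SlideCommutativityRelation} and amounts to redoing that lemma's induction by hand. (Also: you write $t(\gamma_1)=s(\gamma_2)$, but the paper's right-to-left composition gives $t(\gamma_2)=s(\gamma_1)$.) The clean fix is simply to trust your first approach and compute $S_\gamma(\gamma_2)$ correctly.
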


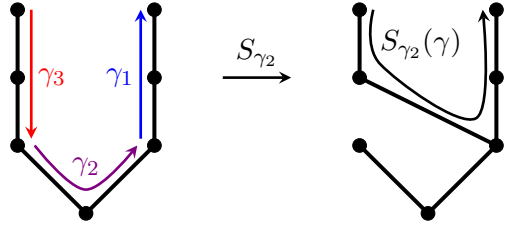
\begin{figure}
\centering
\begin{center}
\begin{tikzpicture}[scale=.45]
\begin{scope}[shift={(-5,0)}]
\draw[color=black, fill=black] (-2,4) circle (.2);
\draw[color=black, fill=black] (-2,2) circle (.2);
\draw[color=black, fill=black] (-2,0) circle (.2);
\draw[color=black, fill=black] (0,-2) circle (.2);
\draw[color=black, fill=black] (2,0) circle (.2);
\draw[color=black, fill=black] (2,2) circle (.2);
\draw[color=black, fill=black] (2,4) circle (.2);
\draw[line width=1.5pt, color=black] (-2,4)--(-2,2);
\draw[line width=1.5pt, color=black] (-2,0)--(-2,2);
\draw[line width=1.5pt, color=black] (0,-2)--(-2,0);
\draw[line width=1.5pt, color=black] (2,4)--(2,2);
\draw[line width=1.5pt, color=black] (2,0)--(2,2);
\draw[line width=1.5pt, color=black] (0,-2)--(2,0);
\draw[line width=1pt, color=red,->,>=stealth] plot [smooth, tension=0.6] coordinates 
      {(-1.6,4)(-1.6,.2)};
\draw[line width=1pt, color=violet,->,>=stealth] plot [smooth, tension=0.6] coordinates 
      {(-1.5,0)(0,-1.3)(1.5,0)};     
   \draw[line width=1pt, color=blue,->,>=stealth] plot [smooth, tension=0.6] coordinates 
      {(1.6,.2)(1.6,4)};   
      \node at (0,-1.2)[anchor=south, color=violet]{$\gamma_2$};
      \node at (-1.7,2)[anchor=west, color=red]{$\gamma_3$};      
            \node at (1.7,2)[anchor=east, color=blue]{$\gamma_1$}; 
            \end{scope}
            \draw[->,>=stealth, line width=1pt] (-1,2)--(1,2); 
            \node at (0,2)[anchor=south]{$S_{\gamma_2}$};
            \begin{scope}[shift={(5,0)}]
\draw[color=black, fill=black] (-2,4) circle (.2);
\draw[color=black, fill=black] (-2,2) circle (.2);
\draw[color=black, fill=black] (-2,0) circle (.2);
\draw[color=black, fill=black] (0,-2) circle (.2);
\draw[color=black, fill=black] (2,0) circle (.2);
\draw[color=black, fill=black] (2,2) circle (.2);
\draw[color=black, fill=black] (2,4) circle (.2);
\draw[line width=1.5pt, color=black] (-2,4)--(-2,2);
\draw[line width=1.5pt, color=black] (2,0)--(-2,2);
\draw[line width=1.5pt, color=black] (0,-2)--(-2,0);
\draw[line width=1.5pt, color=black] (2,4)--(2,2);
\draw[line width=1.5pt, color=black] (2,0)--(2,2);
\draw[line width=1.5pt, color=black] (0,-2)--(2,0);
\draw[line width=1pt, color=black,->,>=stealth] plot [smooth, tension=0.6] coordinates 
      {(-1.6,4)(-1.4,2.4)(1.4,.8) (1.6,4)};    
      \node at (-1.7,3)[anchor=west, color=black]{$S_{\gamma_2}(\gamma)$};      
            \end{scope}          
\end{tikzpicture}
\end{center}

\caption{The generalised pentagon relation from Corollary \ref{corollary:GeneralizedPentagon}.}
\label{fig:genpent}
\end{figure}

\subsection{Adding edges} 
\label{subsec:addedge}

In this section we introduce  three additional  transformations  of directed ribbon graphs that will be used to simplify slides along face paths. The first  is the
graph transformation $\epsilon_\alpha: \Gamma\to \Gamma'$ that deletes an edge $\alpha$ from $\Gamma$. If  $\alpha$ is incident at a univalent vertex $v$, then $v$ is deleted as well.

 The second is the  graph transformation $\eta_\alpha: \Gamma\to\Gamma'$ that adds an oriented edge $\alpha$ at a vertex $v$ of $\Gamma$. This edge $\alpha$ may either have a univalent vertex $v'$ at its other end or may be a loop based at $v$. If $\alpha$ has a univalent vertex at its other end, we orient $\alpha$ towards the univalent vertex. If 
  $\alpha$ is a loop based at $v$, we require
 that the target end  of $\alpha$ is directly before the starting end of $\alpha$ with respect to the cyclic ordering at $v$ and with respect to the linear ordering at $v$, if $v$ is ciliated.

 The third graph transformation is associated with a  face path $\gamma$ in $\Gamma$. It first adds a loop $\gamma'$ at the starting vertex of $\gamma$, such that the target end  of $\gamma'$ is directly after starting end of $\gamma$ with respect to the cyclic ordering. It then slides the target end of $\gamma'$ along $\gamma$, as shown in Figure \ref{fig:addingedge}. We denote it by $C_\gamma=S_\gamma\circ \eta_{\gamma'}: \Gamma\to \Gamma'$  and call it  {\em adding an edge} to  $\gamma$.

\begin{figure}
\centering
\begin{tikzpicture}[scale=.45]
\begin{scope}[shift={(-6,0)}]
\draw[fill=black] (-2,2) circle (.2);
\draw[fill=black] (0,-2) circle (.2);
\draw[fill=black] (-2,0) circle (.2);
\draw[fill=black] (2,0) circle (.2);
\draw[fill=black] (2,2) circle (.2);
\draw[line width=1pt, color=blue,->,>=stealth] plot [smooth, tension=0.6] coordinates 
      {(-1.5,0) (0,-1.5)(1.5,0)};         
\draw[line width=1.5pt, color=black] (-2,0)--(-2,2);
\draw[line width=1.5pt, color=black] (-2,0)--(0,-2);
\draw[line width=1.5pt, color=black,] (0,-2)--(2,0);
\draw[line width=1.5pt, color=black] (2,0)--(2,2);
\draw[line width=1pt, ->,>=stealth] (3,1)--(4,1);
\node at (3.5,1)[anchor=south]{$\eta_{\gamma'}$};
\node at (0,-1.5)[anchor=south, color=blue]{$\gamma$};
\end{scope}
\begin{scope}[shift={(1,0)}]
\draw[fill=black] (-2,2) circle (.2);
\draw[fill=black] (0,-2) circle (.2);
\draw[fill=black] (-2,0) circle (.2);
\draw[fill=black] (2,0) circle (.2);
\draw[fill=black] (2,2) circle (.2);
\draw[line width=1pt, color=blue,->,>=stealth] plot [smooth, tension=0.6] coordinates 
      {(-1.1,-.4) (0,-1.5)(1.5,0)};      
\draw[line width=1.5pt, color=red,->,>=stealth]  (-1.8,.2)..controls (0,1) and (0,-1)..(-1.8,0);  
\draw[line width=1.5pt, color=black] (-2,0)--(-2,2);
\draw[line width=1.5pt, color=black] (-2,0)--(0,-2);
\draw[line width=1.5pt, color=black,] (0,-2)--(2,0);
\draw[line width=1.5pt, color=black] (2,0)--(2,2);
\draw[line width=1pt, ->,>=stealth] (3,1)--(4,1);
\node at (3.5,1)[anchor=south]{$S_\gamma$};
\node at (0,-1.5)[anchor=south, color=blue]{$\gamma$};
\node at (-.5,0)[anchor=west, color=red]{$\gamma'$};
\end{scope}
\begin{scope}[shift={(8,0)}]
\draw[fill=black] (-2,2) circle (.2);
\draw[fill=black] (0,-2) circle (.2);
\draw[fill=black] (-2,0) circle (.2);
\draw[fill=black] (2,0) circle (.2);
\draw[fill=black] (2,2) circle (.2);
\draw[line width=1.5pt, color=red,->,>=stealth]  (-1.8,0)--(1.8,0);     
\draw[line width=1.5pt, color=black] (-2,0)--(-2,2);
\draw[line width=1.5pt, color=black] (-2,0)--(0,-2);
\draw[line width=1.5pt, color=black,] (0,-2)--(2,0);
\draw[line width=1.5pt, color=black] (2,0)--(2,2);
\node at (0,0)[anchor=south, color=red]{$\gamma'$};
\end{scope}
\end{tikzpicture}

\vspace{1cm}
\begin{tikzpicture}[scale=.45]
\begin{scope}[shift={(-6,0)}]
\draw[fill=black] (-2,2) circle (.2);
\draw[fill=black] (0,-2) circle (.2);
\draw[fill=black] (-2,0) circle (.2);
\draw[fill=black] (2,0) circle (.2);
\draw[fill=black] (2,2) circle (.2);
\draw[line width=1pt, color=violet,->,>=stealth] plot [smooth, tension=0.6] coordinates 
      {(-1.5,-.2) (0,-1.7)};      
      \draw[line width=1pt, color=black,->,>=stealth] plot [smooth, tension=0.6] coordinates 
      {(-1.3,2)(-1.1,0) (0,-1)(1.1,0)(1.3,2)};         
\draw[line width=1.5pt, color=black] (-2,0)--(-2,2);
\draw[line width=1.5pt, color=black] (-2,0)--(0,-2);
\draw[line width=1.5pt, color=black,] (0,-2)--(2,0);
\draw[line width=1.5pt, color=black] (2,0)--(2,2);
\draw[line width=1pt, ->,>=stealth] (3,1)--(4,1);
\node at (3.5,1)[anchor=south]{$C_{\gamma}$};
\node at (-1.3,-1.3)[anchor=east, color=violet]{$\sigma$};
\node at (1.3,2)[anchor=east, color=black]{$\rho$};
\end{scope}
\begin{scope}[shift={(1,0)}]
\draw[fill=black] (-2,2) circle (.2);
\draw[fill=black] (0,-2) circle (.2);
\draw[fill=black] (-2,0) circle (.2);
\draw[fill=black] (2,0) circle (.2);
\draw[fill=black] (2,2) circle (.2);
\draw[line width=1pt, color=violet,->,>=stealth] plot [smooth, tension=0.6] coordinates 
      {(-1.5,-.2) (0,-1.7)};   
      \draw[line width=1pt, color=black,->,>=stealth] plot [smooth, tension=0.6] coordinates 
      {(-1.5,2)(-1.5,.5)(1.5,.5)(1.5,2) };      
\draw[line width=1.5pt, color=red,->,>=stealth]  (-1.8,0)--(1.8,0);     
\draw[line width=1.5pt, color=black] (-2,0)--(-2,2);
\draw[line width=1.5pt, color=black] (-2,0)--(0,-2);
\draw[line width=1.5pt, color=black,] (0,-2)--(2,0);
\draw[line width=1.5pt, color=black] (2,0)--(2,2);
\node at (0,.5)[anchor=south, color=black]{$C_\gamma(\rho)$};
\node at (-1.3,-1.3)[anchor=east, color=violet]{$C_\gamma(\sigma)$};
\end{scope}
\end{tikzpicture}

\caption{Adding an edge $\gamma'$ to a face path $\gamma$ and the associated transformation of face paths.}
\label{fig:addingedge}
\end{figure}
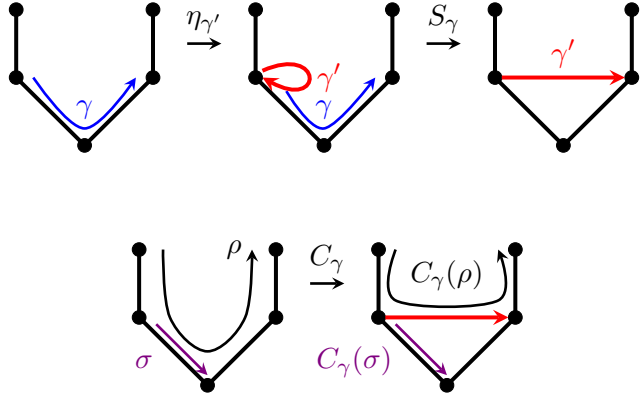

If $\Gamma'$ is obtained from $\Gamma$ by adding an edge $\gamma'$ to a face path $\gamma$, then  $\gamma'^\inv\circ\gamma$ is a ciliated face of $\Gamma'$.  More generally, every face path $\rho$ of $\Gamma$  that contains $\gamma$ as a subpath corresponds to a face path $C_\gamma(\rho)$ in $\Gamma'$ that is obtained  by replacing $\gamma$ by $\gamma'$ in the expression for $\rho$, as in Figure \ref{fig:addingedge}. Face paths of $\Gamma$ that are  proper subpaths of $\gamma$ also correspond canonically to face paths in $\Gamma'$, and the same holds for paths in $\Gamma$ that do not overlap with $\gamma$.

\begin{definition}
	\label{definition:FacePathTranslationComplementary}
	Let  $\gamma$, $\rho$  be face paths of $\Gamma$ such that either (i) $\rho$ is a proper subpath of $\gamma$,  (ii) $\rho$ and $\gamma$ do not overlap or (iii) $\gamma$ is a subpath   of $\rho$. Let $\Gamma'$ be the directed ribbon graph obtained by adding an edge $\gamma'$ to $\gamma$.
The path $C_{\gamma}(\rho)$ in $\Gamma'$ is defined as $C_\gamma(\rho)=\rho$ in cases (i),(ii)  and as  $C_\gamma(\rho)=\rho_1\circ\gamma'\circ\rho_2$ if $\rho=\rho_1\circ\gamma\circ\rho_2$ with possibly trivial face paths $\rho_1,\rho_2$.
\end{definition}

We now associate to each of these graph transformations a morphism in $\mac$. These morphisms are denoted by the same letters as the graph transformations and given as follows.

\begin{definition} \label{def:addingedge}Let $\Gamma$ be a directed ribbon graph and $H$ a pivotal Hopf monoid in $\mac$.
\begin{compactenum}
\item  {\bf Removing an edge} $\alpha\in E$ from $\Gamma$ corresponds to  the morphism
$$
\epsilon_\alpha: H^{\oo E}\to H^{\oo E\setminus\{\alpha\}},
$$
that is the counit on the copy of $H$ for $\alpha$ and  the identity morphism on all other components.

\item {\bf Adding an edge} $\alpha$ to $\Gamma$ corresponds to the morphism
$$\eta_\alpha: H^{\oo E}\to H^{\oo E\cup\{\alpha\}},$$
that is the unit on the copy of $H$ for $\alpha$ and the identity morphism on all other components.

\item {\bf Adding an edge $\gamma'$ to a face path} $\gamma$ in $\Gamma$ corresponds to the morphism
$$
C_\gamma=S_\gamma\circ \eta_{\gamma'}: H^{\oo E}\to H^{\oo E\cup\{\gamma'\}}.
$$
\end{compactenum}
\end{definition}

Note that adding an edge to a face path involves two choices. The first is the orientation of the added loop $\gamma'$, which can be 
reversed by applying the morphism $
T_{\gamma'}$. The second choice is to add the loop at the starting vertex of $\gamma$ and slide its target end to the target of $\gamma$. With expressions \eqref{eq:slidedef2a} to \eqref{eq:slidedef4b}  for the edge slides one finds  that 
adding the loop at the target vertex of $\gamma$ and sliding its starting end to the starting vertex of $\gamma$ yields the same morphism $C_\gamma$.
Equivalently, adding a loop at the starting vertex of a face path $\gamma$  and then sliding it to the target vertex gives the same morphism as adding the loop at the target vertex of $\gamma$.

The advantage of adding edges to face paths is that we can reduce slides along face paths to slides along edges. This is achieved by adding an edge to the face path, sliding along the added edge  and then deleting this edge. To show that this yields the same result as the slide along the  face path,  we  determine the properties of the associated morphism $C_\gamma$.

\begin{lemma}\label{lem:addingedge}  Let $\gamma,\rho$ be face paths  and $\alpha$ an edge in a directed ribbon graph $\Gamma$.
Then the morphism $C_\gamma: H^{\oo E}\to H^{\oo E\cup\{\gamma'\}}$  from Definition \ref{def:addingedge} has the following properties:
\begin{compactenum}
\item Removing the edge $\gamma'$ is left inverse to adding $\gamma'$: 
\begin{align}\label{eq:ComplementDelete}
\epsilon_{\gamma'}\circ C_\gamma=1_{H^{\oo E}}.
\end{align}
\item Removing $\alpha$ is left inverse to adding  an edge $\alpha'$ to $\alpha$ (to  $\alpha^\inv$ and  reversing orientation):
\begin{align}\label{eq:EdgeComplementInvolution}
\epsilon_\alpha\circ C_{\alpha}=1_{H^{\oo E}},\qquad \epsilon_\alpha\circ C_{\alpha^\inv}=T_{\alpha}.
\end{align}
\item Adding an edge $\gamma'$ to $\gamma$ is a morphism of $H$-modules and $H$-comodules for all $H$-(co)module structures at ciliated vertices $v$ (faces $f$) of $\Gamma$ whose cilium is not traversed by $\gamma$:
\begin{align}\label{eq:ComplementVertexFaceOperatorCompatibility}
\rhd_v\circ C_\gamma=C_\gamma\circ \rhd_v\qquad  \delta_{C_\rho(f)}\circ C_\gamma=C_\gamma\circ\delta_f.
\end{align}
\item
	If $\Gamma'$ is  obtained from $\Gamma$ by adding an edge $\gamma'$ to $\gamma$, then 
	$C_{\gamma}$ equalises the trivial comodule structure and 
	the $H$-right comodule structure \eqref{eq:rightdef} of any ciliated face $f$ of $\Gamma'$ that is a cyclic permutation of $\gamma'^\inv\circ\gamma$:
\begin{align}\label{eq:FlatFaceComplement}
\delta'_{f}\circ C_{\gamma}=\eta\oo C_\gamma.
\end{align}
\item Adding  edges commutes with slides along face paths: if $\rho,\gamma$ satisfy (i),(ii) or (iii) in Definition \ref{definition:FacePathTranslationComplementary} and $S_{\gamma}(\rho)$ is a face path, then
\begin{align}\label{eq:ComplementSlideGeneral}
S_{C_\rho(\gamma)}\circ C_\rho=C_{S_\gamma(\rho)}\circ S_\gamma.
\end{align}
In particular, whenever the slide along $\gamma$ is defined, one has
\begin{align}\label{eq:slidenotmatter}
S_{\gamma}= \epsilon_{\gamma'}\circ S_{\gamma'^{ L}}\circ C_\gamma.
\end{align}
\item Adding edges  is commutative: if $\rho,\gamma$  satisfy  (i), (ii) or (iii) in Definition \ref{definition:FacePathTranslationComplementary}, 
then
\begin{align}\label{eq:ComplementCommutativity}
C_{C_\gamma(\rho)}\circ C_\gamma=C_{C_\rho(\gamma)}\circ C_\rho.
\end{align}
\item If $f=\alpha\circ \rho$ is a ciliated face and $\rho$ does not traverse $\alpha$, then 
\begin{align}\label{eq:FlatFaceComplementt}
	T_{\rho'}\circ C_\rho\circ \epsilon_\alpha\circ\iota=\iota,\qquad\qquad  S_\rho\circ \epsilon_\alpha \circ\iota=\epsilon_\alpha\circ S_{\alpha^{-R}} \circ\iota,
\end{align}
where $\iota$ is the equaliser   for the   $H$-right comodule structure $\delta'_f$ from \eqref{eq:rightdef}  and  we identify the edges $\rho'$ and $\alpha$.
\end{compactenum}
\end{lemma}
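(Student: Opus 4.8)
The plan is to derive both identities from a single input: the defining property of $\iota$ as the equaliser of $\delta'_f$ and the trivial right $H$-coaction, which says that after restriction to the coinvariants the holonomy of the face $f=\alpha\circ\rho$ is trivial, so that the label on $\alpha$ is forced to equal the ($T$-twisted) holonomy along $\rho$. The auxiliary tools are the flat-face identity \eqref{eq:FlatFaceComplement}, the reduction \eqref{eq:slidenotmatter} of a slide along a face path to a slide along a single added edge, the involution identities \eqref{eq:EdgeComplementInvolution} and \eqref{eq:ComplementDelete} for $\epsilon$, $C$ and $T$, and the orientation-reversal convention of Section \ref{sec:hrib}, namely that reversing an edge $\alpha$ conjugates every associated morphism by $T_\alpha$.

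For the first identity I would first fix the three graphs. Deleting $\alpha$ with $\epsilon_\alpha$ leaves a directed ribbon graph in which $\rho$ is still a face path, since $\rho$ does not traverse $\alpha$; adding an edge $\rho'$ back along $\rho$ produces a graph which, under the identification $\rho'=\alpha^{\inv}$, is $\Gamma$ with the orientation of $\alpha$ reversed, and the ciliated face $\rho'^{\inv}\circ\rho$ arising this way is exactly $f=\alpha\circ\rho$; this is where the factor $T_{\rho'}$ enters. Now expand $C_\rho=S_\rho\circ\eta_{\rho'}$, so that the left-hand side becomes $T_{\rho'}\circ S_\rho\circ\eta_{\rho'}\circ\epsilon_\alpha\circ\iota$. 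The morphism $\eta_{\rho'}\circ\epsilon_\alpha\circ\iota$ differs from $\iota$, with $\rho'$ in the role of $\alpha^{\inv}$, only in that the label on $\alpha$ has been overwritten by $\eta$; applying \eqref{eq:FlatFaceComplement} in the graph $\epsilon_\alpha(\Gamma)$-with-$\rho'$ and translating $\delta'_{\rho'^{\inv}\circ\rho}$ into $\delta'_f$ via the $T_\alpha=T_{\rho'}$ conjugation shows that $T_{\rho'}\circ S_\rho$ reconstructs exactly this label out of the equaliser relation $\delta'_f\circ\iota=(\text{trivial})\circ\iota$. A short diagrammatic computation, or an equivalent computation in Sweedler notation on the edge labels, using \eqref{eq:anthom}, \eqref{eq:tplusprop1}--\eqref{eq:tplusprop2}, the antipode axioms and \eqref{eq:ComplementDelete}, then closes the argument; alternatively one may first check that $T_{\rho'}\circ C_\rho\circ\epsilon_\alpha$ equalises $\delta'_f$ and the trivial coaction, factor it through $\iota$, and use the same computation to identify the induced endomorphism of the coinvariants with the identity.

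For the second identity I would reduce the slide $S_\rho$ to a single-edge slide by \eqref{eq:slidenotmatter}, writing $S_\rho=\epsilon_{\rho'}\circ S_{\rho'^{L}}\circ C_\rho$, and then transport it past $\epsilon_\alpha$ using \eqref{eq:ComplementSlideGeneral} and the commutativity relation for slides, Lemma \ref{lemma:SlideCommutativityRelation}. The essential point is again the equaliser relation: because $f=\alpha\circ\rho$ is a ciliated face, the slide along $\rho$ of the edge end directly after $\st(\rho)$ and the slide $S_{\alpha^{-R}}$ of that same end along $\alpha^{\inv}$ have, on the coinvariants, the same effect on every remaining edge label, since the two differ by precisely the holonomy of $f$, which is trivial there; after $\epsilon_\alpha$ erases the label of $\alpha$ this becomes the asserted equality. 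One can also deduce the second identity directly from the first, by using the first identity to replace $\epsilon_\alpha$ on $\iota$ and then commuting slides, which avoids repeating the label bookkeeping.

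The main obstacle is exactly this bookkeeping: matching cilia, orientations and cyclic orderings across the three graphs $\Gamma$, $\epsilon_\alpha(\Gamma)$ and $\epsilon_\alpha(\Gamma)$-with-$\rho'$ so that \eqref{eq:FlatFaceComplement} applies with $\rho'^{\inv}\circ\rho$ genuinely identified with $f$ up to the orientation of $\alpha$, and then threading the antipodes and the $T$-twists from \eqref{eq:tplusdef} through the computation. The conceptual content — that on the coinvariants of the face $f=\alpha\circ\rho$ the edge $\alpha$ carries no independent information, its label being the twisted holonomy along $\rho$ — is immediate; the work lies in making the graph identification and the equaliser argument precise.
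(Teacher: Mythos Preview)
Your overall strategy is correct and coincides with the paper's: the content is precisely that on the coinvariants of $f=\alpha\circ\rho$ the label on $\alpha$ is determined by the holonomy along $\rho$, and everything follows from the equaliser relation $\delta'_f\circ\iota=(\iota\otimes\eta)$.

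Two remarks on execution. For the first identity, the paper does not route through \eqref{eq:FlatFaceComplement} at all; it gives a single diagrammatic computation (using Remark~\ref{rem:otherslide} to describe $C_\rho$ via the $H$-comodule structure $\delta_\rho$) which is exactly the ``short diagrammatic computation'' you allude to. Your proposed detour via \eqref{eq:FlatFaceComplement} in the auxiliary graph and the graph-identification bookkeeping is unnecessary and, as you note yourself, is not what actually closes the argument. The direct computation from the equaliser relation is both shorter and avoids the cilium/orientation matching you flag as the main obstacle.

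For the second identity, the paper does precisely what you list as your alternative: it deduces it from the first via
\[
S_{\rho}\,\epsilon_{\alpha}\circ\iota
\ \stackrel{\eqref{eq:ComplementDelete}}{=}\ \epsilon_{\rho'}\,C_{\rho}\,S_{\rho}\,\epsilon_{\alpha}\circ\iota
\ \stackrel{\eqref{eq:ComplementSlideGeneral}}{=}\ \epsilon_{\rho'}\,S_{\rho'}\,C_{\rho}\,\epsilon_{\alpha}\circ\iota
\ \stackrel{\text{first id.}}{=}\ \epsilon_{\alpha}\,S_{\alpha}\,T_{\alpha}\circ\iota
\ \stackrel{\text{Def.~\ref{def:edge slide}}}{=}\ \epsilon_{\alpha}\,S_{\alpha^{-R}}\circ\iota.
\]
Your primary route for the second identity via \eqref{eq:slidenotmatter} and Lemma~\ref{lemma:SlideCommutativityRelation} would also work but is longer; the chain above is the clean way.
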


\begin{proof}
1.~and 2.~The first and the second claim follow directly from the definition of $C_\gamma$, the expressions for the edge slides in  \eqref{eq:slidedef2a} to \eqref{eq:slidedef4b}    and from  the definition of a Hopf monoid in $\mac$.

3.~That $C_\gamma$ is a morphism of $H$-modules and $H$-comodules for the cilia that are  not traversed by $\gamma$  follows directly from the fact that this holds for the slide $S_\gamma$ by Corollary \ref{cor:slidepath} and it holds for adding a loop. The latter follows directly from Definition \ref{def:addingedge}  and from Definition \ref{def:vertexface} of the $H$-(co)module structures, together with the defining properties of a Hopf monoid.

4.~To prove \eqref{eq:FlatFaceComplement}, we first consider the ciliated face $f=\gamma'^\inv\circ\gamma$.  By Remark \ref{rem:otherslide}, we can describe the slide along the face path $\gamma$ in terms of a comodule structure associated with $\gamma$.  As adding a loop corresponds to the unit of $H$, the claim
 then follows  from the diagrammatic computation 
\begin{align}
\begin{tikzpicture}[scale=.3]
\begin{scope}[shift={(-8,0)}]
\draw[line width=1.5pt, color=blue] (0,3)--(0,-4);
\draw[line width=1pt, color=black] (0,2).. controls (-3,1.5) .. (-3,1);
\draw[line width=1pt, color=black] plot [smooth, tension=0.6] coordinates 
      {(-1,-4)(-4,0)  (-2,1) (-1,-1)(0,1)};   
      \draw[line width=1pt, color=black] plot [smooth, tension=0.6] coordinates 
      {(-1,-1) (-1.2,-2)(-2,-4)};   
      \draw[color=black, fill=white,  line width=1pt] (-1.5,0) circle (.4);
            \draw[color=black, fill=white,  line width=1pt] (-1.5,0) circle (.2);
            \end{scope}
            \node at (-6.5,-.5){$=$};
            \begin{scope}[shift={(0,0)}]
\draw[line width=1.5pt, color=blue] (0,3)--(0,-4);
\draw[line width=1pt, color=black] (0,2).. controls (-2.5,1.5) .. (-2.5,.4);
\draw[line width=1pt, color=black] (-4,-1).. controls (-4,1) and (-1,1).. (-1,-2);
\draw[line width=1pt, color=black] (-5,-2).. controls (-5,-.6) and (-3,-.6).. (-3,-2);
\draw[line width=1pt, color=black] (-3,-2).. controls (-3,-3) and (-1,-3).. (-1,-2);
     \draw[line width=1pt, color=black] plot [smooth, tension=0.6] coordinates 
      {(-1,-4)(-4,-3) (-5,-2)};   
           \draw[line width=1pt, color=black] plot [smooth, tension=0.6] coordinates 
      {(-2,-4)(-2,-2.7)};   
      \draw[color=black, fill=white,  line width=1pt] (-3,-2) circle (.4);
            \draw[color=black, fill=white,  line width=1pt] (-3,-2) circle (.2);
            \end{scope}
            \node at (1.5,-.5){$=$};
                        \begin{scope}[shift={(7,0)}]
\draw[line width=1.5pt, color=blue] (0,3)--(0,-4);
\draw[line width=1pt, color=black] (0,2).. controls (-3,1.5) .. (-3,.4);
\draw[line width=1pt, color=black] (-4,-1).. controls (-4,1) and (-2,1).. (-2,-1);
     \draw[line width=1pt, color=black] plot [smooth cycle, tension=0.6] coordinates 
      {(-2,-1)(-1,-2) (-2,-3)(-3,-2)};   
     \draw[line width=1pt, color=black] plot [smooth, tension=0.6] coordinates 
      {(-4,-1)(-3.5,-3) (-1,-4)};   
           \draw[line width=1pt, color=black] (-2,-3)--(-2,-4);
      \draw[color=black, fill=white,  line width=1pt] (-3,-2) circle (.4);
            \draw[color=black, fill=white,  line width=1pt] (-3,-2) circle (.2);
            \end{scope}
            \node at (8.5,-.5){$=$};
                        \begin{scope}[shift={(15,0)}]
\draw[line width=1.5pt, color=blue] (0,3)--(0,-4);
\draw[line width=1pt, color=black] (0,2).. controls (-3,1.5) .. (-3,.4);
\draw[line width=1pt, color=black] (-4,-1).. controls (-4,1) and (-2,1).. (-2,-1);
            \draw[color=black, fill=white,  line width=1pt] (-2,-1) circle (.2); 
     \draw[line width=1pt, color=black] plot [smooth, tension=0.6] coordinates 
      {(-4,-1)(-3.5,-3) (-1,-4)};   
           \draw[line width=1pt, color=black] (-2,-3)--(-2,-4);
            \draw[color=black, fill=black,  line width=1pt] (-2,-3) circle (.2);
            \node at (-1.8,-3)[anchor=west]{$p$};
            \end{scope}
            \node at (17.5,-.5){$=$};
                                    \begin{scope}[shift={(22,0)}]
\draw[line width=1.5pt, color=blue] (0,3)--(0,-4);
\draw[line width=1pt, color=black] (0,2).. controls (-1,1) .. (-1,-4);
           \draw[line width=1pt, color=black] (-2,-1)--(-2,-4);
            \draw[color=black, fill=black,  line width=1pt] (-2,-1) circle (.2);
            \node at (-2.2,-1)[anchor=east]{$p$};
            \end{scope}
\end{tikzpicture}
\tComma
\end{align}
in which the thick vertical line stands for the comodule structure associated with $\gamma$ and the first diagram on the left is the morphism 
$\delta_{f}\circ C_\gamma$. The claim for ciliated faces that are cyclic permutations of $\gamma'^\inv\circ\gamma$ then follows from
the diagrammatic computation
\begin{align*}
\begin{tikzpicture}[scale=.35]
\begin{scope}[shift={(0,0)}]
\draw[line width=1.5pt, color=blue] (0,3)--(0,-3);
\draw[line width=1.5pt, color=red] (1,3)--(1,-3);
\draw[line width=1.5pt, color=violet] (2,3)--(2,-3);
\draw[line width=1pt, color=black] (0,0)--(-3,-1);
\draw[line width=1pt, color=black] (1,2)--(-3,-1);
\draw[line width=1pt, color=black] (2,1.5)--(-3,-1);
\draw[line width=1pt, color=black] (-3,-1)--(-3,-3);
\draw[color=black, fill=white, line width=1pt] (-.5,3) rectangle (2.5,4);
\node at (1,3.5){$\iota$};
\draw[fill=white, line width=1pt] (-3,-2) circle (.3);
\draw[fill=white, line width=1pt] (-3,-2) circle (.14);
\end{scope}
\node at (3.5,0){$=$};
\begin{scope}[shift={(8,0)}]
\draw[line width=1.5pt, color=blue] (0,3)--(0,-3);
\draw[line width=1.5pt, color=red] (1,3)--(1,-3);
\draw[line width=1.5pt, color=violet] (2,3)--(2,-3);
\draw[line width=1pt, color=black] plot [smooth , tension=0.6] coordinates 
         {(0, 2.5)(-2.5,2)};
         \draw[line width=1pt, color=black] plot [smooth cycle, tension=0.6] coordinates 
         {(-2.5, 2)(-3.5,1.5)(-1.5,1)(-2.5,.5)(-3.5,1)(-1.5,1.5)};
         \draw[line width=1pt, color=black] (-2.5,.5)--(-3,-1);
         \draw[color=black, fill=gray, line width=1pt] (-1.5,1.5) circle (.3);
\draw[line width=1pt, color=black] (0,0)--(-3,-1);
\draw[line width=1pt, color=black] (1,2)--(-3,-1);
\draw[line width=1pt, color=black] (2,1.5)--(-3,-1);
\draw[line width=1pt, color=black] (-3,-1)--(-3,-3);
\draw[fill=white, line width=1pt] (-3,-2) circle (.3);
\draw[fill=white, line width=1pt] (-3,-2) circle (.15);
\draw[color=black, fill=white, line width=1pt] (-.5,3) rectangle (2.5,4);
\node at (1,3.5){$\iota$};
\end{scope}
\node at (11.5,0){$=$};
\begin{scope}[shift={(16.5,0)}]
\draw[line width=1.5pt, color=blue] (0,3)--(0,-3);
\draw[line width=1.5pt, color=red] (1,3)--(1,-3);
\draw[line width=1.5pt, color=violet] (2,3)--(2,-3);
\draw[line width=1pt, color=black] (0,2.5)--(-3,0);
\draw[line width=1pt, color=black] (1,2.5)--(-3,0);
\draw[line width=1pt, color=black] (2,2.5)--(-3,0);
\draw[line width=1pt, color=black] (-3,0)--(-3,-1.5);
\draw[line width=1pt, color=black] (0,-.5)--(-3,-1.5);
\draw[line width=1pt, color=black] plot [smooth , tension=0.6] coordinates 
        {(0, .5)(-4,-1)(-3,-1.5)};
          \draw[color=black, fill=gray, line width=1pt] (-1,.1) circle (.3);
          \draw[line width=1pt, color=black] (-3,-1)--(-3,-3);
          \draw[fill=white, line width=1pt] (-3,-2.5) circle (.3);
\draw[fill=white, line width=1pt] (-3,-2.5) circle (.15);
          \draw[color=black, fill=white, line width=1pt] (-.5,3) rectangle (2.5,4);
\node at (1,3.5){$\iota$};
\end{scope}
\node at (20,0){$=$};
\begin{scope}[shift={(25.5,0)}]
\draw[line width=1.5pt, color=blue] (0,3)--(0,-3);
\draw[line width=1.5pt, color=red] (1,3)--(1,-3);
\draw[line width=1.5pt, color=violet] (2,3)--(2,-3);
\node at (-3.2,0)[anchor=east]{$p$};
\draw[color=black, fill=black] (-3,0) circle (.15);
\draw[line width=1pt, color=black] (-3,0)--(-3,-1.5);
\draw[line width=1pt, color=black] (0,-.5)--(-3,-1.5);
\draw[line width=1pt, color=black] plot [smooth , tension=0.6] coordinates 
        {(0, .5)(-4,-1)(-3,-1.5)};
          \draw[color=black, fill=gray, line width=1pt] (-1,.1) circle (.3);
\draw[line width=1pt, color=black] (-3,-1)--(-3,-3);
\draw[fill=white, line width=1pt] (-3,-2.5) circle (.3);
\draw[fill=white, line width=1pt] (-3,-2.5) circle (.15);
\draw[color=black, fill=white, line width=1pt] (-.5,3) rectangle (2.5,4);
\node at (1,3.5){$\iota$};
\end{scope}
\node at (-4,-9){$=$};
\begin{scope}[shift={(0,-9)}]
\draw[line width=1.5pt, color=blue] (0,3)--(0,-3);
\draw[line width=1.5pt, color=red] (1,3)--(1,-3);
\draw[line width=1.5pt, color=violet] (2,3)--(2,-3);
\node at (-3,0)[anchor=south]{$p$};
\draw[color=black, fill=black] (-3,0) circle (.15);
\draw[line width=1pt, color=black] (0,-.5)--(-3,-1.5);
\draw[line width=1pt, color=black] plot [smooth , tension=0.6] coordinates 
        {(0, .5)(-3,-1.5)};
          \draw[color=black, fill=white, line width=1pt] (-1,-.2) circle (.3);
\draw[line width=1pt, color=black] (-3,0)--(-3,-3);
\draw[fill=white, line width=1pt] (-3,-2.5) circle (.3);
\draw[fill=white, line width=1pt] (-3,-2.5) circle (.15);
\draw[color=black, fill=white, line width=1pt] (-.5,3) rectangle (2.5,4);
\node at (1,3.5){$\iota$};
\end{scope}
\node at (3.5,-9){$=$};
\begin{scope}[shift={(8,-9)}]
\draw[line width=1.5pt, color=blue] (0,3)--(0,-3);
\draw[line width=1.5pt, color=red] (1,3)--(1,-3);
\draw[line width=1.5pt, color=violet] (2,3)--(2,-3);
\draw[line width=1pt, color=black] plot [smooth , tension=0.6] coordinates 
         {(0,2.5)(-1.5,2)(-2,1.5)};
         \draw[line width=1pt, color=black] plot [smooth cycle , tension=0.6] coordinates 
         {(-2,1.5)(-2.5,.5)(-2,-.5) (-1.5,.5)};
          \draw[color=black, fill=white, line width=1pt] (-2.5,.5) circle (.3);
         \draw[line width=1pt, color=black] (-2,-.5)--(-2,-3);      
\node at (-3,-1)[anchor=east]{$p$};
\draw[color=black, fill=black] (-3,-1) circle (.15);
\draw[line width=1pt, color=black] (-3,-1)--(-2,-1.5); 
\draw[line width=1pt, color=black] (-2,-2)--(-2,-1.5); 
\draw[fill=white, line width=1pt] (-2,-2) circle (.3);
\draw[fill=white, line width=1pt] (-2,-2) circle (.15);
\draw[color=black, fill=white, line width=1pt] (-.5,3) rectangle (2.5,4);
\node at (1,3.5){$\iota$};
\end{scope}
\node at (11.5,-9){$=$};
\begin{scope}[shift={(16.5,-9)}]
\draw[line width=1.5pt, color=blue] (0,3)--(0,-3);
\draw[line width=1.5pt, color=red] (1,3)--(1,-3);
\draw[line width=1.5pt, color=violet] (2,3)--(2,-3);
\draw[line width=1pt, color=black] (-2,1)--(-2,-3); 
\draw[color=black, fill=black] (-2,1) circle (.15);
\node at (-2,1)[anchor=south]{$p$};
\draw[color=black, fill=white, line width=1pt] (-.5,3) rectangle (2.5,4);
\node at (1,3.5){$\iota$};
\draw[fill=white, line width=1pt] (-2,-2) circle (.3);
\draw[fill=white, line width=1pt] (-2,-2) circle (.14);
\end{scope}
\node at (20,-9){$=$};
\begin{scope}[shift={(25.5,-9)}]
\draw[line width=1.5pt, color=blue] (0,3)--(0,-3);
\draw[line width=1.5pt, color=red] (1,3)--(1,-3);
\draw[line width=1.5pt, color=violet] (2,3)--(2,-3);
\draw[line width=1pt, color=black] (-2,1)--(-2,-3); 
\draw[color=black, fill=white] (-2,1) circle (.15);
\draw[color=black, fill=white, line width=1pt] (-.5,3) rectangle (2.5,4);
\node at (1,3.5){$\iota$};
\end{scope}
\end{tikzpicture}
\tComma
\end{align*}
where the coloured lines stand for the edges in $\gamma'^\inv\circ \gamma$ and $\iota$ for the coequaliser of $\delta'_f$. This shows that the claim is invariant under cyclic permutations of the edges in $f$.

5.~To prove \eqref{eq:ComplementSlideGeneral}, 
note first that if  $\gamma=\gamma_{1}\circ\gamma_{2}$ for a face path $\gamma_{2}$ and a non-trivial face path $\gamma_{1}$ and if $\eta_\alpha$ adds a loop directly before the starting end of $\gamma_{1}$, then one has 
	\begin{align}
		\eta_\alpha S_{\gamma_{1}\gamma_{2}} = S_{\gamma_{1}\alpha^{L}\gamma_{2}} \eta_\alpha.
		\label{eq:CoinvariantArcSlideCompatibility}
	\end{align}
Otherwise $S_{\gamma}$ and $\eta_{\alpha}$ simply commute. 
By decomposing   $C_\rho=S_\rho\circ\eta_{\rho'}$ and $C_{S_\gamma(\rho)}=S_{S_{\gamma}(\rho)}\circ \eta_{S_\gamma(\rho)'}$ , using the commutativity relation of slides in \eqref{eq:SlideCommutativityRelation} and identity \eqref{eq:CoinvariantArcSlideCompatibility}, one  obtains \eqref{eq:ComplementSlideGeneral}.

Identity \eqref{eq:slidenotmatter}  follows by setting $\gamma=\rho$ in  \eqref{eq:ComplementSlideGeneral}, for which $C_\gamma(\gamma)=\gamma'$ by Definition \ref{definition:FacePathTranslationComplementary}, composing both sides of  \eqref{eq:ComplementSlideGeneral} with $\epsilon_{\gamma'}$ and using \eqref{eq:ComplementDelete} .

6.~To prove Equation~\eqref{eq:ComplementCommutativity} we first treat the case where $\rho$ is a proper subpath of $\gamma = \gamma_{1}\circ \rho\circ \gamma_{2}$. Then equation~\eqref{eq:CoinvariantArcSlideCompatibility} holds for $\eta_{\alpha} = \eta_{\rho'}$ and we have $C_{\rho}(\gamma) = \gamma_{1} \rho' \gamma_{2}$ and $C_{\gamma} (\rho) = \rho$. We then obtain 
\begin{align}
	\nonumber
	C_{C_{\gamma}(\rho)} C_{\gamma}
	&= C_{\rho} C_{\gamma}
	= S_{\rho} \eta_{\rho'} C_{\gamma_{1}\rho\gamma_{2}}
	= S_{\rho} \eta_{\rho'} S_{\gamma_{1}\rho\gamma_{2}} \eta_{\gamma'}
	\overset{\text{\eqref{eq:CoinvariantArcSlideCompatibility}}}{=} S_{\rho} S_{\gamma_{1}\rho\rho'\gamma_{2}} \eta_{\rho'} \eta_{\gamma'}
	=S_{\rho} S_{\gamma_{1}\rho\rho'\gamma_{2}}\eta_{\gamma'} \eta_{\rho'}
	\\
	&= S_{\rho} C_{\gamma_{1}\rho\rho'\gamma_{2}} \eta_{\rho'}
	\overset{\text{\eqref{eq:ComplementSlideGeneral}}}{=} C_{S_{\rho}(\gamma_{1}\rho\rho'\gamma_{2})} S_{\rho} \eta_{\rho'}
	= C_{\gamma_{1}\rho'\gamma_{2}} S_{\rho}\eta_{\rho'}
	= C_{C_{\rho}(\gamma)} C_{\rho}.
%	\label{eq:ComplementCommutativityProofSubpath}
		\nonumber
\end{align}
Because of the symmetry of~\eqref{eq:ComplementCommutativity} the same holds if $\gamma$ is a proper subpath of $\rho$, while for $\gamma=\rho$ the claim is tautological. For non-overlapping $\rho$ and $\gamma$ we have $C_{\gamma}(\rho)=\rho$, $C_{\rho}(\gamma)= \gamma$ and $\eta_{\rho'}$ commutes with $C_{\gamma}$. We then obtain
\begin{align*}
	C_{C_{\gamma}(\rho)} \circ C_{\gamma} 
	= C_{\rho} \circ C_{\gamma}
	= S_{\rho} \circ \eta_{\rho'} \circ C_{\gamma}
	= S_{\rho} \circ C_{\gamma} \circ \eta_{\rho'}
	\overset{\text{\eqref{eq:ComplementSlideGeneral}}}{=} C_{S_{\rho}(\gamma)} \circ S_{\rho} \circ \eta_{\rho'}
	= C_{\gamma} \circ C_{\rho}
	= C_{C_{\rho}(\gamma)} \circ C_{\rho}.
\end{align*} 
%Identity \eqref{eq:ComplementFlatFace} and 
7.~The first identity in \eqref{eq:FlatFaceComplementt} follows from Remark \ref{rem:otherslide} and the identity
\begin{align}\label{eq:edgeexpress}
\begin{tikzpicture}[scale=.4]
\begin{scope}[shift={(0,0)}]
\draw[color=black, line width=1pt, fill=white] (-.5,3) rectangle (2.5,4.5);
\node at (1,3)[anchor=south]{$\iota$};
\draw[line width=1.5pt, color=blue] (0,3)--(0,-3);
\draw[line width=1.5pt, color=red] (1,3)--(1,-3);
\draw[line width=1.5pt, color=violet] (2,3)--(2,-3);
\end{scope}  
\node at (2.8,0){$=$};
\begin{scope}[shift={(5,0)}]
\draw[color=black, line width=1pt, fill=white] (-.5,3) rectangle (2.5,4.5);
\node at (1,3)[anchor=south]{$\iota$};
\draw[line width=1.5pt, color=blue] (0,3)--(0,-3);
\draw[line width=1.5pt, color=red] (1,3)--(1,-3);
\draw[line width=1.5pt, color=violet] (2,3)--(2,-3);
\draw[line width=1pt, color=black] (0,2.5)--(-1,.5);
\draw[line width=1pt, color=black] (1,2)--(-1,.5);
\draw[line width=1pt, color=black] (2,1.5)--(-1,.5);
\draw[line width=1pt, color=black] plot [smooth , tension=0.6] coordinates 
         {(-1,.5)(-1,-2)(0,-2.5)};
\draw[color=black, fill=white] (-1.1,-1) circle (.3);
\draw[color=black, fill=white] (-1.1,-1) circle (.15);
\end{scope}
\node at (8,0){$=$};
\begin{scope}[shift={(11,0)}]
\draw[color=black, line width=1pt, fill=white] (-.5,3) rectangle (2.5,4.5);
\node at (1,3)[anchor=south]{$\iota$};
\draw[line width=1.5pt, color=blue] (0,3)--(0,-3);
\draw[line width=1.5pt, color=red] (1,3)--(1,-3);
\draw[line width=1.5pt, color=violet] (2,3)--(2,-3);
\draw[line width=1pt, color=black] (0,2.5)--(-1,1.5);
\draw[line width=1pt, color=black] (1,2.5)--(-2,.5);
\draw[line width=1pt, color=black] (2,2)--(-2,.5);
\draw[line width=1pt, color=black] (-2,.5)--(-2,-.5);
\draw[line width=1pt, color=black] (-1,1.5)--(-1,-.5);
\draw[line width=1pt, color=black] plot [smooth , tension=0.6] coordinates 
         {(-2,-.5)(-1.5,-1)(-1,-.5)};
\draw[line width=1pt, color=black] plot [smooth , tension=0.6] coordinates 
         {(-1.5,-1)(-1,-2)(0,-2.5)};
\draw[color=black, fill=white] (-1,0) circle (.3);
\draw[color=black, fill=white] (-2,0) circle (.3);
\draw[color=black, fill=white] (-2,0) circle (.15);
\end{scope}
\node at (13.8,0){$=$};
\begin{scope}[shift={(16,0)}]
\draw[color=black, line width=1pt, fill=white] (-.5,3) rectangle (2.5,4.5);
\node at (1,3)[anchor=south]{$\iota$};
\draw[line width=1.5pt, color=blue] (0,3)--(0,-3);
\draw[line width=1.5pt, color=red] (1,3)--(1,-3);
\draw[line width=1.5pt, color=violet] (2,3)--(2,-3);
\draw[line width=1pt, color=black] plot [smooth , tension=0.6] coordinates 
         {(0,2.5)(-.5,2)(-.5,1)(0,.5)};
         \draw[color=black, fill=white] (-.5,1.5) circle (.3);
  \draw[line width=1pt, color=black] (1,1)--(-1,-1);
    \draw[line width=1pt, color=black] (2,.5)--(-1,-1);
    \draw[line width=1pt, color=black] plot [smooth , tension=0.6] coordinates 
         {(-1,-1)(-1,-2)(0,-2.5)};
         \draw[color=black, fill=white] (-1,-1.5) circle (.3);
                  \draw[color=black, fill=white] (-1,-1.5) circle (.15);
                  \end{scope}
 \node at (19,0) {$=$};        
 \begin{scope}[shift={(21,0)}]
 \draw[color=black, line width=1pt, fill=white] (-.5,3) rectangle (2.5,4.5);
\node at (1,3)[anchor=south]{$\iota$};
\draw[line width=1.5pt, color=blue] (0,3)--(0,1);
\draw[line width=1.5pt, color=red] (1,3)--(1,-3);
\draw[line width=1.5pt, color=violet] (2,3)--(2,-3);
  \draw[color=black, fill=white] (0,1) circle (.15);
  \draw[line width=1pt, color=black] (1,1)--(0,-1);
    \draw[line width=1pt, color=black] (2,.5)--(0,-1);
    \draw[line width=1pt, color=black] (0,-1)--(0,-3);
         \draw[color=black, fill=white] (0,-1.5) circle (.3);
                  \draw[color=black, fill=white] (0,-1.5) circle (.15);
                  \end{scope}         
\end{tikzpicture}
\tComma
\end{align}
where $\iota$ stands for  the inclusion morphism of the coinvariants from Definition \ref{def:invcoinv},
the left line for the edge $\alpha$ 
and the other vertical lines for other edges in the face. 
The second identity in  \eqref{eq:FlatFaceComplementt} then follows from the first, together with
~\eqref{eq:ComplementDelete}, \eqref{eq:ComplementSlideGeneral} and the Definition of the edge slides: 
	\begin{align*}
		S_{\rho} \epsilon_{\alpha} 
		\stackrel{\eqref{eq:ComplementDelete}}= \epsilon_{\rho'} C_{\rho} S_{\rho} \epsilon_{\alpha}
		\stackrel{\eqref{eq:ComplementSlideGeneral}}= \epsilon_{\rho'} S_{\rho'} C_{\rho} \epsilon_{\alpha}\stackrel{\eqref{eq:FlatFaceComplementt}.1}= \epsilon_{\alpha}S_{\alpha}T_{\alpha}
		\stackrel{\text{Def.}\ref{def:edge slide}}= \epsilon_{\alpha} S_{\alpha^{-R}}.
	\end{align*}
\end{proof}

Equation \eqref{eq:slidenotmatter} allows us to replace slides along face paths or their inverses by slides along edges. It also extends Definition \ref{def:slideface} to slides of edge ends that are traversed by the face path $\gamma$. Equation \eqref{eq:ComplementSlideGeneral} states that sliding along a face path $\gamma$ commutes with the addition of an edge to another face path $\rho$, whenever $\gamma$ remains a face path.   Equations \eqref{eq:EdgeComplementInvolution}, \eqref{eq:FlatFaceComplement} and \eqref{eq:FlatFaceComplementt}  ensure that we can add multiple edges to face paths and delete any of the added edges without changing the slide. Equation \eqref{eq:ComplementCommutativity} states that the order in which edges are added to different face paths is irrelevant, as long as face paths remain face paths.
Together, these identities allow us to replace slides  along  face paths  by  slides along  edges that are added to these face paths.   

Similarly, adding edges  to a vertex $v$ allows one to replace slides of multiple edge ends incident at  $v$  by a single slide. This is achieved by adding an edge $\alpha$ with a univalent target vertex  to the starting vertex of a face path $\gamma$, sliding the edge ends in question along $\alpha$ to the target of $\alpha$, then sliding $\alpha$ along $\gamma$ and finally sliding the edge ends back to the starting vertex of $\alpha$, as shown in Figure \ref{fig:multiplide}.

\begin{lemma}\label{rem:shorthand} Let $\gamma$ be a face path in $\Gamma$ and suppose that the first $n+m$ edge ends at $s(\gamma)$ after the starting end  of $\gamma$ are not traversed by $\gamma$.
 Insert an edge   $\alpha$ with a univalent target vertex  at  $s(\gamma)$ such that there are $m$ edge ends  between the starting end of $\gamma$ and of $\alpha$, as shown in Figure \ref{fig:multiplide}.  Denote by $S=S_{\alpha_L}^n S^m_{\alpha^R}$ the slide that slides these $m+n$ edge ends  to the  target of $\alpha$. Then 
\begin{align*}
S_{\gamma}^{m+n}=\epsilon_\alpha\circ S^\inv\circ S_{\gamma} \circ S\circ \eta_\alpha.
\end{align*}
\end{lemma}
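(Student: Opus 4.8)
\textbf{Proof plan for Lemma \ref{rem:shorthand}.}

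The plan is to reduce the statement to the identities for adding edges and sliding along face paths that are already established in Lemma \ref{lem:addingedge}, together with the commutativity relations for edge slides. First I would set up notation carefully: the slide $S = S_{\alpha^L}^n S_{\alpha^R}^m$ slides the $n$ edge ends on the left of $\alpha$ (those coming directly after the starting end of $\alpha$) and the $m$ edge ends on the right of $\alpha$ (those coming directly before the starting end of $\alpha$) along $\alpha$ to its target vertex. Because $\alpha$ has a univalent target vertex and these $m+n$ ends are precisely the ones adjacent to $\alpha$ at $s(\gamma)$, each individual slide in $S$ is well-defined by conditions (\ref{cond:ShareVertex})--(\ref{cond:NotIdenticalEdge}) after \eqref{eq:slideedge}, and by the opposite end and adjacent commutativity of Corollary \ref{cor:adjacentendslide} their order does not matter; the inverse $S^\inv$ slides them back.

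The key geometric observation, visible in Figure \ref{fig:multiplide}, is that conjugating the slide $S_\gamma$ along the edge $\alpha$ by $S$ produces exactly the composite of the $m+n$ slides of the original edge ends along $\gamma$. Concretely, once the $m+n$ ends have been moved to the target of $\alpha$, sliding $\alpha$ along $\gamma$ carries all of them along $\gamma$ simultaneously (they are ``parked'' on $\alpha$), and sliding them back off $\alpha$ at the far end realises each of them as having been slid along $\gamma$. To make this rigorous I would use the generalised pentagon equation of Corollary \ref{corollary:GeneralizedPentagon}, or equivalently the commutativity relation \eqref{eq:SlideCommutativityRelation} of Lemma \ref{lemma:SlideCommutativityRelation}: for each of the $m+n$ edge ends $\beta_i$, the face path along which $\beta_i$ slides after the conjugation is $\alpha^{-1}\circ\gamma\circ\alpha$ (or $\alpha\circ\gamma\circ\alpha^{-1}$ depending on the side), and the relation $S_{S_\rho(\gamma)}S_\rho = S_{S_\gamma(\rho)}S_\gamma$ lets one slide the parking edge $\alpha$ past each $\beta_i$-slide. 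Peeling off the slides of the $\beta_i$ one at a time and using that $S_\gamma$ commutes with a slide that does not involve the edges of $\gamma$ (which holds since the $m+n$ ends are not traversed by $\gamma$), one rewrites $S\circ S_\gamma\circ S^\inv$ as $\big(\prod_i S_{\gamma}^{(\beta_i)}\big)$ composed with a slide of $\alpha$ along $\gamma$ that afterwards gets annihilated by $\epsilon_\alpha$.

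More efficiently, I would argue via \eqref{eq:CoinvariantArcSlideCompatibility} and \eqref{eq:slidenotmatter}: inserting $\alpha$ with a univalent vertex at $s(\gamma)$ and sliding the $m+n$ ends onto it is, after applying $\epsilon_\alpha$, the same as adding an edge to $\gamma$ and reading off the slide of those ends along $\gamma'$; then \eqref{eq:slidenotmatter} identifies the slide along $\gamma$ of an edge end with $\epsilon_{\gamma'}\circ S_{\gamma'^L}\circ C_\gamma$, and one checks that doing this for all $m+n$ ends at once gives $\epsilon_\alpha\circ S^\inv\circ S_\gamma\circ S\circ\eta_\alpha$. Here I would invoke \eqref{eq:ComplementSlideGeneral} repeatedly (commuting $S_\gamma$ past the addition/removal of the auxiliary edge) and \eqref{eq:ComplementDelete}, \eqref{eq:EdgeComplementInvolution} to cancel the auxiliary data. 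The main obstacle I anticipate is purely bookkeeping: keeping track of which side of $\alpha$ each of the $m$ versus $n$ ends sits on, hence whether one uses $S_{\alpha^L}$ or $S_{\alpha^R}$ and whether the relevant conjugating face path is $\alpha^{-1}\gamma\alpha$ or $\alpha\gamma\alpha^{-1}$, and verifying that in every case the slide of $\alpha$ itself along $\gamma$ that appears after conjugation is killed by $\epsilon_\alpha$. Once the combinatorics of the orderings at $s(\gamma)$ is pinned down, the algebraic identities are all consequences of Lemma \ref{lem:addingedge} and Corollary \ref{cor:adjacentendslide}, so no genuinely new computation is required.
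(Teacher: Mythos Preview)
Your proposal is correct and takes essentially the same approach as the paper: use the generalised pentagon relation \eqref{eq:GeneralizedPentagon} to identify $S^{-1}\circ S_\gamma\circ S$ with $S_\gamma^{m+n+1}$ (the $m+n$ original ends plus $\alpha$ itself, all slid along $\gamma$), commute $\eta_\alpha$ and $\epsilon_\alpha$ past the slides that do not touch $\alpha$, and observe that $\epsilon_\alpha\circ S_\gamma\circ\eta_\alpha=\id$ since a unit-labelled edge slides trivially. The paper's writeup is simply more direct than yours---it records the one-line computation $\epsilon_\alpha\circ S^{-1}\circ S_\gamma\circ S\circ\eta_\alpha = \epsilon_\alpha\circ S_\gamma^{m+n+1}\circ\eta_\alpha = S_\gamma^{n}\circ(\epsilon_\alpha\circ S_\gamma\circ\eta_\alpha)\circ S_\gamma^{m} = S_\gamma^{m+n}$---and does not take the detour through \eqref{eq:slidenotmatter} or Lemma \ref{lem:addingedge}.
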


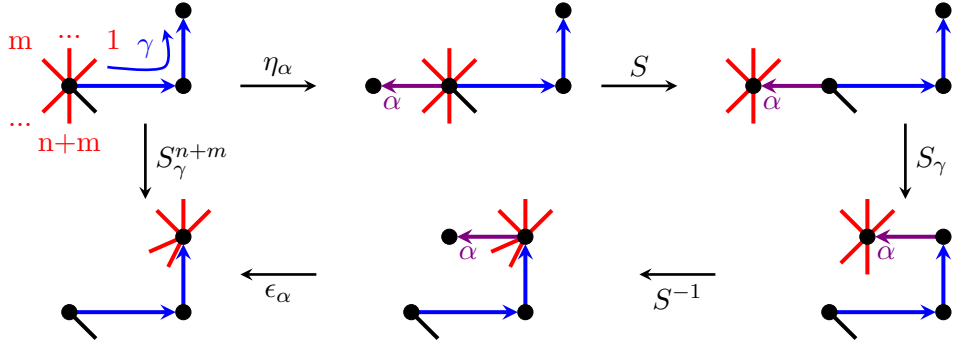
\begin{figure}
\begin{center}
\begin{tikzpicture}[scale=.5]
\begin{scope}[shift={(0,0)}]
\draw[color=blue, line width=1.5pt, ->,>=stealth] (0,0)--(2.8,0);
\draw[color=blue, line width=1.5pt, ->,>=stealth] (3,0)--(3,1.8);
\draw[color=red, line width=1.5pt] (0,0)--(.7,.7);
\draw[color=red, line width=1.5pt] (0,0)--(0,1);
\draw[color=red, line width=1.5pt] (0,0)--(-.7,.7);
\draw[color=red, line width=1.5pt] (0,0)--(-.7,-.7);
\draw[color=red, line width=1.5pt] (0,0)--(0,-1);
\draw[color=black, line width=1.5pt] (0,0)--(.7,-.7);
\draw[color=black, fill=black] (0,0) circle (.2);
\draw[color=black, fill=black] (3,0) circle (.2);
\draw[color=black, fill=black] (3,2) circle (.2);
\node at (.7,.7)[anchor=south west, color=red]{1};
\node at (-.7,.7)[anchor=south east, color=red]{m};
\node at (0,1)[anchor=south, color=red]{...};
\node at (-.7,-.7)[anchor=north east, color=red]{...};
\node at (0,-1)[anchor=north, color=red]{n+m};
    \draw[line width=1pt, color=blue, ->,>=stealth] plot [smooth , tension=0.6] coordinates 
         {(1,.5)(2.5,.5)(2.5,1.5)};
\node at (2.5,1)[anchor=east, color=blue]{$\gamma$};
\end{scope}
\draw[line width=1pt, ->,>=stealth] (2,-1)--(2,-3);
\node at (2,-2)[anchor=west]{$S_\gamma^{n+m}$};
\draw[line width=1pt, ->,>=stealth] (4.5,0)--(6.5,0);
\node at (5.5,0)[anchor=south]{$\eta_\alpha$};
\begin{scope}[shift={(10,0)}]
\draw[color=blue, line width=1.5pt, ->,>=stealth] (0,0)--(2.8,0);
\draw[color=blue, line width=1.5pt, ->,>=stealth] (3,0)--(3,1.8);
\draw[color=red, line width=1.5pt] (0,0)--(.7,.7);
\draw[color=red, line width=1.5pt] (0,0)--(0,1);
\draw[color=red, line width=1.5pt] (0,0)--(-.7,.7);
\draw[color=red, line width=1.5pt] (0,0)--(-.7,-.7);
\draw[color=red, line width=1.5pt] (0,0)--(0,-1);
\draw[color=black, line width=1.5pt] (0,0)--(.7,-.7);
\draw[color=violet, line width=1.5pt, ->,>=stealth] (0,0)--(-1.8,0);
\node at (-1.5,0)[anchor=north, color=violet]{$\alpha$};
\draw[color=black, fill=black] (-2,0) circle (.2);
\draw[color=black, fill=black] (0,0) circle (.2);
\draw[color=black, fill=black] (3,0) circle (.2);
\draw[color=black, fill=black] (3,2) circle (.2);
\end{scope}
\draw[line width=1pt, ->,>=stealth] (14,0)--(16,0);
\node at (15,0)[anchor=south]{$S$};
\begin{scope}[shift={(20,0)}]
\draw[color=blue, line width=1.5pt, ->,>=stealth] (0,0)--(2.8,0);
\draw[color=blue, line width=1.5pt, ->,>=stealth] (3,0)--(3,1.8);
\draw[color=red, line width=1.5pt] (-2,0)--(-1.3,.7);
\draw[color=red, line width=1.5pt] (-2,0)--(-2,1);
\draw[color=red, line width=1.5pt] (-2,0)--(-2.7,.7);
\draw[color=red, line width=1.5pt] (-2,0)--(-2.7,-.7);
\draw[color=red, line width=1.5pt] (-2,0)--(-2,-1);
\draw[color=black, line width=1.5pt] (0,0)--(.7,-.7);
\draw[color=violet, line width=1.5pt, ->,>=stealth] (0,0)--(-1.8,0);
\node at (-1.5,0)[anchor=north, color=violet]{$\alpha$};
\draw[color=black, fill=black] (-2,0) circle (.2);
\draw[color=black, fill=black] (0,0) circle (.2);
\draw[color=black, fill=black] (3,0) circle (.2);
\draw[color=black, fill=black] (3,2) circle (.2);
\end{scope}
\draw[line width=1pt, ->,>=stealth] (22,-1)--(22,-3);
\node at (22,-2)[anchor=west]{$S_\gamma$};
\begin{scope}[shift={(20,-6)}]
\draw[color=blue, line width=1.5pt, ->,>=stealth] (0,0)--(2.8,0);
\draw[color=blue, line width=1.5pt, ->,>=stealth] (3,0)--(3,1.8);
\draw[color=red, line width=1.5pt] (1,2)--(1.7,2.7);
\draw[color=red, line width=1.5pt] (1,2)--(1,3);
\draw[color=red, line width=1.5pt] (1,2)--(.3,2.7);
\draw[color=red, line width=1.5pt] (1,2)--(.3,1.3);
\draw[color=red, line width=1.5pt] (1,2)--(1,1 );
\draw[color=black, line width=1.5pt] (0,0)--(.7,-.7);
\draw[color=violet, line width=1.5pt, ->,>=stealth] (3,2)--(1.2,2);
\node at (1.5,2)[anchor=north, color=violet]{$\alpha$};
\draw[color=black, fill=black] (1,2) circle (.2);
\draw[color=black, fill=black] (0,0) circle (.2);
\draw[color=black, fill=black] (3,0) circle (.2);
\draw[color=black, fill=black] (3,2) circle (.2);
\end{scope}
\draw[line width=1pt, <-,>=stealth] (15,-5)--(17,-5);
\node at (16,-5)[anchor=north]{$S^\inv$};
\begin{scope}[shift={(9,-6)}]
\draw[color=blue, line width=1.5pt, ->,>=stealth] (0,0)--(2.8,0);
\draw[color=blue, line width=1.5pt, ->,>=stealth] (3,0)--(3,1.8);
\draw[color=red, line width=1.5pt] (3,2)--(3.7,2.7);
\draw[color=red, line width=1.5pt] (3,2)--(3,3);
\draw[color=red, line width=1.5pt] (3,2)--(2.3,2.7);
\draw[color=red, line width=1.5pt] (3,2)--(2.1,1.6);
\draw[color=red, line width=1.5pt] (3,2)--(2.6, 1.2 );
\draw[color=black, line width=1.5pt] (0,0)--(.7,-.7);
\draw[color=violet, line width=1.5pt, ->,>=stealth] (3,2)--(1.2,2);
\node at (1.5,2)[anchor=north, color=violet]{$\alpha$};
\draw[color=black, fill=black] (1,2) circle (.2);
\draw[color=black, fill=black] (0,0) circle (.2);
\draw[color=black, fill=black] (3,0) circle (.2);
\draw[color=black, fill=black] (3,2) circle (.2);
\end{scope}
\begin{scope}[shift={(0,-6)}]
\draw[color=blue, line width=1.5pt, ->,>=stealth] (0,0)--(2.8,0);
\draw[color=blue, line width=1.5pt, ->,>=stealth] (3,0)--(3,1.8);
\draw[color=red, line width=1.5pt] (3,2)--(3.7,2.7);
\draw[color=red, line width=1.5pt] (3,2)--(3,3);
\draw[color=red, line width=1.5pt] (3,2)--(2.3,2.7);
\draw[color=red, line width=1.5pt] (3,2)--(2.1,1.6);
\draw[color=red, line width=1.5pt] (3,2)--(2.6, 1.2 );
\draw[color=black, line width=1.5pt] (0,0)--(.7,-.7);
\draw[color=black, fill=black] (0,0) circle (.2);
\draw[color=black, fill=black] (3,0) circle (.2);
\draw[color=black, fill=black] (3,2) circle (.2);
\end{scope}
\draw[line width=1pt, <-,>=stealth] (4.5,-5)--(6.5,-5);
\node at (5.5,-5)[anchor=north]{$\epsilon_\alpha$};
\end{tikzpicture}
\end{center}
\caption{Replacing  slides of multiple edge ends along a face path by a single slide.}
\label{fig:multiplide}
\end{figure}

\begin{proof}  By the generalised pentagon relation~\eqref{eq:GeneralizedPentagon}, sliding the $m+n$ edge ends to the target of $\alpha$,  sliding $\alpha$ along $\gamma$ and then sliding the edge ends back to the start of $\alpha$ gives the same result as sliding the first $m$ edge ends, then $\alpha$ and then the last $n$ edge ends along $\gamma$. 
Adding the edge $\alpha$ and deleting the edge $\alpha$ commute with the slides of the  $m+n$   edge ends along $\gamma$.
Expressions \eqref{eq:slidedef2a} to \eqref{eq:slidedef4b} for the slides 
show that  adding an edge $\alpha$ by applying the unit of $H$, sliding it along $\gamma$ and then deleting $\alpha$ gives the identity morphism. Thus we have 
\begin{align*}
\epsilon_\alpha \circ S^\inv \circ S_\gamma\circ S\circ \eta_{\alpha}\stackrel{\eqref{eq:GeneralizedPentagon}} =\epsilon_\alpha\circ S_\gamma^{n+m+1}\circ \eta_\alpha=
S^{n}_\gamma\circ \epsilon_\alpha\circ S_\gamma\circ \eta_\alpha\circ S_\gamma^m
\stackrel{\eqref{eq:slidedef2a}-\eqref{eq:slidedef4b}}=S^n_\gamma\circ S^m_\gamma=S^{m+n}_\gamma.
\end{align*}
\end{proof}

\subsection{Twists along closed face paths}
\label{sec:twists}

As already apparent in Section \ref{sec:torus}, slides along loops or closed face paths $\gamma$ do not change the underlying ribbon graph,  if  they are applied to all edge ends between the starting and target  end of $\gamma$. In fact, such slides correspond to Dehn twists, and we will will relate them to the  Dehn twists from Theorem \ref{th:gervais}
in  Section \ref{sec:gervaismap}.

We introduce these twists in three stages. We first define twists along loops in a directed ribbon graph $\Gamma$ that are based at a ciliated vertex. We then extend this definition to twists along closed face paths,  by adding an edge to the face path, twisting around the edge, and then removing it. Finally,  in Section \ref{sec:gervaismap}  we  define twists along certain closed paths that are not face paths.

\begin{definition} \label{def:dtnull}  Let  $\beta$ be a loop  at a ciliated vertex of  $\Gamma$. 
 The {\bf twist}  along $\beta$ is the counterclockwise slide along $\beta$, applied to each edge end  between the  ends of $\beta$ once. 
 \begin{align*}
 D_\beta=\begin{cases} S_{\beta^L}^n & \st(\beta)< \ta(\beta),\\
 S_{\beta^{-R}}^n & \st(\beta)>\ta(\beta),
 \end{cases}
 \end{align*} 
 where $n$ is the number of edge ends between the starting end $\st(\beta)$ and the target end $\ta(\beta)$ of  $\beta$.
\end{definition}

It is clear from Definition \ref{def:dtnull} and  expressions \eqref{eq:slidedef2a} to \eqref{eq:slidedef4b}  for the edge slides that the twist $D_\beta$ does not depend  
 on the orientation of $\beta$.  Reversing the  orientation of $\beta$  and applying the involution $T$ to the associated copy of $H$ yields the same automorphism of $H^{\oo E}$. 

 Note, however,  that the twist along a loop $\beta$ depends on the choice of the cilium at the starting and target vertex of $\beta$, that is the {\em linear} ordering of the edge ends at this vertex. This dependence is investigated in more depth at the end of Section \ref{sec:gervaismap}, where we show that different choices of cilia lead to mapping class group actions related by  conjugation.

The twist along a loop $\beta$ acts only on those copies of $H$ that belong to edges with  at least one end  incident between the starting and the target end of $\beta$. With Lemma \ref{rem:shorthand}, we can  restrict attention to the case where there is only a single such edge end.  More precisely,  if  $\beta$ is a loop  based at a vertex $v$ of $\Gamma$, we can add
an edge $\alpha$ at $v$,  with the starting end between $\st(\beta)$ and $\ta(\beta)$ and a univalent vertex $t(\alpha)$. Then by Lemma \ref{rem:shorthand}, we have
$
D_\beta=\epsilon_\alpha \circ S^\inv \circ D_\beta\circ S\circ \eta_{\alpha}
$, 
where  $S$  is the composite slide  that slides all other edge ends incident between $s(\beta)$ and $t(\beta)$ along $\alpha$ to its target. 
This allows us to replace some or all edge ends between the ends of a loop by a single edge end in diagrammatic computations, as shown in Figure \ref{fig:multiplide}.

We now extend the definition of the twist from   loops to  closed face paths in a  ribbon graph. This is achieved  by adding an edge to the face path, twisting along the added edge and then removing it. This reduces to the original  definition whenever the closed face path is a single loop.

\begin{definition}\label{def:dtnotface} Let $\phi$ be a closed face path based at a ciliated vertex  in a directed ribbon graph $\Gamma$. The  {\bf  twist} $D_{\phi}$ along $\phi$  is  the composite $D_\phi=\epsilon_{\phi'}\circ D_{\phi'}\circ C_\phi: H^{\oo E}\to H^{\oo E}$ obtained by
\begin{compactenum}
\item Adding an edge $\phi'$  to $\phi$ as in Definition~\ref{def:addingedge},
\item Performing a   twist along  the edge $\phi'$,
\item Removing the edge $\phi'$ as in Definition~\ref{def:addingedge}.
\end{compactenum}
\end{definition}

\begin{remark}
	\label{remark:TwistArcFaceCoincide}
Definition~\ref{def:dtnotface}  extends Definition~\ref{def:dtnull} of twists along loops. If $\phi=\beta^{\pm 1}$ for a loop $\beta$, then $D_\phi=D_\beta$.  This follows with Lemma \ref{rem:shorthand} and the diagrammatic computation

\begin{align}
		\begin{tikzpicture}[scale=.5]
			\begin{scope}[shift={(0,0)}]
				\draw[line width=1pt, color=black] (-1,0)--(4,0);
				\draw[color=red, line width=1.5 pt, <-,>=stealth] (0,0).. controls (0,2) and (3,2) .. (3,0);
				\draw[color=blue, line width=1.5 pt, <-,>=stealth] (1.5,0)--(1.5,3);
				\node at (2.5,1.2) [color=red, anchor=south]{$b$};
				\node at (1.5,3)[color=blue, anchor=south] {$a$};
			\end{scope}
			\draw[color=black, line width=1.5pt,->,>=stealth] (4.5,2) -- (6.5,2);
			\node at (5.5,2)[anchor=south]{$C_{\phi}$};
			\draw[color=black, line width=1pt,->,>=stealth] (1.5,-0.5) -- (1.5,-2.5);
			\node at (1.6,-1.5) [anchor=west, color=black] {$D_{\beta}$};
			\begin{scope}[shift={(8,0)}]
				\draw[line width=1pt, color=black] (-1,0)--(6,0);
				\draw[color=red, line width=1.5 pt, <-,>=stealth,style=dashed] (-0.5,0).. controls (-0.5,3.5) and (5.5,3.5) .. (5.5,0);
				\draw[color=red, line width=1.5 pt, <-,>=stealth] (1,0).. controls (1,2) and (4,2) .. (4,0);
				\draw[color=blue, line width=1.5 pt, <-,>=stealth] (2.5,0)--(2.5,3);
				\node at (4.6,2) [color=red, anchor=south]{$b_{(2)}$};
				\node at (3.5,1.2) [color=red, anchor=south]{$b_{(1)}$};
				\node at (2.5,3)[color=blue, anchor=south] {$1$};
			\end{scope}
			\draw[color=black, line width=1pt,->,>=stealth] (10.5,-0.5) -- (10.5,-2.5);
			\node at (10.5,-1.5)[anchor=west]{$D_{\phi'}$};
			\begin{scope}[shift={(8,-7)}]
				\draw[line width=1pt, color=black] (-1,0)--(6,0);
				\draw[color=red, line width=1.5 pt, <-,>=stealth,style=dashed] (-0.5,0).. controls (-0.5,3.5) and (5.5,3.5) .. (5.5,0);
				\draw[color=red, line width=1.5 pt, <-,>=stealth] (1,0).. controls (1,2) and (4,2) .. (4,0);
				\draw[color=blue, line width=1.5 pt, <-,>=stealth] (2.5,0)--(2.5,3);
				\node at (4.6,2) [color=red, anchor=south]{$b_{(5)}$};
				\node at (3.5,-0.1) [color=red, anchor=north]{$b_{(4)}b_{(1)}S(b_{(2)})$};
				\node at (2.5,3)[color=blue, anchor=south] {$b_{(3)}a$};
			\end{scope}
			\draw[color=black, line width=1pt,<-,>=stealth] (4.5,-5) -- (6.5,-5);
			\node at (5.5,-5)[anchor=north]{$\epsilon_{\phi'}$};
			\begin{scope}[shift={(0,-7)}]
				\draw[line width=1pt, color=black] (-1,0)--(4,0);
				\draw[color=red, line width=1.5 pt, <-,>=stealth] (0,0).. controls (0,2) and (3,2) .. (3,0);
				\draw[color=blue, line width=1.5 pt, <-,>=stealth] (1.5,0)--(1.5,3);
				\node at (2.5,1.2) [color=red, anchor=south]{$b_{(2)}$};
				\node at (1.5,3)[color=blue, anchor=south] {$b_{(1)}a$};
			\end{scope}
		\end{tikzpicture}
\end{align}
\end{remark}

Definition \ref{def:dtnotface} extends Definition \ref{def:dtnull} from edges to  face paths. In the following, we will also need to consider twists along certain closed paths that are not face paths.  These will be defined in Section \ref{sec:gervaismap}. The basic idea is to slide the edge ends that prevent a closed path from being a face path out of the way, either along  the path or along other edges, twist along the resulting face path, and then slide the edge ends back in their original position. 
For this, we require a number of technical results on  the interaction between twists, slides and adding edges.

\begin{lemma} \label{lem:slideDT}  Let  $\alpha$ be a loop based at a ciliated vertex and  $\beta$ an edge whose starting and target vertex are ciliated, such that the following  are defined. Then:
\begin{compactenum}
\item Sliding both  ends of  $\alpha$ and all edge ends between them along  $\beta$ commutes with $D_\alpha$.
\item Sliding both ends of  $\alpha$ in the same direction along different sides of  $\beta$  commutes with  $D_\alpha$.
\item Sliding an end of an edge $\gamma$  into  or out of a loop $\alpha$ along an edge $\delta$  commutes with  $D_\alpha$.
\end{compactenum}
\end{lemma}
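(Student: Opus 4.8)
The three statements of Lemma \ref{lem:slideDT} all assert that a certain slide commutes with the twist $D_\alpha$ along a loop $\alpha$. My plan is to reduce each statement to the commutativity relation for slides along face paths, Lemma \ref{lemma:SlideCommutativityRelation}, together with the generalised pentagon relation of Corollary \ref{corollary:GeneralizedPentagon}, using Lemma \ref{rem:shorthand} to replace the multiple edge ends inside the loop by a single auxiliary edge. The key point is that $D_\alpha$ is, by Definition \ref{def:dtnull}, a power of a single edge slide $S_{\alpha^{\pm L/R}}$ applied to every edge end between $\st(\alpha)$ and $\ta(\alpha)$; after adjoining an auxiliary edge $\mu$ at the base vertex $v$ of $\alpha$ (with its starting end between $\st(\alpha)$ and $\ta(\alpha)$ and a univalent target) and sliding all other intervening ends onto $\mu$, Lemma \ref{rem:shorthand} lets us write $D_\alpha = \epsilon_\mu\circ S^{\inv}\circ S_{\alpha^{L}}\circ S\circ\eta_\mu$ (up to the obvious orientation case distinctions), so that up to conjugation by the auxiliary slides the twist is a \emph{single} slide $S_{\alpha^L}$ along the loop $\alpha$, now viewed as a face path at $v$.

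For part 1, I would then observe that sliding both ends of $\alpha$ (and the intervening ends) along $\beta$ is a composite of edge slides $S_\beta$ which, under the groupoid isomorphism induced by a slide, replaces the face path $\alpha$ by the conjugated face path $S_\beta(\alpha) = \beta^{\mp1}\circ\alpha\circ\beta^{\pm1}$ — still a loop, hence still a face path at the new base vertex. By Lemma \ref{lemma:SlideCommutativityRelation} (with $\gamma=\alpha$ and $\rho$ the face path along $\beta$), $S_{S_\beta(\alpha)}\circ S_\beta = S_{S_\alpha(\beta)}\circ S_\alpha$; since $\beta$ is not traversed by $\alpha$ and the relevant ends slide cleanly, $S_\alpha(\beta)=\beta$, so $S_{S_\beta(\alpha)}\circ S_\beta = S_\beta\circ S_\alpha$, i.e.\ the $\beta$-slide intertwines $S_\alpha$ with $S_{S_\beta(\alpha)}$. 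Raising to the appropriate power and conjugating back by the auxiliary slides $S,\eta_\mu,\epsilon_\mu$ (which commute with the $\beta$-slides because they affect disjoint copies of $H$ or satisfy \eqref{eq:CoinvariantArcSlideCompatibility} and \eqref{eq:ComplementSlideGeneral}) yields the claim: the $\beta$-slide carries $D_\alpha$ to $D_{S_\beta(\alpha)}$, but by the orientation-independence noted after Definition \ref{def:dtnull} and the fact that $S_\beta(\alpha)$ is again a loop around the same handle, $D_{S_\beta(\alpha)}=D_\alpha$, so the $\beta$-slide commutes with $D_\alpha$. Part 2 is the analogous argument where now the two ends of $\alpha$ slide along opposite sides of $\beta$ in the same direction: here the groupoid transformation again sends $\alpha$ to a freely homotopic loop (the slides cancel up to the conjugating element), and the same combination of Lemma \ref{lemma:SlideCommutativityRelation} and the adjacent/opposite-end commutativity of Corollary \ref{cor:adjacentendslide} gives commutation with $D_\alpha$. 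For part 3, sliding an end of an unrelated edge $\gamma$ into or out of the loop along $\delta$ leaves $\alpha$ itself unchanged as a face path; by Lemma \ref{lemma:SlideCommutativityRelation} applied with $\gamma$ the loop $\alpha$ and $\rho$ the $\delta$-slide path (neither a subpath of the other, or $\delta$ disjoint from $\alpha$), and by the fact that the $\delta$-slide only changes the \emph{number} of ends between $\st(\alpha)$ and $\ta(\alpha)$ in a way compensated by Lemma \ref{rem:shorthand}, the slide commutes with the single-slide presentation of $D_\alpha$ and hence with $D_\alpha$.

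In all three cases, after the reduction via Lemma \ref{rem:shorthand} the argument is a diagrammatic commutation of two slides that overlap in one of the benign configurations (i) or (iii) of Figure \ref{fig:FacePathTransformationCases}, together with bookkeeping of the auxiliary edge $\mu$; alternatively, each identity can be verified by a direct computation in Sweedler notation using the Hopf-module compatibility and the properties \eqref{eq:tplusprop1}, \eqref{eq:tplusprop2} of the involution $T$, exactly as in the proofs of Proposition \ref{prop:vertfacecomp} and Proposition \ref{prop: relationsbene}. I expect the main obstacle to be the case analysis for the relative cyclic positions of the sliding ends and the ends of $\alpha$ at the shared vertex — in particular ensuring in part 2 that sliding along \emph{different sides} of $\beta$ really does leave $\alpha$ a face path and produces a freely homotopic loop, so that $D_{S_\beta(\alpha)}=D_\alpha$, and checking that the cilium is never crossed so that the single-slide presentation of $D_\alpha$ via Lemma \ref{rem:shorthand} remains valid throughout. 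Once the configurations are pinned down, each commutation is a routine application of Lemma \ref{lemma:SlideCommutativityRelation}, Corollary \ref{corollary:GeneralizedPentagon} and Corollary \ref{cor:adjacentendslide}.
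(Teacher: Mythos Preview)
Your approach is genuinely different from the paper's. The paper proves parts 1 and 2 by short, explicit Sweedler computations: after invoking Lemma~\ref{rem:shorthand} to reduce to a single intervening edge end, it simply writes down the effect of $D_\alpha$ and of the $\beta$-slide on the edge labels and checks that the two squares commute. Part 3 is dispatched in a single sentence as an instance of the pentagon relation from Proposition~\ref{prop: relationsbene} (applied to the twist along $\alpha$, which moves the ends of $\gamma$ and $\delta$, and the slide along $\delta$). Your plan instead aims for a uniform structural argument via Lemma~\ref{lemma:SlideCommutativityRelation}.

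For part~3 your route and the paper's essentially coincide, since the pentagon is precisely the non-trivial case in the proof of Lemma~\ref{lemma:SlideCommutativityRelation}. For parts~1 and~2, however, your sketch has a genuine gap. The $\beta$-slides you need to commute with $D_\alpha$ move the ends of $\alpha$ itself. When you invoke Lemma~\ref{lemma:SlideCommutativityRelation} with $\gamma=\alpha$ and a single $\rho=\beta$, the image $S_\beta(\alpha)$ is no longer a loop (only one end of $\alpha$ has moved), so the twist along it is undefined and the iteration does not proceed as stated. Your key step ``$D_{S_\beta(\alpha)}=D_\alpha$'' is then either tautological (both are the same morphism of $H^{\otimes E}$, since the edge-slide morphisms depend only on the copies of $H$ involved, not on the cyclic position) --- in which case it contributes nothing --- or it is the conjugation identity $B\circ D_\alpha\circ B^{-1}=D_\alpha$ that you are trying to prove. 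In effect you are reaching for the content of Lemma~\ref{lemma:SlideDehnCompatibilityGeneral}, which in the paper is proved \emph{after} Lemma~\ref{lem:slideDT} (using part~3 of it). The auxiliary-edge bookkeeping with $\mu$ also does not commute cleanly with the $\beta$-slides: after $\eta_\mu$ the end $\st(\mu)$ sits between the ends of $\alpha$, so the ``slide everything along $\beta$'' must include it, and you then have to push $\eta_\mu$, $S$, $S^{-1}$, $\epsilon_\mu$ past a $\beta$-slide that touches $\mu$ --- this is exactly the kind of interaction that Lemma~\ref{rem:shorthand} does \emph{not} address. The paper's two-line Sweedler checks avoid all of this; your fallback remark (``alternatively, each identity can be verified by a direct computation in Sweedler notation'') is in fact the paper's actual proof.
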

\begin{proof}
	Using Lemma \ref{rem:shorthand} we replace the edge ends between the ends of $\alpha$ with a single edge labelled with $c$.  We then verify 1.~by a direct diagrammatic computation, 
\begin{align*}
\begin{tikzpicture}[scale=.6]
\begin{scope}
\draw[line width=1pt] (-5,0)--(4,0);
\draw[color=red, line width=1.5 pt, ->,>=stealth] (-4,0).. controls (-4,2) and (-1,2) .. (-1,0);
\draw[color=blue, line width=1.5 pt, ->,>=stealth] (0,0).. controls (0,2) and (3,2) .. (3,0);
\draw[color=violet, line width=1.5 pt, ->,>=stealth] (1.5,0)-- (1.5,2.5);
\node at (-2.5,1.5)[anchor=south, color=red]{$b$};
\node at (1.7,1.5)[anchor=south west, color=blue]{$a$};
\node at (1.5,2.5)[anchor=south, color=violet]{$c$};
\end{scope}
\draw[line width=1pt, ->,>=stealth] (-.5,-1)--node[right]{$S_{\beta^{L}}^{-3}$}(-.5,-2.5);
\draw[line width=1pt, ->,>=stealth] (5,1)--(7,1);
\node at (6,1)[anchor=south]{$D_\alpha$};
\draw[line width=1pt, ->,>=stealth] (12.5,-1)--node[right]{$S_{\beta^{L}}^{-3}$}(12.5,-2.5);
\begin{scope}[shift={(13,0)}]
\draw[line width=1pt] (-5,0)--(4,0);
\draw[color=red, line width=1.5 pt, ->,>=stealth] (-4,0).. controls (-4,2) and (-1,2) .. (-1,0);
\draw[color=blue, line width=1.5 pt, ->,>=stealth] (0,0).. controls (0,2) and (3,2) .. (3,0);
\draw[color=violet, line width=1.5 pt, ->,>=stealth] (1.5,0)-- (1.5,2.5);
\node at (-2.5,1.5)[anchor=south, color=red]{$b$};
\node at (1.7,1.5)[anchor=south west, color=blue]{$\low a 1$};
\node at (1.5,2.5)[anchor=south, color=violet]{$c ST( \low a 2)$};
\end{scope}
\begin{scope}[shift={(0,-5)}]
\draw[line width=1pt] (-5,0)--(4,0);
\draw[color=blue, line width=1.5 pt, ->,>=stealth] (-4,0).. controls (-4,2) and (-1,2) .. (-1,0);
\draw[color=red, line width=1.5 pt, ->,>=stealth] (0,0).. controls (0,2) and (3,2) .. (3,0);
\draw[color=violet, line width=1.5 pt, ->,>=stealth] (-2.5,0)-- (-2.5,2.5);
\node at (-2.5,0)[anchor=north, color=blue]{$ S^\inv(\low b 3) a  \low b 1$};
\node at (1.7,1.5)[anchor=south west, color=red]{$\low b 4 $};
\node at (-2.5,2.5)[anchor=south, color=violet]{$c  \low b 2$};
\end{scope}
\draw[line width=1pt, ->,>=stealth] (5,-4)--(7,-4);
\node at (6,-4)[anchor=south]{$D_{\alpha}$};
\begin{scope}[shift={(13,-5)}]
\draw[line width=1pt] (-5,0)--(4,0);
\draw[color=blue, line width=1.5 pt, ->,>=stealth] (-4,0).. controls (-4,2) and (-1,2) .. (-1,0);
\draw[color=red, line width=1.5 pt, ->,>=stealth] (0,0).. controls (0,2) and (3,2) .. (3,0);
\draw[color=violet, line width=1.5 pt, ->,>=stealth] (-2.5,0)-- (-2.5,2.5);
\node at (-2.5,0)[anchor=north, color=blue]{$ S^\inv(\low b 3) \low a 1  \low b 1$};
\node at (1.7,1.5)[anchor=south west, color=red]{$\low b 4 $};
\node at (-2.8,2.5)[anchor=south, color=violet]{$c ST(\low a 2) \low b 2$};
\end{scope}
\end{tikzpicture}
\end{align*}
in which 
$b$ is drawn as a loop for notational convenience. 
This proves 1.~for the given edge orientations. The claim for other edge orientations follows by applying the involution $T$.
We verify claim 2. by the following computation, where the loop $\alpha$ carries the label $a$,  the edge $\beta$  the label $b$ and $\beta$ is drawn as a loop for convenience. Note that in this situation, there is only one end $\beta$ between the starting and target end of $\alpha$ by assumption.
\begin{align*}
&\begin{tikzpicture}[scale=.6]
\begin{scope}[shift={(-5,0)}]
	\draw[line width=1pt, color=black](-3,0)--(5,0);
	\draw[color=red, line width=1.5pt, <-,>=stealth] (-2,0).. controls (-2,2) and (2,2)..(2,0);
	\node at (0,1.5) [anchor=south, color=red]{$b$};
	\draw[color=blue, line width=1.5pt, <-,>=stealth] (0,0).. controls (0,2) and (4,2)..(4,0);
	\node at (2,1.5) [anchor=south, color=blue]{$a$};
\end{scope}
\draw[line width=1pt, color=black, ->,>=stealth](0,1)--(2,1);
\node at (1,1) [anchor=south]{$D_{\alpha}$};
\begin{scope}[shift={(5,0)}]
	\draw[line width=1pt, color=black](-3,0)--(5,0);
	\draw[color=red, line width=1.5pt, <-,>=stealth] (-2,0).. controls (-2,2) and (2,2)..(2,0);
	\node at (0,1.5) [anchor=south, color=red]{$bS(a_{(1)})$};
	\draw[color=blue, line width=1.5pt, <-,>=stealth] (0,0).. controls (0,2) and (4,2)..(4,0);
	\node at (2,1.5) [anchor=south west , color=blue]{$a_{(2)}$};
\end{scope}
\draw[line width=1pt, color=black, ->,>=stealth](-4,-1)--(-4,-2);
\node at (-3.9,-1.5) [anchor=west]{$S_{\beta^{L}}S_{\beta^{R}}$};
\draw[line width=1pt, color=black, ->,>=stealth](6,-1)--(6,-2);
\node at (6.1,-1.5) [anchor=west]{$S_{\beta^{L}}S_{\beta^{R}}$};
\begin{scope}[shift={(-5,-4)}]
	\draw[line width=1pt, color=black](-3,0)--(5,0);
	\draw[color=blue, line width=1.5pt, ->,>=stealth] (-2,0).. controls (-2,2) and (2,2)..(2,0);
	\node at (-.5,-.1) [anchor=north, color=blue]{$b_{(1)}aT(b_{(3)})$};
	\draw[color=red, line width=1.5pt, <-,>=stealth] (0,0).. controls (0,2) and (4,2)..(4,0);
	\node at (4,-.1) [anchor=north, color=red]{$b_{(2)}$};
\end{scope}
\draw[line width=1pt, color=black, ->,>=stealth](0,-3)--(2,-3);
\node at (1,-3) [anchor=south]{$D_{\alpha}$};
\begin{scope}[shift={(5,-4)}]
	\draw[line width=1pt, color=black](-3,0)--(5,0);
	\draw[color=blue, line width=1.5pt, ->,>=stealth] (-2,0).. controls (-2,2) and (2,2)..(2,0);
	\node at (-.5,-.1) [anchor=north, color=blue]{$b_{(1)}a_{(1)}T(b_{(3)})$};
	\draw[color=red, line width=1.5pt, <-,>=stealth] (0,0).. controls (0,2) and (4,2)..(4,0);
	\node at (4,-.1) [anchor=north, color=red]{$b_{(2)}S(a_{(2)})$};
\end{scope}
\end{tikzpicture}
\end{align*}
Claim 3. is the pentagon relation from Figure~\ref{fig:sliderel5}  applied to the twist along $\alpha$, which moves  the ends  of $\gamma$ and $\delta$, and the slide along $\delta$.
\end{proof}

The third relation in Lemma \ref{lem:slideDT} implies a more general relation between  slides and twists.

\begin{lemma}
	\label{lemma:SlideDehnCompatibilityGeneral}
	 Let $\gamma$ be a face path and $\rho$ a closed face path based at a ciliated vertex,  satisfying one of the conditions of Definition~\ref{definition:FacePathTranslationComplementary} and   such that $S_{\gamma}(\rho)$ is a face path. Then one has
	\begin{align}
		S_{\gamma} D_{\rho} = D_{S_{\gamma}(\rho)} S_{\gamma}.
		\label{eq:SlideDehnCompatibilityGeneral}
	\end{align}
\end{lemma}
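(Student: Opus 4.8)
The plan is to reduce the general identity \eqref{eq:SlideDehnCompatibilityGeneral} to the case already handled by Lemma \ref{lem:slideDT}.3, using the technology of adding edges to face paths developed in Section \ref{subsec:addedge}. The key observation is that both the twist $D_\rho$ and the twist $D_{S_\gamma(\rho)}$ are, by Definition \ref{def:dtnotface}, given by adding an edge to the respective closed face path, performing a twist along the added \emph{edge}, and removing it again. So writing $\rho'$ for an edge added to $\rho$ and noting that $S_\gamma(\rho')$ is the corresponding edge added to $S_\gamma(\rho)$ (since $S_\gamma$ acts on the path groupoid and $\gamma$, $\rho$, $\rho'$ are arranged so that sliding along $\gamma$ sends the loop $\rho'$ to the loop $S_\gamma(\rho)'$), we have
\begin{align*}
D_\rho = \epsilon_{\rho'}\circ D_{\rho'}\circ C_\rho, \qquad
D_{S_\gamma(\rho)} = \epsilon_{S_\gamma(\rho)'}\circ D_{S_\gamma(\rho)'}\circ C_{S_\gamma(\rho)}.
\end{align*}

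\textbf{Main steps.} First I would invoke the compatibility of slides along face paths with adding edges, namely \eqref{eq:ComplementSlideGeneral}: $S_{C_\rho(\gamma)}\circ C_\rho = C_{S_\gamma(\rho)}\circ S_\gamma$, which holds under exactly the hypotheses imposed here (one of the conditions of Definition \ref{definition:FacePathTranslationComplementary} plus $S_\gamma(\rho)$ a face path). This lets me move the slide $S_\gamma$ past the edge-addition $C_\rho$. Second, since the twist $D_{\rho'}$ is a twist along a single edge $\rho'$, and the slide $S_{C_\rho(\gamma)}$ (which is a face-path slide that either avoids $\rho'$ or slides an end of $\rho'$ into or out of itself, depending on which case of Definition \ref{definition:FacePathTranslationComplementary} applies) commutes with $D_{\rho'}$ — this is precisely Lemma \ref{lem:slideDT}.3, applied iteratively along the edges of the face path $C_\rho(\gamma)$ via the same induction used in the proof of Lemma \ref{lemma:SlideCommutativityRelation} — I can move the slide past the twist: $S_{C_\rho(\gamma)}\circ D_{\rho'} = D_{S_{C_\rho(\gamma)}(\rho')}\circ S_{C_\rho(\gamma)}$, and $S_{C_\rho(\gamma)}(\rho') = S_\gamma(\rho)'$ by construction. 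Third, I would deal with the final $\epsilon_{\rho'}$: since removing an edge commutes with any slide not affecting that edge, and $S_{C_\rho(\gamma)}$ either does not touch $\rho'$ or does so only through the added-edge mechanism that $\epsilon_{\rho'}$ undoes, I can move $\epsilon_{\rho'}$ to the outside, using \eqref{eq:ComplementDelete} and \eqref{eq:ComplementSlideGeneral} once more if needed. Chaining these three moves converts $S_\gamma\circ D_\rho = S_\gamma\circ\epsilon_{\rho'}\circ D_{\rho'}\circ C_\rho$ into $\epsilon_{S_\gamma(\rho)'}\circ D_{S_\gamma(\rho)'}\circ C_{S_\gamma(\rho)}\circ S_\gamma = D_{S_\gamma(\rho)}\circ S_\gamma$, which is the claim.

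\textbf{Handling the cases.} One must verify the bookkeeping separately in the three sub-cases of Definition \ref{definition:FacePathTranslationComplementary}. When $\rho$ and $\gamma$ do not overlap, $S_\gamma(\rho)=\rho$ and the slide $S_\gamma$ genuinely commutes with everything in sight, so the identity is almost immediate. When $\rho$ is a proper subpath of $\gamma$, the added edge $\rho'$ becomes a genuine interior edge of the modified path $C_\rho(\gamma)$, and sliding along $C_\rho(\gamma)$ passes an end of $\rho'$ along other edges — this is exactly the situation of Lemma \ref{lem:slideDT}.3. When $\gamma$ is a subpath of $\rho$ (so $\rho$ closed forces $\gamma$ to be a proper subpath), the argument is symmetric after relabelling. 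In all cases, the reduction to Lemma \ref{lem:slideDT}.3 combined with identities \eqref{eq:ComplementDelete} and \eqref{eq:ComplementSlideGeneral} closes the proof.

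\textbf{Expected obstacle.} The main difficulty is not any single computation — each individual move is licensed by an already-proven lemma — but rather the careful verification that $S_{C_\rho(\gamma)}(\rho') = S_\gamma(\rho)'$ as oriented loops, i.e. that ``adding an edge, then sliding'' and ``sliding, then adding an edge'' produce genuinely the same added edge (up to the choices of orientation and base vertex, which by the remark after Definition \ref{def:addingedge} do not affect the resulting morphism $C$). This is a path-groupoid compatibility statement that needs to be stated precisely and checked against Figure \ref{fig:FacePathTransformationCases} in each of the three cases; once it is in hand, the algebraic identity follows formally. An induction over the number of edges of $\gamma$, mirroring the proof of Lemma \ref{lemma:SlideCommutativityRelation}, reduces everything to $\gamma$ a single edge, where the statement is Lemma \ref{lem:slideDT}.3 directly.
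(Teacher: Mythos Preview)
Your overall strategy coincides with the paper's: decompose $D_\rho = \epsilon_{\rho'}\circ D_{\rho'}\circ C_\rho$ via Definition~\ref{def:dtnotface}, use identity~\eqref{eq:ComplementSlideGeneral} to commute the slide past $C_\rho$, reduce to commuting a slide along the face path $C_\rho(\gamma)$ with the twist $D_{\rho'}$ along a \emph{single} loop edge, and finally peel off $\epsilon_{\rho'}$. This is exactly the paper's three-step reduction, and your identification of~\eqref{eq:ComplementSlideGeneral} as the key compatibility is correct.

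However, there is a genuine gap in your base case. You assert that once $\rho'$ is a single loop and $\gamma$ is a single edge, the commutativity $S_\gamma D_{\rho'} = D_{\rho'} S_\gamma$ is ``Lemma~\ref{lem:slideDT}.3 directly''. That lemma only covers the configuration where the slide moves an edge end \emph{into or out of} the loop $\rho'$, i.e.\ when exactly one end of $\gamma$ lies between the ends of $\rho'$. The paper's proof shows that a threefold case analysis on the position of the ends of $\gamma$ relative to $\rho'$ is required: (a) neither end between (trivial, disjoint supports); (b) exactly one end between (this is Lemma~\ref{lem:slideDT}.3); and (c) \emph{both} ends of $\gamma$ between the ends of $\rho'$. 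Case (c) is not an instance of Lemma~\ref{lem:slideDT}.3 --- the sliding edge end stays inside the loop throughout --- and the paper handles it by an explicit Sweedler-notation computation. Your proposal does not account for this case, and the induction over the length of $\gamma$ that you describe cannot avoid it, since after factoring $S_{C_\rho(\gamma)}$ into edge slides some of those individual slides may well be along edges lying entirely inside $\rho'$.

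A secondary imprecision: you describe $S_{C_\rho(\gamma)}$ as possibly ``sliding an end of $\rho'$ into or out of itself''. In fact the paper observes (and uses) that $S_{C_\rho(\gamma)}$ does \emph{not} move any end of $\rho'$; this is what licenses invoking the single-loop base case and what makes the identification $S_\gamma(\rho)' = \rho'$ immediate rather than something to be verified. So your ``expected obstacle'' is less of an issue than you anticipate, while the missing direct computation for case (c) is the actual work that remains.
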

\begin{proof}
1.~We first prove \eqref{eq:SlideDehnCompatibilityGeneral} under the  assumptions that
		(i) $\rho$ and $\gamma$ each involve only a single edge,
		(ii) $\st(\rho)< \ta(\rho)$, (iii) the slide is along the left  of $\gamma$ to the target, (iv)
		the sliding edge end  is not an end of $\rho$. 
		Note that (ii) and (iii) do not restrict generality as twists and slides are invariant under edge orientation reversal. 
	As a consequence of (i)-(iv) we have $S_{\gamma^{L}}(\rho) = \rho$ and $D_{\rho}=S_{\rho^{L}}^{n}$, where $n$ is the number of edge ends between $\st(\rho)$ and $\ta(\rho)$.

	If neither the starting nor the target end of $\gamma$ is between the ends of $\rho$,  it follows directly that  $S_{\gamma^{L}}$ commutes with $D_{\rho}$.
	If only one of the ends of $\gamma$ is between the ends of $\rho$, then $S_{\gamma^{L}}$  moves an edge end that does not belong to $\rho$ into or out of the loop $\rho$. The slide $S_{\gamma^L}$ then commutes with $D_{\rho}$ by Lemma~\ref{lem:slideDT}, 3.
	If both end points of $\gamma$ are between $s(\rho)$ and $t(\rho)$, we verify  that $S_{\gamma^L}$ and $D_{\rho}$ commute by the following direct computation: 	
\begin{align*}
\begin{tikzpicture}[scale=.6]
\begin{scope}
\draw[line width=1pt] (-5,0)--(4,0);
\draw[color=red, line width=1.5 pt, <-,>=stealth] (-4,0).. controls (-4,3) and (3,3) .. (3,0);
\draw[color=blue, line width=1.5 pt, <-,>=stealth] (-2,0).. controls (-2,2) and (1,2) .. (1,0);
\draw[color=violet, line width=1.5 pt, ->,>=stealth] (-.5,0)-- (-.5,3);
\node at (-4,-1)[anchor=south, color=red]{$b$};
\node at (1,-1)[anchor=south, color=blue]{$c$};
\node at (0,3)[anchor=west, color=violet]{$a$};
\end{scope}
\draw[line width=1pt, ->,>=stealth] (-.5,-1)--(-.5,-2.5);
\draw[line width=1pt, ->,>=stealth] (5,1)--(7,1);
\node at (6,1)[anchor=south]{$D_\rho$};
\node at (-.5,-1.5)[anchor=east]{$S_{\gamma}$};
\node at (12.5,-1.5)[anchor=east]{$S_{\gamma}$};
\draw[line width=1pt, ->,>=stealth] (12.5,-1)--(12.5,-2.5);
\begin{scope}[shift={(13,0)}]
\draw[line width=1pt] (-5,0)--(4,0);
\draw[color=red, line width=1.5 pt, <-,>=stealth] (-4,0).. controls (-4,3) and (3,3) .. (3,0);
\draw[color=blue, line width=1.5 pt, <-,>=stealth] (-2,0).. controls (-2,2) and (1,2) .. (1,0);
\draw[color=violet, line width=1.5 pt, ->,>=stealth] (-.5,0)-- (-.5,3);
\node at (-4,-.2)[anchor=north, color=red]{$b_{(4)}$};
\node at (2,-.1)[anchor=north, color=blue]{$b_{(3)}cS(b_{(1)})$};
\node at (0,3)[anchor=west, color=violet]{$aS(b_{(2)})$};
\end{scope}
\begin{scope}[shift={(0,-6)}]
\draw[line width=1pt] (-5,0)--(4,0);
\draw[color=red, line width=1.5 pt, <-,>=stealth] (-4,0).. controls (-4,3) and (3,3) .. (3,0);
\draw[color=blue, line width=1.5 pt, <-,>=stealth] (-2,0).. controls (-2,2) and (1,2) .. (1,0);
\draw[color=violet, line width=1.5 pt, ->,>=stealth] (-.5,0)-- (-.5,3);
\node at (-4,-.1)[anchor=north, color=red]{$b$};
\node at (1,-.1)[anchor=north, color=blue]{$c_{(2)}$};
\node at (0,3)[anchor=west, color=violet]{$aS(c_{(1)})$};
\end{scope}
\draw[line width=1pt, ->,>=stealth] (5,-4)--(7,-4);
\node at (6,-4)[anchor=south]{$D_{S_\gamma(\rho)}$};
\begin{scope}[shift={(13,-6)}]
\draw[line width=1pt] (-5,0)--(4,0);
\draw[color=red, line width=1.5 pt, <-,>=stealth] (-4,0).. controls (-4,3) and (3,3) .. (3,0);
\draw[color=blue, line width=1.5 pt, <-,>=stealth] (-2,0).. controls (-2,2) and (1,2) .. (1,0);
\draw[color=violet, line width=1.5 pt, ->,>=stealth] (-.5,0)-- (-.5,3);
\node at (-4,-.1)[anchor=north, color=red]{$b_{(4)}$};
\node at (1,-.1)[anchor=north, color=blue]{$b_{(3)}c_{(2)}S(b_{(1)})$};
\node at (0,3)[anchor=west, color=violet]{$aS(c_{(1)})S(b_{(2)})$};
\end{scope}
\end{tikzpicture}
\end{align*}
This proves the claim under assumption (i)-(iv) for $\st(\gamma) < \ta(\gamma)$, and an analogous computation proves it  for $\ta(\gamma) < \st(\gamma)$.

2.~If $\rho$ traverses a single edge and $\gamma$ is a face path such that $S_{\gamma}$ does not slide an end of $\rho$, then  $S_{\gamma}(\rho) =\rho$ and 
\eqref{eq:SlideDehnCompatibilityGeneral} follows from 1.~by factoring $S_{\gamma}$ into slides along edges as in Definition~\ref{def:slideface}.

3.~Suppose now that $\gamma$ and $\rho$ are face paths satisfying the conditions of Lemma~\ref{lemma:SlideDehnCompatibilityGeneral}.
	By Definition \ref{def:dtnotface} the twist $D_{\rho}$ can be decomposed as $D_\rho=\epsilon_{\rho'} \circ D_{\rho'} \circ C_{\rho}$. We then obtain
	\begin{align*}
		D_{S_{\gamma}(\rho)}S_{\gamma} 
		=
		\epsilon_{S_{\gamma}(\rho)'}D_{S_{\gamma}(\rho)'}C_{S_{\gamma}(\rho)} S_{\gamma}
		\overset{\text{\eqref{eq:ComplementSlideGeneral}}}{=}
		\epsilon_{\rho'} D_{\rho'} S_{C_{\rho}(\gamma)} C_{\rho}
		\overset{\text{2.}}{=} 
		\epsilon_{\rho'} S_{C_{\rho}(\gamma)} D_{\rho'} C_{\rho}
		=
		S_{\gamma} \epsilon_{\rho'} D_{\rho'}C_{\rho}
		\stackrel{\ref{def:dtnotface}}=
		S_{\gamma} D_{\rho}.
	\end{align*}
Note that we can apply 2.~because $\rho'$ is an edge and $S_{C_{\rho}(\gamma)}$ does not move an end of $\rho'$.
The fourth step is immediate if $\gamma$ does not traverse $\rho'$. Otherwise, it follows by decomposing $S_{\gamma} = S_{\gamma_{1}} S_{(\rho')^{L}}S_{\gamma_{2}}$ and applying the second equation in~\eqref{eq:FlatFaceComplementt} to the face $(\rho')^{-1} \circ\rho$, for which \eqref{eq:FlatFaceComplement} holds.
\end{proof}

\begin{corollary} 
	\label{corollary:SlideIntoFacePathDehnCommute}
	Let $\alpha:v\to w,\beta:w\to v$ be face paths  with $v$ ciliated  and
	 $n$ edge ends between the target end of $\beta$ and the starting end of $\alpha$, such that sliding them along $\alpha$ or $\beta$ yields a closed face path  $\gamma=\alpha\circ\beta$. Then
\begin{align}
	S_{\alpha}^{-n} \circ D_{\gamma}\circ S_{\alpha}^{n} = S_{\beta}^{n} \circ D_{\gamma}\circ S_{\beta}^{-n}.
	\label{eq:SimplePathDehnTwistCoherency}
\end{align}	
\end{corollary}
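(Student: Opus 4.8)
The plan is to reduce the statement of Corollary~\ref{corollary:SlideIntoFacePathDehnCommute} to Lemma~\ref{lemma:SlideDehnCompatibilityGeneral}, which already encodes the compatibility between a slide along a face path and a twist along a closed face path. The key observation is that the $n$ edge ends sitting between $\ta(\beta)$ and $\st(\alpha)$ can be slid, via $S_\alpha$ or via $S_\beta$, onto one of the two halves of the closed path $\gamma=\alpha\circ\beta$; and the resulting closed face path is, in both cases, a cyclic permutation of $\gamma$ (or of $\gamma$ with those $n$ ends attached differently), so the twist along it is well-defined.

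First I would set up notation: by the hypothesis, sliding the $n$ edge ends along $\alpha$ produces a closed face path, call it $\gamma_\alpha$, and similarly sliding them along $\beta$ produces $\gamma_\beta$; both equal $\gamma$ as closed face paths because the underlying ribbon graph is unchanged (the ends that are moved are exactly the ones obstructing $\gamma$ from being a face path, and sliding them along either half puts them out of the way). Then I would apply Lemma~\ref{lemma:SlideDehnCompatibilityGeneral} $n$ times to the face path $\alpha$ and the closed face path $\gamma$, checking that the hypotheses hold at each step: $S_\alpha$ does not slide an end of $\gamma$ once we have moved the obstructing ends out, and $S_\alpha(\gamma)$ remains a face path. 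This gives $S_\alpha^{n}\circ D_\gamma = D_{S_\alpha^n(\gamma)}\circ S_\alpha^{n}$, hence $S_\alpha^{-n}\circ D_\gamma\circ S_\alpha^{n} = D_{S_\alpha^{-n}(\gamma)}$. Symmetrically, $S_\beta^{n}\circ D_\gamma\circ S_\beta^{-n} = D_{S_\beta^{n}(\gamma)}$. So it remains to identify $S_\alpha^{-n}(\gamma)$ with $S_\beta^{n}(\gamma)$ as closed face paths — and this follows because both describe the same underlying loop in the same ribbon graph (the configuration with the $n$ ends in their original position, relative to which $\gamma$ was formed), and a twist depends only on the closed face path up to the data fixed by the ribbon graph and cilium.

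Alternatively, and perhaps more cleanly, I would use Lemma~\ref{lem:slideDT}.3 together with Lemma~\ref{rem:shorthand}: replace the $n$ edge ends by a single edge $\delta$ inserted between $\ta(\beta)$ and $\st(\alpha)$ via $\eta_\delta$, so that by Lemma~\ref{rem:shorthand} it suffices to prove the identity after conjugating by the slide that collects the $n$ ends onto $\delta$; then sliding $\delta$ along $\alpha$ versus along $\beta$ produces the same closed face path (now genuinely a loop once $\delta$ is slid around), and the commutation of each of these slides with $D_\gamma$ is exactly Lemma~\ref{lemma:SlideDehnCompatibilityGeneral} (or its edge-level special case Lemma~\ref{lem:slideDT}). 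Finally one removes $\delta$ via $\epsilon_\delta$. The main obstacle I anticipate is purely bookkeeping: verifying that $S_\alpha(\gamma)$ (resp.\ $S_\beta(\gamma)$) is still a face path at each of the $n$ steps — i.e.\ that we stay in case (i) or (iii) of Figure~\ref{fig:FacePathTransformationCases} throughout and never slip into case (ii) or the bad subcase of (iv) — and that the two expressions $S_\alpha^{-n}(\gamma)$ and $S_\beta^{n}(\gamma)$ are genuinely the same closed face path rather than merely homotopic loops; this is where one must look carefully at the cyclic orderings at the shared vertex $v$. Once that combinatorial identification is in place, the algebraic content is entirely supplied by Lemma~\ref{lemma:SlideDehnCompatibilityGeneral}.
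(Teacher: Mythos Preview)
Your first approach has a genuine gap, not merely bookkeeping. To iterate Lemma~\ref{lemma:SlideDehnCompatibilityGeneral} along $S_\alpha$ alone you would need $\gamma$ to remain a closed face path after each single $S_\alpha^{-1}$. But once even one of the $n$ edge ends is slid back to $v$ it sits between $\ta(\beta)$ and $\st(\alpha)$, and $\gamma$ fails to turn maximally left there; so the lemma cannot be applied at the next step. Your proposed endpoint identification $S_\alpha^{-n}(\gamma)=S_\beta^{n}(\gamma)$ is indeed trivially true (both equal $\gamma$ as paths, since the slid ends are not traversed by $\gamma$), but this is useless precisely because $D_\gamma$ is undefined in that intermediate configuration.

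The paper's argument avoids this by a single rewriting. Rearranging \eqref{eq:SimplePathDehnTwistCoherency} gives $D_\gamma\circ(S_\alpha^{n}S_\beta^{n})=(S_\alpha^{n}S_\beta^{n})\circ D_\gamma$ in the configuration where the $n$ ends sit at $w$, and since $S_\gamma=S_\alpha\circ S_\beta$ by Definition~\ref{def:slideface}, this is just $S_\gamma\circ D_\gamma=D_\gamma\circ S_\gamma$ iterated. That identity is Lemma~\ref{lemma:SlideDehnCompatibilityGeneral} with $\rho=\gamma$, using $S_\gamma(\gamma)=\gamma$. The key point you missed is that a slide along the \emph{full} loop $\gamma$ carries an edge end from $w$ all the way back to $w$, so after each application $\gamma$ is again a closed face path and the lemma can be reapplied. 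Your second approach via Lemma~\ref{rem:shorthand} can be pushed through, but it is a detour once one sees this one-line reformulation.
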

\begin{proof} Identity \eqref{eq:SimplePathDehnTwistCoherency} is equivalent to the identity $S_\gamma\circ D_\gamma=D_\gamma\circ S_\gamma$ for the face path $\gamma$, which follows directly from 
Lemma~\ref{lemma:SlideDehnCompatibilityGeneral}.
\end{proof}

\begin{corollary}\label{cor:addingtwist}
	 Let $\rho$ be a face path and $\gamma$ a closed face path  based at ciliated vertex such that   one of the conditions of Definition~\ref{definition:FacePathTranslationComplementary} is satisfied. Then  adding an edge to $\rho$ commutes with $D_{\gamma}$:
	\begin{align}
		C_{\rho}\circ D_{\gamma} = D_{C_{\rho}(\gamma)} \circ C_{\rho}.
	%	\label{eq:DehnTwistComplementCommutativity}	
		\nonumber
	\end{align}
\end{corollary}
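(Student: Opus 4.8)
The plan is to mimic the proof of Lemma~\ref{lemma:SlideDehnCompatibilityGeneral}, part~3, by decomposing the twist $D_\gamma$ using its defining Definition~\ref{def:dtnotface}, $D_\gamma = \epsilon_{\gamma'}\circ D_{\gamma'}\circ C_\gamma$, and then pushing the edge-addition $C_\rho$ through each factor using the already-established commutation identities from Lemma~\ref{lem:addingedge}. The relevant identities are the commutativity of adding edges \eqref{eq:ComplementCommutativity}, the fact that removing the added edge $\gamma'$ is left inverse to adding it \eqref{eq:ComplementDelete}, and the commutativity of slides along face paths with adding edges \eqref{eq:ComplementSlideGeneral}. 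The key point is that $\rho$ and $\gamma$ satisfy one of the three cases of Definition~\ref{definition:FacePathTranslationComplementary}, so $C_\rho(\gamma)$ is a well-defined face path, and $\gamma'$ is a single edge, so $C_\rho(\gamma')$ makes sense and equals $\gamma'$ whenever $\gamma'$ does not overlap with $\rho$ (which holds because $\gamma'$ is a freshly added loop).

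Concretely, I would argue as follows. Write $D_{C_\rho(\gamma)} = \epsilon_{C_\rho(\gamma)'}\circ D_{C_\rho(\gamma)'}\circ C_{C_\rho(\gamma)}$. By Definition~\ref{definition:FacePathTranslationComplementary}, adding an edge $C_\rho(\gamma)'$ to the face path $C_\rho(\gamma)$ is the same as adding the edge $\gamma'$ to $\gamma$ after adding $\rho'$ to $\rho$; more precisely $C_{C_\rho(\gamma)} \circ C_\rho = C_{C_\gamma(\rho)}\circ C_\gamma$ by \eqref{eq:ComplementCommutativity}, and one should identify $C_\gamma(\rho)$ with $\rho$ here since in all three cases $\rho$ is not affected by adding an edge to $\gamma$ (it is a proper subpath, non-overlapping, or containing $\gamma$, and the loop $\gamma'$ sits at the vertex of $\gamma$). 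Hence $C_{C_\rho(\gamma)}\circ C_\rho = C_\rho\circ C_\gamma$, where on the right $C_\gamma$ adds $\gamma'$ and then $C_\rho$ adds $\rho'$, and $C_\rho$ commutes with the twist $D_{\gamma'}$ along the single edge $\gamma'$ by the third part of Lemma~\ref{lem:slideDT} (sliding an end of $\rho'$ into or out of the loop $\gamma'$ commutes with $D_{\gamma'}$) applied repeatedly after factoring $C_\rho = S_\rho\circ\eta_{\rho'}$ into edge slides, together with the fact that $\eta_{\rho'}$ trivially commutes with $D_{\gamma'}$. One also uses that $\epsilon_{C_\rho(\gamma)'} = \epsilon_{\gamma'}$ commutes with $C_\rho$ since they act on disjoint edges, and \eqref{eq:ComplementSlideGeneral} to move $C_\rho$ past any slides hidden inside $D_{\gamma'}$ (the twist along an edge is a power of an edge slide). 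Chaining these gives
\begin{align*}
D_{C_\rho(\gamma)}\circ C_\rho
&= \epsilon_{\gamma'}\circ D_{\gamma'}\circ C_{C_\rho(\gamma)}\circ C_\rho
= \epsilon_{\gamma'}\circ D_{\gamma'}\circ C_\rho\circ C_\gamma\\
&= \epsilon_{\gamma'}\circ C_\rho\circ D_{\gamma'}\circ C_\gamma
= C_\rho\circ \epsilon_{\gamma'}\circ D_{\gamma'}\circ C_\gamma
= C_\rho\circ D_\gamma,
\end{align*}
which is the claim.

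The main obstacle I anticipate is bookkeeping in the middle step: verifying carefully that $C_\rho$ commutes with the twist $D_{\gamma'}$ along the single added edge $\gamma'$, in each of the three positional cases of Definition~\ref{definition:FacePathTranslationComplementary}, and that the identifications $C_\gamma(\rho)=\rho$ and $C_\rho(\gamma)' = \gamma'$ are legitimate. Since $\gamma'$ is a loop based at the ciliated vertex $s(\gamma)$ and $D_{\gamma'}$ is (by Lemma~\ref{rem:shorthand} and the remark after Definition~\ref{def:dtnull}) expressible as a conjugate of a single edge slide $S_{\gamma'^{\pm}}$, the commutation with $C_\rho$ reduces to the pentagon-type identity in Lemma~\ref{lem:slideDT}, part~3, applied whenever an end of $\rho'$ passes into or out of the loop $\gamma'$; when $\rho$ and $\gamma$ do not overlap $\rho'$ never enters the loop and the commutation is immediate. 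I would also note, as in the proof of Lemma~\ref{lemma:SlideDehnCompatibilityGeneral}, that if $\gamma$ traverses $\rho'$ one needs the second identity in \eqref{eq:FlatFaceComplementt} applied to the face $(\rho')^{-1}\circ\rho$, for which \eqref{eq:FlatFaceComplement} holds; this handles the one genuinely delicate sub-case. The rest is routine manipulation of the commutation identities already proved.
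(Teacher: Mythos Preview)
Your approach—decomposing $D_\gamma=\epsilon_{\gamma'}\circ D_{\gamma'}\circ C_\gamma$ and pushing $C_\rho$ through each factor—is different from the paper's, which instead decomposes $C_\rho=S_\rho\circ\eta_{\rho'}$ and invokes Lemma~\ref{lemma:SlideDehnCompatibilityGeneral} directly for the $S_\rho$ part, together with the observation after Definition~\ref{def:addingedge} that a trivially-labelled loop can be moved freely. The paper's route is much shorter because the hard work (commuting a slide with a general twist) has already been done in \eqref{eq:SlideDehnCompatibilityGeneral}; your route re-derives a special case of that in the middle step.

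There is a genuine gap in your argument. Your identification $C_\gamma(\rho)=\rho$ in all three cases is false: when $\gamma$ is a subpath of $\rho$, Definition~\ref{definition:FacePathTranslationComplementary} gives $C_\gamma(\rho)=\rho_1\circ\gamma'\circ\rho_2$, not $\rho$. In this case $\rho$ is no longer a face path in the graph with $\gamma'$ added (the end of $\gamma'$ breaks the maximal-left-turn condition along $\rho$), so the expression $C_\rho\circ C_\gamma$ on the right side of your chain is ill-defined as written. One must carry $C_{C_\gamma(\rho)}$ through the remaining steps, and then commuting it past $\epsilon_{\gamma'}$ requires \eqref{eq:FlatFaceComplementt} applied to the face $\gamma'^{-1}\circ\gamma$ (not $\rho'^{-1}\circ\rho$ as you wrote—$\gamma$ cannot traverse $\rho'$, which does not yet exist). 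This is fixable, but it makes the argument noticeably longer than the paper's two-line proof, and your middle step (commuting $C_\rho=S_\rho\circ\eta_{\rho'}$ with the single-edge twist $D_{\gamma'}$) already contains the same two ingredients the paper uses globally: the loop observation for $\eta_{\rho'}$ and \eqref{eq:SlideDehnCompatibilityGeneral} for $S_\rho$. So you may as well apply them to $D_\gamma$ itself and skip the decomposition of $D_\gamma$ entirely.
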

\begin{proof}
	This follows from the identity $C_{\rho} = S_{\rho} \circ\eta_{\rho'}$, because   adding a loop at the starting end of a face path $\rho$  and sliding both ends of the loop to its target  is  the same as adding the loop at the target of $\rho$ (see the discussion after Definition \ref{def:addingedge}) and from  identity  \eqref{eq:SlideDehnCompatibilityGeneral}.
\end{proof}

\section{Mapping class group action by Dehn twists}
\label{sec:gervaismap}

In this section we give explicit descriptions of  mapping class group actions for surfaces with and without boundaries in terms of generating Dehn twists. We prove that these Dehn twists satisfy the defining relations of Theorem \ref{th:gervais} for mapping class groups of surfaces with $n+1\geq 1$ boundary components.
We then  show that for  $\mac$ finitely complete and cocomplete and involutive pivotal structures they  induce actions of mapping class groups of closed surfaces.

We consider an oriented surface $\Sigma$ of genus $g\geq 1$ with $n+1\geq 1$ boundary components, as shown in Figure \ref{fig:pi1gens}.
We work with the set of generators of the fundamental group 
$$
 \pi_1(\Sigma)=\langle \mu_1,...,\mu_n,\alpha_1,\beta_1,...,\alpha_g,\beta_g\rangle=F_{n+2g},
$$
depicted in Figure \ref{fig:pi1gens}.
Viewed as a directed ribbon graph, this set of generators of $\pi_1(\Sigma)$ has a single vertex  and  $n+1$ faces, of which $n$ correspond to the loops $\mu_i$ and the last one to   the path $f=\mu_1^\inv\cdots\mu_n^\inv \alpha_1^\inv\beta_1\alpha_1\beta_1^\inv\cdots  \alpha_g^\inv\beta_g\alpha_g\beta_g^\inv$ 
  in Figure \ref{fig:pi1gens}.

\begin{figure}
\centering
\def\svgwidth{.9\columnwidth}
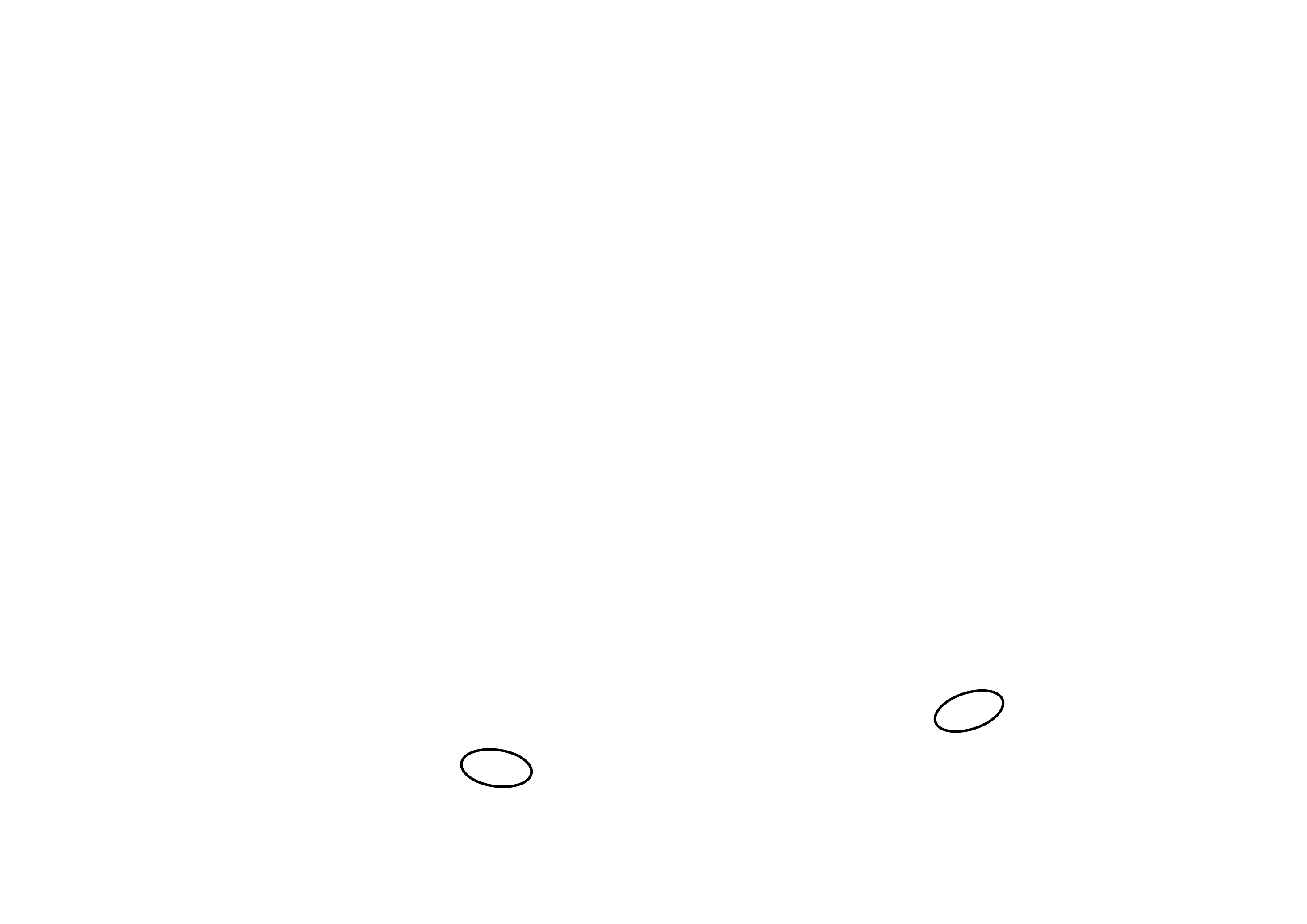
\caption{The generators of the fundamental group $\pi_1(\Sigma)$.}
\label{fig:pi1gens}
\end{figure}

We add a cilium to the vertex as in Figure \ref{fig:pi1gens} and  $n$ edges $\nu_{1},\dots,\nu_{n}$ that connect this vertex to the  boundary components for the loops $\mu_i$  and carry ciliated univalent vertices at their other ends. This yields  a  ribbon graph $\Gamma$ with $n+1$ ciliated vertices and $2(n+g)$ edges given by the following chord diagram  
\begin{align}\label{eq:standardchord}
\begin{tikzpicture}[scale=.6]
\draw[line width=1pt, color=black] (-10,0)--(14,0);
\draw[line width=1.5pt, color=red,<-,>=stealth] (-9,0).. controls (-9,2) and (-7,2)..(-7,0);
\draw[line width=1.5pt, color=blue,<-,>=stealth] (-8,0).. controls (-8,2) and (-6,2)..(-6,0);
\node at (-8,1.5)[anchor=south, color=red]{$\beta_g$};
\node at (-7,1.5)[anchor=south, color=blue]{$\alpha_g$};
\draw[line width=1.5pt, color=red,<-,>=stealth] (-5,0).. controls (-5,2) and (-3,2)..(-3,0);
\draw[line width=1.5pt, color=blue,<-,>=stealth] (-4,0).. controls (-4,2) and (-2,2)..(-2,0);
\node at (-4.3,1.5)[anchor=south, color=red]{$\beta_{g-1}$};
\node at (-2.7,1.5)[anchor=south, color=blue]{$\alpha_{g-1}$};
\node at (-1,1)[anchor=south]{$\ldots$};
\draw[line width=1.5pt, color=red,<-,>=stealth] (0,0).. controls (0,2) and (2,2)..(2,0);
\draw[line width=1.5pt, color=blue,<-,>=stealth] (1,0).. controls (1,2) and (3,2)..(3,0);
\node at (1,1.5)[anchor=south, color=red]{$\beta_{1}$};
\node at (2,1.5)[anchor=south, color=blue]{$\alpha_{1}$};
\draw[line width=1.5pt, color=black,<-,>=stealth] (4,0).. controls (4,2) and (6,2)..(6,0);
\draw[color=violet, line width=1.5pt,<-,>=stealth] (5,0) -- (5,1.2);
\node at (5,0) [anchor=north, color=violet] {$\nu_{n}$};
\node at (5,1.5)[anchor=south, color=black]{$\mu_n$};
\draw[line width=1.5pt, color=black,<-,>=stealth] (7,0).. controls (7,2) and (9,2)..(9,0);
\node at (8,1.5)[anchor=south, color=black]{$\mu_{n-1}$};
\draw[color=violet, line width=1.5pt,<-,>=stealth] (8,0) -- (8,1.2);
\node at (8,0) [anchor=north, color=violet] {$\nu_{n-1}$};
\node at (10,1)[anchor=south]{$\ldots$};
\draw[line width=1.5pt, color=black,<-,>=stealth] (11,0).. controls (11,2) and (13,2)..(13,0);
\node at (12,1.5)[anchor=south, color=black]{$\mu_{1}$};
\draw[color=violet, line width=1.5pt,<-,>=stealth] (12,0) -- (12,1.2);
\node at (12,0) [anchor=north, color=violet] {$\nu_{1}$};
\end{tikzpicture}
\end{align}

Every face and vertex of this graph is assigned to a unique boundary component of $\Sigma$ and to a unique cilium. 
The solid boundary component in Figure \ref{fig:pi1gens} corresponds to the baseline, while the other boundary components correspond to the cilia at the univalent vertices. 
By Lemma \ref{lem:facevertex}, this defines $n+1$ commuting Yetter-Drinfeld module structures on $H^{\oo E}=H^{\oo 2(n+g)}$. 

We now consider the curves $\alpha_i$, $\delta_j$ and $\gamma_{i,j}$ for the generating Dehn twists from Theorem \ref{th:gervais} and choose representatives of these curves in $\pi_1(\Sigma)$. As $\Sigma$ has $n+1$ boundary components, we index the curves $\delta_j$  by $0\leq j\leq n+2g-2$ and the curves $\gamma_{k,l}$ by $0\leq k,l\leq n+2g-2$, with the 0th boundary component  associated with the  baseline in \eqref{eq:standardchord}.
By comparing Figure \ref{fig:pi1gens} with Figures \ref{fig:gervais} and \ref{fig:gervais2}, we 
find that the
curves  from Theorem \ref{th:gervais}  are represented by the following elements of $\pi_1(\Sigma)$
\begingroup
\allowdisplaybreaks
\begin{align}\label{eq:pathexpress}
&\alpha_i & &i=1,...,g\\
&\delta_{0} = \beta_{g}^{-1} \nonumber\\
	&\delta_{i}=\mu_i^{-1}\cdots\mu_n^{-1}  \alpha_1^{-1}\beta_1\alpha_1\beta_1^{-1}\cdots  \alpha^{-1}_{g-1}\beta_{g-1}\alpha_{g-1}\beta^{-1}_{g-1}
	\alpha_g^{-1} \beta_g \alpha_g
& &1\leq i\leq n\nonumber\\
	&\delta_{n+2j-1}= \alpha_j^{-1}\beta_j\alpha_j\beta^{-1}_j\cdots  \alpha^{-1}_{g-1}\beta_{g-1}\alpha_{g-1}\beta^{-1}_{g-1}
	\alpha^{-1}_g \beta_g \alpha_g
	& &1\leq j\leq g-1\nonumber\\
	&\delta_{n+2j}=\beta_j^{-1} \alpha_{j+1}^{-1}\beta_{j+1}\alpha_{j+1}\beta_{j+1}^{-1}\cdots  \alpha_{g-1}^{-1}\beta_{g-1}\alpha_{g-1}\beta_{g-1}^{-1}
	\alpha_g^{-1} \beta_g \alpha_g
	& &1\leq j\leq g-1\nonumber\\
	&\gamma_{i,0}=\delta_i \beta_{g}^{-1} & & 1\leq  i \leq n+2g-2\nonumber\\
	&\gamma_{i,j}=\delta_j \delta_i^{-1} & & 1\leq j < i \leq n+2g-2\nonumber\\
	&\gamma_{0,j}=\delta_j \alpha_{g}^{-1}\beta_{g}^{-1}\alpha_{g} & & 1\leq j  \leq n+2g-2\nonumber\\
	&\gamma_{i,j}=\delta_i\alpha_g  \delta_j^{-1} \alpha_g^{-1}& &1\leq i< j\leq n+2g-2.\nonumber
\end{align}
\endgroup
Paths in the chord diagram \eqref{eq:standardchord} that represent   $\delta_i$  and  $\gamma_{k,l}$ are drawn  in Figures \ref{fig:deltafig} and  \ref{fig:delta}.
Note that  the paths $\alpha_i$ for $1\leq i\leq g$, the paths   $\delta_j$ for $0\leq j\leq n+2g-2$,    the paths  $\gamma_{k,l}$  for $1\leq l<k \leq n+2g-2$ and the paths $\gamma_{0,j}$ and $\gamma_{j,0}$ for $1\leq j\leq n+2g-2$ in \eqref{eq:pathexpress} are face paths. However, this is not true for the paths $\gamma_{k,l}$ with $1\leq k<l \leq n+2g-2$.

\begin{figure}%[H]
\begin{center}

\begin{tikzpicture}[scale=.4]
\draw[line width=1pt, color=black](-10,0)--(16,0);
\draw[color=red, line width=1.5pt,<-,>=stealth] (-9,0) .. controls (-9,2) and (-5,2).. (-5,0);
\node at (-7,2)[color=red, anchor=south]{$\beta_g$};
\draw[color=blue, line width=1.5pt,<-,>=stealth] (-7,0) .. controls (-7,2) and (-3,2).. (-3,0);
\node at (-5,2)[color=blue, anchor=south]{$\alpha_g$};
\draw[color=red, line width=1.5pt,<-,>=stealth] (-1,0) .. controls (-1,2) and (3,2).. (3,0);
\node at (1,2)[color=red, anchor=south]{$\beta_{g-1}$};
\draw[color=blue, line width=1.5pt,<-,>=stealth] (1,0) .. controls (1,2) and (5,2).. (5,0);
\node at (3,2)[color=blue, anchor=south]{$\alpha_{g-1}$};
\node at (7,1){$\ldots$};
\draw[color=black, line width=1.5pt,<-,>=stealth] (9,0) .. controls (9,2) and (11,2).. (11,0);
\node at (10,2)[color=black, anchor=south]{$\mu_{i+1}$};
\draw[color=violet, line width=1.5pt,<-,>=stealth] (10,0) -- (10,1.2);
\node at (10,0) [anchor=north, color=violet] {$\nu_{i+1}$};
\draw[color=black, line width=1.5pt,<-,>=stealth] (12,0) .. controls (12,2) and (14,2).. (14,0);
\node at (13,2)[color=black, anchor=south]{$\mu_{i}$};
\draw[color=violet, line width=1.5pt,<-,>=stealth] (13,0) -- (13,1.2);
\node at (13,0) [anchor=north, color=violet] {$\nu_{i}$};
\draw[line width=.5pt, ->,>=stealth] (-3.5,-.5)--(-3.5,.5);
\draw[color=black, line width=.5pt,->,>=stealth] (-3.5,.5) .. controls (-3.5,1.5) and (-6.5,1.5).. (-6.5,.5);
\draw[line width=.5pt, ->,>=stealth] (-6.5,.5)--(-5.5,.5);
\draw[color=black, line width=.5pt,->,>=stealth] (-5.5,.5) .. controls (-5.5,1.5) and (-8.5,1.5).. (-8.5,.5);
\draw[line width=.5pt, ->,>=stealth] (-8.5,.5)--(-7.5,.5);
\draw[color=black, line width=.5pt,->,>=stealth] (-7.5,.5) .. controls (-7.5,2.5) and (-2.5,2.5).. (-2.5,.5);
\draw[line width=.5pt, ->,>=stealth] (-2.5,.5)--(-1.5,.5);
\draw[color=black, line width=.5pt,->,>=stealth] (-1.5,.5) .. controls (-1.5,2.5) and (3.5,2.5).. (3.5,.5);
\draw[line width=.5pt, ->,>=stealth] (3.5,.5)--(4.5,.5);
\draw[color=black, line width=.5pt,->,>=stealth] (4.5,.5) .. controls (4.5,1.5) and (1.5,1.5).. (1.5,.5);
\draw[line width=.5pt, ->,>=stealth] (1.5,.5)--(2.5,.5);
\draw[color=black, line width=.5pt,->,>=stealth] (2.5,.5) .. controls (2.5,1.5) and (-.5,1.5).. (-.5,.5);
\draw[line width=.5pt, ->,>=stealth] (-.5,.5)--(.5,.5);
\draw[color=black, line width=.5pt,->,>=stealth] (.5,.5) .. controls (.5,2.5) and (5.5,2.5).. (5.5,.5);
\draw[line width=.5pt, ->,>=stealth] (5.5,.5)--(6.5,.5);
\draw[line width=.5pt, ->,>=stealth] (7.5,.5)--(8.5,.5);
\draw[color=black, line width=.5pt,->,>=stealth] (8.5,.5) .. controls (8.5,2.5) and (11.5,2.5).. (11.5,.5);
\draw[color=black, line width=.5pt,->,>=stealth] (11.5,.5) .. controls (11.5,2.5) and (14.5,2.5).. (14.5,.5);
\draw[line width=.5pt, ->,>=stealth] (14.5,.5)--(14.5,-.5);
\node at (16,1)[anchor=west]{$\ldots$};
\node at (14.5,0)[anchor=north west]{$\delta_{i}$};
\end{tikzpicture}

\begin{tikzpicture}[scale=.4]
\draw[line width=1pt, color=black](-10,0)--(16,0);
\draw[color=red, line width=1.5pt,<-,>=stealth] (-9,0) .. controls (-9,2) and (-5,2).. (-5,0);
\node at (-7,2)[color=red, anchor=south]{$\beta_g$};
\draw[color=blue, line width=1.5pt,<-,>=stealth] (-7,0) .. controls (-7,2) and (-3,2).. (-3,0);
\node at (-5,2)[color=blue, anchor=south]{$\alpha_g$};
\draw[color=red, line width=1.5pt,<-,>=stealth] (-1,0) .. controls (-1,2) and (3,2).. (3,0);
\node at (1,2)[color=red, anchor=south]{$\beta_{g-1}$};
\draw[color=blue, line width=1.5pt,<-,>=stealth] (1,0) .. controls (1,2) and (5,2).. (5,0);
\node at (3,2)[color=blue, anchor=south]{$\alpha_{g-1}$};
\node at (7,1){$\ldots$};
\draw[color=red, line width=1.5pt,<-,>=stealth] (9,0) .. controls (9,2) and (13,2).. (13,0);
\node at (11,2)[color=red, anchor=south]{$\beta_j$};
\draw[color=blue, line width=1.5pt,<-,>=stealth] (11,0) .. controls (11,2) and (15,2).. (15,0);
\node at (13,2)[color=blue, anchor=south]{$\alpha_{j}$};
\draw[line width=.5pt, ->,>=stealth] (-3.5,-.5)--(-3.5,.5);
\draw[color=black, line width=.5pt,->,>=stealth] (-3.5,.5) .. controls (-3.5,1.5) and (-6.5,1.5).. (-6.5,.5);
\draw[line width=.5pt, ->,>=stealth] (-6.5,.5)--(-5.5,.5);
\draw[color=black, line width=.5pt,->,>=stealth] (-5.5,.5) .. controls (-5.5,1.5) and (-8.5,1.5).. (-8.5,.5);
\draw[line width=.5pt, ->,>=stealth] (-8.5,.5)--(-7.5,.5);
\draw[color=black, line width=.5pt,->,>=stealth] (-7.5,.5) .. controls (-7.5,2.5) and (-2.5,2.5).. (-2.5,.5);
\draw[line width=.5pt, ->,>=stealth] (-2.5,.5)--(-1.5,.5);
\draw[color=black, line width=.5pt,->,>=stealth] (-1.5,.5) .. controls (-1.5,2.5) and (3.5,2.5).. (3.5,.5);
\draw[line width=.5pt, ->,>=stealth] (3.5,.5)--(4.5,.5);
\draw[color=black, line width=.5pt,->,>=stealth] (4.5,.5) .. controls (4.5,1.5) and (1.5,1.5).. (1.5,.5);
\draw[line width=.5pt, ->,>=stealth] (1.5,.5)--(2.5,.5);
\draw[color=black, line width=.5pt,->,>=stealth] (2.5,.5) .. controls (2.5,1.5) and (-.5,1.5).. (-.5,.5);
\draw[line width=.5pt, ->,>=stealth] (-.5,.5)--(.5,.5);
\draw[color=black, line width=.5pt,->,>=stealth] (.5,.5) .. controls (.5,2.5) and (5.5,2.5).. (5.5,.5);
\draw[line width=.5pt, ->,>=stealth] (5.5,.5)--(6.5,.5);
\draw[line width=.5pt, ->,>=stealth] (7.5,.5)--(8.5,.5);
\draw[color=black, line width=.5pt,->,>=stealth] (8.5,.5) .. controls (8.5,2.5) and (13.5,2.5).. (13.5,.5);
\draw[line width=.5pt, ->,>=stealth] (13.5,.5)--(14.5,.5);
\draw[color=black, line width=.5pt,->,>=stealth] (14.5,.5) .. controls (14.5,1.5) and (11.5,1.5).. (11.5,.5);
\draw[line width=.5pt, ->,>=stealth] (11.5,.5)--(12.5,.5);
\draw[color=black, line width=.5pt,->,>=stealth] (12.5,.5) .. controls (12.5,1.5) and (9.5,1.5).. (9.5,.5);
\draw[line width=.5pt, ->,>=stealth] (9.5,.5)--(10.5,.5);
\draw[color=black, line width=.5pt,->,>=stealth] (10.5,.5) .. controls (10.5,2.5) and (15.5,2.5).. (15.5,.5);
\draw[line width=.5pt, ->,>=stealth] (15.5,.5)--(15.5,-.5);
\node at (16,1)[anchor=west]{$\ldots$};
\node at (15.5,0)[anchor=north west]{$\delta_{n+2j-1}$};
\end{tikzpicture}

\begin{tikzpicture}[scale=.4]
\draw[line width=1pt, color=black](-10,0)--(16,0);
\draw[color=red, line width=1.5pt,<-,>=stealth] (-9,0) .. controls (-9,2) and (-5,2).. (-5,0);
\node at (-7,2)[color=red, anchor=south]{$\beta_g$};
\draw[color=blue, line width=1.5pt,<-,>=stealth] (-7,0) .. controls (-7,2) and (-3,2).. (-3,0);
\node at (-5,2)[color=blue, anchor=south]{$\alpha_g$};
\draw[color=red, line width=1.5pt,<-,>=stealth] (-1,0) .. controls (-1,2) and (3,2).. (3,0);
\node at (1,2)[color=red, anchor=south]{$\beta_{g-1}$};
\draw[color=blue, line width=1.5pt,<-,>=stealth] (1,0) .. controls (1,2) and (5,2).. (5,0);
\node at (3,2)[color=blue, anchor=south]{$\alpha_{g-1}$};
\node at (7,1){$\ldots$};
\draw[color=red, line width=1.5pt,<-,>=stealth] (9,0) .. controls (9,2) and (13,2).. (13,0);
\node at (11,2)[color=red, anchor=south]{$\beta_j$};
\draw[color=blue, line width=1.5pt,<-,>=stealth] (11,0) .. controls (11,2) and (15,2).. (15,0);
\node at (15,1)[color=blue, anchor=south]{$\alpha_{j}$};
\draw[line width=.5pt, ->,>=stealth] (-3.5,-.5)--(-3.5,.5);
\draw[color=black, line width=.5pt,->,>=stealth] (-3.5,.5) .. controls (-3.5,1.5) and (-6.5,1.5).. (-6.5,.5);
\draw[line width=.5pt, ->,>=stealth] (-6.5,.5)--(-5.5,.5);
\draw[color=black, line width=.5pt,->,>=stealth] (-5.5,.5) .. controls (-5.5,1.5) and (-8.5,1.5).. (-8.5,.5);
\draw[line width=.5pt, ->,>=stealth] (-8.5,.5)--(-7.5,.5);
\draw[color=black, line width=.5pt,->,>=stealth] (-7.5,.5) .. controls (-7.5,2.5) and (-2.5,2.5).. (-2.5,.5);
\draw[line width=.5pt, ->,>=stealth] (-2.5,.5)--(-1.5,.5);
\draw[color=black, line width=.5pt,->,>=stealth] (-1.5,.5) .. controls (-1.5,2.5) and (3.5,2.5).. (3.5,.5);
\draw[line width=.5pt, ->,>=stealth] (3.5,.5)--(4.5,.5);
\draw[color=black, line width=.5pt,->,>=stealth] (4.5,.5) .. controls (4.5,1.5) and (1.5,1.5).. (1.5,.5);
\draw[line width=.5pt, ->,>=stealth] (1.5,.5)--(2.5,.5);
\draw[color=black, line width=.5pt,->,>=stealth] (2.5,.5) .. controls (2.5,1.5) and (-.5,1.5).. (-.5,.5);
\draw[line width=.5pt, ->,>=stealth] (-.5,.5)--(.5,.5);
\draw[color=black, line width=.5pt,->,>=stealth] (.5,.5) .. controls (.5,2.5) and (5.5,2.5).. (5.5,.5);
\draw[line width=.5pt, ->,>=stealth] (5.5,.5)--(6.5,.5);
\draw[line width=.5pt, ->,>=stealth] (7.5,.5)--(8.5,.5);
\draw[color=black, line width=.5pt,->,>=stealth] (8.5,.5) .. controls (8.5,2.5) and (13.5,2.5).. (13.5,.5);
\draw[line width=.5pt, ->,>=stealth] (13.5,.5)--(13.5,-.5);
\node at (16,1)[anchor=west]{$\ldots$};
\node at (13.5,0)[anchor=north west]{$\delta_{n+2j}$};
\end{tikzpicture}
\end{center}
\vspace{-.5cm}
\caption{Paths in the chord diagram \eqref{eq:standardchord} representing the paths  $\delta_i$  from \eqref{eq:pathexpress}.}
\label{fig:deltafig}
\end{figure}
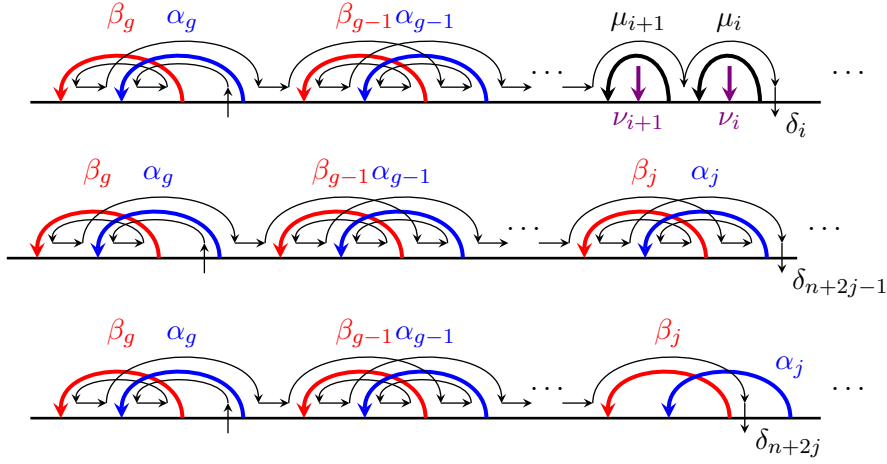

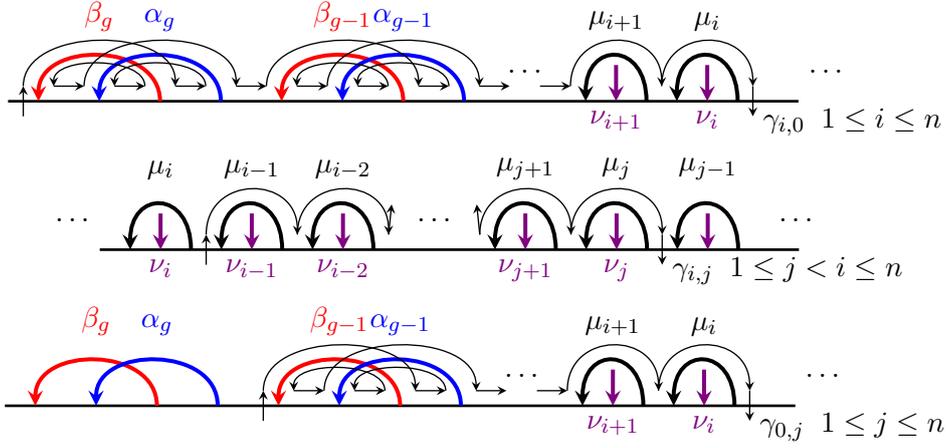
\begin{figure}%[H]
\begin{center}
\begin{tikzpicture}[scale=.4]
\draw[line width=1pt, color=black](-10,0)--(16,0);
\draw[color=red, line width=1.5pt,<-,>=stealth] (-9,0) .. controls (-9,2) and (-5,2).. (-5,0);
\node at (-7,2)[color=red, anchor=south]{$\beta_g$};
\draw[color=blue, line width=1.5pt,<-,>=stealth] (-7,0) .. controls (-7,2) and (-3,2).. (-3,0);
\node at (-5,2)[color=blue, anchor=south]{$\alpha_g$};
\draw[color=red, line width=1.5pt,<-,>=stealth] (-1,0) .. controls (-1,2) and (3,2).. (3,0);
\node at (1,2)[color=red, anchor=south]{$\beta_{g-1}$};
\draw[color=blue, line width=1.5pt,<-,>=stealth] (1,0) .. controls (1,2) and (5,2).. (5,0);
\node at (3,2)[color=blue, anchor=south]{$\alpha_{g-1}$};
\node at (7,1){$\ldots$};
\draw[color=black, line width=1.5pt,<-,>=stealth] (9,0) .. controls (9,2) and (11,2).. (11,0);
\node at (10,2)[color=black, anchor=south]{$\mu_{i+1}$};
\draw[color=violet, line width=1.5pt,<-,>=stealth] (10,0) -- (10,1.2);
\node at (10,0) [anchor=north, color=violet] {$\nu_{i+1}$};
\draw[color=black, line width=1.5pt,<-,>=stealth] (12,0) .. controls (12,2) and (14,2).. (14,0);
\node at (13,2)[color=black, anchor=south]{$\mu_{i}$};
\draw[color=violet, line width=1.5pt,<-,>=stealth] (13,0) -- (13,1.2);
\node at (13,0) [anchor=north, color=violet] {$\nu_{i}$};
\draw[line width=.5pt, ->,>=stealth] (-9.5,-.5)--(-9.5,.5);
\draw[color=black, line width=.5pt,->,>=stealth] (-9.5,.5) .. controls (-9.5,2.5) and (-4.5,2.5).. (-4.5,.5);
\draw[line width=.5pt, ->,>=stealth] (-4.5,.5)--(-3.5,.5);
\draw[color=black, line width=.5pt,->,>=stealth] (-3.5,.5) .. controls (-3.5,1.5) and (-6.5,1.5).. (-6.5,.5);
\draw[line width=.5pt, ->,>=stealth] (-6.5,.5)--(-5.5,.5);
\draw[color=black, line width=.5pt,->,>=stealth] (-5.5,.5) .. controls (-5.5,1.5) and (-8.5,1.5).. (-8.5,.5);
\draw[line width=.5pt, ->,>=stealth] (-8.5,.5)--(-7.5,.5);
\draw[color=black, line width=.5pt,->,>=stealth] (-7.5,.5) .. controls (-7.5,2.5) and (-2.5,2.5).. (-2.5,.5);
\draw[line width=.5pt, ->,>=stealth] (-2.5,.5)--(-1.5,.5);
\draw[color=black, line width=.5pt,->,>=stealth] (-1.5,.5) .. controls (-1.5,2.5) and (3.5,2.5).. (3.5,.5);
\draw[line width=.5pt, ->,>=stealth] (3.5,.5)--(4.5,.5);
\draw[color=black, line width=.5pt,->,>=stealth] (4.5,.5) .. controls (4.5,1.5) and (1.5,1.5).. (1.5,.5);
\draw[line width=.5pt, ->,>=stealth] (1.5,.5)--(2.5,.5);
\draw[color=black, line width=.5pt,->,>=stealth] (2.5,.5) .. controls (2.5,1.5) and (-.5,1.5).. (-.5,.5);
\draw[line width=.5pt, ->,>=stealth] (-.5,.5)--(.5,.5);
\draw[color=black, line width=.5pt,->,>=stealth] (.5,.5) .. controls (.5,2.5) and (5.5,2.5).. (5.5,.5);
\draw[line width=.5pt, ->,>=stealth] (5.5,.5)--(6.5,.5);
\draw[line width=.5pt, ->,>=stealth] (7.5,.5)--(8.5,.5);
\draw[color=black, line width=.5pt,->,>=stealth] (8.5,.5) .. controls (8.5,2.5) and (11.5,2.5).. (11.5,.5);
\draw[color=black, line width=.5pt,->,>=stealth] (11.5,.5) .. controls (11.5,2.5) and (14.5,2.5).. (14.5,.5);
\draw[line width=.5pt, ->,>=stealth] (14.5,.5)--(14.5,-.5);
\node at (16,1)[anchor=west]{$\ldots$};
\node at (14.5,-.7)[anchor=west]{$\gamma_{i,0} \;\; 1\leq i\leq n$};
\end{tikzpicture}
\begin{tikzpicture}[scale=.4]
\draw[line width=1pt, color=black](-13,0)--(10,0);
\node at (-13,1)[anchor=east]{$\ldots$};
\draw[color=black, line width=1.5pt,<-,>=stealth] (-12,0) .. controls (-12,2) and (-10,2).. (-10,0);
\node at (-11,2)[color=black, anchor=south]{$\mu_{i}$};
\draw[color=violet, line width=1.5pt,<-,>=stealth] (-11,0) -- (-11,1.2);
\node at (-11,0) [anchor=north, color=violet] {$\nu_{i}$};
\draw[color=black, line width=1.5pt,<-,>=stealth] (-9,0) .. controls (-9,2) and (-7,2).. (-7,0);
\node at (-8,2)[color=black, anchor=south]{$\mu_{i-1}$};
\draw[color=violet, line width=1.5pt,<-,>=stealth] (-8,0) -- (-8,1.2);
\node at (-8,0) [anchor=north, color=violet] {$\nu_{i-1}$};
\draw[color=black, line width=1.5pt,<-,>=stealth] (-6,0) .. controls (-6,2) and (-4,2).. (-4,0);
\node at (-5,2)[color=black, anchor=south]{$\mu_{i-2}$};
\draw[color=violet, line width=1.5pt,<-,>=stealth] (-5,0) -- (-5,1.2);
\node at (-5,0) [anchor=north, color=violet] {$\nu_{i-2}$};
\node at (-2,1){$\ldots$};
\draw[color=black, line width=1.5pt,<-,>=stealth] (0,0) .. controls (0,2) and (2,2).. (2,0);
\node at (1,2)[color=black, anchor=south]{$\mu_{j+1}$};
\draw[color=violet, line width=1.5pt,<-,>=stealth] (1,0) -- (1,1.2);
\node at (1,0) [anchor=north, color=violet] {$\nu_{j+1}$};
\draw[color=black, line width=1.5pt,<-,>=stealth] (3,0) .. controls (3,2) and (5,2).. (5,0);
\node at (4,2)[color=black, anchor=south]{$\mu_{j}$};
\draw[color=violet, line width=1.5pt,<-,>=stealth] (4,0) -- (4,1.2);
\node at (4,0) [anchor=north, color=violet] {$\nu_{j}$};
\draw[color=black, line width=1.5pt,<-,>=stealth] (6,0) .. controls (6,2) and (8,2).. (8,0);
\node at (7,2)[color=black, anchor=south]{$\mu_{j-1}$};
\draw[color=violet, line width=1.5pt,<-,>=stealth] (7,0) -- (7,1.2);
\draw[line width=.5pt, ->,>=stealth] (-9.5,-.5)--(-9.5,.5);
\draw[color=black, line width=.5pt,->,>=stealth] (-9.5,.5) .. controls (-9.5,2.5) and (-6.5,2.5).. (-6.5,.5);
\draw[color=black, line width=.5pt,->,>=stealth] (-6.5,.5) .. controls (-6.5,2.5) and (-3.5,2.5).. (-3.5,.5);
\draw[line width=.5pt, ->,>=stealth] (-3.5,.5)--(-3.4,1.5);
\draw[line width=.5pt, ->,>=stealth] (-.5,.5)--(-.6,1.5);
\draw[color=black, line width=.5pt,->,>=stealth] (-.5,.5) .. controls (-.5,2.5) and (2.5,2.5).. (2.5,.5);
\draw[color=black, line width=.5pt,->,>=stealth] (2.5,.5) .. controls (2.5,2.5) and (5.5,2.5).. (5.5,.5);
\draw[line width=.5pt, ->,>=stealth] (5.5,.5)--(5.5,-.5);
\node at (9,1)[anchor=west]{$\ldots$};
\node at (5.5,-.7)[anchor=west]{$\gamma_{i,j}\;\; 1\leq j<i\leq n$};
\end{tikzpicture}

\begin{tikzpicture}[scale=.4]
\draw[line width=1pt, color=black](-10,0)--(16,0);
\draw[color=red, line width=1.5pt,<-,>=stealth] (-9,0) .. controls (-9,2) and (-5,2).. (-5,0);
\node at (-7,2)[color=red, anchor=south]{$\beta_g$};
\draw[color=blue, line width=1.5pt,<-,>=stealth] (-7,0) .. controls (-7,2) and (-3,2).. (-3,0);
\node at (-5,2)[color=blue, anchor=south]{$\alpha_g$};
\draw[color=red, line width=1.5pt,<-,>=stealth] (-1,0) .. controls (-1,2) and (3,2).. (3,0);
\node at (1,2)[color=red, anchor=south]{$\beta_{g-1}$};
\draw[color=blue, line width=1.5pt,<-,>=stealth] (1,0) .. controls (1,2) and (5,2).. (5,0);
\node at (3,2)[color=blue, anchor=south]{$\alpha_{g-1}$};
\node at (7,1){$\ldots$};
\draw[color=black, line width=1.5pt,<-,>=stealth] (9,0) .. controls (9,2) and (11,2).. (11,0);
\node at (10,2)[color=black, anchor=south]{$\mu_{i+1}$};
\draw[color=violet, line width=1.5pt,<-,>=stealth] (10,0) -- (10,1.2);
\node at (10,0) [anchor=north, color=violet] {$\nu_{i+1}$};
\draw[color=black, line width=1.5pt,<-,>=stealth] (12,0) .. controls (12,2) and (14,2).. (14,0);
\node at (13,2)[color=black, anchor=south]{$\mu_{i}$};
\draw[color=violet, line width=1.5pt,<-,>=stealth] (13,0) -- (13,1.2);
\node at (13,0) [anchor=north, color=violet] {$\nu_{i}$};
\draw[line width=.5pt, ->,>=stealth] (-1.5,-.5)--(-1.5,.5);
\draw[color=black, line width=.5pt,->,>=stealth] (-1.5,.5) .. controls (-1.5,2.5) and (3.5,2.5).. (3.5,.5);
\draw[line width=.5pt, ->,>=stealth] (3.5,.5)--(4.5,.5);
\draw[color=black, line width=.5pt,->,>=stealth] (4.5,.5) .. controls (4.5,1.5) and (1.5,1.5).. (1.5,.5);
\draw[line width=.5pt, ->,>=stealth] (1.5,.5)--(2.5,.5);
\draw[color=black, line width=.5pt,->,>=stealth] (2.5,.5) .. controls (2.5,1.5) and (-.5,1.5).. (-.5,.5);
\draw[line width=.5pt, ->,>=stealth] (-.5,.5)--(.5,.5);
\draw[color=black, line width=.5pt,->,>=stealth] (.5,.5) .. controls (.5,2.5) and (5.5,2.5).. (5.5,.5);
\draw[line width=.5pt, ->,>=stealth] (5.5,.5)--(6.5,.5);
\draw[line width=.5pt, ->,>=stealth] (7.5,.5)--(8.5,.5);
\draw[color=black, line width=.5pt,->,>=stealth] (8.5,.5) .. controls (8.5,2.5) and (11.5,2.5).. (11.5,.5);
\draw[color=black, line width=.5pt,->,>=stealth] (11.5,.5) .. controls (11.5,2.5) and (14.5,2.5).. (14.5,.5);
\draw[line width=.5pt, ->,>=stealth] (14.5,.5)--(14.5,-.5);
\node at (16,1)[anchor=west]{$\ldots$};
\node at (14.5,-.7)[anchor=west]{$\gamma_{0,j}\;\; 1\leq j\leq n$};
\end{tikzpicture}
\end{center}
\vspace{-.5cm}
\caption{Paths in the chord diagram \eqref{eq:standardchord} representing  the paths   $\gamma_{i,j}$  from \eqref{eq:pathexpress}.}
\label{fig:delta}
\end{figure}

\begin{figure}
\begin{center}
\begin{tikzpicture}[scale=.4]
\draw[line width=1pt, color=black] (-5,0)--(12,0);
\draw[color=blue, line width=1.5 pt, <-,>=stealth] (0,0).. controls (0,3) and (5,3) .. (5,0);
\draw[color=red, line width=1.5 pt, ->,>=stealth] (1,0).. controls (1,3) and (-4,3)..(-4,0);
\draw[color=violet, line width=1.5 pt, ->,>=stealth] (2,0).. controls (2,4) and (11,4) .. (11,0);
\draw[color=orange, line width=1.5 pt, ->,>=stealth] (4,0).. controls (4,3) and (9,3) .. (9,0);
\draw[color=gray, line width=1.5 pt, <-,>=stealth] (7,0)--(7,4);
\draw[color=olive, line width=1.5 pt, <-,>=stealth] (10,0)--(10,4);
\node at (8.5,3.3) [color=violet, anchor=south]{$\delta'_i$};
\node at (8.5,1.5) [color=orange, anchor=south]{$\delta'_j$};
\node at (2.5,2.8) [color=blue, anchor=south]{$\alpha_g$};
\node at (-1.5,2.3)[color=red, anchor=south] {$\beta_g$};
\node at (7,4)[color=gray, anchor=south] {$\nu$};
\node at (10,4)[color=olive, anchor=south] {$\mu$};
\draw[line width=.5pt, color=black,->,>=stealth] (-.5,.3).. controls (-.5,3.5) and (5.5,3.5).. (5.5,.5);
\draw[line width=.5pt, color=black,->,>=stealth] (5.5,.5)--(8.5,.5);     
\draw[line width=.5pt, color=black,] (8.5,.5).. controls (8.5,2.5) and (4.5,2.5) .. (4.5,.3);
\draw[line width=.5pt, color=black,->,>=stealth] (4.5,.5).. controls (4.5,2.2) and (.5,2.2) .. (.5,.5);
\draw[line width=.5pt, color=black,->,>=stealth] (.5,.5)--(1.5,.5); 
\draw[line width=.5pt, color=black,->,>=stealth] (1.5,.5).. controls (1.5,4.5) and (11.5, 4.5) .. (11.5,.5);
\node at (11.5,.5)[anchor=west]{$\gamma$};
\end{tikzpicture}
\end{center}
\caption{The path $\gamma'_{i,j}=\delta'_i\circ \alpha_g\circ \delta'^\inv_j\circ\alpha_g^\inv$ in Definition \ref{definition:DehnTwistGammaNonFace}}
\label{fig:gammaijpath}
\end{figure}
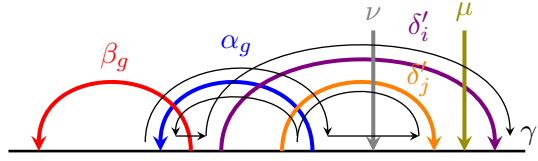

Given a pivotal Hopf monoid $H$ in a symmetric monoidal category $\mac$, we associate to each of the paths $\rho$  in \eqref{eq:pathexpress} an isomorphism $D_\rho: H^{\oo 2(n+g)}\to H^{\oo 2(n+g)}$. 
For the  paths $\rho$  in \eqref{eq:pathexpress} that are face paths, this isomorphism $D_\rho$  is the twist along $\rho$  from Definition \ref{def:dtnotface}. 
For the remaining paths $\gamma_{i,j}=\delta_i\circ\alpha_g\circ \delta_j^\inv\circ \alpha_g^\inv$ with $1\leq i<j \leq n+2g-2$  we define the twist as follows.

\begin{definition}
	\label{definition:DehnTwistGammaNonFace} For $1\leq i<j\leq n+2g-2$ the twist 
along  $\gamma_{i,j}$   is the  morphism $D_{\gamma_{i,j}}: H^{\oo E}\to H^{\oo E}$ defined as the following composite:
\begin{compactenum}
	\item Add edges $\delta'_{i}$ and $\delta'_{j}$ to the face paths $\delta_{i}$ and $\delta_{j}$, as shown in Figure \ref{fig:gammaijpath}.
	\item Slide the edge ends between the starting end of $\alpha_{g}$ and the target  end of $\delta'_{j}$ along  $\alpha_{g}$ and slide the starting end of $\beta_{g}$ along $\delta'_{i}$. This yields a face path  $$\gamma^*_{i,j}:= \delta'_{i}\circ \alpha_{g}\circ  \delta'^\inv_{j}\circ \alpha_{g}^{-1}.$$
	\item Perform a twist along the face path $\gamma^*_{i,j}$.
	\item Reverse the slides of step 2., moving back all edge ends into their original position.
	\item Remove the edges $\delta'_{i}$ and $\delta'_{j}$.
\end{compactenum}
\end{definition}

Note that the choices in Definition \ref{definition:DehnTwistGammaNonFace} do not affect the result. Instead of sliding the edge ends between the starting end of $\alpha_g$ and the target end of $\delta'_j$ along  the right of $\alpha_g$, we could also slide them along the face path $\alpha_g\circ \delta'^\inv_j$ and then along $\delta'_i$ together with the starting end of $\beta_g$. Similarly, instead of sliding the starting end of $\beta_g$ along $\delta'_i$, we could  slide it along 
 the face path $\alpha_g\circ\delta'_j$ and then along $\alpha_g$ together with the edge ends between the starting end of $\alpha_g$ and the target end of $\delta'_j$. Corollary \ref{corollary:SlideIntoFacePathDehnCommute} ensures that this yields the same morphisms in $\mac$.

Note also that neither the slides along the face paths in \eqref{eq:pathexpress} nor the slides along the paths $\gamma_{i,j}$ in Definition \ref{definition:DehnTwistGammaNonFace} slide  edge ends over the cilia. Proposition \ref{prop:vertfacecomp} then implies that  the associated automorphisms $D_\rho$  of $H^{\oo 2(n+g)}$ are automorphisms of Yetter-Drinfeld modules with respect to the Yetter-Drinfeld module structures from Lemma \ref{lem:facevertex} for each cilium. 
%To show that they define an action of the mapping class group $\pi_1(\Sigma)$ by automorphisms of Yetter-Drinfeld modules, it verify the relations from  Theorem \ref{th:gervais}.

\begin{theorem}\label{th:maptheorem} Let $H$ be a pivotal Hopf monoid in a symmetric monoidal category $\mac$, 
$\Sigma$ a surface of genus $g\geq 1$ with $n+1\geq 1$ boundary components
and  $\Gamma$ the embedded  graph from \eqref{eq:standardchord}.

Then the  twists along the paths 
$\alpha_i$, $\delta_j$, $\gamma_{k,l}$ 
from \eqref{eq:pathexpress} for $i\in\{1,\ldots, g\}$, $ j\in\{0,\ldots, n+2g-2\}$ and $ k\neq l\in\{0,\ldots, n+2g-2\}$
satisfy the relations in Theorem \ref{th:gervais} and 
define a group homomorphism
$$\rho:\mathrm{Map}(\Sigma)\to \mathrm{Aut}_{YD}(H^{\oo 2(n+g)}).$$
\end{theorem}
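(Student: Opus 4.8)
The plan is to verify, one at a time, each of the four families of relations (i)--(iv) in Theorem~\ref{th:gervais} for the morphisms $D_{\alpha_i}$, $D_{\delta_j}$, $D_{\gamma_{k,l}}$. Once these relations hold, the presentation of $\mathrm{Map}(\Sigma)$ in Theorem~\ref{th:gervais} yields a well-defined group homomorphism $\rho\colon \mathrm{Map}(\Sigma)\to\mathrm{Aut}(H^{\oo 2(n+g)})$; that $\rho$ lands in $\mathrm{Aut}_{YD}$ follows at once, since each generating twist is built from edge slides and the operations $\eta_{\gamma'}$, $\epsilon_{\gamma'}$ (Definitions~\ref{def:dtnotface} and \ref{definition:DehnTwistGammaNonFace}), none of which moves an edge end over any of the $n+1$ cilia, so Proposition~\ref{prop:vertfacecomp} (and its consequence Corollary~\ref{cor:slidepath}, together with Lemma~\ref{lem:addingedge}(3)) shows each $D_\rho$ is an isomorphism of Yetter--Drinfeld modules for all $n+1$ Yetter--Drinfeld structures of Lemma~\ref{lem:facevertex}.

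The key technical tool is the translation of each relation into a statement about slides and twists along face paths, which can then be manipulated with the machinery of Section~\ref{sec:slidetwist}. First I would handle relation~(ii), commutativity $D_\alpha D_\beta=D_\beta D_\alpha$ when $|\alpha\cap\beta|=0$: by inspecting the intersection numbers \eqref{eq:intnumbers} and the explicit representatives \eqref{eq:pathexpress} one checks that in the chord diagram \eqref{eq:standardchord} disjoint curves correspond to closed face paths (or their $\gamma_{i,j}$-type analogues) whose supports involve disjoint sets of edges or which can be separated by adding auxiliary edges; commutativity then reduces to the fact that slides affecting disjoint copies of $H$ commute (as in the proof of Proposition~\ref{prop: relationsbene}), combined with Lemma~\ref{lemma:SlideDehnCompatibilityGeneral}, Corollary~\ref{cor:addingtwist} and Lemma~\ref{lem:addingedge}(6). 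Next, relation~(iii), the braid relation $D_\alpha D_\beta D_\alpha=D_\beta D_\alpha D_\beta$ for $|\alpha\cap\beta|=1$: the model case is exactly the torus computation of Theorem~\ref{th:modular}, and the general case is obtained by adding edges to $\alpha$ and $\beta$ to isolate a one-holed-torus neighbourhood, reducing to that computation via Lemma~\ref{lem:addingedge}(5) and the generalised pentagon Corollary~\ref{corollary:GeneralizedPentagon}. Relation~(i), $D_{\gamma_{n+2j+1,n+2j}}=D_{\gamma_{n+2j,n+2j-1}}$ and $D_{\gamma_{1,n+2g-2}}=D_{\gamma_{n+2g-2,n+2g-3}}$, should follow because the relevant closed paths from \eqref{eq:pathexpress} are freely homotopic — concretely, they are related by a sequence of slides, so Lemma~\ref{lemma:SlideDehnCompatibilityGeneral} forces the twists to agree.

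The main obstacle will be relation~(iv), the "star relation" $(D_{\delta_k} D_{\delta_i} D_{\delta_j} D_{\alpha_g})^3 = D_{\gamma_{i,j}} D_{\gamma_{j,k}} D_{\gamma_{k,i}}$, especially in the cases where one or more of the $\gamma$'s is a non-face path of the $\gamma_{i,j}$, $i<j$, type and must be handled through Definition~\ref{definition:DehnTwistGammaNonFace}. Here the strategy is: add the auxiliary edges $\delta'_i,\delta'_j,\delta'_k$ to the face paths $\delta_i,\delta_j,\delta_k$ (these additions commute with everything in sight by Lemma~\ref{lem:addingedge}(6) and Corollary~\ref{cor:addingtwist}), slide the obstructing edge ends along $\alpha_g$ and along the $\delta'$'s so that all six curves $\delta_i,\delta_j,\delta_k,\gamma_{i,j},\gamma_{j,k},\gamma_{k,i}$ simultaneously become honest face paths in a modified graph, and then prove the identity for face-path twists by a direct diagrammatic computation in $\mac$, using the Hopf-module compatibility and the properties \eqref{eq:tplusprop1}--\eqref{eq:tplusprop2} of the involution $T$. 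The left-hand side unwinds, via Lemma~\ref{lemma:SlideDehnCompatibilityGeneral} and Corollary~\ref{corollary:SlideIntoFacePathDehnCommute}, into a product of slides whose net effect one computes in Sweedler notation; matching it to the right-hand side is the heavy calculation, and one must be careful that the slide-back steps of Definition~\ref{definition:DehnTwistGammaNonFace} for the non-face $\gamma_{i,j}$ are applied in a consistent order — Corollary~\ref{corollary:SlideIntoFacePathDehnCommute} is precisely what guarantees this is well-defined. I would verify the base case $g=1$, $n\ge 1$ (or $g=2$, $n=0$, matching the worked example promised in Section~\ref{sec:gervaismap}) explicitly, and then argue that the general case differs only by factors acting on copies of $H$ not involved in the relation, hence reduces to the base case by adding and deleting edges.
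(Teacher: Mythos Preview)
Your overall strategy matches the paper's: verify relations (i)--(iv) of Theorem~\ref{th:gervais} using the slide/twist calculus of Section~\ref{sec:slidetwist}, and conclude Yetter--Drinfeld compatibility from the fact that no edge ends cross cilia. The treatment of relation~(iii) (reduce to the torus computation of Theorem~\ref{th:modular} by adding edges and sliding extraneous ends out of the way) and the rough plan for relation~(iv) (add edges $\delta'_i,\delta'_j,\delta'_k$, slide obstructions along $\alpha_g$ and the $\delta'$'s to make everything a face path, then compute) are both essentially what the paper does.

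There is, however, a genuine gap in your argument for relation~(i). You claim that since $\gamma_{n+2j+1,n+2j}=\beta_j^{-1}$ and $\gamma_{n+2j,n+2j-1}=\alpha_j^{-1}\beta_j\alpha_j$ are freely homotopic and ``related by a sequence of slides,'' Lemma~\ref{lemma:SlideDehnCompatibilityGeneral} forces $D_{\gamma_{n+2j+1,n+2j}}=D_{\gamma_{n+2j,n+2j-1}}$. But Lemma~\ref{lemma:SlideDehnCompatibilityGeneral} says $S_\gamma D_\rho = D_{S_\gamma(\rho)}S_\gamma$, which gives \emph{conjugacy} of the twists, not equality; and there is no slide on the fixed graph $\Gamma$ carrying one path to the other. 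The paper instead computes both twists directly in Sweedler notation: the twist along $\alpha_j^{-1}\beta_j\alpha_j$ is evaluated via Definition~\ref{def:dtnotface} (add an edge---its label involves the pivotal element $p$---twist, delete), and one checks by hand that the result agrees with $(a,b)\mapsto(\low b 1 a,\low b 2)$. The pivotal structure enters here essentially; free homotopy alone does not suffice.

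Your plan for relation~(ii) also underestimates the non-face-path cases. When $\alpha=\gamma_{i,j}$ with $1\le i<j$ is a non-face path, the paper does not reduce all subcases to disjoint-support arguments. Instead, for several of the subcases (those involving $\beta=\alpha_k$ with $k<g$, or $\beta=\gamma_{k,l}$ with certain index orderings), the paper \emph{uses relation~(iv)}---whose proof is carefully made independent of~(ii)---to rewrite $D_{\gamma_{i,j}}=(D_{\delta_j}D_{\delta_i}^2 D_{\alpha_g})^3 D_{\gamma_{j,i}}^{-1}$ as a product of face-path twists, each of which commutes with $D_\beta$ by the easy face-path case. You would need either to discover this trick or to supply separate arguments for those subcases. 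Relatedly, in relation~(iv) the paper does not argue by induction on $(g,n)$ from a base case; after adding edges and sliding, the computation takes place in a single fixed chord diagram (independent of $g$ and $n$), and the result is obtained by one explicit Sweedler-notation calculation.
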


\begin{proof} 
We use the
 results on twists from Section \ref{sec:twists} to verify the relations in Theorem \ref{th:gervais} by explicit computations and structural arguments.

{\bf Relation (i):}
For $1\leq j\leq g-1$ we have from \eqref{eq:pathexpress}  
$$\gamma_{n+2j+1, n+2j}=\delta_{n+2j}\circ \delta_{n+2j+1}^{-1}=\beta_j^\inv\qquad 
\gamma_{n+2j, n+2j-1}=\delta_{n+2j-1}\circ \delta_{n+2j}^\inv=\alpha_j^\inv\beta_j\alpha_j,$$
where we set $\gamma_{n+2g-1, n+2g-2}:=\gamma_{0,n+2g-2}=\beta_{g-1}^\inv$, as in Theorem \ref{th:gervais}. 
We consider the action of  the associated   twists for the diagram \eqref{eq:standardchord}. They affect only the labels of the  chords $\alpha_j$ and $\beta_j$, and their action on these edge labels is given by 
\begin{align*}
&\begin{tikzpicture}[scale=.5]
\node at (-14.5,1)[anchor=west]{$D_{\gamma_{n+2j, n+2j-1}}:$};
\draw[line width=1pt, color=black] (-10,0)--(-2,0);
\draw[line width=1.5pt, color=red,<-,>=stealth] (-9,0).. controls (-9,2) and (-5,2)..(-5,0);
\draw[line width=1.5pt, color=blue,<-,>=stealth] (-7,0).. controls (-7,2) and (-3,2)..(-3,0);
\node at (-7,1.5)[anchor=south, color=red]{$b$};
\node at (-5,1.5)[anchor=south, color=blue]{$a$};
\draw[line width=1pt, ->,>=stealth] (-1.5,1)--(-.5,1);
\begin{scope}[shift={(10,0)}]
\draw[line width=1pt, color=black] (-10,0)--(-2,0);
\draw[line width=1.5pt, color=red,<-,>=stealth] (-9,0).. controls (-9,2) and (-5,2)..(-5,0);
\draw[line width=1.5pt, color=blue,<-,>=stealth] (-7,0).. controls (-7,2) and (-3,2)..(-3,0);
\node at (-7,1.5)[anchor=south, color=red]{$\low b 2$};
\node at (-5,1.5)[anchor=south, color=blue]{$\low b 1 a$};
\end{scope}
\end{tikzpicture}\\
&\begin{tikzpicture}[scale=.5]
\node at (-14.5,1)[anchor=west]{$D_{\gamma_{n+2j+1, n+2j}}:$};
\draw[line width=1pt, color=black] (-10,0)--(-2,0);
\draw[line width=1.5pt, color=red,<-,>=stealth] (-9,0).. controls (-9,2) and (-5,2)..(-5,0);
\draw[line width=1.5pt, color=blue,<-,>=stealth] (-7,0).. controls (-7,2) and (-3,2)..(-3,0);
\node at (-7,1.5)[anchor=south, color=red]{$b$};
\node at (-5,1.5)[anchor=south, color=blue]{$a$};
\draw[line width=1pt, ->,>=stealth] (-1.5,1)--(-.5,1);
\begin{scope}[shift={(10,0)}]
\draw[line width=1pt, color=black] (-10,0)--(-1,0);
\draw[line width=1.5pt, color=red,<-,>=stealth] (-9,0).. controls (-9,2) and (-5,2)..(-5,0);
\draw[line width=1.5pt, color=blue,<-,>=stealth] (-7,0).. controls (-7,2) and (-3,2)..(-3,0);
\draw[line width=1.5pt, color=black, ->,>=stealth, style=dashed] (-4,0).. controls (-4,2) and (-2,2) .. (-2,0);
\node at (-2,1.5)[anchor=south]{$pS(\low a 3) \low b 1\low a 1$};
\node at (-7,1.5)[anchor=south, color=red]{$\low b 2$};
\node at (-3,0)[anchor=north, color=blue]{$\low a 2$};
\end{scope}
\begin{scope}[shift={(0,-3)}]
\draw[line width=1pt, ->,>=stealth] (-11.5,1)--(-10.5,1);
\draw[line width=1pt, color=black] (-10,0)--(-1,0);
\draw[line width=1.5pt, color=red,<-,>=stealth] (-9,0).. controls (-9,2) and (-5,2)..(-5,0);
\draw[line width=1.5pt, color=blue,<-,>=stealth] (-7,0).. controls (-7,2) and (-3,2)..(-3,0);
\draw[line width=1.5pt, color=black, ->,>=stealth, style=dashed] (-4,0).. controls (-4,2) and (-2,2) .. (-2,0);
\node at (-2,1.5)[anchor=south]{$pS(\low a 3) \low b 1\low a 1$};
\node at (-7,1.5)[anchor=south, color=red]{$\low b 3$};
\node at (-3,0)[anchor=north, color=blue]{$\low b 2\low a 2$};
\end{scope}
\begin{scope}[shift={(10,-3)}]
\draw[line width=1pt, ->,>=stealth] (-11.5,1)--(-10.5,1);
\draw[line width=1pt, color=black] (-10,0)--(-2,0);
\draw[line width=1.5pt, color=red,<-,>=stealth] (-9,0).. controls (-9,2) and (-5,2)..(-5,0);
\draw[line width=1.5pt, color=blue,<-,>=stealth] (-7,0).. controls (-7,2) and (-3,2)..(-3,0);
\node at (-7,1.5)[anchor=south, color=red]{$\low b 2$};
\node at (-5,1.5)[anchor=south, color=blue]{$\low b 1 a$};
\end{scope}
\end{tikzpicture}
\end{align*}
{\bf  Relation  (ii): }  For curves  $\alpha,\beta$ with $|\alpha\cap \beta|=0$ that  are represented by face paths, Definition \ref{def:dtnotface} and Lemma \ref{rem:shorthand} allow us to assume without loss of generality that the  chord diagram that defines the   twists $D_\alpha$ and $D_\beta$ takes the form
\begin{align}\label{eq:commutenonint}
\begin{tikzpicture}[scale=.3]
\draw[line width=1pt, color=black] (-6,0)--(8,0);
\draw[color=red, line width=1.5pt, <-,>=stealth] (-4,0) .. controls (-4,2) and (0,2) .. (0,0);
\draw[color=blue, line width=1.5pt, <-,>=stealth] (2,0) .. controls (2,2) and (6,2) .. (6,0);
\draw[color=olive, line width=1.5pt,<-,>=stealth] (-5,0)--(-5,4);
\draw[color=orange, line width=1.5pt,<-,>=stealth] (-2,0)--(-2,4);
\draw[color=cyan, line width=1.5pt,<-,>=stealth] (4,0)--(4,4);
\draw[color=violet, line width=1.5pt,<-,>=stealth] (1,0)--(1,4);
\draw[color=brown, line width=1.5pt,<-,>=stealth] (7,0)--(7,4);
\node at (-1,1.5)[anchor=south,color=red]{$\beta'$};
\node at (5,1.5)[anchor=south,color=blue]{$\alpha'$};
\end{tikzpicture}
\qquad or \qquad
\begin{tikzpicture}[scale=.3]
\draw[line width=1pt, color=black] (-6,0)--(8,0);
\draw[color=red, line width=1.5pt, <-,>=stealth] (-4,0) .. controls (-4,4) and (6,4) .. (6,0);
\draw[color=blue, line width=1.5pt, <-,>=stealth] (-1,0) .. controls (-1,2) and (3,2) .. (3,0);
\draw[color=olive, line width=1.5pt,<-,>=stealth] (-5,0)--(-5,4);
\draw[color=orange, line width=1.5pt,<-,>=stealth] (-2,0)--(-2,4);
\draw[color=cyan, line width=1.5pt,<-,>=stealth] (4,0)--(4,4);
\draw[color=violet, line width=1.5pt,<-,>=stealth] (1,0)--(1,4);
\draw[color=brown, line width=1.5pt,<-,>=stealth] (7,0)--(7,4);
\node at (5,2.5)[anchor=south,color=red]{$\beta'$};
\node at (2,1.5)[anchor=south,color=blue]{$\alpha'$};
\end{tikzpicture}
\end{align}

where the edges $\alpha'$ and $\beta'$ are obtained by adding an edge to the face paths $\alpha$ and $\beta$ as in Definition \ref{def:dtnotface} and the other edge ends stand for multiple incoming or outgoing edge ends  at these positions, as in Lemma \ref{rem:shorthand}. 
 For the diagram on the left, the twists along $\alpha'$ and $\beta'$ commute, since they act on different copies of the Hopf monoid $H$.  
For the diagram on the right, the claim follows by applying Lemma~\ref{lem:slideDT}, 1. to  $D_{\alpha'}$ and the slide $S_{\beta'^L}$.

It remains to prove relation (ii) for the cases where $\alpha$ is not a face path, that is $\alpha=\gamma_{i,j}$ with $1\leq i<j\leq n+2g-2$ and $\beta\in G$ with $|\alpha\cap\beta|=0$. This involves the following cases:
\begin{compactenum}[(a)]
\item $\beta=\alpha_g$, 
\item  $\beta=\alpha_k$ with $n+2k\neq j$ and $n+2k\neq i$, 
 \item    $\beta=\gamma_{k,l}$ with $l<k\leq i$ or $j\leq l<k$ 
	or $i\leq l < k \leq j$,
	
  \item  $\beta=\delta_k$ with $i\leq k \leq j$, 
  \item  $ \beta = \gamma_{k,l}$ with $i \leq k < l \leq j$ or $1\leq k \leq i<j\leq l$. As this is symmetric in $(i,j)$ and $(k,l)$ we can restrict attention to $i \leq k < l \leq j$.
  \item $\beta = \gamma_{0,k}$ with $j \leq k$.
\end{compactenum}
To prove  relation (ii) for these cases, we use 
Definition \ref{definition:DehnTwistGammaNonFace} and   Figure \ref{fig:gammaijpath} for the twist along $\alpha'=\gamma_{i,j}$.

$\bullet$ Case (a): 
Sliding  the starting end of $\beta_g$  in Figure \ref{fig:gammaijpath}  along  $\delta'_i$ commutes with the  twist $D_{\alpha_g}$ by  Lemma \ref{lem:slideDT}, 3.
Sliding the edge ends between the starting end of $\alpha_g$ and the target of $\delta'_j$ along $\alpha_g$ commutes with the twist $D_{\alpha_g}$ by adjacent commutativity from Corollary \ref{cor:adjacentendslide}.
The   twist $D_{\alpha_g}$ also commutes with the   twist  along the face path $\gamma^*_{i,j}$ from Definition \ref{definition:DehnTwistGammaNonFace}
because adding an edge to $\gamma^*_{i,j}$ yields the second diagram in \eqref{eq:commutenonint}.  This shows that $D_{\alpha_g}\circ D_{\gamma_{i,j}}= D_{\gamma_{i,j}}\circ D_{\alpha_g}$ for $i<j$.

$\bullet$ Cases (b), (c) and (f):  We  use relation (iv), which we prove below without making use of relation (ii), to express the   twist $D_{\gamma_{i,j}}$ for $i<j$ as the product 
 $D_{\gamma_{i,j}}=  (D_{\delta_j}\circ D_{\delta_i}^2 \circ D_{\alpha_g})^3 \circ D_{\gamma_{j,i}}^\inv$. As $\beta$ is a {\em face path} that  intersects neither  $\alpha_g$ nor  the {\em face paths} $\delta_i$, $\delta_j$ and $\gamma_{i,j}$, it follows that $D_\beta$ commutes with  $D_{\alpha_g}$, $D_{\delta_i}$, $D_{\delta_j}$ and $D_{\gamma_{i,j}}$. Hence,  $D_\beta$ commutes with $D_{\gamma_{i,j}}= (D_{\delta_j}\circ D_{\delta_i}^2 \circ D_{\alpha_g})^3 \circ D_{\gamma_{j,i}}^\inv$.

$\bullet$ Case (d): To verify relation (ii) for $\beta=\delta_k$ with $i\leq k\leq j$, we note that adding edges to the face paths $\delta_i$, $\delta_j$ and $\delta_k$ yields the  chord diagram
\begin{center}
\begin{tikzpicture}[scale=.4]
\draw[line width=1pt, color=black] (-13,0)--(4,0);
\draw[line width=1.5pt, color=red, <-,>=stealth] (-12,0).. controls (-12,2) and (-8,2).. (-8,0);
\draw[line width=1.5pt, color=blue, <-,>=stealth] (-9,0).. controls (-9,2) and (-3,2).. (-3,0);
\draw[line width=1.5pt, color=violet, ->,>=stealth] (-7,0).. controls (-7,4) and (3,4).. (3,0);
\draw[line width=1.5pt, color=black, ->,>=stealth] (-6,0).. controls (-6,3) and (1,3).. (1,0);
\draw[line width=1.5pt, color=orange, ->,>=stealth] (-5,0).. controls (-5,2) and (-1,2).. (-1,0);
\draw[line width=1.5pt, color=gray, <-,>=stealth] (-2,0)--(-2,4);
\draw[line width=1.5pt, color=olive, <-,>=stealth] (2,0)--(2,4);
\draw[line width=1.5pt, color=brown, <-,>=stealth] (0,0)--(0,4);
\node at (-10,1.5)[anchor=south, color=red]{$\beta_g$};
\node at (-7,1.5)[anchor=south, color=blue]{$\alpha_g$};
\node at (-1,0)[anchor=north, color=orange]{$\delta'_j$};
\node at (3,0)[anchor=north, color=violet]{$\delta'_i$};
\node at (1,0)[anchor=north, color=black]{$\delta'_k$};
\node at (-2,4)[anchor=south, color=gray]{$\nu$};
\node at (2,4)[anchor=south, color=olive]{$\mu$};
\node at (0,4)[anchor=south, color=brown]{$\sigma$};
\end{tikzpicture}
\end{center}
where $\mu,\nu,\sigma$ stand for multiple edge ends incident at these positions, as in Lemma \ref{rem:shorthand}.
The   twist along $\gamma_{i,j}$ is again given by Definition \ref{definition:DehnTwistGammaNonFace} and Figure \ref{fig:gammaijpath}. Sliding the starting end of  $\beta_g$ along $\delta'_i$  commutes with the twist $D_{\delta'_k}$ by the commutativity relation.  Siding the edge end $\nu$ along $\alpha_g$ commutes with $D_{\delta'_k}$
by  by Lemma \ref{lem:slideDT}, 3. The twist $D_{\gamma_{i,j}^*}$ commutes with the twist $D_{\delta'_k}$ because they are both face paths as in the second diagram in  \eqref{eq:commutenonint}. This shows that $D_{\gamma_{i,j}}$ and $D_{\delta_k}$ commute.

$\bullet$  Case (e): for $\alpha=\gamma_{i,j}$ and $\beta=\gamma_{k,l}$ with $i\leq k<l\leq j$ we use relation (iv) to express $D_{\gamma_{k,l}}$ as  $D_{\gamma_{k,l}}=  (D_{\delta_k}\circ D_{\delta_l}^2 \circ D_{\alpha_g})^3 \circ D_{\gamma_{l,k}}^\inv $. The right hand side involves only twists from cases (a) to (d) and hence commutes with $D_{\gamma_{i,j}}$.

{\bf Relation (iii):}
If $\alpha,\beta \in G$ with $|\alpha\cap \beta|=1$, then because of~\eqref{eq:intnumbers} we assume w.~l.~o.~g.~that $\alpha=\alpha_k$ with $1\leq k\leq g$ and $\beta=\delta_i$ or $\beta=\gamma_{i,j}$ for suitable $i,j\in\{0,...,n+2g-2\}$.  If $\beta$ is a face path, then adding an edge to $\beta$ commutes with $D_{\alpha}$ by Corollary \ref{cor:addingtwist}.
We can therefore restrict attention to %the chord diagram 
\begin{center}
\begin{tikzpicture}[scale=.4]
\draw[line width=1pt, color=black] (-6,0)--(6,0);
\draw[line width=1.5pt, color=red, <-,>=stealth] (-4,0).. controls (-4,2) and (1,2).. (1,0);
\draw[line width=1.5pt, color=blue, <-,>=stealth] (-1,0).. controls (-1,2) and (4,2).. (4,0);
\draw[line width=1.5pt, color=gray, <-,>=stealth] (-2.5,0)--(-2.5,3);
\draw[line width=1.5pt, color=brown, <-,>=stealth] (0,0)--(0,3);
\draw[line width=1.5pt, color=olive, <-,>=stealth] (2.5,0)--(2.5,3);
\node at (-2.5,3)[anchor=south, color=gray] {$\nu$};
\node at (0,3)[anchor=south, color=brown] {$\sigma$};
\node at (2.5,3)[anchor=south, color=olive] {$\mu$};
\node at (4,0)[anchor=north, color=blue] {$\alpha$};
\node at (-4,0)[anchor=north, color=red] {$\beta'$};
\end{tikzpicture}
\end{center}
where $\beta'$ is obtained by adding an edge to the face path $\beta$  and   $\mu,\nu,\sigma$ stand for multiple  edge ends  at these positions, as in Lemma \ref{rem:shorthand}. We can  slide them out of the loops $\alpha$ and $\beta'$ by first sliding $\nu$ and $\sigma$ to the starting end of $\alpha$, then $\mu$ and $\sigma$ to the target end of $\beta'$. 
By Lemma~\ref{lem:slideDT}, 3.~and by the adjacent commutativity relation, this commutes with the twists $D_\alpha$ and $D_{\beta'}$, and it reduces the chord diagram to the one in \eqref{eq:chordtorus}, for which relation (iii) was shown in Theorem~\ref{th:modular}.

It remains to consider the cases, where $\beta$ is not a face path, namely $\beta=\gamma_{i,j}$ with $1\leq i<j\leq n+2g-2$. In this case  $|\alpha_k\cap \beta|=1$ if and only if $i=n+2k$ or $j=n+2k$, see Figures \ref{fig:gervais} and \ref{fig:gervais2} and the intersection numbers in \eqref{eq:intnumbers}.

$\bullet$ For $\alpha=\alpha_k$ and $\beta = \gamma_{i, j}$ with $i=n+2k<j$ we consider the  chord diagram
\begin{center}
\begin{tikzpicture}[scale=.4]
\draw[line width=1pt, color=black] (-11,0)--(11,0);
\draw[line width=1.5pt, color=red,  <-,>=stealth] (-10,0).. controls (-10,2) and (-6,2).. (-6,0);
\draw[line width=1.5pt, color=blue,  <-,>=stealth] (-7,0).. controls (-7,2) and (-3,2).. (-3,0);
\draw[line width=1.5pt, color=violet,  ->,>=stealth] (-5,0).. controls (-5,4) and (7,4).. (7,0);
\draw[line width=1.5pt, color=orange,  ->,>=stealth] (-4,0).. controls (-4,2) and (0,2).. (0,0);
\draw[line width=1.5pt, color=brown,  <-,>=stealth] (2,0).. controls (2,2) and (6,2).. (6,0);
\draw[line width=1.5pt, color=black,  <-,>=stealth] (4,0).. controls (4,2) and (8,2).. (8,0);
\draw[line width=1.5pt, color=gray,  <-,>=stealth] (-2,0)--(-2,4);
\draw[line width=1.5pt, color=olive,  <-,>=stealth] (1,0)--(1,4);
\node at (-2,4)[anchor=south, color=gray]{$\nu$};
\node at (1,4)[anchor=south, color=olive]{$\mu$};
\node at (6,0)[anchor=north, color=brown]{$\beta_k$};
\node at (8,0)[anchor=north, color=black]{$\alpha_k$};
\node at (-8,1.5)[anchor=south, color=red]{$\beta_g$};
\node at (-5,1.5)[anchor=south, color=blue]{$\alpha_g$};
\node at (0,0)[anchor=north, color=orange]{$\delta'_j$};
\node at (6,2.3)[anchor=south, color=violet]{$\delta'_{n+2k}$};
\end{tikzpicture}
\end{center}
where $\delta'_j$ and $\delta'_{n+2k}$ are obtained by adding edges to the face paths $\delta_j$ and $\delta_{n+2k}$ and the edge ends $\mu$ and $\nu$ stand for multiple edge ends incident at these positions, as in Lemma \ref{rem:shorthand}.
The   twist along $\gamma_{n+2k,j}$ is given by the sequence of slides in Definition \ref{definition:DehnTwistGammaNonFace}. 
Sliding the edge end $\nu$ along $\alpha_g$ commutes with $D_{\alpha_k}$ by the  commutativity relation. Sliding the starting end of $\beta_g$ along $\delta'_{n+2k}$ commutes with $D_{\alpha_k}$ by Lemma \ref{lem:slideDT}, 3. 
The   twists $D_{\gamma^*_{n+2k,j}}$ and $D_{\alpha_k}$ satisfy  relation (iii), since they are  face paths that intersect in a single point. Hence, $D_{\gamma_{n+2k,j}}$ and $ D_{\alpha_k}$  satisfy relation (iii) as well. 

$\bullet$ For $\alpha=\alpha_k$ and $\beta=\gamma_{i, j}$ with $i<j=n+2k$ we consider the chord diagram
\begin{center}
\begin{tikzpicture}[scale=.4]
\draw[line width=1pt, color=black] (-11,0)--(8,0);
\draw[line width=1.5pt, color=red,  <-,>=stealth] (-10,0).. controls (-10,2) and (-6,2).. (-6,0);
\draw[line width=1.5pt, color=blue,  <-,>=stealth] (-7,0).. controls (-7,2) and (-3,2).. (-3,0);
\draw[line width=1.5pt, color=violet,  ->,>=stealth] (-5,0).. controls (-5,4) and (7,4).. (7,0);
\draw[line width=1.5pt, color=orange,  ->,>=stealth] (-4,0).. controls (-4,3) and (4,3).. (4,0);
\draw[line width=1.5pt, color=black,  <-,>=stealth] (1,0).. controls (1,2) and (5,2).. (5,0);
\draw[line width=1.5pt, color=brown,  <-,>=stealth] (-1,0).. controls (-1,2) and (3,2).. (3,0);
\draw[line width=1.5pt, color=gray,  <-,>=stealth] (-2,0)--(-2,4);
\draw[line width=1.5pt, color=olive,  <-,>=stealth] (6,0)--(6,4);
\node at (.5,0)[anchor=north west, color=black]{$\alpha_k$};
\node at (-8,1.5)[anchor=south, color=red]{$\beta_g$};
\node at (-5,1.5)[anchor=south, color=blue]{$\alpha_g$};
\node at (4.5,0)[anchor=north, color=orange]{$\delta'_{n+2k}$};
\node at (7,0)[anchor=north, color=violet]{$\delta'_i$};
\node at (-2,4)[anchor=south, color=gray]{$\nu$};
\node at (6,4)[anchor=south, color=olive]{$\mu$};
\node at (-1,0)[anchor=north, color=brown]{$\beta_k$};
\end{tikzpicture}
\end{center}
where $\delta'_i$ and $\delta'_{n+2k}$ are obtained by adding edges to the face paths $\delta_i$ and $\delta_{n+2k}$, and the edge ends labeled $\mu$ and $\nu$ stand again for multiple edge ends incident at these positions. 

The twist along $\gamma_{i,n+2k}$ is given 
by the sequence of edge slides in Definition \ref{definition:DehnTwistGammaNonFace}. 
By Corollary \ref{corollary:SlideIntoFacePathDehnCommute} and the discussion after Definition \ref{definition:DehnTwistGammaNonFace}, sliding the starting end of $\beta_k$ along $\alpha_g\circ \delta'^\inv_{n+2k}$ instead of $\alpha_g$ and then along $\delta'_i$ together with 
the starting end of $\beta+g$ yields the same twist. By Lemma~\ref{lem:slideDT}, 3.~the slide along $\alpha_g\circ \delta'^\inv_{n+2k}$ commutes with the twist $D_{\alpha_k}$. Sliding the edge end $\nu$  and the target end of $\beta_k$ along $\alpha_g$ and sliding the starting end of $\beta_g$ along $\delta'_{i}$  commute with $D_{\alpha_k}$ as well by the commutativity relation and opposite end commutativity.  We can thus omit  $\beta_g$, $\beta_k$ and  $\nu$ and  prove relation (iii) for the diagram
\begin{align}
\begin{tikzpicture}[scale=.55]
\draw[line width=1pt, color=black] (0,0)--(10,0);
		\draw[line width=1.5pt, color=blue,  <-,>=stealth] (1,0).. controls (1,2) and (5,2).. (5,0);
		\draw[line width=1.5pt, color=violet,  ->,>=stealth] (2.5,0).. controls (2.5,4) and (9,4).. (9,0);
		\draw[line width=1.5pt, color=orange,  ->,>=stealth] (3.5,0).. controls (3.5,2) and (7,2).. (7,0);
		\draw[line width=1.5pt, color=black,  <-,>=stealth] (6,0).. controls (6,2) and (8,2).. (8,0);
		\draw[line width=1.5pt, color=olive,  <-,>=stealth] (8.5,0)--(8.5,4);
		\node at (8.5,4)[anchor=south, color=olive]{$\mu$};
		\node at (5.5,0)[anchor=north west, color=black]{$\alpha_k$};
		\node at (1,0)[anchor=north west, color=blue]{$\alpha_g$};
		\node at (3.5,0)[anchor=north, color=orange]{$\delta'_{n+2k}$};
		\node at (9,0)[anchor=north, color=violet]{$\delta'_i$};
		\draw[color=black, line width=.5pt,->,>=stealth] (0.7,-.5) -- (0.7,.5);
		\draw[color=black, line width=.5pt,->,>=stealth] (0.7,.5) .. controls (0.7,2.2) and (5.2,2.2).. (5.2,.5);
		\draw[color=black, line width=.5pt,->,>=stealth] (5.2,.5) -- (6.5,.5);
		\draw[color=black, line width=.5pt,->,>=stealth] (6.5,.5) .. controls (6.5,1.5) and (4,1.5).. (4,.5);
		\draw[color=black, line width=.5pt,->,>=stealth] (4,.5) -- (4.5,.5);
		\draw[color=black, line width=.5pt,->,>=stealth] (4.5,.5) .. controls (4.5,1.5) and (1.5,1.5).. (1.5,.5);
		\draw[color=black, line width=.5pt,->,>=stealth] (1.5,.5) -- (2,.5);
		\draw[color=black, line width=.5pt,->,>=stealth] (2,.5) .. controls (2,4.5) and (9.5,4.5).. (9.5,.5);
		\draw[color=black, line width=.5pt] (9.5,.5) -- (9.5,-.5);
		\node at (2.5,2.5)[anchor=east, color=black]{${\gamma_{i,n+2k}}$};
\end{tikzpicture}
\end{align}

The twist along $\gamma_{i,n+2k}$ is  then given by  sliding the target end of $\alpha_k$ along $\alpha_g$,  twisting along the face path $\gamma^*_{i,n+2k}=S_{\alpha_g^R}(\gamma_{i,n+2k})$ and  sliding the target end of $\alpha_k$ back along $\alpha_g$ in its original position. 
By Lemma~\ref{lemma:SlideDehnCompatibilityGeneral},  this is identical to 
$$D_{\gamma_{i, n+2k}}=S^3_{\alpha_g^{-R}}\circ D_{\gamma''}\circ S^3_{\alpha_g^R}\quad\text{with}\quad 
\gamma''=S^3_{\alpha_g^R}(\gamma_{i,n+2k}).$$ 
The face path $\gamma''$ is the image of $\gamma_{i,n+2k}$ under the slide  that slides the starting and target end of $\alpha_k$ and the target end of $\delta'_{n+2k}$ along $\alpha_g$ and is given as follows 

\begin{align}\label{eq:simppp3}
\begin{tikzpicture}[scale=.6]
	\begin{scope}[shift={(0,0)}]
		\draw[line width=1pt, color=black] (0,0)--(10,0);
		\draw[line width=1.5pt, color=blue,  <-,>=stealth] (1,0).. controls (1,2) and (5,2).. (5,0);
		\draw[line width=1.5pt, color=violet,  ->,>=stealth] (2.5,0).. controls (2.5,4) and (9,4).. (9,0);
		\draw[line width=1.5pt, color=orange,  ->,>=stealth] (3.5,0).. controls (3.5,2) and (7,2).. (7,0);
		\draw[line width=1.5pt, color=black,  <-,>=stealth] (6,0).. controls (6,2) and (8,2).. (8,0);
		\draw[line width=1.5pt, color=olive,  <-,>=stealth] (8.5,0)--(8.5,4);
		\node at (8.5,4)[anchor=south, color=olive]{$\mu$};
		\node at (5.5,0)[anchor=north west, color=black]{$\alpha_k$};
		\node at (1,0)[anchor=north west, color=blue]{$\alpha_g$};
		\node at (3.5,0)[anchor=north, color=orange]{$\delta'_{n+2k}$};
		\node at (9,0)[anchor=north, color=violet]{$\delta'_i$};
		\draw[color=black, line width=.5pt,->,>=stealth] (0.7,-.5) -- (0.7,.5);
		\draw[color=black, line width=.5pt,->,>=stealth] (0.7,.5) .. controls (0.7,2.2) and (5.2,2.2).. (5.2,.5);
		\draw[color=black, line width=.5pt,->,>=stealth] (5.2,.5) -- (6.5,.5);
		\draw[color=black, line width=.5pt,->,>=stealth] (6.5,.5) .. controls (6.5,1.5) and (4,1.5).. (4,.5);
		\draw[color=black, line width=.5pt,->,>=stealth] (4,.5) -- (4.5,.5);
		\draw[color=black, line width=.5pt,->,>=stealth] (4.5,.5) .. controls (4.5,1.5) and (1.5,1.5).. (1.5,.5);
		\draw[color=black, line width=.5pt,->,>=stealth] (1.5,.5) -- (2,.5);
		\draw[color=black, line width=.5pt,->,>=stealth] (2,.5) .. controls (2,4.5) and (9.5,4.5).. (9.5,.5);
		\draw[color=black, line width=.5pt] (9.5,.5) -- (9.5,-.5);
		\node at (2.5,2.5)[anchor=east, color=black]{${\gamma_{i,n+2k}}$};
	\end{scope}
	\draw[line width=1pt, color=black,->,>=stealth] (10.5,1)-- node[above] {$S^3_{\alpha_g^R}$}(12.5,1);
		\begin{scope}[shift={(16,0)}]
		\draw[line width=1pt, color=black] (-3,0)--(7,0);
		\draw[line width=1.5pt, color=blue,  <-,>=stealth] (1,0).. controls (1,2) and (5,2).. (5,0);
		\draw[line width=1.5pt, color=violet,  ->,>=stealth] (2.5,0).. controls (2.5,2) and (6,2).. (6,0);
		\draw[line width=1.5pt, color=orange,  ->,>=stealth] (3.5,0).. controls (3.5,2) and (-1,2).. (-1,0);
		\draw[line width=1.5pt, color=black,  <-,>=stealth] (-2,0).. controls (-2,2) and (0,2).. (0,0);
		\draw[line width=1.5pt, color=olive,  <-,>=stealth] (5.5,0)--(5.5,3);
		\node at (5.5,3)[anchor=south, color=olive]{$\mu$};
		\node at (-2,0)[anchor=north, color=black]{$\alpha_k$};
		\node at (1,0)[anchor=north west, color=blue]{$\alpha_g$};
		\node at (3.5,0)[anchor=north, color=orange]{$\delta'_{n+2k}$};
		\node at (6,0)[anchor=north, color=violet]{$\delta'_i$};
		\draw[color=black, line width=.5pt,->,>=stealth] (-1.3,-.5) -- (-1.3,.5);
		\draw[color=black, line width=.5pt,->,>=stealth] (-1.3,.5) .. controls (-1.3,2.2) and (3.8,2.2).. (3.8,.5);
		\draw[color=black, line width=.5pt,->,>=stealth] (3.8,.5) -- (4.5,.5);
		\draw[color=black, line width=.5pt,->,>=stealth] (4.5,.5) .. controls (4.5,1.5) and (1.5,1.5).. (1.5,.5);
		\draw[color=black, line width=.5pt,->,>=stealth] (1.5,.5) -- (2,.5);
		\draw[color=black, line width=.5pt,->,>=stealth] (2,.5) .. controls (2,2.5) and (6.5,2.5).. (6.5,.5);
		\draw[color=black, line width=.5pt] (6.5,.5) -- (6.5,-.5);
		\node at (5.5,2.2)[anchor=west, color=black]{$\gamma''$};
	\end{scope}
\end{tikzpicture}
\end{align}
As the slide $S_{\alpha^R}^3$ commutes with the twist $D_{\alpha_k}$ by Lemma~\ref{lem:slideDT},1.~it is now sufficient
to show that relation (iii) holds for the twists along $\alpha_k$ and $\gamma''$ in \eqref{eq:simppp3}. This follows directly because $\gamma''$ is a face path that intersects $\alpha_k$ in a single point.

{\bf Relation (iv):}
Due to the symmetries of  relation (iv), it is sufficient to consider the cases $j<i<k$, $j=i<k$ and $j<i=k$. We  treat the cases $j=0$ and $j \neq 0$ separately.  

$\bullet$ For the case $1 \leq j<i<k$, depicted  in Figure \ref{fig:gervais2}, we note that all of the paths in relation (iv) except $\gamma_{j,k}$ are face paths. The twists along these paths are thus obtained by adding edges 
$\delta'_i$, $\delta'_j$, $\delta'_k$, $\gamma'_{k,i}$ and $\gamma'_{i,j}$ to them, twisting along these added edges and then deleting them.
Note that adding these edges commutes with 
the twists 
along the face paths
in Relation (iv) 
by Corollary~\ref{cor:addingtwist}.
We can apply the same argument to the twist along the non-face path $\gamma_{j,k}$ by also noting that the slides in step~2. and step~4. of Definition~\ref{definition:DehnTwistGammaNonFace} commute with adding the edges by~\eqref{eq:ComplementSlideGeneral}.
The order in which these complements are added does not affect the result 
by \eqref{eq:ComplementCommutativity}. 
From this,  we also obtain that deleting any of the added edges commutes with the twists under consideration.

The twist along $\gamma_{j,k}$ is given by Definition \ref{definition:DehnTwistGammaNonFace} and computed from the  chord diagram
\begin{center}
\begin{tikzpicture}[scale=.4]
\draw[line width=1pt, color=black] (-2,0)--(18,0);
\draw[color=blue, line width=1.5 pt,  <-,>=stealth] (0,0).. controls (0,3) and (5,3) .. (5,0);
\draw[color=red, line width=1.5 pt,  <-,>=stealth] (-1,0)..controls(-1,2) and (1,2)..(1,0);
\draw[color=black, line width=1.5 pt,  ->,>=stealth] (2,0).. controls (2,5) and (17,5) .. (17,0);
\draw[color=magenta, line width=1.5 pt,  ->,>=stealth] (3,0).. controls (3,4) and (13,4) .. (13,0);
\draw[color=orange, line width=1.5 pt,  ->,>=stealth] (4,0).. controls (4,3) and (9,3) .. (9,0);
\draw[color=gray, line width=1.5 pt,  <-,>=stealth] (7,0)--(7,5);
\draw[color=olive, line width=1.5 pt,  <-,>=stealth] (11,0)--(11,5);
\draw[color=brown, line width=1.5 pt,  <-,>=stealth] (15,0)--(15,5);
\draw[color=violet, line width=1.5 pt,  ->,>=stealth] (10,0).. controls (10,2) and (12,2) .. (12,0);
\draw[color=purple, line width=1.5 pt,  ->,>=stealth] (14,0).. controls (14,2) and (16,2) .. (16,0);
\draw[line width=.5pt, color=black,->,>=stealth] (-.5,.3).. controls (-.5,3.5) and (5.5,3.5).. (5.5,.5);
\draw[line width=.5pt, color=black,->,>=stealth] (5.5,.5)--(8.5,.5);     
\draw[line width=.5pt, color=black,] (8.5,.5).. controls (8.5,2.5) and (4.5,2.5) .. (4.5,.3);
\draw[line width=.5pt, color=black,->,>=stealth] (4.5,.5).. controls (4.5,2.2) and (.5,2.2) .. (.5,.5);
\draw[line width=.5pt, color=black,->,>=stealth] (.5,.5)--(1.5,.5); 
\draw[line width=.5pt, color=black,->,>=stealth] (1.5,.5).. controls (1.5,5.5) and (17.5, 5.5) .. (17.5,.5);
\node at (7,5) [color=gray, anchor=south]{$\nu$};
\node at (11,5) [color=olive, anchor=south]{$\sigma$};
\node at (15,5) [color=brown, anchor=south]{$\mu$};
\node at (0,0)[anchor=north, color=blue]{$\alpha_g$};
\node at (17,0)[anchor=north, color=black]{$\delta'_j$};
\node at (13,0)[anchor=north, color=magenta]{$\delta'_i$};
\node at (9,0)[anchor=north, color=orange]{$\delta'_k$};
\node at (-1,0)[anchor=north, color=red]{$\beta_g$};
\node at (10.5,1) [color=violet, anchor=south east]{$\gamma'_{k,i}$};
\node at (14.5,1) [color=purple, anchor=south east]{$\gamma'_{i,j}$};
\node at (17.5,1)[anchor=west]{$\gamma_{j,k}$};
\end{tikzpicture}
\end{center}
 where the edges $\mu,\nu,\sigma$ stand again for multiple edge ends incident at this position, as in  Lemma \ref{rem:shorthand}. 
 The twist $D_{\gamma_{j,k}}$ is obtained by first sliding the starting end of $\beta_g$ along $\delta'_j$ and the edge end $\nu$ along $\alpha_g$, which yields the face path $\gamma^*_{j,k}=\delta'_j\circ\alpha_g\circ \delta'^\inv_k\circ \alpha_g^\inv$, then twisting along $\gamma_{j,k}^*$ and finally sliding the edge ends back. 
 Sliding the starting end of $\beta_g$ along $\delta'_j$ commutes with the twists $D_{\delta'_i}$, $D_{\delta'_k}$, $D_{\gamma'_{i,j}}$ and $D_{\gamma'_{k,i}}$ by the commutativity relation, with $D_{\delta'_j}$ by the adjacent commutativity relation in Corollary \ref{cor:adjacentendslide} and with $D_{\alpha_g}$ by Lemma \ref{lem:slideDT}, 3.  Sliding the edge end $\nu$ along $\alpha_g$ commutes with  $D_{\gamma'_{i,j}}$ and $D_{\gamma'_{k,i}}$ by the commutativity relation, with $D_{\alpha_g}$  by the adjacent commutativity relation  and with $D_{\delta'_i}$, $D_{\delta'_j}$ and $D_{\delta'_k}$ by Lemma \ref{lem:slideDT}, 3.  It is therefore sufficient to consider  the following chord diagram,
obtained by omitting $\beta_g$ and $\nu$ and adding an edge $\gamma'_{j,k}$ to $\gamma^*_{j,k}$.

 \begin{align}
\begin{tikzpicture}[scale=.4]
\draw[line width=1pt, color=black] (-2,0)--(19,0);
\draw[color=blue, line width=1.5 pt,  <-,>=stealth] (0,0).. controls (0,3) and (5,3) .. (5,0);
\draw[color=black, line width=1.5 pt,  ->,>=stealth] (2,0).. controls (2,5) and (17,5) .. (17,0);
\draw[color=magenta, line width=1.5 pt,  ->,>=stealth] (3,0).. controls (3,4) and (13,4) .. (13,0);
\draw[color=orange, line width=1.5 pt,  ->,>=stealth] (4,0).. controls (4,3) and (9,3) .. (9,0);
\draw[color=olive, line width=1.5 pt,  <-,>=stealth] (11,0)--(11,5);
\draw[color=brown, line width=1.5 pt,  <-,>=stealth] (15,0)--(15,5);
\draw[color=violet, line width=1.5 pt,  ->,>=stealth] (10,0).. controls (10,2) and (12,2) .. (12,0);
\draw[color=purple, line width=1.5 pt,  ->,>=stealth] (14,0).. controls (14,2) and (16,2) .. (16,0);
\draw[color=red, line width=1.5 pt,  ->,>=stealth] (-1,0).. controls (-1,6) and (18,6) .. (18,0);
\node at (0,0)[anchor=north, color=blue]{$\alpha_g$};
\node at (17,0)[anchor=north, color=black]{$\delta'_j$};
\node at (13,0)[anchor=north, color=magenta]{$\delta'_i$};
\node at (9,0)[anchor=north, color=orange]{$\delta'_k$};
\node at (10.5,1) [color=violet, anchor=south east]{$\gamma'_{k,i}$};
\node at (14.5,1) [color=purple, anchor=south east]{$\gamma'_{i,j}$};
\node at (18,1)[anchor=west, color=red]{$\gamma'_{j,k}$};
\node at (11,5) [color=olive, anchor=south]{$\sigma$};
\node at (15,5) [color=brown, anchor=south]{$\mu$};
\end{tikzpicture}
\label{eq:RelationIVGraph}
\end{align}

We have to show that for this diagram
$\epsilon\circ  (D_{\delta'_k}\circ D_{\delta'_i}\circ D_{\delta'_j}\circ D_{\alpha_g})^3=\epsilon\circ D_{\gamma'_{i,j}}\circ D_{\gamma'_{j,k}}\circ D_{\gamma'_{k,i}}$, 
where $\epsilon$ 
deletes the added edges $\delta'_i$, $\delta'_j$, 
$\gamma'_{k,i}$, $\gamma'_{i,j}$ and $\gamma'_{j,k}$.

The paths $
\gamma'_{k,i}\circ \delta'_k\circ \delta'^\inv_i$,
$\gamma_{j,k}'^{-1}\circ\delta_{j}'\circ\alpha_{g}\circ\delta'^{-1}_{k}\circ\alpha_{g}^{-1}$ and 
$\gamma'_{i,j}\circ \delta'_i\circ\delta'^\inv_j$
are ciliated faces of the chord diagram, and because they were obtained by adding edges to face paths, 
the $H$-right comodule structures associated to them are trivial by \eqref{eq:FlatFaceComplement}. This allows us to use identity~\eqref{eq:edgeexpress} to express the label of one edge of every face in terms of the other labels of that face.
				
To simplify the expressions in the following computations, we also reverse the orientation of the edges $\gamma'_{i,j}$, $\gamma'_{k,i}$, $\gamma'_{j,k}$, $\delta'_i$, $\delta'_j$, $\delta'_k$.
Assigning the labels $a,c,d,e$ to the edges $\alpha_{g}, \delta'_{k}, \gamma'_{k,i}, \gamma'_{i,j}$ and labels $y,z$ to the edge ends $\sigma,\mu$, we then obtain the labels of the edges $\delta'_{i},\delta'_{j}$ and $\gamma'_{j,k}$ from identity \eqref{eq:edgeexpress}.
We then 
can delete  $\gamma'_{j,k}$ and $\gamma'_{k,i}$ before computing $(D'_{\delta_k}\circ D'_{\delta_i}\circ D'_{\delta_j}\circ D_{\alpha_g})^3$, and $\delta'_{i}$ and $\delta'_{j}$ before computing $D_{\gamma'_{j,k}}\circ D_{\gamma'_{k,i}}\circ D_{\gamma'_{i,j}}$   
and  obtain the following two $H$-labelled chord diagrams:

 \begin{align}\label{eq:chord1}
 &\begin{tikzpicture}[scale=.4, baseline=(current  bounding  box.center)]
\draw[line width=1pt, color=black] (-2,0)--(17,0);
\draw[color=blue, line width=1.5 pt,  <-,>=stealth] (0,0).. controls (0,3) and (5,3) .. (5,0);
\draw[color=orange, line width=1.5 pt,  <-,>=stealth] (4,0).. controls (4,3) and (9,3) .. (9,0);
\draw[color=violet, line width=1.5 pt,  <-,>=stealth] (10,0).. controls (10,2) and (12,2) .. (12,0);
\draw[color=purple, line width=1.5 pt,  <-,>=stealth] (13,0).. controls (13,2) and (15,2) .. (15,0);
\draw[color=red, line width=1.5 pt,  <-,>=stealth] (-1,0).. controls (-1,5.5) and (16,5.5) .. (16,0);
\draw[color=olive, line width=1.5 pt,  <-,>=stealth] (11,0)--(11,5);
\draw[color=brown, line width=1.5 pt,  <-,>=stealth] (14,0)--(14,5);
\node at (8,2.1) [color=orange, anchor=south]{$c_{(2)}$};
\node at (4,2.1) [color=blue, anchor=south]{$a_{(2)}$};
\node at (11.2,1.3) [color=violet, anchor=south east]{$d_{(1)}$};
\node at (14.2,1.3) [color=purple, anchor=south east]{$e_{(1)}$};
\node at (11,5)[color=olive, anchor=south] {$y$};
\node at (14,5)[color=brown, anchor=south] {$z$};
\node at (-2.5,0)[anchor=north west, color=red]{$a_{(3)} S(c_{(1)})S(a_{(1)}) p^{-1} c_{(3)}p^{-1}d_{(2)}p^{-1}e_{(2)}$};
\end{tikzpicture}
\qquad\begin{tikzpicture}[scale=.4, baseline=(current  bounding  box.center)]
\draw[line width=1pt, color=black] (-2,0)--(18,0);
\draw[color=blue, line width=1.5 pt,  <-,>=stealth] (0,0).. controls (0,3) and (5,3) .. (5,0);
\draw[color=black, line width=1.5 pt,  <-,>=stealth] (2,0).. controls (2,5) and (17,5) .. (17,0);
\draw[color=magenta, line width=1.5 pt,  <-,>=stealth] (3,0).. controls (3,4) and (13,4) .. (13,0);
\draw[color=orange, line width=1.5 pt,  <-,>=stealth] (4,0).. controls (4,3) and (9,3) .. (9,0);
\draw[color=olive, line width=1.5 pt,  <-,>=stealth] (11,0)--(11,5);
\draw[color=brown, line width=1.5 pt,  <-,>=stealth] (15,0)--(15,5);
\node at (9,4.7) [color=black, anchor=east]{$\low c 3 p^\inv \low d 2p^\inv e$};
\node at (13,0) [color=magenta, anchor=north]{$\low c 2 p^\inv \low d 1$};
\node at (8.5,1.5) [color=orange, anchor=south]{$\low c 1$};
\node at (2,2.3) [color=blue, anchor=south]{$a$};
\node at (11,5)[color=olive, anchor=south] {$y$};
\node at (15,5)[color=brown, anchor=south] {$z$};
\end{tikzpicture}
\end{align}

Using the second diagram in \eqref{eq:chord1}, we compute  the morphism $D_{\delta'_k}\circ D_{\delta'_i}\circ D_{\delta'_j}\circ D_{\alpha_g}$. 

\begin{tikzpicture}[scale=.6]
\node at (-3,5)[anchor=south west] {$D_{\delta'_k}\circ D_{\delta'_i}\circ D_{\delta'_j}\circ D_{\alpha_g}:$};
\draw[line width=1pt, color=black] (-2,0)--(18,0);
\draw[color=blue, line width=1.5 pt,   <-,>=stealth] (0,0).. controls (0,3) and (5,3) .. (5,0);
\draw[color=black, line width=1.5 pt,  <-,>=stealth] (2,0).. controls (2,5) and (17,5) .. (17,0);
\draw[color=magenta, line width=1.5 pt,  <-,>=stealth] (3,0).. controls (3,4) and (13,4) .. (13,0);
\draw[color=orange, line width=1.5 pt,  <-,>=stealth] (4,0).. controls (4,3) and (9,3) .. (9,0);
\draw[color=olive, line width=1.5 pt,  <-,>=stealth] (11,0)--(11,5);
\draw[color=brown, line width=1.5 pt,  <-,>=stealth] (15,0)--(15,5);
\node at (15.5,2.5) [color=black, anchor=south west]{$\low c 3 p^\inv \low d 2 p^\inv e$};
\node at (13,2) [color=magenta, anchor=south]{$\low c 2 p^\inv \low d 1$};
\node at (8.5,1.7) [color=orange, anchor=south]{$\low c 1 $};
\node at (2,2.3) [color=blue, anchor=south]{$a$};
\node at (11,5)[color=olive, anchor=south] {$y$};
\node at (15,5)[color=brown, anchor=south] {$z$};
\end{tikzpicture}

\begin{tikzpicture}[scale=.6]
\draw[line width=1pt, ->,>=stealth, color=black](-3,2)--(-1,2);
\node at (-2,2)[anchor=south]{$D_{\alpha_g}$};
\draw[line width=1pt, color=black] (-2,0)--(18,0);
\draw[color=blue, line width=1.5 pt,  <-,>=stealth] (0,0).. controls (0,3) and (5,3) .. (5,0);
\draw[color=black, line width=1.5 pt,  <-,>=stealth] (2,0).. controls (2,5) and (17,5) .. (17,0);
\draw[color=magenta, line width=1.5 pt,  <-,>=stealth] (3,0).. controls (3,4) and (13,4) .. (13,0);
\draw[color=orange, line width=1.5 pt,  <-,>=stealth] (4,0).. controls (4,3) and (9,3) .. (9,0);
\draw[color=olive, line width=1.5 pt,  <-,>=stealth] (11,0)--(11,5);
\draw[color=brown, line width=1.5 pt,  <-,>=stealth] (15,0)--(15,5);
\node at (15.5,2.5) [color=black, anchor=south west]{$t$};
\node at (12.5,2) [color=magenta, anchor=south]{$u$};
\node at (8.5,1.7) [color=orange, anchor=south]{$\low r 1$};
\node at (2,2.3) [color=blue, anchor=south]{$\low a 2$};
\node at (11,5)[color=olive, anchor=south] {$y$};
\node at (15,5)[color=brown, anchor=south] {$z$};
\node at (18.5,4) [color=black, anchor=west]{$r=\low a 1 c$};
\node at (18.5,3) [color=black, anchor=west]{$u=\low r 2 p^\inv \low d 1$};
\node at (18.5,2) [color=black, anchor=west]{$t=\low r 3 p^\inv \low d 2 p^\inv e$};
\end{tikzpicture}

\begin{tikzpicture}[scale=.6]
\draw[line width=1pt, ->,>=stealth, color=black](-3,2)--(-1,2);
\node at (-2,2)[anchor=south]{$D_{\delta'_j}$};
\draw[line width=1pt, color=black] (-2,0)--(18,0);
\draw[color=blue, line width=1.5 pt,  <-,>=stealth] (0,0).. controls (0,3) and (5,3) .. (5,0);
\draw[color=black, line width=1.5 pt,  <-,>=stealth] (2,0).. controls (2,5) and (17,5) .. (17,0);
\draw[color=magenta, line width=1.5 pt,  <-,>=stealth] (3,0).. controls (3,4) and (13,4) .. (13,0);
\draw[color=orange, line width=1.5 pt,  <-,>=stealth] (4,0).. controls (4,3) and (9,3) .. (9,0);
\draw[color=olive, line width=1.5 pt,  <-,>=stealth] (11,0)--(11,5);
\draw[color=brown, line width=1.5 pt,  <-,>=stealth] (15,0)--(15,5);
\node at (15.5,2.5) [color=black, anchor=south west]{$\low t 8$};
\node at (13,0) [color=magenta, anchor=north]{$\low t 7 u S(\low t 2)$};
\node at (8,0) [color=orange, anchor=north]{$\low t 6 \low r 1 S(\low t 4)$};
\node at (2.7,2.2) [color=blue, anchor=south east]{$\low a 2 S(\low t 5)$};
\node at (11,5)[color=olive, anchor=south] {$\low t 3 y$};
\node at (15,5)[color=brown, anchor=south] {$\low t 1 z$};
\node at (18.5,4) [color=black, anchor=west]{$r=\low a 1 c$};
\node at (18.5,3) [color=black, anchor=west]{$u=\low r 2 p^\inv \low d 1$};
\node at (18.5,2) [color=black, anchor=west]{$t=\low r 3 p^\inv \low d 2p^\inv e$};
\end{tikzpicture}

\begin{tikzpicture}[scale=.6]
\draw[line width=1pt, ->,>=stealth, color=black](-4,2)--(-2,2);
\node at (-3,2)[anchor=south]{$D_{\delta'_i}$};
\draw[line width=1pt, color=black] (-2,0)--(18,0);
\draw[color=blue, line width=1.5 pt,  <-,>=stealth] (0,0).. controls (0,3) and (5,3) .. (5,0);
\draw[color=black, line width=1.5 pt,  <-,>=stealth] (2,0).. controls (2,5) and (17,5) .. (17,0);
\draw[color=magenta, line width=1.5 pt,  <-,>=stealth] (3,0).. controls (3,4) and (13,4) .. (13,0);
\draw[color=orange, line width=1.5 pt,  <-,>=stealth] (4,0).. controls (4,3) and (9,3) .. (9,0);
\draw[color=olive, line width=1.5 pt,  <-,>=stealth] (11,0)--(11,5);
\draw[color=brown, line width=1.5 pt,  <-,>=stealth] (15,0)--(15,5);
\node at (15.5,2.5) [color=black, anchor=south west]{$\low t 8$};
\node at (12,0) [color=magenta, anchor=north]{$\low t 7 \low u 5 S(\low t 2)$};
\node at (7,0) [color=orange, anchor=north]{$\low t 6 \low u 4 \low r 1 S(\low t 4\low u 2)$};
\node at (3.5, 2.2) [color=blue, anchor=south east]{$\low a 2 S(\low t 5\low u 3) $};
\node at (11,5)[color=olive, anchor=south] {$\low t 3 \low u 1 y$};
\node at (15,5)[color=brown, anchor=south] {$\low t 1 z$};
\node at (18.5,4) [color=black, anchor=west]{$r=\low a 1 c$};
\node at (18.5,3) [color=black, anchor=west]{$u=\low r 2 p^\inv \low d 1$};
\node at (18.5,2) [color=black, anchor=west]{$t=\low r 3 p^\inv \low d 2p^\inv e$};
\end{tikzpicture}

\begin{tikzpicture}[scale=.6]
\draw[line width=1pt, ->,>=stealth, color=black](-4,2)--(-2,2);
\node at (-3,2)[anchor=south]{$D_{\delta'_k}$};
\draw[line width=1pt, color=black] (-2,0)--(18,0);
\draw[color=blue, line width=1.5 pt,  <-,>=stealth] (0,0).. controls (0,3) and (5,3) .. (5,0);
\draw[color=black, line width=1.5 pt,  <-,>=stealth] (2,0).. controls (2,5) and (17,5) .. (17,0);
\draw[color=magenta, line width=1.5 pt,  <-,>=stealth] (3,0).. controls (3,4) and (13,4) .. (13,0);
\draw[color=orange, line width=1.5 pt,  <-,>=stealth] (4,0).. controls (4,3) and (9,3) .. (9,0);
\draw[color=olive, line width=1.5 pt,  <-,>=stealth] (11,0)--(11,5);
\draw[color=brown, line width=1.5 pt,  <-,>=stealth] (15,0)--(15,5);
\node at (15.5,2.5) [color=black, anchor=south west]{$\low t 8$};
\node at (13.5,0) [color=magenta, anchor=north]{$\low t 7 \low u 5 S(\low t 2)$};
\node at (8.3,0) [color=orange, anchor=north]{$\low t 6\low u 4  \low  r 2 S(\low t 4\low u 2)$};
\node at (3.5,2.2) [color=blue, anchor=south east]{$\low a 2 S(\low t 5\low u 3 \low r 1)$};
\node at (11.5,5)[color=olive, anchor=south] {$\low t 3 \low u 1 y$};
\node at (15,5)[color=brown, anchor=south] {$\low t 1 z$};
\node at (18.5,4) [color=black, anchor=west]{$r=\low a 1 c$};
\node at (18.5,3) [color=black, anchor=west]{$u=\low r 3 p^\inv \low d 1$};
\node at (18.5,2) [color=black, anchor=west]{$t=\low r 4 p^\inv \low d 2p^\inv e$};
\end{tikzpicture}

Applying this transformation three times, we get

\begin{align}
&\begin{tikzpicture}[scale=.6]
\node at (-3,4)[anchor=south west] {$(D_{\delta'_k}\circ D_{\delta'_i}\circ D_{\delta'_j}\circ D_{\alpha_g})^3:$};
\draw[line width=1pt, color=black] (-2,0)--(18,0);
\draw[color=blue, line width=1.5 pt,   <-,>=stealth] (0,0).. controls (0,3) and (5,3) .. (5,0);
\draw[color=black, line width=1.5 pt,  <-,>=stealth] (2,0).. controls (2,5) and (17,5) .. (17,0);
\draw[color=magenta, line width=1.5 pt,  <-,>=stealth] (3,0).. controls (3,4) and (13,4) .. (13,0);
\draw[color=orange, line width=1.5 pt,  <-,>=stealth] (4,0).. controls (4,3) and (9,3) .. (9,0);
\draw[color=olive, line width=1.5 pt,  <-,>=stealth] (11,0)--(11,5);
\draw[color=brown, line width=1.5 pt,  <-,>=stealth] (15,0)--(15,5);
\node at (15.5,2.5) [color=black, anchor=south west]{$\low c 3 p^\inv \low d 2 p^\inv e$};
\node at (13,2) [color=magenta, anchor=south]{$\low c 2 p^\inv \low d 1$};
\node at (8.5,1.7) [color=orange, anchor=south]{$\low c 1 $};
\node at (2,2.3) [color=blue, anchor=south]{$a$};
\node at (11,5)[color=olive, anchor=south] {$y$};
\node at (15,5)[color=brown, anchor=south] {$z$};
\end{tikzpicture}\nonumber\\
&\begin{tikzpicture}[scale=.6]
\draw[line width=1pt, ->,>=stealth] (-3,2)--(-2,2);
\draw[line width=1pt, color=black] (-2,0)--(18,0);
\draw[color=blue, line width=1.5 pt,   <-,>=stealth] (0,0).. controls (0,3) and (5,3) .. (5,0);
\draw[color=black, line width=1.5 pt,  <-,>=stealth] (2,0).. controls (2,5) and (17,5) .. (17,0);
\draw[color=magenta, line width=1.5 pt,  <-,>=stealth] (3,0).. controls (3,4) and (13,4) .. (13,0);
\draw[color=orange, line width=1.5 pt,  <-,>=stealth] (4,0).. controls (4,3) and (9,3) .. (9,0);
\draw[color=olive, line width=1.5 pt,  <-,>=stealth] (11,0)--(11,5);
\draw[color=brown, line width=1.5 pt,  <-,>=stealth] (15,0)--(15,5);
\node at (15.5,2.5) [color=black, anchor=south west]{$e'$};
\node at (12.5,2) [color=magenta, anchor=south]{$d'$};
\node at (8.5,1.7) [color=orange, anchor=south]{$c'$};
\node at (2,2.3) [color=blue, anchor=south]{$a'$};
\node at (11,5)[color=olive, anchor=south] {$y'$};
\node at (15,5)[color=brown, anchor=south] {$z'$};
\end{tikzpicture}
\label{eq:rel41}
\end{align}
where
	 \begin{align*}
		&v = a_{(5)}S(c_{(1)}) S(a_{(1)})p^{-1}c_{(5)}p^{-1} d_{(3)}p^{-1}e_{(2)}
		 &&a'= v_{(9)} a_{(3)}S(v_{(5)})\\
		 &y'= v_{(3)} d_{(1)}y
		 &&z'= v_{(1)} e_{(1)} z\\
		 &c'= v_{(6)} c_{(3)} S(v_{(4)})
		&&d'= v_{(7)} c_{(4)}p^{-1}d_{(2)}S(v_{(2)})\\
		 &e'= v_{(8)} a_{(2)}c_{(2)}p S(a_{(4)}).
	 \end{align*}
	 
The morphism $ D_{\gamma'_{j,k}}\circ D_{\gamma'_{k,i}}\circ D_{\gamma'_{i,j}}$ is obtained from the first diagram in \eqref{eq:chord1} and given by
 
\begin{align}
&\begin{tikzpicture}[scale=.6]
\node at (-3,2)[anchor=west]{$\quad$};
\draw[line width=1pt, color=black] (-2,0)--(18,0);
\draw[color=blue, line width=1.5 pt,  <-,>=stealth] (0,0).. controls (0,3) and (5,3) .. (5,0);
\draw[color=red, line width=1.5 pt,  <-,>=stealth] (-1,0).. controls (-1,5) and (17,5) .. (17,0);
\draw[color=orange, line width=1.5 pt,  <-,>=stealth] (4,0).. controls (4,3) and (9,3) .. (9,0);
\draw[color=violet, line width=1.5 pt,  <-,>=stealth] (10,0).. controls (10,2) and (12,2) .. (12,0);
\draw[color=purple, line width=1.5 pt,  <-,>=stealth] (14,0).. controls (14,2) and (16,2) .. (16,0);
\draw[color=olive, line width=1.5 pt,  <-,>=stealth] (11,0)--(11,5);
\draw[color=brown, line width=1.5 pt,  <-,>=stealth] (15,0)--(15,5);
\node at (-2,3.7) [color=red, anchor=south west]{$a_{(3)}S(c_{(1)}) S(a_{(1)})p^{-1} c_{(3)}p^{-1} d_{(2)}p^{-1} e_{(2)}$};
\node at (8.5,1.7) [color=orange, anchor=south]{$\low c 2$};
\node at (4.5,1.7) [color=blue, anchor=south]{$a_{(2)}$};
\node at (10.5,1) [color=violet, anchor=south east]{$\low d 1$};
\node at (14.5,1) [color=purple, anchor=south east]{$\low e 1$};
\node at (11,5)[anchor=south, color=olive] {$y$};
\node at (15,5)[anchor=south, color=brown] {$z$};
\end{tikzpicture}\nonumber
\intertext{}\nonumber\\
 &\begin{tikzpicture}[scale=.6]
 \draw[color=black, line width=1pt, ->,>=stealth] (-3,2)--(-1,2);
 \node at (-2,2)[anchor=south]{$D_{\gamma'_{i,j}}$};
\draw[line width=1pt, color=black] (-2,0)--(18,0);
\draw[color=blue, line width=1.5 pt,  <-,>=stealth] (0,0).. controls (0,3) and (5,3) .. (5,0);
\draw[color=orange, line width=1.5 pt,  <-,>=stealth] (4,0).. controls (4,3) and (9,3) .. (9,0);
\draw[color=violet, line width=1.5 pt,  <-,>=stealth] (10,0).. controls (10,2) and (12,2) .. (12,0);
\draw[color=purple, line width=1.5 pt,  <-,>=stealth] (14,0).. controls (14,2) and (16,2) .. (16,0);
\draw[color=red, line width=1.5 pt,  <-,>=stealth] (-1,0).. controls (-1,5) and (17,5) .. (17,0);
\draw[color=olive, line width=1.5 pt,  <-,>=stealth] (11,0)--(11,5);
\draw[color=brown, line width=1.5 pt,  <-,>=stealth] (15,0)--(15,5);
\node at (-2,3.7) [color=red, anchor=south west]{$a_{(3)}S(c_{(1)}) S(a_{(1)})p^{-1} c_{(3)}p^{-1} d_{(2)}p^{-1} e_{(3)}$};
\node at (8.5,1.7) [color=orange, anchor=south]{$\low c 2$};
\node at (4.5,1.7) [color=blue, anchor=south]{$a_{(2)}$};
\node at (10.5,1) [color=violet, anchor=south east]{$\low d 1$};
\node at (14.5,1) [color=purple, anchor=south east]{$\low e 2$};
\node at (11,5)[anchor=south, color=olive] {$y$};
\node at (15,5)[anchor=south, color=brown] {$\low e 1 z$};
\end{tikzpicture}
 \nonumber
\intertext{}\nonumber\\
  &\begin{tikzpicture}[scale=.6]
 \draw[color=black, line width=1pt, ->,>=stealth] (-3,2)--(-1,2);
 \node at (-2,2)[anchor=south]{$D_{\gamma'_{k,i}}$};
\draw[line width=1pt, color=black] (-2,0)--(18,0);
\draw[color=blue, line width=1.5 pt,  <-,>=stealth] (0,0).. controls (0,3) and (5,3) .. (5,0);
\draw[color=red, line width=1.5 pt,  <-,>=stealth] (-1,0).. controls (-1,5) and (17,5) .. (17,0);
\draw[color=orange, line width=1.5 pt,  <-,>=stealth] (4,0).. controls (4,3) and (9,3) .. (9,0);
\draw[color=violet, line width=1.5 pt,  <-,>=stealth] (10,0).. controls (10,2) and (12,2) .. (12,0);
\draw[color=purple, line width=1.5 pt,  <-,>=stealth] (14,0).. controls (14,2) and (16,2) .. (16,0);
\draw[color=olive, line width=1.5 pt,  <-,>=stealth] (11,0)--(11,5);
\draw[color=brown, line width=1.5 pt,  <-,>=stealth] (15,0)--(15,5);
\node at (-2,3.7) [color=red, anchor=south west]{$a_{(3)}S(c_{(1)}) S(a_{(1)})p^{-1} c_{(3)}p^{-1} d_{(3)}p^{-1} e_{(3)}$};
\node at (8.5,1.7) [color=orange, anchor=south]{$\low c 2$};
\node at (4.5,1.7) [color=blue, anchor=south]{$a_{(2)}$};
\node at (10.5,1) [color=violet, anchor=south east]{$\low d 2$};
\node at (14.5,1) [color=purple, anchor=south east]{$\low e 2$};
\node at (11,5)[anchor=south, color=olive] {$\low d 1 y $};
\node at (15,5)[anchor=south, color=brown] {$\low e 1 z$};
\end{tikzpicture}
 \nonumber
\intertext{}\nonumber\\
&\begin{tikzpicture}[scale=.6]
        \node at (-4, 5)[anchor=south west]{$v=\low a 3 S(\low c 1) S(\low a 1) p^\inv \low c 4 p^\inv \low d 3 p^\inv \low e 3$};
 \draw[color=black, line width=1pt,  ->,>=stealth] (-3,2)--(-1,2);
 \node at (-2,2)[anchor=south]{$D_{\gamma'_{j,k}}$};
\draw[line width=1pt, color=black] (-2,0)--(18,0);
\draw[color=blue, line width=1.5 pt,  <-,>=stealth] (0,0).. controls (0,3) and (5,3) .. (5,0);
\draw[color=orange, line width=1.5 pt,  <-,>=stealth] (4,0).. controls (4,3) and (9,3) .. (9,0);
\draw[color=red, line width=1.5 pt,  <-,>=stealth] (-1,0).. controls (-1,5) and (17,5) .. (17,0);
\draw[color=olive, line width=1.5 pt,  <-,>=stealth] (11,0)--(11,5);
\draw[color=brown, line width=1.5 pt,  <-,>=stealth] (15,0)--(15,5);
\draw[color=violet, line width=1.5 pt,  <-,>=stealth] (10,0).. controls (10,2) and (12,2) .. (12,0);
\draw[color=purple, line width=1.5 pt,  <-,>=stealth] (14,0).. controls (14,2) and (16,2) .. (16,0);
\node at (9,2.2) [color=orange, anchor=south]{$\low v 9 \low c 2 S(\low v 7)$};
\node at (2.5,2.2) [color=blue, anchor=south west]{$\low v {10} \low a 2 S(\low v 8)$};
\node at (0,2.7) [color=red, anchor=south west]{$v_{(11)}$};
\node at (11,-1.2) [color=violet, anchor=south]{$v_{(6)}\low d 2 S(v_{(4)})$};
\node at (15,-1.2) [color=purple, anchor=south]{$v_{(3)} \low e 2 S(v_{(1)})$};
\node at (11,5)[anchor=south, color=olive] {$\low v 5 \low d 1 y$};
\node at (15,5)[anchor=south, color=brown] {$\low v 2 \low e 1 z$};
\end{tikzpicture}
\label{eq:rel4pppp}
\end{align}
After applying
$\epsilon_{\gamma'_{i,j}}, \epsilon_{\gamma'_{k,i}}$ and $\epsilon_{\gamma'_{j,k}}$ 
to diagram \eqref{eq:rel4pppp} to eliminate the edges  $\gamma'_{i,j}$, $\gamma'_{k,i}$ and $\gamma'_{j,k}$ 
and applying $\epsilon_{\delta'_i}$ and $\epsilon_{\delta'_j}$ 
to \eqref{eq:rel41} to eliminate the edges  $\delta'_i$ and $\delta'_j$, 
one finds that the resulting transformations agree. This proves relation (iv) for $1 \leq j<i<k$.

$\bullet$  The proof for the case $1 \leq j=i<k$ is obtained from the same computation by removing the  edge labeled $z$ and setting $e=1$.
Relation (iv) for  $1 \leq j<i=k$ follows similarly by  removing the edge  labeled $y$ and setting $d=1$. 

$\bullet$ For the case $0 = j \leq i \leq k$ all of the paths representing the curves in relation (iv) are face paths. The twists along these face paths are given by adding edges  to the face paths $\gamma_{i,0}$, $\gamma_{k,i}$, $\gamma_{0,k}$, $\delta_i$ and $\delta_k$, which yields the following diagram

\begin{center}
\begin{tikzpicture}[scale=.4]
\draw[line width=1pt, color=black] (-3,0)--(19,0);
\draw[color=blue, line width=1.5 pt,  <-,>=stealth] (0,0).. controls (0,3) and (5,3) .. (5,0);
\draw[color=red, line width=1.5 pt,  <-,>=stealth] (-1,0)..controls(-1,2) and (1,2)..(1,0);
\draw[color=cyan, line width=1.5 pt,  ->,>=stealth] (2,0).. controls (2,5) and (17,5) .. (17,0);
\draw[color=black, line width=1.5 pt,  ->,>=stealth] (-2,0).. controls (-2,6) and (18,6) .. (18,0);
\draw[color=magenta, line width=1.5 pt,  ->,>=stealth] (3,0).. controls (3,4) and (13,4) .. (13,0);
\draw[color=olive, line width=1.5 pt,  <-,>=stealth] (11,0)--(11,5);
\draw[color=brown, line width=1.5 pt,  <-,>=stealth] (15,0)--(15,5);
\draw[color=violet, line width=1.5 pt,  ->,>=stealth] (6,0).. controls (6,2) and (12,2) .. (12,0);
\draw[color=purple, line width=1.5 pt,  ->,>=stealth] (14,0).. controls (14,2) and (16,2) .. (16,0);
\node at (5,0)[anchor=north, color=blue]{$\alpha_g$};
\node at (17,0)[anchor=north, color=cyan]{$\delta'_i$};
\node at (13,0)[anchor=north, color=magenta]{$\delta'_k$};
\node at (-0.5,0)[anchor=north, color=red]{$\beta_g=\delta_{0}$};
\node at (10.5,1.3) [color=violet, anchor=south east]{$\gamma'_{0,k}$};
\node at (15,1.2) [color=purple, anchor=south east]{$\gamma'_{k,i}$};
\node at (18,1)[anchor=west]{$\gamma'_{i,0}$};
\end{tikzpicture}
\end{center}
Sliding the starting and target end of $\beta_{g}$ along the edge $\alpha_{g}$  commutes with the  twist along $\delta_0=\beta_{g}$ by Lemma~\ref{lem:slideDT}, 2, with the twists along $\delta'_i$ and $\delta'_k$ by Lemma~\ref{lem:slideDT}, 3.~and with the twists along $\gamma'_{i,0}$, $\gamma'_{0,k}$ and $\gamma'_{k,i}$ by the commutativity relation.
 It is therefore sufficient to prove that relation (iv) holds for the diagram obtained by this slide:

\begin{center}
\begin{tikzpicture}[scale=.4]
\draw[line width=1pt, color=black] (-2,0)--(19,0);
\draw[color=blue, line width=1.5 pt,  <-,>=stealth] (0,0).. controls (0,3) and (5,3) .. (5,0);
\draw[color=cyan, line width=1.5 pt,  ->,>=stealth] (2,0).. controls (2,5) and (17,5) .. (17,0);
\draw[color=magenta, line width=1.5 pt,  ->,>=stealth] (3,0).. controls (3,4) and (13,4) .. (13,0);
\draw[color=red, line width=1.5 pt,  ->,>=stealth] (4,0).. controls (4,3) and (9,3) .. (9,0);
\draw[color=olive, line width=1.5 pt,  <-,>=stealth] (11,0)--(11,5);
\draw[color=brown, line width=1.5 pt,  <-,>=stealth] (15,0)--(15,5);
\draw[color=violet, line width=1.5 pt, ->,>=stealth] (10,0).. controls (10,2) and (12,2) .. (12,0);
\draw[color=purple, line width=1.5 pt,  ->,>=stealth] (14,0).. controls (14,2) and (16,2) .. (16,0);
\draw[color=black, line width=1.5 pt,  ->,>=stealth] (-1,0).. controls (-1,6) and (18,6) .. (18,0);
\node at (0,0)[anchor=north, color=blue]{$\alpha_g$};
\node at (17,0)[anchor=north, color=cyan]{$\delta'_i$};
\node at (13,0)[anchor=north, color=magenta]{$\delta'_k$};
\node at (9,0)[anchor=north, color=red]{$\beta_{g}=\delta_0$};
\node at (10.8,1) [color=violet, anchor=south east]{$\gamma'_{0,k}$};
\node at (15,1.2) [color=purple, anchor=south east]{$\gamma'_{k,i}$};
\node at (18,1)[anchor=west]{$\gamma'_{i,0}$};
\end{tikzpicture}
\end{center}
Up to the labelling of the edges, this coincides with diagram~\eqref{eq:RelationIVGraph}, for which we have already proven the claim. This concludes the proof.
\end{proof}

Theorem \ref{th:maptheorem} extends  Theorem \ref{the:edge1} to surfaces  of genus $g\geq 1$ with more than one boundary component and  gives an explicit description of the mapping class group action in terms of generating Dehn twists. 
It will also allow us to determine sufficient conditions under which the edge slides  and associated twists define mapping class group actions for  closed surfaces.

For this, note that  a  surface $\Sigma'$ of genus $g\geq 1$ with $n\geq 0$ boundary components is obtained from the surface $\Sigma$ of genus $g$ with $n+1$ boundary components in Figure \ref{fig:pi1gens} by attaching a disc to the coloured boundary component.  If we work with the graph $\Gamma$ from \eqref{eq:standardchord} and Figure \ref{fig:pi1gens}, this amounts to attaching a 
disc to the path $\gamma_{1,0}=f$ in \eqref{eq:pathexpress} and in Figure \ref{fig:pi1gens}. By Theorem \ref{th:gervais} the  presentation of the mapping class group $\mathrm{Map}(\Sigma')$ 
is then obtained from the presentation of  $\mathrm{Map}(\Sigma)$ used in Theorem \ref{th:maptheorem} 
by imposing the additional relations  
\begin{align}\label{eq:addrel}
	D_{\gamma_{1,0}}=1,  \quad D_{\delta_0}=D_{\delta_1}, \quad 
	%D_{\gamma_{0,1}}=(D_{\delta_{0}}D_{\alpha_{g}}D_{\delta_{0}})^4,
	D_{\gamma_{0,1}}=(D_{\delta_{0}}^3D_{\alpha_{g}})^3,
	\quad   D_{\gamma_{0,k}}=D_{\gamma_{1,k}},\quad 
D_{\gamma_{k,0}}=D_{\gamma_{k,1}}
\end{align}
for $1< k\leq n+2g-2$, see \cite{G01}. In general, they do not hold for the action of $\mathrm{Map}(\Sigma)$ on $H^{\oo 2(n+g)}$ from Theorem \ref{th:maptheorem} and neither on the invariants of the $H$-left module structure nor on the coinvariants of the $H$-left comodule structure associated with the baseline of diagram \eqref{eq:standardchord} (cf.~Example \ref{ex:groupex}). Instead, we have to impose both invariance and coinvariance and work with the biinvariants of the Yetter-Drinfeld module structure associated with the baseline of 
\eqref{eq:standardchord}.

We first show that the group homomorphism $\rho$ from Theorem \ref{th:maptheorem} induces an action of the mapping class group $\mathrm{Map}(\Sigma)$ on the biinvariants of this Yetter-Drinfeld module. 
For this, recall that the generating Dehn twists from Theorem \ref{th:maptheorem} do not slide edge ends over the associated cilium and   are automorphisms 
 of this Yetter-Drinfeld module  by Proposition \ref{prop:vertfacecomp}.  From Lemma \ref{lem:maclemma} we then obtain

\begin{corollary} \label{cor:maccor} Let $\mac$ be a finitely complete and cocomplete symmetric monoidal category  and  $H$ a pivotal  Hopf monoid in $\mac$. Then  $\rho$ from Theorem \ref{th:maptheorem} induces a group homomorphism
\begin{align}\label{eq:mapinv}
\rho_{inv}:\mathrm{Map}(\Sigma)\to \mathrm{Aut}(H^{\oo 2(n+g)}_{inv})
\end{align}
\end{corollary}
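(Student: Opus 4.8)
The plan is to apply Lemma~\ref{lem:maclemma} to each of the generating Dehn twists constructed in Theorem~\ref{th:maptheorem}. First I would recall that by Proposition~\ref{prop:vertfacecomp} every edge slide appearing in the construction of the twists $D_{\alpha_i}$, $D_{\delta_j}$, $D_{\gamma_{k,l}}$ is an isomorphism of $H$-left modules and $H$-left comodules for the Yetter--Drinfeld module structure $(\rhd_v,\delta_v)$ on $H^{\oo 2(n+g)}$ associated with the cilium at the baseline of \eqref{eq:standardchord}, provided no edge end slides over that cilium. The slides in \eqref{eq:pathexpress} and in Definition~\ref{definition:DehnTwistGammaNonFace} were chosen precisely so that this is the case --- this was already observed in the discussion preceding the statement of Theorem~\ref{th:maptheorem}. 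Hence each generator $D_\rho$ is an isomorphism of $H$-modules and $H$-comodules with respect to this structure, and Lemma~\ref{lem:maclemma} produces a unique isomorphism $(D_\rho)_{inv}: H^{\oo 2(n+g)}_{inv}\to H^{\oo 2(n+g)}_{inv}$ with $\pi\circ D_\rho\circ\iota = I\circ (D_\rho)_{inv}\circ P$, where $(P,I,\iota,\pi)$ are the morphisms from Definition~\ref{def:biinv}.

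The second step is to check that $\rho\mapsto (\rho)_{inv}$ is multiplicative on the image of $\mathrm{Map}(\Sigma)$, i.e. that $(D_\rho\circ D_\sigma)_{inv}=(D_\rho)_{inv}\circ(D_\sigma)_{inv}$ and $(1)_{inv}=1$. This follows formally from the uniqueness clause in Lemma~\ref{lem:maclemma}: the composite $(D_\rho)_{inv}\circ(D_\sigma)_{inv}$ satisfies $I\circ(D_\rho)_{inv}\circ(D_\sigma)_{inv}\circ P = \pi\circ D_\rho\circ D_\sigma\circ\iota$, using the defining relations for $(D_\rho)_{inv}$ and $(D_\sigma)_{inv}$ together with the intertwining identities $\pi'\circ D_\sigma = D_\sigma^{H}\circ\pi$ and $D_\sigma\circ\iota = \iota'\circ D_\sigma^{coH}$ from Lemma~\ref{lem:inducedinvcoinv} (and the fact that on $H^{\oo 2(n+g)}_{inv}$ the image factorisation is compatible with these), so by uniqueness it equals $(D_\rho\circ D_\sigma)_{inv}$. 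Thus the assignment $D_\rho\mapsto(D_\rho)_{inv}$ respects composition, and the identity automorphism maps to the identity.

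The third and final step is to verify that $\rho_{inv}$ is well-defined as a homomorphism out of $\mathrm{Map}(\Sigma)$, i.e. that the induced automorphisms $(D_\rho)_{inv}$ satisfy the defining relations of Theorem~\ref{th:gervais}. But this is immediate: Theorem~\ref{th:maptheorem} already establishes that the $D_\rho$ themselves satisfy these relations as automorphisms of $H^{\oo 2(n+g)}$, and since $D_\rho\mapsto(D_\rho)_{inv}$ is a group homomorphism on the subgroup of $\mathrm{Aut}_{YD}(H^{\oo 2(n+g)})$ generated by the $D_\rho$ (by step two), any word in the generators that is trivial upstairs is trivial downstairs. Therefore $\rho_{inv}$ is a well-defined group homomorphism $\mathrm{Map}(\Sigma)\to\mathrm{Aut}(H^{\oo 2(n+g)}_{inv})$, namely $\rho_{inv}=(-)_{inv}\circ\rho$.

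The main obstacle --- and the only point requiring genuine care --- is step two: making the multiplicativity of $(-)_{inv}$ precise. Lemma~\ref{lem:maclemma} is stated only for a single isomorphism, and one must check that the diagram chase identifying $(D_\rho)_{inv}\circ(D_\sigma)_{inv}$ with $(D_\rho\circ D_\sigma)_{inv}$ really goes through, using that $P$ is an epimorphism and $I$ a monomorphism (so the defining equation pins down the induced map uniquely) together with the commuting square \eqref{eq:invcoinvcomm} applied to $D_\sigma$. Since all of this is purely formal categorical bookkeeping with the universal properties already set up in Section~\ref{sec:Hopf}, the argument is short; no further computation with the Hopf monoid $H$ is needed beyond what Theorem~\ref{th:maptheorem} provides.
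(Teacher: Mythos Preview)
Your proposal is correct and follows essentially the same approach as the paper. The paper's proof is more concise: rather than working on generators and then verifying relations (your steps 1--3), it applies Lemma~\ref{lem:maclemma} directly to $\rho(\phi)$ for an \emph{arbitrary} element $\phi\in\mathrm{Map}(\Sigma)$, which is already an automorphism of Yetter--Drinfeld modules by Theorem~\ref{th:maptheorem}; this makes your step~3 redundant, since well-definedness is automatic when you factor through the image of the existing homomorphism $\rho$. Multiplicativity (your step~2) is handled in both proofs by the uniqueness in Lemma~\ref{lem:maclemma} together with $I$ being a monomorphism and $P$ an epimorphism.
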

\begin{proof}
By Lemma \ref{lem:maclemma} every element  $\phi\in \mathrm{Map}(\Sigma)$ induces a unique automorphism $\rho_{inv}(\phi)$ of $H^{\oo 2(n+g)}_{inv}$ with $I\circ \rho_{inv}(\phi)\circ P=\pi\circ \rho(\phi)\circ \iota$. This defines a group homomorphism as in \eqref{eq:mapinv}, because $\rho$ is a group homomorphism, $I$ is a monomorphism and $P$ an epimorphism.
\end{proof}

We now show that under suitable assumptions  the group homomorphism $\rho$ from Theorem \ref{th:maptheorem} not only induces an action of the mapping class group $\mathrm{Map}(\Sigma)$ on the biinvariants of this Yetter-Drinfeld module structure but an action of the mapping class group $\mathrm{Map}(\Sigma')$, where $\Sigma'$
is the  surface obtained by attaching a disc to the face $f$  in Figure \ref{fig:pi1gens}. For this, we require the following technical lemma.

\begin{lemma}\label{lem:facelemma}  Let $\mac$ be a finitely complete and cocomplete symmetric monoidal category  and  $H$ a pivotal  Hopf monoid in $\mac$ with $m\circ (p\oo p)=\eta$. Let $\Gamma$ be a directed ribbon graph and $\gamma_1,\gamma_2$ closed face paths  in $\Gamma$ such that  $\gamma=\gamma_{1}\gamma_{2}$ is a ciliated face.
Then we have for the biinvariants of the Yetter-Drinfeld module structure at this cilium
	\begin{align}
		D^{inv}_{\gamma_{1}}=D^{inv}_{\gamma_{2}},\qquad D^{inv}_{\gamma} = 1_{H^{\oo E}_{inv}}.
%		\label{eq:InvCoinvFacePathTwists}
		\nonumber
	\end{align}
	\label{lemma:InvCoinvFacePathtwists}\end{lemma}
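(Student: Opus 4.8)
The plan is to reduce the statement to a computation on the biinvariants $H^{\oo E}_{inv}$ using the results of Section~\ref{sec:slidetwist}. First I would treat the case where $\gamma_1 = \beta_1$ and $\gamma_2=\beta_2$ are single loops, so that $\gamma = \beta_1\circ\beta_2$. By Definition~\ref{def:dtnull} and Lemma~\ref{rem:shorthand}, after adding an auxiliary edge we may assume there is a single edge end between the relevant ends of each loop, so the chord diagram near the cilium looks like two concentric arcs $\beta_1$, $\beta_2$ enclosing that edge end. The key point is that the right $H$-comodule structure $\delta'_f$ associated to the ciliated face $f=\beta_1^{-1}\circ(\beta_1\circ\beta_2)$, i.e.\ to $\gamma$ read against the cilium, equalises with the trivial comodule structure on the coinvariants $\iota$; this is exactly what \eqref{eq:FlatFaceComplementt} and the computations surrounding \eqref{eq:edgeexpress} give us (note that the hypothesis $m\circ(p\oo p)=\eta$ is the condition needed there, since it makes $T$ behave like an antipode on the pivotal element). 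Using the explicit Sweedler expressions \eqref{eq:abslide}-style for the slides $S_{\beta_1^{L}}$, $S_{\beta_2^{L}}$, I would compute $D_{\beta_1}\circ\iota$ and $D_{\beta_2}\circ\iota$ and show that after post-composing with $\pi$ (the projection to the invariants of the module structure at the cilium) the two agree: the difference is absorbed into a factor of the form $m\circ(p\oo S)$ applied along the face label, which is killed once we are on the coinvariants by \eqref{eq:FlatFaceComplementt}.

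Next I would bootstrap to general closed face paths $\gamma_1,\gamma_2$. By Definition~\ref{def:dtnotface} we have $D_{\gamma_i}=\epsilon_{\gamma_i'}\circ D_{\gamma_i'}\circ C_{\gamma_i}$, so adding edges $\gamma_1'$ to $\gamma_1$ and $\gamma_2'$ to $\gamma_2$ reduces the problem to the case of two loops $\gamma_1',\gamma_2'$ based at the (still ciliated) vertex, enclosing the newly created face $\gamma_1'^{-1}\circ\gamma_1$ resp.\ $\gamma_2'^{-1}\circ\gamma_2$. Here I use that adding edges commutes with the relevant (co)module structures on the appropriate invariants: \eqref{eq:ComplementVertexFaceOperatorCompatibility} of Lemma~\ref{lem:addingedge} says $C_{\gamma_i}$ is a morphism of $H$-modules and $H$-comodules for the cilium (which is not traversed by $\gamma_i$), so by Lemma~\ref{lem:inducedinvcoinv} and Lemma~\ref{lem:maclemma} it descends to the biinvariants; moreover the composite face path $\gamma_1'^{-1}\circ\gamma$ is still a cyclic permutation of a face obtained by adding an edge, so \eqref{eq:FlatFaceComplement} still applies. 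After both edges are added, the loop case shows $D^{inv}_{\gamma_1'}=D^{inv}_{\gamma_2'}$, and then removing the added edges (which also descends to biinvariants, again via \eqref{eq:ComplementVertexFaceOperatorCompatibility} and Lemma~\ref{lem:maclemma}) yields $D^{inv}_{\gamma_1}=D^{inv}_{\gamma_2}$.

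For the second equation, $D^{inv}_\gamma=1$, I would apply Corollary~\ref{corollary:SlideIntoFacePathDehnCommute}, or rather its proof: $D_\gamma$ equals (up to conjugation by slides that descend trivially to biinvariants) the composite $S_{\gamma_1}^{-1}\circ D_\gamma\circ S_{\gamma_1}$, and on the coinvariants of $\delta'_f$ the slide $S_\gamma = S_{\gamma_1}\circ S_{\gamma_2}$ acts as identity by \eqref{eq:FlatFaceComplementt}. Concretely, adding an edge $\gamma'$ to $\gamma$, the twist $D_{\gamma'}$ acts on the label of $\gamma'$ by $b\mapsto b_{(3)}b_{(1)}S(b_{(2)})$ in Sweedler notation, which is precisely the counit followed by unit applied to the inner copy; since the face $\gamma'^{-1}\circ\gamma$ is flat by \eqref{eq:FlatFaceComplement}, this is the identity on the coinvariants, hence on the biinvariants after applying $\pi$. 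The first equation then follows as the special case $D^{inv}_{\gamma_1}=(D^{inv}_{\gamma_2})^{-1}\circ D^{inv}_{\gamma_1\gamma_2}\cdot(\text{something})$... more precisely: since $D^{inv}_{\gamma_1}=D^{inv}_{\gamma_2}$ and, writing $\gamma=\gamma_1\circ\gamma_2$ viewed as enclosing no edge end between the ends of $\gamma$, the twist $D_\gamma$ is the "total" slide which collapses by flatness.

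The main obstacle I expect is bookkeeping the interplay between the \emph{two} faces involved: the face $f$ whose Yetter-Drinfeld structure we take biinvariants of, and the "decomposition" faces $\gamma_i'^{-1}\circ\gamma_i$ created by adding edges. One must check carefully that adding the edge $\gamma_1'$ does not disturb the flatness of $\delta'_f$ (it does not, because $f$ remains a cyclic permutation of an added-edge face by \eqref{eq:FlatFaceComplement}, applied iteratively), and that the slides used to normalise the diagram via Lemma~\ref{rem:shorthand} do not slide over the cilium, so that Proposition~\ref{prop:vertfacecomp} guarantees they are automorphisms of the Yetter-Drinfeld module and hence descend. The actual Sweedler computation in the loop case — showing $\pi\circ D_{\beta_1}\circ\iota=\pi\circ D_{\beta_2}\circ\iota$ — is then a short manipulation using coassociativity, the antipode axioms, the identity $m\circ(p\oo S)\circ\Delta\circ p = \ldots$ coming from $m\circ(p\oo p)=\eta$, and the flatness identity \eqref{eq:FlatFaceComplementt}.
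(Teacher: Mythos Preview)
Your overall strategy is the same as the paper's: reduce to the case of two loops $\gamma_1',\gamma_2'$ by adding edges via Definition~\ref{def:dtnotface} and Lemma~\ref{rem:shorthand}, then verify $\pi\circ D_{\gamma_1'}\circ\iota=\pi\circ D_{\gamma_2'}\circ\iota$ by a direct Sweedler computation. Two points are worth flagging.

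First, you are working harder than necessary on the second identity. The paper simply observes that $D^{inv}_\gamma=1$ follows from the first identity by taking $\gamma_1=\gamma$ and $\gamma_2$ the trivial path, so that $D^{inv}_{\gamma_2}=1$. Your separate argument via Corollary~\ref{corollary:SlideIntoFacePathDehnCommute} and flatness of $\gamma'^{-1}\circ\gamma$ is roundabout and the sketch you give (``$D_{\gamma'}$ acts on the label of $\gamma'$ by $b\mapsto b_{(3)}b_{(1)}S(b_{(2)})$'') is not quite right: the twist acts on the edge ends \emph{between} the ends of $\gamma'$, not on the $\gamma'$-label itself.

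Second, your invocation of \eqref{eq:FlatFaceComplementt} and \eqref{eq:edgeexpress} is misplaced. Those identities concern the auxiliary face $\gamma_i'^{-1}\circ\gamma_i$ created when you add an edge, not the ciliated face $\gamma$ whose Yetter--Drinfeld structure you are taking biinvariants of. The paper's computation uses only the defining properties $\delta_f\circ\iota=(\eta\oo 1)\circ\iota$ and $\pi\circ\rhd_v=\pi\circ(\epsilon\oo 1)$ of the (co)equaliser, together with the hypothesis $m\circ(p\oo p)=\eta$; no appeal to Lemma~\ref{lem:addingedge}, 4.~or~7.\ is needed at this step. Concretely: on coinvariants the face-holonomy of $\gamma_1'\gamma_2'$ is trivial, so the label of one loop is determined by the other; the composite $D_{\gamma_2'}^{-1}\circ D_{\gamma_1'}$ then differs from the identity by an inner conjugation which $\pi$ kills. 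Your ``main obstacle'' paragraph about juggling two faces is therefore a red herring---only the one face $\gamma$ matters.
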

\begin{proof}   The second claim follows from the first by setting $\gamma_1=\gamma$ and taking the trivial path for $\gamma_2$. 
To prove the first claim, note
that these  twists slide no edge ends over the cilium and hence  the  automorphisms  $D_{\gamma_i}: H^{\oo E}\to H^{\oo E}$  are automorphisms  of Yetter-Drinfeld modules by Proposition \ref{prop:vertfacecomp}.  By Lemma \ref{lem:maclemma}  they induce unique automorphisms
 $D^{inv}_{\gamma_i}: H^{\oo E}_{inv}\to H^{\oo E}_{inv}$ with
 $
 I\circ D^{inv}_{\gamma_i}\circ P=\pi\circ D_{\gamma_i}\circ \iota
 $. As $I$ is a monomorphism and $P$ an epimorphism,  it is sufficient to show that $\pi\circ D_{\gamma_1}\circ \iota=\pi\circ D_{\gamma_2}\circ \iota$. As the twist along $\gamma_i$ is obtained by adding an edge $\gamma'_i$ to $\gamma_i$ and twisting along $\gamma'_i$ , it is sufficient to verify this for the 
 chord diagram	
\begin{align*}
\begin{tikzpicture}[scale=.5]
\begin{scope}
\draw[line width=1pt] (-5,0)--(4,0);
\draw[color=red, line width=1.5 pt, ->,>=stealth] (-4,0).. controls (-4,2) and (-1,2) .. (-1,0);
\draw[color=blue, line width=1.5 pt, ->,>=stealth] (0,0).. controls (0,2) and (3,2) .. (3,0);
\draw[color=black, line width=1.5 pt, ->,>=stealth] (-2.5,2.5)-- (-2.5,0);
\draw[color=violet, line width=1.5 pt, ->,>=stealth] (1.5,2.5)-- (1.5,0);
\node at (-2.3,1.5)[anchor=south west, color=red]{$\gamma_{2}'$};
\node at (1.7,1.5)[anchor=south west, color=blue]{$\gamma_{1}'$};
\node at (-2.5,2.5)[anchor=south, color=black]{$\mu$};
\node at (1.5,2.5)[anchor=south, color=violet]{$\nu$};
\end{scope}
\end{tikzpicture}
\tComma
\end{align*}
	in which we replaced the edge ends between the starting and target ends of $\gamma_1',\gamma_2'$ by single edges $\mu,\nu$, as in  Lemma \ref{rem:shorthand}.
  The Yetter-Drinfeld module structure for the cilium is given by
  \begingroup
\allowdisplaybreaks
  \begin{align*}
  &\begin{tikzpicture}[scale=.55]
\begin{scope}
\draw[line width=1pt] (-5,0)--(4,0);
\draw[color=red, line width=1.5 pt, ->,>=stealth] (-4,0).. controls (-4,2) and (-1,2) .. (-1,0);
\draw[color=blue, line width=1.5 pt, ->,>=stealth] (0,0).. controls (0,2) and (3,2) .. (3,0);
\draw[color=black, line width=1.5 pt, ->,>=stealth] (-2.5,2.5)-- (-2.5,0);
\draw[color=violet, line width=1.5 pt, ->,>=stealth] (1.5,2.5)-- (1.5,0);
\node at (-2.3,1.5)[anchor=south west, color=red]{$b$};
\node at (1.7,1.5)[anchor=south west, color=blue]{$a$};
\node at (-2.5,2.5)[anchor=south, color=black]{$c$};
\node at (1.5,2.5)[anchor=south, color=violet]{$d$};
\node at (0,-.5)[anchor=north]{$h$};
\end{scope}
\draw[line width=1pt, ->,>=stealth](4,1)--(6,1);
\node at (5,1)[anchor=south]{$\rhd$}; 
\begin{scope}[shift={(11,0)}]
\draw[line width=1pt] (-5,0)--(4,0);
\draw[color=red, line width=1.5 pt, ->,>=stealth] (-4,0).. controls (-4,2) and (-1,2) .. (-1,0);
\draw[color=blue, line width=1.5 pt, ->,>=stealth] (0,0).. controls (0,2) and (3,2) .. (3,0);
\draw[color=black, line width=1.5 pt, ->,>=stealth] (-2.5,2.5)-- (-2.5,0);
\draw[color=violet, line width=1.5 pt, ->,>=stealth] (1.5,2.5)-- (1.5,0);
\node at (-2.5,1.5)[anchor=south west, color=red]{$\low h 4 b S(\low h 6)$};
\node at (1.7,1.5)[anchor=south west, color=blue]{$\low h 1 a S(\low h 3)$};
\node at (-2.5,2.5)[anchor=south, color=black]{$\low h 5 c$};
\node at (1.5,2.5)[anchor=south, color=violet]{$\low h 2d$};
\end{scope}
\end{tikzpicture}\\
&\begin{tikzpicture}[scale=.55]
\begin{scope}
\draw[line width=1pt] (-5,0)--(4,0);
\draw[color=red, line width=1.5 pt, ->,>=stealth] (-4,0).. controls (-4,2) and (-1,2) .. (-1,0);
\draw[color=blue, line width=1.5 pt, ->,>=stealth] (0,0).. controls (0,2) and (3,2) .. (3,0);
\draw[color=black, line width=1.5 pt, ->,>=stealth] (-2.5,2.5)-- (-2.5,0);
\draw[color=violet, line width=1.5 pt, ->,>=stealth] (1.5,2.5)-- (1.5,0);
\node at (-2.3,1.5)[anchor=south west, color=red]{$b$};
\node at (1.7,1.5)[anchor=south west, color=blue]{$a$};
\node at (-2.5,2.5)[anchor=south, color=black]{$c$};
\node at (1.5,2.5)[anchor=south, color=violet]{$d$};
\end{scope}
\draw[line width=1pt, ->,>=stealth](4,1)--(6,1);
\node at (5,1)[anchor=south]{$\delta$}; 
\begin{scope}[shift={(11,0)}]
\draw[line width=1pt] (-5,0)--(4,0);
\draw[color=red, line width=1.5 pt, ->,>=stealth] (-4,0).. controls (-4,2) and (-1,2) .. (-1,0);
\draw[color=blue, line width=1.5 pt, ->,>=stealth] (0,0).. controls (0,2) and (3,2) .. (3,0);
\draw[color=black, line width=1.5 pt, ->,>=stealth] (-2.5,2.5)-- (-2.5,0);
\draw[color=violet, line width=1.5 pt, ->,>=stealth] (1.5,2.5)-- (1.5,0);
\node at (-2.3,1.5)[anchor=south west, color=red]{$\low b 2$};
\node at (1.7,1.5)[anchor=south west, color=blue]{$\low a 2$};
\node at (-2.5,2.5)[anchor=south, color=black]{$c$};
\node at (1.5,2.5)[anchor=south, color=violet]{$d$};
\node at (0,-.5)[anchor=north]{$\low a 1 \low b 1$};
\end{scope}
\end{tikzpicture}
\end{align*}
With the identities $\delta\circ \iota=(\eta\oo 1_{H^{\oo 4}})\circ \iota$, $\pi\circ \rhd=\pi\circ (\epsilon\oo 1_{H^{\oo 4}})$  and $m\circ (p\oo p)=\eta$ we then obtain 
 \begin{align*}
 &\begin{tikzpicture}[scale=.55]
\begin{scope}
\draw[line width=1pt] (-5,0)--(4,0);
\draw[color=red, line width=1.5 pt, ->,>=stealth] (-4,0).. controls (-4,2) and (-1,2) .. (-1,0);
\draw[color=blue, line width=1.5 pt, ->,>=stealth] (0,0).. controls (0,2) and (3,2) .. (3,0);
\draw[color=black, line width=1.5 pt, ->,>=stealth] (-2.5,2.5)-- (-2.5,0);
\draw[color=violet, line width=1.5 pt, ->,>=stealth] (1.5,2.5)-- (1.5,0);
\node at (-2.3,1.5)[anchor=south west, color=red]{$b$};
\node at (1.7,1.5)[anchor=south west, color=blue]{$a$};
\node at (-2.5,2.5)[anchor=south, color=black]{$c$};
\node at (1.5,2.5)[anchor=south, color=violet]{$d$};
\end{scope}
\begin{scope}[shift={(11,0)}]
\node at (-6,1)[anchor=south]{$\overset{\iota}{\equiv}$};
\draw[line width=1pt] (-5,0)--(4,0);
\draw[color=red, line width=1.5 pt, ->,>=stealth] (-4,0).. controls (-4,2) and (-1,2) .. (-1,0);
\draw[color=blue, line width=1.5 pt, ->,>=stealth] (0,0).. controls (0,2) and (3,2) .. (3,0);
\draw[color=black, line width=1.5 pt, ->,>=stealth] (-2.5,2.5)-- (-2.5,0);
\draw[color=violet, line width=1.5 pt, ->,>=stealth] (1.5,2.5)-- (1.5,0);
\node at (-2.3,1.5)[anchor=south west, color=red]{$b_{(2)}$};
\node at (1.7,1.5)[anchor=south west, color=blue]{$\epsilon(a)S(b_{(1)})$};
\node at (-2.5,2.5)[anchor=south, color=black]{$c$};
\node at (1.5,2.5)[anchor=south, color=violet]{$d$};
\end{scope}
\end{tikzpicture}
\\
 &\begin{tikzpicture}[scale=.55]
 \begin{scope}[shift={(0,0)}]
 \draw[line width=1pt, ->,>=stealth](-7,1)--(-5,1);
\node at (-6,1)[anchor=south]{$D_{\gamma'_1}$}; 
\draw[line width=1pt] (-5,0)--(4,0);
\draw[color=red, line width=1.5 pt, ->,>=stealth] (-4,0).. controls (-4,2) and (-1,2) .. (-1,0);
\draw[color=blue, line width=1.5 pt, ->,>=stealth] (0,0).. controls (0,2) and (3,2) .. (3,0);
\draw[color=black, line width=1.5 pt, ->,>=stealth] (-2.5,2.5)-- (-2.5,0);
\draw[color=violet, line width=1.5 pt, ->,>=stealth] (1.5,2.5)-- (1.5,0);
\node at (-2.3,1.5)[anchor=south west, color=red]{$b_{(3)}$};
\node at (1,0)[anchor=north west, color=blue]{$\epsilon(a)S(b_{(2)})$};
\node at (-2.5,2.5)[anchor=south, color=black]{$c$};
\node at (1.5,2.5)[anchor=south, color=violet]{$b_{(1)}pd$};
\end{scope}
\begin{scope}[shift={(11,0)}]
\draw[line width=1pt, ->,>=stealth](-7,1)--(-5,1);
\node at (-6,1)[anchor=south]{$D_{\gamma'_2}^\inv$}; 
\draw[line width=1pt] (-5,0)--(4,0);
\draw[color=red, line width=1.5 pt, ->,>=stealth] (-4,0).. controls (-4,2) and (-1,2) .. (-1,0);
\draw[color=blue, line width=1.5 pt, ->,>=stealth] (0,0).. controls (0,2) and (3,2) .. (3,0);
\draw[color=black, line width=1.5 pt, ->,>=stealth] (-2.5,2.5)-- (-2.5,0);
\draw[color=violet, line width=1.5 pt, ->,>=stealth] (1.5,2.5)-- (1.5,0);
\node at (-2.3,1.5)[anchor=south west, color=red]{$\low b 3$};
\node at (1,0)[anchor=north west, color=blue]{$\epsilon(a)S(b_{(2)})$};
\node at (-2.5,2.5)[anchor=south, color=black]{$\low b 4 p c$};
\node at (1.5,2.5)[anchor=south, color=violet]{$b_{(1)}pd$};
\end{scope}
\end{tikzpicture}
\\
&\begin{tikzpicture}[scale=.55]
\begin{scope}[shift={(0,0)}]
\node at (-5.5,1)[anchor=south]{$=$};
\draw[line width=1pt] (-5,0)--(4,0);
\draw[color=red, line width=1.5 pt, ->,>=stealth] (-4,0).. controls (-4,2) and (-1,2) .. (-1,0);
\draw[color=blue, line width=1.5 pt, ->,>=stealth] (0,0).. controls (0,2) and (3,2) .. (3,0);
\draw[color=black, line width=1.5 pt, ->,>=stealth] (-2.5,2.5)-- (-2.5,0);
\draw[color=violet, line width=1.5 pt, ->,>=stealth] (1.5,2.5)-- (1.5,0);
\node at (-2.3,1.5)[anchor=south west, color=red]{$\low b 5$};
\node at (-1.5,0)[anchor=north west, color=blue]{$\epsilon(a) \low b 2 T(b_{(1)})    S(\low b 4 p)$};
\node at (-2.5,2.5)[anchor=south, color=black]{$\low b 6 p c$};
\node at (1.5,2.5)[anchor=south, color=violet]{$\low b 3 pd$};
\end{scope}
\begin{scope}[shift={(11,0)}]
\node at (-6,1)[anchor=south]{$=$};
\draw[line width=1pt] (-5,0)--(4,0);
\draw[color=red, line width=1.5 pt, ->,>=stealth] (-4,0).. controls (-4,2) and (-1,2) .. (-1,0);
\draw[color=blue, line width=1.5 pt, ->,>=stealth] (0,0).. controls (0,2) and (3,2) .. (3,0);
\draw[color=black, line width=1.5 pt, ->,>=stealth] (-2.5,2.5)-- (-2.5,0);
\draw[color=violet, line width=1.5 pt, ->,>=stealth] (1.5,2.5)-- (1.5,0);
\node at (-5.5,0)[anchor=north west, color=red]{$\low b 5 p  \low b 8 S( \low b 7 p)$};
\node at (0,0)[anchor=north west, color=blue]{$\epsilon(a) \low b 2 T(b_{(1)})   S(\low b 4 p)$};
\node at (-2.5,2.5)[anchor=south, color=black]{$\low b 6 p c$};
\node at (1.5,2.5)[anchor=south, color=violet]{$\low b 3 pd$};
\end{scope}
\end{tikzpicture}\\
&\begin{tikzpicture}[scale=.55]
\begin{scope}[shift={(0,0)}]
	\node at (-5.5,0)[anchor=south]{$\overset{\pi}{\equiv}$};
\draw[line width=1pt] (-5,0)--(4,0);
\draw[color=red, line width=1.5 pt, ->,>=stealth] (-4,0).. controls (-4,2) and (-1,2) .. (-1,0);
\draw[color=blue, line width=1.5 pt, ->,>=stealth] (0,0).. controls (0,2) and (3,2) .. (3,0);
\draw[color=black, line width=1.5 pt, ->,>=stealth] (-2.5,2.5)-- (-2.5,0);
\draw[color=violet, line width=1.5 pt, ->,>=stealth] (1.5,2.5)-- (1.5,0);
\node at (-2.3,1.5)[anchor=south west, color=red]{$  \low b 2 $};
\node at (1.7,1.5)[anchor=south west, color=blue]{$ \epsilon(a) S(b_{(1)})   $};
\node at (-2.5,2.5)[anchor=south, color=black]{$ c$};
\node at (1.5,2.5)[anchor=south, color=violet]{$d$};
\end{scope}
\begin{scope}[shift={(11,0)}]
\node at (-6,0)[anchor=south]{$\overset{\iota}{\equiv}$}; 
\draw[line width=1pt] (-5,0)--(4,0);
\draw[color=red, line width=1.5 pt, ->,>=stealth] (-4,0).. controls (-4,2) and (-1,2) .. (-1,0);
\draw[color=blue, line width=1.5 pt, ->,>=stealth] (0,0).. controls (0,2) and (3,2) .. (3,0);
\draw[color=black, line width=1.5 pt, ->,>=stealth] (-2.5,2.5)-- (-2.5,0);
\draw[color=violet, line width=1.5 pt, ->,>=stealth] (1.5,2.5)-- (1.5,0);
\node at (-2.3,1.5)[anchor=south west, color=red]{$b$};
\node at (1.7,1.5)[anchor=south west, color=blue]{$a$};
\node at (-2.5,2.5)[anchor=south, color=black]{$c$};
\node at (1.5,2.5)[anchor=south, color=violet]{$d$};
\end{scope}
\end{tikzpicture}
 \end{align*}
   \endgroup
 where we used
 first the identity $\delta\circ \iota=(\eta\oo 1_{H^{\oo 4}})\circ \iota$, 
 then the definition of the twists, 
 then the  properties of a Hopf monoid,
 then the identity $\pi\circ \rhd=\pi\circ (\epsilon\oo 1_{H^{\oo 4}})$ to pass to the fourth line and the identity $\delta\circ \iota=(\eta\oo 1_{H^{\oo 4}})\circ \iota$ in the final step. The notations $\overset{\iota}{\equiv}$
and $\overset{\pi}{\equiv}$ mean that the morphisms associated to the two labelled diagrams are coequalised by $\iota$ and equalised by $\pi$, respectively.
\end{proof}

Note that the condition $m\circ (p\oo p)=\eta$ is required only for the first relation in Lemma \ref{lem:facelemma}. The second relation holds without any assumptions on the pivotal structure, but we will need both relations in the following. Note also that this condition imposes a restriction on the antipode of the Hopf monoid, namely the condition $S^4=1_H$.

We now consider  the oriented surface $\Sigma$ of genus $g\geq 1$ with $n+1\geq 1$ boundary components from Figure \ref{fig:pi1gens} and the  surface $\Sigma'$  with $n\geq 0$ boundary components obtained by attaching a disc to the face $f$ in Figure \ref{fig:pi1gens}. Let $\Gamma$ be the  associated 
 ribbon graph  from  \eqref{eq:standardchord},  consider the Yetter-Drinfeld module structure for the baseline in   \eqref{eq:standardchord} and  the associated object $H^{\oo 2(n+g)}_{inv}$ from Definition \ref{def:biinv}.

\begin{theorem}\label{th:close} Let $\mac$ be a finitely complete and cocomplete symmetric monoidal category  and  $H$ a pivotal  Hopf monoid in $\mac$ with $m\circ (p\oo p)=\eta$. 
 The group homomorphism $\rho$ from Theorem \ref{th:maptheorem} induces a group homomorphism
$$\rho_{inv}:\mathrm{Map}(\Sigma')\to \mathrm{Aut}(H^{\oo 2(n+g)}_{inv}).$$
\end{theorem}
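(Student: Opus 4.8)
The strategy is to use Corollary~\ref{cor:maccor}, which already gives a group homomorphism $\rho_{inv}:\mathrm{Map}(\Sigma)\to\mathrm{Aut}(H^{\oo 2(n+g)}_{inv})$, and to show that $\rho_{inv}$ descends to the quotient $\mathrm{Map}(\Sigma')$. By Theorem~\ref{th:gervais}, $\mathrm{Map}(\Sigma')$ is presented by the same generators as $\mathrm{Map}(\Sigma)$ subject to the additional relations~\eqref{eq:addrel}, namely $D_{\gamma_{1,0}}=1$, $D_{\delta_0}=D_{\delta_1}$, $D_{\gamma_{0,1}}=(D_{\delta_0}^3 D_{\alpha_g})^3$, and $D_{\gamma_{0,k}}=D_{\gamma_{1,k}}$, $D_{\gamma_{k,0}}=D_{\gamma_{k,1}}$ for $1<k\leq n+2g-2$. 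So it suffices to check that the induced automorphisms $D^{inv}_\rho$ of $H^{\oo 2(n+g)}_{inv}$ satisfy each of these five families of relations, where throughout we work with the Yetter--Drinfeld module structure associated to the baseline of~\eqref{eq:standardchord}, i.e.\ the one whose invariants define $H^{\oo 2(n+g)}_{inv}$.

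First I would dispatch the relations $D^{inv}_{\gamma_{1,0}}=1$ and $D^{inv}_{\delta_0}=D^{inv}_{\delta_1}$ directly from Lemma~\ref{lem:facelemma}. Indeed $\gamma_{1,0}=f$ is, by construction, the ciliated face associated to the baseline, so $D^{inv}_{\gamma_{1,0}}=1_{H^{\oo 2(n+g)}_{inv}}$ is the second assertion of that lemma. For $D^{inv}_{\delta_0}=D^{inv}_{\delta_1}$ one notes from the expressions~\eqref{eq:pathexpress} that $\delta_0=\beta_g^{-1}$ and $\delta_1$ compose (up to reordering/cyclic permutation of edges, which does not affect twists) to give the face $f$; more precisely one writes $f=\delta_0\circ\phi$ for a closed face path $\phi$ whose twist equals $D_{\delta_1}$, or directly exhibits $\delta_1$ and $\delta_0^{-1}$ as the two closed face paths $\gamma_1,\gamma_2$ with $\gamma_1\gamma_2$ a cyclic representative of $f$. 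Then the first assertion of Lemma~\ref{lem:facelemma} gives $D^{inv}_{\delta_0}=D^{inv}_{\delta_1}$. Here the hypothesis $m\circ(p\oo p)=\eta$ is genuinely used.

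Next, the relations $D^{inv}_{\gamma_{0,k}}=D^{inv}_{\gamma_{1,k}}$ and $D^{inv}_{\gamma_{k,0}}=D^{inv}_{\gamma_{k,1}}$ for $1<k\leq n+2g-2$. From~\eqref{eq:pathexpress} one reads $\gamma_{k,0}=\delta_k\beta_g^{-1}=\delta_k\delta_0^{-1}$ and $\gamma_{k,1}=\delta_1\delta_k^{-1}$ (for $1\le 1<k$), while $\gamma_{0,k}=\delta_k\alpha_g^{-1}\beta_g^{-1}\alpha_g$ and $\gamma_{1,k}=\delta_1\alpha_g\delta_k^{-1}\alpha_g^{-1}$. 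The point is that $\gamma_{k,0}$ and $\gamma_{k,1}^{-1}$ (or rather the closed face paths representing them) together with a suitable closed complement compose to a cyclic representative of the baseline face $f$; concretely one uses that $\delta_0^{-1}$ and $\delta_1$ fit into $f$ as above, so $\gamma_{k,0}=\delta_k\delta_0^{-1}$ and $\gamma_{k,1}=\delta_1\delta_k^{-1}$ differ, inside the face $f$, by exactly the composable pair $(\delta_0^{-1},\delta_1)$ whose invariant twists agree. Applying Lemma~\ref{lem:facelemma} (or the structural identities of Section~\ref{sec:twists} together with Lemma~\ref{lem:maclemma}) to the relevant face decomposition yields $D^{inv}_{\gamma_{k,0}}=D^{inv}_{\gamma_{k,1}}$, and symmetrically $D^{inv}_{\gamma_{0,k}}=D^{inv}_{\gamma_{1,k}}$ after conjugating by the slide along $\alpha_g$ built into the definitions of $\gamma_{0,k}$ and $\gamma_{1,k}$ (using Corollary~\ref{corollary:SlideIntoFacePathDehnCommute} to move this slide past the twist, then Lemma~\ref{lem:facelemma} on the conjugated faces). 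The slides involved never cross the baseline cilium, so by Proposition~\ref{prop:vertfacecomp} everything remains a Yetter--Drinfeld morphism and Lemma~\ref{lem:maclemma} applies throughout.

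The hardest relation, and the one I expect to be the main obstacle, is $D^{inv}_{\gamma_{0,1}}=(D^{inv}_{\delta_0}^3 D^{inv}_{\alpha_g})^3$. This does not reduce to Lemma~\ref{lem:facelemma} alone: it is the closed-surface incarnation of the star relation~(iv) of Theorem~\ref{th:gervais} specialised to $i=j=0$ (with $\delta_0=\beta_g$), and in the open surface it reads $(D_{\delta_0}^3 D_{\alpha_g})^3 = D_{\gamma_{0,0}} D_{\gamma_{0,1}} D_{\gamma_{1,0}}$ with $D_{\gamma_{0,0}}=\mathrm{id}$ by convention; since $D^{inv}_{\gamma_{1,0}}=1$, the closed relation follows \emph{once} one verifies $D^{inv}_{\gamma_{0,0}}=1$ and re-expresses relation~(iv). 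Thus the plan is: on $H^{\oo 2(n+g)}_{inv}$, invoke relation~(iv) of Theorem~\ref{th:maptheorem} in the degenerate case where two of the three indices equal $0$ (which that proof handles — the case $0=j\le i\le k$ with further degeneration $i=0$), obtaining $(D_{\delta_0}^3D_{\alpha_g})^3 = D_{\gamma_{0,1}}\circ(\text{twists along faces of }f)$, then kill the extraneous face twists using Lemma~\ref{lem:facelemma} and Corollary~\ref{cor:maccor}. Care is needed because $\gamma_{0,1}$ is itself one of the non-face paths of Definition~\ref{definition:DehnTwistGammaNonFace} (it has the form $\delta_0\alpha_g\delta_1^{-1}\alpha_g^{-1}$ up to the added edges), so one must track the added edges $\delta_0',\delta_1'$ and the auxiliary slides, exactly as in the case-(iii)/(iv) computations in the proof of Theorem~\ref{th:maptheorem}, and then pass to biinvariants via Lemma~\ref{lem:maclemma}; the identities~\eqref{eq:FlatFaceComplement}, \eqref{eq:FlatFaceComplementt}, \eqref{eq:edgeexpress} of Lemma~\ref{lem:addingedge} are the tools that make the added-edge labels collapse on the (co)invariants. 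Once all five relation families are checked, the universal property of the presentation gives the desired factorisation through $\mathrm{Map}(\Sigma')$, and $\rho_{inv}$ is a group homomorphism because $I$ is monic and $P$ epic, as in Corollary~\ref{cor:maccor}.
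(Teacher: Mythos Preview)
Your overall strategy is exactly that of the paper: start from Corollary~\ref{cor:maccor} and verify the extra relations~\eqref{eq:addrel} on biinvariants. You also handle the first two relations correctly: since $f=\gamma_{1,0}=\delta_1\circ\delta_0$ is the ciliated face for the baseline, Lemma~\ref{lem:facelemma} gives $D^{inv}_{\gamma_{1,0}}=1$ and $D^{inv}_{\delta_0}=D^{inv}_{\delta_1}$ immediately.

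Where you diverge from the paper, and where your argument becomes shaky, is in the last three relations. You propose to attack $D^{inv}_{\gamma_{k,0}}=D^{inv}_{\gamma_{k,1}}$ and $D^{inv}_{\gamma_{0,k}}=D^{inv}_{\gamma_{1,k}}$ by further direct applications of Lemma~\ref{lem:facelemma} to suitable face decompositions. But that lemma only compares twists along two closed face paths whose \emph{composite} is the ciliated face $f$; neither $\gamma_{k,0}$ and $\gamma_{k,1}$ nor their inverses compose to (a cyclic permutation of) $f$, so the lemma does not apply in the way you sketch. Your suggested workaround via conjugation by slides along $\alpha_g$ is too vague to be convincing. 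You also misidentify $\gamma_{0,1}$ as a non-face path requiring Definition~\ref{definition:DehnTwistGammaNonFace}; in fact all $\gamma_{0,j}$ and $\gamma_{j,0}$ are face paths (only $\gamma_{i,j}$ with $1\le i<j$ are not), so the elaborate added-edge bookkeeping you anticipate for this relation is unnecessary.

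The paper's route is considerably cleaner and avoids all of this: once $D^{inv}_{\gamma_{1,0}}=1$ and $D^{inv}_{\delta_0}=D^{inv}_{\delta_1}$ are known, the remaining three relations in~\eqref{eq:addrel} are deduced \emph{purely algebraically} from relation~(iv) of Theorem~\ref{th:gervais}, which already holds for $\rho_{inv}$ via Corollary~\ref{cor:maccor}. Concretely, relation~(iv) with $(j,i,k)=(0,0,1)$ gives $D_{\gamma_{0,1}}=D_{\gamma_{1,0}}^{-1}(D_{\delta_0}^2 D_{\delta_1} D_{\alpha_g})^3$, which on biinvariants becomes $(D^{inv}_{\delta_0})^3 D^{inv}_{\alpha_g})^3$. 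For $1<k$, applying relation~(iv) with $(j,i,k)=(0,0,k)$, $(1,1,k)$ and $(0,1,k)$ and using $D^{inv}_{\delta_0}=D^{inv}_{\delta_1}$, $D^{inv}_{\gamma_{1,0}}=1$ yields the chain $D^{inv}_{\gamma_{0,k}}D^{inv}_{\gamma_{k,0}}=D^{inv}_{\gamma_{1,k}}D^{inv}_{\gamma_{k,1}}=D^{inv}_{\gamma_{0,k}}D^{inv}_{\gamma_{k,1}}$, from which both remaining equalities follow. No further use of Lemma~\ref{lem:facelemma} or edge-slide manipulations is needed.
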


\begin{proof}We  show that the images of the generating Dehn twists under the group homomorphism $\rho_{inv}$ from Corollary \ref{cor:maccor}
 satisfy the additional relations in \eqref{eq:addrel}. 
As the face of the ribbon graph in \eqref{eq:standardchord} is given in terms of the paths in \eqref{eq:pathexpress}  by $f=\gamma_{1,0}=\delta_1\circ \delta_0$, we have by 
 Lemma \ref{lem:facelemma}
\begin{align}\label{eq:rhoinv}
\rho_{inv}(D_{\gamma_{1,0}})=1_{H^{\oo 2(n+g)}_{inv}},\qquad \rho_{inv}(D_{\delta_0})=\rho_{inv}(D_{\delta_1}).
\end{align}
This proves the first two relations in \eqref{eq:addrel}. Combining \eqref{eq:rhoinv} with  relation (iv) in Theorem \ref{th:gervais}  for the mapping class group $\mathrm{Map}(\Sigma)$ for $(j,i,k)=(0,0,1),(0,0,k), (1,1,k)$ and $(0,1,k)$  
%and applying relation (iii) in Theorem \ref{th:gervais}  
yields for $1<k\leq n+2g-2$
\begin{align*}
&\rho_{inv}(D_{\gamma_{0,1}})\stackrel{(iv)}= 
\rho_{inv}(D_{\gamma_{1,0}}^{-1}(D_{\delta_{0}}^{2}D_{\delta_{1}}D_{\alpha_{g}})^{3}),
\stackrel{\eqref{eq:rhoinv}}=\rho_{inv}(  (D_{\delta_0}^3D_{\alpha_g})^3)\\
%\stackrel{(iii)} = \rho_{inv}(  (D_{\delta_0}D_{\alpha_g}D_{\delta_0})^4)\\
%
&\rho_{inv}(   (D_{\delta_0}^2  D_{\delta_k} D_{\alpha_g})^3)
\stackrel{(iv)}=\rho_{inv}(D_{\gamma_{0,k}}D_{\gamma_{k,0}})
=\rho_{inv}(D_{\gamma_{1,k}}D_{\gamma_{k,1}})= \rho_{inv}(D_{\gamma_{0,k}} D_{\gamma_{k,1}}). 
\end{align*} 
 The first line is the third relation in \eqref{eq:addrel}. The second line implies the last two relations in \eqref{eq:addrel}. \end{proof}

In particular, Theorem \ref{th:close}  defines actions of mapping class groups of {\em closed} surfaces of genus $g\geq 1$ on   $H^{\oo 2g}_{inv}$. That the condition on the pivotal element is necessary is already apparent in the simplest example for a surface of genus $g=2$.

\begin{example} \label{ex:groupex}Let $\Sigma$ be a surface of genus $g=2$ with one boundary component,  $\Sigma'$ the closed surface obtained by attaching a disc to the boundary  of $\Sigma$ and $\Gamma$ the  ribbon graph from Figure \ref{fig:pi1gens} and \eqref{eq:standardchord}. Let $G$ be a finite group, $H=\C[G]$ and $p\in Z(G)$ a central element.  

The left-left Yetter-Drinfeld module structure on $\C[G^{\times 4}]$ associated with the cilium in Figure \ref{fig:pi1gens} and the baseline of the chord diagram  \eqref{eq:standardchord} is given by
\begin{align}\label{ydact}
&\rhd: \C[G^{\times 5}]\to \C[G^{\times 4}],  & &(h,a_1,b_1,a_2,b_2)\mapsto (ha_1 h^\inv, h b_1 h^\inv, ha_2 h^\inv, h b_2 h^\inv),\\
&\delta: \C[G^{\times 4}]\to \C[G^{\times 5}], & &(a_1,b_1,a_2,b_2)\mapsto ([a_1^\inv, b_1][a_2^\inv, b_2], a_1,b_1,a_2,b_2),\nonumber
\end{align}
where the arguments $a_i$ and $b_i$ are associated with the loops $\alpha_i$ and $\beta_i$, respectively. The linear subspace of coinvariants of this Yetter-Drinfeld module structure is
\begin{align*}
M^{coH}=\mathrm{span}_\C\{(a_1,b_1,a_2,b_2)\mid [a_1^\inv, b_1][a_2^\inv, b_2]=1\}\subset \C[G^{\times 4}],
\end{align*}
and the subspace $M_{inv}\subset \C[G^{\times 4}]$ of elements that are both invariants and coinvariants with respect to \eqref{ydact} is the image of $M^{coH}$ under the projector
$$
\pi: M'\to M', \quad x\mapsto \tfrac 1 {|G|}\Sigma_{h\in G}\, h\rhd x.
$$
By Theorem \ref{th:gervais}, the action of  $\mathrm{Map}(\Sigma)$ on $\C[G^{\times 4}]$ via the group homomorphism $\rho$ in Theorem \ref{th:maptheorem}  is generated by the Dehn twists along $\alpha_1$, $\alpha_2$ and along the following paths from \eqref{eq:pathexpress}
\begin{align*}
&\delta_0=\beta_2^\inv, \quad \delta_1=[\alpha_1^\inv,\beta_1]\alpha_2^\inv\beta_2\alpha_2, \quad \delta_2=\beta_1^\inv\alpha_2^\inv\beta_2\alpha_2,
\quad 
\gamma_{0,1}
=[\alpha_1^\inv,\beta_1] ,  \quad \gamma_{0,2}=\beta_1^\inv,
\\
&\gamma_{1,0}
=[\alpha_1^\inv,\beta_1][\alpha_2^\inv,\beta_2],  \quad 
\gamma_{2,0}
=\beta_1^\inv[\alpha_2^\inv,\beta_2],\quad 
\gamma_{2,1}
=\alpha_1^\inv\beta_1\alpha_1, \quad
 \gamma_{1,2}
=[\alpha_1^\inv,\beta_1]
[\alpha_2^\inv,\beta_2] \alpha_2\beta_1\alpha_2^\inv.
\end{align*}
From Definitions \ref{def:dtnull} and \ref{definition:DehnTwistGammaNonFace} and formulas \eqref{eq:slidedef2a} to \eqref{eq:slidedef4b} for the slides, one computes the action of the associated  twists on $\C[G^{\times 4}]$. Listing only those arguments that transform nontrivially, we obtain
\begingroup
\allowdisplaybreaks
\begin{align}\label{eq:rho1gens}
&D_{\alpha_1}: b_1\mapsto b_1 a_1^\inv,\\
&D_{\alpha_2}: b_2\mapsto b_2 a_2^\inv,\nonumber\\
&D_{\delta_0}: a_2\mapsto b_2 a_2,\nonumber\\
&D_{\delta_1}: x\mapsto a_2^\inv b_2^\inv a_2[b_1, a_1^\inv] x [a_1^\inv, b_1]a_2^\inv b_2 a_2,  & &x\in\{a_1,b_1\},\nonumber\\
&\qquad\; a_2\mapsto p^2 a_2   [a_1^\inv, b_1]a_2^\inv b_2 a_2,\nonumber\\
&D_{\delta_2}:  a_1\mapsto p^\inv a_2^\inv b_2^\inv a_2 b_1 a_1,\nonumber\\
&\qquad\; b_1\mapsto a_2^\inv b_2^\inv a_2 b_1 a_2^\inv b_2 a_2,\nonumber\\
&\qquad\; a_2\mapsto p a_2 b_1^\inv a_2^\inv b_2 a_2,\nonumber\\
&D_{\gamma_{0,1}}: x\mapsto [b_1, a_1^\inv] x[a_1^\inv, b_1], & &x\in\{a_1,b_1\}, \nonumber\\
&D_{\gamma_{0,2}}: a_1\mapsto b_1 a_1,\nonumber\\
&D_{\gamma_{1,0}}: x\mapsto [b_2, a_2^\inv][b_1,a_1^\inv] x[a_1^\inv, b_1][a_2^\inv, b_2], & &x\in \{a_1,b_1,a_2,b_2\},\nonumber\\ 
&D_{\gamma_{2,0}}: a_1\mapsto p^{-2}[b_2, a_2^\inv] b_1 a_1,\nonumber\\
&\qquad\;\;\; x\mapsto [b_2, a_2^\inv] b_1 x b_1^\inv[a_2^\inv, b_2], & &x\in\{b_1, a_2,b_2\},\nonumber\\
&D_{\gamma_{2,1}}: a_1\mapsto b_1 a_1,\nonumber\\
&D_{\gamma_{1,2}}: a_1\mapsto p^2 [w,a_2^\inv] a_1 w^\inv, & &w=a_2 b_1^\inv a_2^\inv [b_2, a_2^\inv][b_1, a_1^\inv],\nonumber\\
&\qquad\;\;\; b_1\mapsto [w, a_2^\inv] b_1 [a_2^\inv, w], & &u= a_2^{-1}b_{2}^{-1}a_{2}[b_1, a_1^\inv],\nonumber\\
&\qquad\;\;\; a_2\mapsto wa_2 w^\inv,\nonumber\\
&\qquad\;\;\; b_2\mapsto b_2[u, w].\nonumber
\end{align}
\endgroup
By a direct but  lengthy computation, one can verify that these generating Dehn twists indeed satisfy the relations from Theorem \ref{th:gervais} for the mapping class group $\mathrm{Map}(\Sigma)$. 
The mapping class group $\mathrm{Map}(\Sigma')$ is obtained by imposing the additional relations from \eqref{eq:addrel}
\begin{align}\label{eq:addrel2}
D_{\gamma_{1,0}}=\id_{\C[G^{\times 4}]},  \quad D_{\delta_0}=D_{\delta_1}, \quad D_{\gamma_{0,1}}=(D_{\delta_0}^3D_{\alpha_2})^3,\quad   D_{\gamma_{0,2}}=D_{\gamma_{1,2}},\quad 
D_{\gamma_{2,0}}=D_{\gamma_{2,1}},
\end{align}
which follow from the first two relations in \eqref{eq:addrel2}. From the expressions for the Dehn twists in \eqref{eq:rho1gens} it is apparent that
the first relation holds both on the invariants and on the coinvariants of the Yetter-Drinfeld module structure \eqref{ydact} and for all elements $p\in Z(G)$.
In general, the relation $D_{\delta_0}=D_{\delta_1}$ holds only if one imposes both, invariance and coinvariance and $p^2=1$.
\end{example}

\begin{example} 
Let $\Sigma$,  $\Sigma'$  and $\Gamma$ be as in Example \ref{ex:groupex}.
Let $G$ be a group, viewed as a Hopf monoid in the cartesian monoidal category $\mac=\mathrm{Set}$, and $p\in Z(G)$.

Then the Yetter-Drinfeld module structure on $G^{\times 4}$ for the cilium in Figure \ref{fig:pi1gens} and the baseline of  \eqref{eq:standardchord} is again  given by \eqref{ydact}. Its  biinvariants are the moduli space of flat $G$-connections on $\Sigma'$
\begin{align*}
G^{\times 4}_{inv}=\{(a_1,b_1,a_2,b_2)\in G^{\times 4}\mid [a_1^\inv, b_1][a_2^\inv, b_2]=1\}/G=\mathrm{Hom}(\pi_1(\Sigma'), G)/G.
\end{align*}
 The generating Dehn twists of the mapping class group $\mathrm{Map}(\Sigma)$ are again given by \eqref{eq:rho1gens}, and one finds that they induce an action of  $\mathrm{Map}(\Sigma')$ on $M_{inv}$ if $p^2=1$.
 For $p=1$, this gives the  action of the mapping class group $\mathrm{Map}(\Sigma')$ on $\mathrm{Hom}(\pi_1(\Sigma'), G)/G$.
\end{example}

 Theorems \ref{th:maptheorem} and \ref{th:close} associate mapping class group actions to specific ribbon graphs equipped with additional structures, namely choices of cilia at each vertex and face. In the remainder of this section we show that these mapping class group actions are not limited to the specific graphs in Figure \ref{fig:pi1gens} and in~\eqref{eq:standardchord} and only depend mildly on these choices. Similar actions of the mapping class groups
  $\mathrm{Map}(\Sigma)$ and $\mathrm{Map}(\Sigma')$  are obtained for any  ribbon graph $\Gamma$,  as long as each vertex and each face of $\Gamma'$ carries exactly one cilium.  This follows because any such graph can be transformed into the graph from \eqref{eq:standardchord} by a sequence of edge slides.

\begin{figure}
\begin{center}
\begin{tikzpicture}[scale=.5]
\node at (7,1)[anchor=south]{(a)};
\begin{scope}[shift={(0,0)}]
\draw[line width=1pt, color=black](-3,0)--(5,0);
\draw[color=red, line width=1.5pt, <-,>=stealth] (-2,0).. controls (-2,2) and (2,2)..(2,0);
\draw[line width=1.5pt,->,>=stealth] (-1,.8)--(-1,0);
\node at (-1,0)[anchor=north] {$\mu$};
\draw[line width=1.5pt,->,>=stealth] (1,.8)--(1,0);
\node at (1,0)[anchor=north] {$\nu$};
\draw[line width=1.5pt,->,>=stealth] (3,.8)--(3,0);
\node at (3,0)[anchor=north] {$\gamma$};
\node at (0,2) [anchor=south, color=red]{$\beta$};
\draw[color=blue, line width=1.5pt, <-,>=stealth] (0,0).. controls (0,2) and (4,2)..(4,0);
\node at (2,2) [anchor=south, color=blue]{$\alpha$};
\draw[line width=.5pt, color=black,->,>=stealth] plot [smooth, tension=0.6] coordinates 
      {(-2.5,0)(-1.6,1.6)(0,2)(1.6,1.6)(2.5,.3)(3.5,.3)(3, 1)(2,1.3)(1,1)(0.5,.3)(1.5,.3)(1,1)(0,1.3)  (-1,1)(-1.5,.3)(-.5,.3)(.4,1.6)(2,2)(3.6,1.6)(4.5,.1)  };
\end{scope}
\draw[line width=1pt, ->,>=stealth] (6,1)--(8,1);
\begin{scope}[shift={(12,0)}]
\draw[line width=1pt, color=black](-3,0)--(8,0);
\draw[color=red, line width=1.5pt, <-,>=stealth] (-2,0).. controls (-2,2) and (2,2)..(2,0);
\draw[line width=1.5pt,->,>=stealth] (7,.8)--(7,0);
\node at (7,.8)[anchor=south] {$\mu$};
\draw[line width=1.5pt,->,>=stealth] (6,.8)--(6,0);
\node at (6,.8)[anchor=south] {$\nu$};
\draw[line width=1.5pt,->,>=stealth] (5,.8)--(5,0);
\node at (5,.8)[anchor=south] {$\gamma$};
\node at (0,2) [anchor=south, color=red]{$\beta$};
\draw[color=blue, line width=1.5pt, <-,>=stealth] (0,0).. controls (0,2) and (4,2)..(4,0);
\node at (2,2) [anchor=south, color=blue]{$\alpha$};
\draw[line width=.5pt, color=black,->,>=stealth] plot [smooth, tension=0.6] coordinates 
      {(-2.5,0)(-1.6,1.6)(0,2)(1.6,1.6)(2.5,.3)(3.5,.3)(3, 1)(2,1.3)(1,1)(0.5,.3)(1.5,.3)(1,1)(0,1.3)  (-1,1)(-1.5,.3)(-.5,.3)(.4,1.6)(2,2)(3.6,1.6)(4.5,.1)  };
\end{scope}
\end{tikzpicture}

\begin{tikzpicture}[scale=.5]
\begin{scope}[shift={(0,0)}]
\node at (7,1)[anchor=south]{(b)};
\draw[line width=1pt, color=black](-5,0)--(5,0);
\draw[line width=1.5 pt,<-,>=stealth] (1,0) .. controls (1,1.5) and (3,1.5) .. (3,0);
\draw[line width=1.5 pt,->,>=stealth] (-1,0) .. controls (-1,1.5) and (-3,1.5) .. (-3,0);
\draw[line width=1.5pt, ->, >=stealth] (0,2)--(0,0);
\draw[line width=1.5 pt,<-,>=stealth] (-4,0) .. controls (-4,4) and (4,4) .. (4,0);
\node at (0,3)[anchor=south]{$\mu$};
\node at (0,2)[anchor=north west]{$\nu$};
\draw[line width=.5pt] (-4,0) circle (.3);
\draw[line width=.5pt] (-3.7,.15) -- (-3.3,.15);
\draw[line width=.5 pt,->,>=stealth] (-3.3,.15) .. controls (-3.3,1.7) and (-.7,1.7) .. (-.7,.15);
\draw[line width=.5pt] (3.7,.15) -- (3.3,.15);
\draw[line width=.5 pt,->,>=stealth] (3.3,.15) .. controls (3.3,1.7) and (.7,1.7) .. (.7,.15);
\draw[line width=.5pt] (4,0) circle (.3);
\end{scope}
\draw[line width=1pt, ->,>=stealth] (6,1)--(8,1);
\begin{scope}[shift={(15,0)}]
\draw[line width=1pt, color=black](-6,0)--(6,0);
\draw[line width=1.5 pt,<-,>=stealth] (-5,0) .. controls (-5,1.5) and (-3,1.5) .. (-3,0);
\draw[line width=1.5 pt,->,>=stealth] (5,0) .. controls (5,1.5) and (3,1.5) .. (3,0);
\draw[line width=1.5pt, ->, >=stealth] (0,2)--(0,0);
\draw[line width=1.5 pt,<-,>=stealth] (-2,0) .. controls (-2,4) and (2,4) .. (2,0);
\node at (0,3)[anchor=south]{$\mu$};
\node at (0,2)[anchor=north west]{$\nu$};
\end{scope}
\end{tikzpicture}
\end{center}
\caption{Transforming a  ribbon graph into the graph from Figure \ref{fig:pi1gens}.}
\label{fig:trafo}
\end{figure}
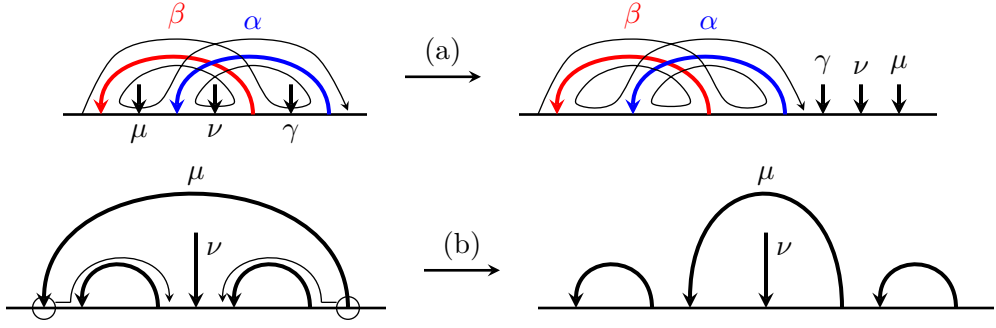

\begin{lemma}\label{lem:graphred}
	Let $\Gamma$ be a  ribbon graph with $n+1$ vertices and faces, such that every vertex and face carries exactly one cilium. 
	Then there is a  sequence of slides that do not slide edges over cilia and of edge reversals that  transforms $\Gamma$ into the graph from~\eqref{eq:standardchord} with $g=\frac{|E|}{2}-n$.
\end{lemma}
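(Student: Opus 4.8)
The plan is to show that any ciliated ribbon graph $\Gamma$ with the correct combinatorial type can be reduced to the standard graph~\eqref{eq:standardchord} by a finite sequence of admissible edge slides (slides that do not pass any edge end over a cilium) together with edge orientation reversals. Since a ribbon graph with $n+1$ vertices, $n+1$ faces and $|E|$ edges has Euler characteristic $\chi = (n+1) - |E| + (n+1) = 2(n+1) - |E|$, and the associated closed surface $\Sigma_\Gamma$ has genus $g$ satisfying $\chi = 2-2g$, we get $|E| = 2n + 2g$, so indeed $g = \tfrac{|E|}{2} - n$; the reduction must therefore terminate at a graph with exactly this number of chords.

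First I would reduce to the case of a single vertex. The key observation is that an admissible slide of an edge $\beta$ along an edge $\alpha$ incident at the same vertex can be used to ``merge'' vertices: given an edge $e$ connecting two distinct vertices $v \neq w$, one slides all other edge ends at $w$ along $e$ (toward $v$), one at a time, always choosing the slide direction so that no cilium is crossed — this is possible because the edge end to be slid can always be taken adjacent to an appropriate end of $e$ in the linear order at $w$, possibly after first reversing the orientation of $e$ so that the relevant end of $e$ is at the correct side. After all ends at $w$ have been moved, $w$ is left incident only to $e$ (which becomes a loop at $v$ after the last slide, or an edge ending at a now-univalent vertex). Iterating over a spanning tree of $\Gamma$ contracts all internal edges and leaves a graph with a single non-univalent vertex carrying its cilium, plus $n$ ``leg'' edges $\nu_1,\dots,\nu_n$ to the $n$ univalent ciliated vertices corresponding to the $n$ non-baseline faces; the remaining $2g$ edges are loops at the central vertex. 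This step is essentially the statement that any connected ribbon graph is slide-equivalent to a rose with legs, and the cilium bookkeeping is the only subtlety.

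Second, I would bring the single-vertex graph into the standard form. Here the linear chord diagram picture of Section~2.3 is available, and the ciliated univalent vertices correspond to the $n$ legs. The edge slides on a one-vertex ribbon graph are exactly (an oriented version of) Bene's chord slides, so reducing the chord diagram to~\eqref{eq:standardchord} is a purely combinatorial normal-form statement for linear chord diagrams under chord slides that fix the baseline endpoints (the cilium). Concretely: slides let one (a)~reorder disjoint chords freely, (b)~separate the legs $\nu_i$ from the chords by sliding them to one side of the baseline and ordering them, as in Figure~\ref{fig:trafo}(a), and (c)~group the remaining $2g$ loop-chords into $g$ mutually linked ``handle'' pairs $(\alpha_j,\beta_j)$ in the interlaced pattern of~\eqref{eq:standardchord}, as in Figure~\ref{fig:trafo}(b). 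This is the standard reduction of a chord diagram to its genus normal form; one uses the triangle and pentagon moves (Theorem~\ref{th:bene}) to justify that any rearrangement of a single interlaced pair, and any relocation of a leg past a handle, is realised by admissible slides. Edge reversals are applied freely to fix the orientations to match~\eqref{eq:standardchord}.

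The main obstacle I anticipate is the cilium constraint: one must verify at every stage that the required slide is admissible, i.e.\ that the sliding end is adjacent to the guiding edge in the \emph{linear} (not merely cyclic) order at the shared vertex, and if not, that a prior sequence of admissible slides (or edge reversals, which swap the two sides of a chord) makes it so. For the vertex-merging step this is manageable because one processes the edge ends at $w$ in the linear order, peeling them off one at a time from the end of the list adjacent to (the appropriate end of) $e$; for the chord-diagram normalisation it requires care in choosing the order in which chords and legs are moved, and in some cases temporarily sliding a blocking chord end out of the way and back afterwards. I would handle this by induction on the number of chords whose position already matches~\eqref{eq:standardchord}, showing that one can always advance by at least one without disturbing the matched part, invoking the commutativity and triangle/pentagon relations to reroute around obstructions. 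The termination and the final count $g = |E|/2 - n$ then follow from the Euler characteristic computation above, since admissible slides and orientation reversals preserve both the number of edges and the homeomorphism type of $\Sigma_\Gamma$.
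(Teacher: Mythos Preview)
Your overall two-stage plan (contract to a single multivalent vertex via a spanning tree, then normalise the resulting chord diagram) is exactly the paper's approach. However, there is a counting error with a real consequence. After the vertex reduction you have $n$ legs $\nu_1,\dots,\nu_n$, but the number of loops at the central vertex is $|E|-n = 2g+n$, not $2g$ as you claim. Your normalisation step (c) forms $g$ handle pairs from what you believe are all the remaining loops; this leaves $n$ loops unaccounted for. These are precisely the $\mu_i$ in~\eqref{eq:standardchord}, and you never say how to produce them or how each $\mu_i$ gets paired with the corresponding leg $\nu_i$ so that $\ta(\nu_i)$ sits strictly between the two ends of $\mu_i$ with nothing else in between.

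The paper handles this by \emph{first} clearing out the handle pairs (its Figure~\ref{fig:trafo}(a) --- you have the (a)/(b) references swapped), and then using the face structure: the cilium at each univalent vertex $w$ determines a face, and once the handle pairs are isolated that face is bounded by a single loop $\mu$ with $\ta(\mu)<\ta(\nu)<\st(\mu)$, which is then slid so that its ends become adjacent to $\ta(\nu)$ (Figure~\ref{fig:trafo}(b)). Your step (b), ``separate the legs from the chords by sliding them to one side'', does not capture this: moving a leg without its partner loop $\mu_i$ cannot yield the standard diagram. A smaller point: invoking Theorem~\ref{th:bene} is beside the purpose here --- those relations say when two slide sequences coincide, whereas the lemma only asks you to exhibit \emph{one} sequence, which is a direct adjacency check at each step.
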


\begin{proof}
	Note that the edge slides preserve the bijection between the faces and vertices defined by the cilia, as long as no edges  slide over cilia.
	As a first step we transform $\Gamma$ into a chord diagram.
First we choose a vertex $x$. For every neighbouring vertex $w$, we choose an edge $\nu_w$ connecting $x$ and $w$, orient $\nu_w$ towards $x$, and slide every other edge end incident at $w$ along $\nu_w$ to  $x$, such that no edge end slides over the cilium at $w$.
We repeat this until every vertex except $x$ is univalent.  We then orient every loop $\gamma$ at $x$ such that $\ta(\gamma)> \st(\gamma)$ for the  ordering at $x$ and obtain a chord diagram $\Gamma'$.

As a second step we form the pairs of loops labeled $\alpha_i$ and $\beta_i$  in~\eqref{eq:standardchord}. For any pair of edges $\alpha$ and $\beta$ in $\Gamma'$ with $\ta(\beta)> \ta(\alpha)>\st(\beta)>\st(\alpha)$,  we slide all other edge ends between their starting and target end  along $\alpha$ and $\beta$ to the right, as shown in Figure \ref{fig:trafo} (a). 
This yields a chord diagram $\Gamma''$.

As a third step we form the loops labeled $\mu_i$ in \eqref{eq:standardchord}. Suppose $w$ is a univalent vertex of $\Gamma''$ and $\nu$ the unique edge that connects $w$ and $x$. 
Because $w$ carries a cilium that also belongs to a unique face of $\Gamma''$, there is a loop $\mu$  with $\ta(\mu)<\ta(\nu)<\st(\mu)$, as shown in Figure \ref{fig:trafo} (b). 
We then slide both edge ends of $\mu$ next to the target vertex of $\nu$, as shown in Figure \ref{fig:trafo} (b). 
We repeat this step until all univalent vertices are associated with an edge $\nu_i$ to the baseline and a loop $\mu_i$ as in \eqref{eq:standardchord}. 
Finally we slide all these pairs $(\mu_i,\nu_i)$ to the right of all pairs $(\alpha_j,\beta_j)$  and  obtain  the graph from  \eqref{eq:standardchord}. The identity for $g$ follows from the assumptions and the formula for the Euler characteristic.
\end{proof}

Suppose now that $\Gamma$ is a directed ribbon graph as in Lemma \ref{lem:graphred}. Then attaching annuli to the faces yields a surface $\Sigma$  with $n+1$ boundary components. By Lemma \ref{lem:graphred} for any vertex $x$ of $\Gamma$ there is a sequence of edge slides and edge reversals that transforms $\Gamma$ into the graph \eqref{eq:standardchord} and the cilium at $x$ to its baseline. As no edges slide over cilia and orientation reversal is compatible with the Yetter-Drinfeld module structure by definition, this defines an 
 automorphism  $\phi: H^{\oo 2(n+g)}\to H^{\oo 2(n+g)}$ of Yetter-Drinfeld modules.
By conjugating the generating Dehn twists from Theorem \ref{th:maptheorem} with $\phi$, we then obtain another set of automorphisms
 for each curve $\sigma$ in \eqref{eq:pathexpress}  that satisfy the relations from Theorem \ref{th:gervais} and define an action of  $\text{Map}(\Sigma)$  by automorphisms of Yetter-Drinfeld modules. 

If $\mac$ is finitely complete and cocomplete and $m\circ (p\oo p)=\eta$, 
by Lemma \ref{lem:maclemma} the automorphism $\phi$ induces an isomorphism $\phi_{inv}$ between the biinvariants associated with the Yetter-Drinfeld module structure at $x$ and with the baseline of \eqref{eq:standardchord}. Conjugating the action of the mapping class group $\mathrm{Map}(\Sigma')$ from Theorem \ref{th:close} on the biinvariants  of \eqref{eq:standardchord} with $\phi_{inv}$  then defines an action of $\mathrm{Map}(\Sigma')$ on the biinvariants at $x$. 
In particular, this implies that the choice of the cilium at the multivalent vertex in Figure \ref{fig:pi1gens} affects the associated mapping class group actions from Theorem \ref{th:maptheorem} and Theorem \ref{th:close} only by conjugation with an automorphism.

\subsection*{Concluding Remarks}

The mapping class group actions constructed in this article are distinct from the ones obtained by Lyubashenko \cite{Ly95a,Ly95b,Ly96}, from the mapping class group actions  \cite{AS,Fa18a,Fa18b} in the context of Chern-Simons theory, as they are based on different  data. 

As our mapping class group actions are obtained from a generalisation of Kitaev lattice models \cite{Ki,BMCA} which were in turn related to Turaev-Viro TQFTs   in \cite{BK,KKR}, it is plausible that they  should be related  to mapping class group actions in Turaev-Viro TQFTs  when our pivotal Hopf monoid $H$ is a finite-dimensional semisimple Hopf algebra over $\C$. 
The relations between Turaev-Viro TQFTs and Reshetikhin-Turaev TQFTs  \cite{Ba,KB,TV} and  between  Kitaev lattice models and the quantum moduli algebra  \cite{Me} then suggest  a relation
 to the mapping class group actions from \cite{AS,Fa18a,Fa18b,Ly95a,Ly95b,Ly96}
 for the Drinfeld double $D(H)$. However, this question is beyond the scope of this article.
  It would also be interesting to see if our mapping class group actions generalise to Kitaev models with defects such as the ones considered in \cite{Ko}.

If $H$ is a finite-dimensional semisimple Hopf algebra over $\C$, the results in Section \ref{sec:chordslideshopf} are also related to an earlier observation by Kashaev \cite{Ka}.
It is shown in \cite{Ka} that  the action of the universal $R$-matrix of the Drinfeld double $D(H)$ on three modules over its Heisenberg double $\mathcal H(H)$ satisfies the pentagon relation, which is related to the mapping class group action  on a triangulation via flip or Whitehead moves \cite{P87}.
Under these assumptions, modules over the Heisenberg double $\mathcal H(H)$ correspond bijectively to $H$-Hopf modules. A direct computation shows that the action of the universal $R$-matrix of $D(H)$ on   $\mathcal H(H)\oo \mathcal H(H)$ then gives the expressions for the edge slides in \eqref{eq:slidedef2a} to \eqref{eq:slidedef4b} for the pivotal element $\eta$.
The  pentagon relations from Figure~\ref{fig:sliderel5} for the edge slides and the diagrammatic proof in Figure \ref{fig:pent} can thus be viewed as a  generalisation of \cite{Ka}.

\subsection*{Acknowledgements}
This research was supported in part by Perimeter Institute for Theoretical Physics. Research at Perimeter Institute is supported by the Government of Canada through the Department of Innovation, Science and Economic Development and by the Province of Ontario through the Ministry of Research and Innovation.

%:biblio

\end{document}